\documentclass[reqno,a4paper]{amsart}
\usepackage[left=1in,right=1in,top=1.5in,bottom=1in]{geometry}

\usepackage{Style}
\usetikzlibrary{calc,arrows.meta}
\usepackage{enumitem}
\usepackage{ytableau}
\ytableausetup{boxsize=.3em, centertableaux}
\usepackage{algorithm}
\usepackage{algpseudocode}

\algnewcommand{\InlineIfThen}[1]{\State\algorithmicif\ #1\ \algorithmicthen}
\algnewcommand{\EndInlineIf}{\unskip}

\newcommand{\g}{\mathfrak{g}}

\renewcommand{\sl}{\mathfrak{sl}}
\newcommand{\GL}{\mathrm{GL}}
\newcommand{\poly}{\mathrm{poly}}
\newcommand{\qpoly}{\mathrm{qpoly}}

\newcommand{\qbinom}[2]{\genfrac{[}{]}{0pt}{}{#1}{#2}}

\newcommand{\SYT}{\mathrm{SYT}}
\newcommand{\SSYT}{\mathrm{SSYT}}
\newcommand{\hl}{\mathsf{hl}}
\newcommand{\ct}{\mathsf{ct}}    

\newcommand{\yes}{\mathsf{yes}}
\newcommand{\no}{\mathsf{no}}
\renewcommand{\L}{\mathsf{L}}
\newcommand{\sharpL}{\#\mathsf{L}}
\newcommand{\sharpTISP}{\#\mathsf{TISP}}
\newcommand{\GapL}{\mathsf{GapL}}
\newcommand{\FL}{\mathsf{FL}}
\newcommand{\NL}{\mathsf{NL}}
\newcommand{\coNL}{\mathsf{coNL}}
\renewcommand{\P}{\mathsf{P}}
\newcommand{\FP}{\mathsf{FP}}
\newcommand{\FQP}{\mathsf{FQP}}
\newcommand{\GapP}{\mathsf{GapP}}
\newcommand{\sharpP}{\#\mathsf{P}}
\newcommand{\NP}{\mathsf{NP}}
\newcommand{\coNP}{\mathsf{coNP}}

\renewcommand{\leq}{\leqslant}
\renewcommand{\le}{\leqslant}
\renewcommand{\geq}{\geqslant}
\renewcommand{\ge}{\geqslant}

\newcommand{\ua}{\ensuremath{\mathord{\uparrow}}}
\newcommand{\da}{\ensuremath{\mathord{\downarrow}}}

\makeatletter
\let\c@equation\c@de

\makeatother

\usepackage{hyperref}
\usepackage{etoolbox}
\makeatletter
\patchcmd{\@maketitle}
  {\ifx\@empty\@dedicatory}
  {\ifx\@empty\@date \else {\vskip3ex \centering\footnotesize\@date\par\vskip1ex}\fi
   \ifx\@empty\@dedicatory}
  {}{}
\patchcmd{\@adminfootnotes}
  {\ifx\@empty\@date\else \@footnotetext{\@setdate}\fi}
  {}{}{}
\makeatother

\author[Á.~Gutiérrez]{Álvaro Gutiérrez}
\address[ÁG]{University of Bristol}
\email{a.gutierrezcaceres@bristol.ac.uk}

\author[C.~Ikenmeyer]{Christian Ikenmeyer}
\address[CI]{University of Warwick}
\email{christian.ikenmeyer@warwick.ac.uk}

\author[G.~Panova]{Greta Panova}
\address[GP]{University of Southern California}
\email{gpanova@usc.edu}

\date{July 17 2026}

\thanks{This material is based
upon work supported by the National Science Foundation under Grant No.\ DMS-1929284, while the authors were in residence at the Institute for Computational and Experimental Research in Mathematics in Providence, RI,
during the semester program ``Categorification and Computation in Algebraic Combinatorics'' in Fall 2025.
The authors thank Joshua P.\ Swanson and Michał Szwej for helpful conversations. ChatGPT 5.6 was used for proofreading.\\
ÁG was funded by a University of Bristol Research Training Support Grant.\\ GP was partially funded by NSF CCF:AF and DMS grants.\\For the purpose of open access, the authors have applied a Creative Commons Attribution (CC-BY) license to any Author
Accepted Manuscript version arising from this submission.}

\title{Counting in logarithmic space}

\begin{document}
\raggedbottom

\begin{abstract}
We study the class $\#\mathsf{L}$ of functions counting accepting paths of non-deterministic log-space Turing machines and construct methods to prove containment in $\#\mathsf{L}$. 
We prove that a large number of classical combinatorial and number theoretic functions belong to this class: classical functions from enumerative combinatorics (multinomial coefficients, Catalan numbers, linear extensions of trees, Stirling numbers, etc), algebraic combinatorics (number of standard Young tableaux, etc), discrete geometry, number theoretic functions, representation theoretic multiplicities in a large class of cases. We show that $\mathrm{GL}_2$-plethysm coefficients of bounded length outer partition can be counted by log$^2$-space polytime verifiers. We pose numerous questions and conjectures on $\#\mathsf{L}$ containment and its generalizations, that suggest venues for conditionally disproving $\#\mathsf{P}$-completeness. 
While studying which combinatorial functions are in $\#\mathsf{P}$ provides a formal way of (dis)proving the existence of combinatorial interpretations, the lower class $\#\mathsf{L}$ serves as an analogue for functions computable in polynomial time. \\

\noindent\textbf{Keywords:} Counting complexity, combinatorial interpretations, logarithmic space, algebraic combinatorics, plethysm\\
\noindent\textbf{MSC2020:} 05A19, 68Q15, 05-04 (Primary); 05E10, 68R05, 11P81 (Secondary)
\end{abstract}

\maketitle

\tableofcontents

\section{Introduction}
Algebraic Combinatorics studies objects and quantities originating in algebra, representation theory, and geometry using combinatorial tools. 
A typical problem that arises is 
\begin{quote}
\itshape
The values of the function $X$ are the dimensions of vector spaces, and hence $X$ outputs only non-negative integer values.
Find a combinatorial interpretation for $X$, i.e.,
a family of nice combinatorial objects which are counted by~$X$.
\end{quote}
A flagship problem of this type that has been successfully resolved is the combinatorial interpretation of the Littlewood--Richardson coefficients. Yet many other quantities, like Kronecker and plethysm coefficients, remain mysterious. 

The above question contains the undefined ``nice combinatorial objects'' and ``combinatorial interpretation''. A way to formalize these concepts is to understand ``combinatorial interpretation'' as ``being an element of the complexity class $\sharpP$'',
as considered in \cite{Mul07}.
This brings computational complexity tools and provides ways to disprove the existence of combinatorial interpretations, see for example \cite{WhatIsIn} and \cite{PosGroupCharacters}.
The survey \cite{WhatIsAComb} almost equates having a combinatorial interpretation with the membership in $\#\mathsf{P}$, see \cite[\S3.1]{WhatIsAComb}, stating that these terms ``usually coincide but can also differ in several special cases''. Indeed, $\sharpP$ by definition is the complexity class of counting [possibly exponentially many] witnesses, each verifiable in polynomial time, which does not always give aesthetically satisfying ``combinatorial interpretations''. In many cases we have combinatorial functions computable in polynomial time, hence in $\FP \subseteq \sharpP$, but the emerging counting formula  would just be the pre-computed integers themselves;
for example the description $\binom{n}{k} = \#\{x\in\N \mid k!\,(n-k)!\,x \le n!\}$ places the binomial coefficient in $\sharpP$ without giving a satisfying combinatorial interpretation.
It becomes fruitful to understand the lower counting complexity classes (counting complexity classes form a partially ordered set by inclusion) that important counting problems belong to. 
This direction has been pioneered in \cite[end of \S1.1]{DI24}, and in \cite{BCDI25} (where the counting problems have exponentially large input).
\medskip

We study the complexity class $\sharpL$ of functions counting the number of accepting paths of a non-deterministic Turing machine that runs in \emph{logarithmic space}. This is a subclass of $\sharpP$.
We explicitly construct logarithmic space Turing machines to establish that the following functions are in $\sharpL$.
\begin{itemize}[leftmargin=2em]
    \item \textbf{From enumerative combinatorics.} Binomial and multinomial coefficients, as well as the factorial. Catalan, Narayana, Stirling, Fibonacci, and Euler numbers, and any sequence given by a 1- or 2-dimensional linear recursion. For partitions $\lambda$ of a bounded length: the number of ballot sequences of type $\lambda$, which also give standard and semistandard tableaux of shape $\lambda$. The number of linear extensions of a rooted tree poset.  The differences involved in the unimodality of binomial coefficients, and the log-concavity of binomial and Stirling coefficients. The number of permutations of a given cycle type, and the number of permutations of $n$ which are involutions, the number of certain pattern avoiding permutations. The determinant of the distance matrix of a tree is even log-space computable.
    \item \textbf{From representation theory and geometry.} 
    The number of integer points in a polytope $P$  of fixed dimension. The number of contingency tables where at least one dimension is constant.
    For fixed $n$, the $\GL_n$-Littlewood--Richardson coefficients, $\GL_n$-Kostka numbers, and the product constants of monomial symmetric polynomials.
    In addition, one can compute in log-space the raising operator of crystals, Kashiwara's tensor product rule, and decide whether a given element is a highest weight element of a prescribed weight.
    \item \textbf{From number theory.} The partition function (which implies its non-trivial containment in $\FP$), the coefficients in $q$ of the $q$-binomial and the $q$-multinomial. The Euler totient function, and the divisor function $\sigma_k(n)$. Since one can perform Euclidean division in log-space and also decide the primality of a number, the divisor function $\sigma_k(n)$ is even computable in log-space for each fixed $k$. If for a given formula its $p$-adic valuation is log-space computable, then the formula is in $\sharpL$; this places the radical function and the factorial (again) in $\sharpL$, as well as the hook-length formula for the number of standard Young tableaux of shapes of \emph{unbounded} length (strengthening the result from the section on enumerative combinatorics).
\end{itemize}
We then study a family of coefficients that are simultaneously $\GL_2$-plethysm coefficients and rectangular Kronecker coefficients \cite{IOT}, and which we call \emph{Hermite coefficients}. Hinging on a combinatorial interpretation for these coefficients \cite{PakPanovaSwanson}, we construct a non-deterministic $\poly$-time $\mathrm{polylog}$-space Turing machine that solves this problem. An even more general result is the following. See~\S\ref{sec:zoo} for the precise definition of the complexity class.
\begin{thm}[$\GL_2$-plethysm coefficients]
Fix a constant $C\in\N$. Let $\mu$ be a partition of length at most $C$.
The function $$(1^{\mu_1}\,0\,1^{\mu_2}\,0\,\dots\,0\,1^{\mu_C}\,0\,1^k\,0\,1^r) \mapsto a_{\mu[k]}^{(|\mu|k-r,r)}$$ is in $\sharpTISP(\poly(n),\log^2(n))\cap \GapL$.
\end{thm}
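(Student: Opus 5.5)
The plan is to prove the two memberships separately. The $\GapL$ part comes from a principal specialization of Schur functions; the $\sharpTISP(\poly(n),\log^2(n))$ part needs a genuinely positive witness set.

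\textbf{Step 1: reduction to counting tableaux.} Restrict $s_\mu[h_k]$ to two variables $(x,y)$. Since $h_k(x,y)=\sum_{j=0}^k x^{k-j}y^j$, we get
\[
s_\mu[h_k](x,y)=x^{|\mu|k}\, s_\mu(1,q,\dots,q^k), \qquad q=y/x .
\]
Moreover, $s_\mu(1,q,\dots,q^k)=\sum_T q^{|T|}$, where $T$ runs over semistandard tableaux of shape $\mu$ with entries in $\{0,\dots,k\}$ and $|T|$ is the sum of the entries. Let $N_r$ be the number of such $T$ with $|T|=r$. Expanding in the $\GL_2$ Schur functions $s_{(|\mu|k-r,r)}(x,y)$ then gives the standard identity
\[
a_{\mu[k]}^{(|\mu|k-r,r)} = N_r - N_{r-1} \qquad \text{for } r\le |\mu|k/2,
\]
and the coefficient vanishes otherwise (which is checked deterministically).

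\textbf{Step 2: $N_r\in\sharpL$, hence the difference is in $\GapL$.} A tableau is never stored cell by cell. Instead, encode it by its Gelfand--Tsetlin data: for each value $v=0,\dots,k$ and each row $i\le C$, let $p_{i,v}$ be the number of entries $\le v$ in row $i$. Semistandardness is then equivalent to local conditions between consecutive $v$:
\begin{itemize}
\item the prefix counts increase, $p_{i,v-1}\le p_{i,v}$;
\item column strictness gives the interlacing $p_{i+1,v}\le p_{i,v-1}$;
\item the final counts are the row lengths, $p_{i,k}=\mu_i$.
\end{itemize}
The weight satisfies $|T|=\sum_{v}\sum_i v\,(p_{i,v}-p_{i,v-1})$.

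The machine sweeps $v$ from $0$ to $k$. At each step it keeps only the $C$ previous prefix counts, a running weight bounded by $|\mu|k$, and $v$ itself, all in $O(C\log n)$ bits. It guesses the next $C$ prefix counts, checks the local conditions, and at the end accepts iff $p_{i,k}=\mu_i$ for all $i$ and the weight equals $r$. Accepting paths biject with the tableaux, so $N_r\in\sharpL$. Since $\GapL$ is closed under subtraction, the function is in $\GapL$.

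\textbf{Step 3: a positive witness set, the main obstacle.} For $\sharpTISP(\poly(n),\log^2(n))$ a difference is not allowed, so I need a set of size exactly $N_r-N_{r-1}$. The plan is to exhibit an explicit injection $\varphi$ from weight-$(r-1)$ tableaux to weight-$r$ tableaux, namely an $\mathfrak{sl}_2$ raising operator, and to count weight-$r$ tableaux not in its image, i.e.\ ``highest weight'' elements. For the Hermite case one can take the Pak--Panova--Swanson interpretation~\cite{PakPanovaSwanson}. For general bounded $\mu$, I would realize $S^\mu(S^kV)$ inside a tensor power of one-row $\GL_2$ crystals using the column-reading word of $T$. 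I would then apply the earlier log-space results on Kashiwara's tensor product rule and on deciding highest weight elements to decide whether a guessed $T$ is highest weight. The weight-$r$ highest-weight elements in a crystal of $S^\mu(S^kV)$ number exactly $a_{\mu[k]}^{(|\mu|k-r,r)}$, so such a check suffices.

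The difficulty is that $T$ has $\Theta(n)$ cells, so it cannot be written down in $O(\log n)$ space. The crystal bracketing rule, however, needs random access to the reading word. Each letter of the word (the entry in a given cell) can be recomputed from the Gelfand--Tsetlin data, but that data is itself only available on the fly. My approach is to fix the witness as the sequence of prefix counts, which the verifier reads as a one-way certificate. It would then recompute each letter, and run the bracket-matching counter for the highest-weight test, by re-reading segments of the certificate. Where a full re-scan of the certificate is unavailable, I would recurse by a Savitch-style midpoint argument on the sweep over $v$, which costs $O(\log n)$ levels of $O(\log n)$ bits each. This is where I expect the $\log^2$ space bound to come from. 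The delicate point is to make sure that this recursive verification does not multiply the number of accepting paths, i.e.\ that every nondeterministic guess other than the certificate itself is unique or deterministic.
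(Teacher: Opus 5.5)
Your $\GapL$ half is fine. Sweeping the Gelfand--Tsetlin data puts $N_r$ in $\sharpL$ when $\ell(\mu)\le C$, and then $N_r-N_{r-1}\in\GapL$. The paper remarks on this, though it proves $c_r(\lambda,k)\in\GapL$ via a signed expansion of the $q$-hook-content formula, which also works for unbounded length.

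The $\sharpTISP(\poly(n),\log^2(n))$ half, however, rests on a false step. The tableaux of shape $\mu$ with entries in $\{0,\dots,k\}$ carry a $\mathfrak{gl}_{k+1}$-crystal structure. But $\GL_2$ acts on $S^kV$ through the principal $\mathfrak{sl}_2$, whose raising operator involves all the $E_i$ at once. So there is no reason for the reading words of these tableaux to form a subcrystal of the $\mathfrak{sl}_2$-crystal $\mathbb{B}(k)^{\otimes n}$, and they do not. For $\mu=(2)$, $k=2$ we have $S^2(S^2V)\cong S^4V\oplus S^{(2,2)}V$, so the coefficients for $r=0,1,2$ are $1,0,1$. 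Write $i\in\mathbb{B}(2)$ as $\texttt{)}^i\texttt{(}^{2-i}$ and use the signature rule. Then $\mathbb{B}(2)^{\otimes 2}$ has components $\{00,10,20,21,22\}$, $\{01,11,12\}$ and $\{02\}$. Neither the six row words $00,01,02,11,12,22$ nor their reverses form a union of components. Counting highest weight words among them gives $1,1,1$ and $1,0,0$ respectively, not $1,0,1$.

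Producing a genuine $\mathfrak{sl}_2$-crystal on these tableaux is exactly the open plethysm-crystal problem. The same holds for the weaker goal of an injection from weight $r-1$ to weight $r$ whose image a one-pass log-space verifier can recognize. The paper only cites a conjectural construction \cite{GutCrystals}. A sanity check confirms the gap: for $\mu=(n)$ the tableau is a weakly increasing word, and the $\mathfrak{sl}_2$ highest-weight test is a single left-to-right bracket counter. Your argument would therefore place Hermite coefficients in $\sharpL$, which is the open Question~\ref{q:Hermite}. Note also that the Pak--Panova--Swanson interpretation is via marked KOH trees, not an injection on tableaux.

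The Savitch-style recursion addresses a non-issue and would itself break the time bound. With $\ell(\mu)\le C$ you can give the tableau column by column and stream its reading word with $O(C)$ counters. A recursion that enumerates midpoints takes $n^{O(\log n)}$ time, outside $\poly(n)$.

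The missing idea is the paper's decomposition:
\begin{itemize}
\item The formula \eqref{GOH}, coming from Kirillov--Reshetikhin admissible configurations, writes $s_\mu\circ s_k(q,q^{-1})$ as a positive sum of products of quantum binomials.
\item Each quantum binomial is a Hermite generating function $\sum_b a_b(\cdot,\cdot)[b]$. Each Hermite coefficient is witnessed by a small KOH witness of depth $O(\log k)$, checked with $O(\log k)$-bit counters per level; this, not a midpoint recursion, is where $\log^2$ comes from.
\item The resulting products of quantum integers are resolved positively by the Clebsch--Gordan marks of Lemma~\ref{lem:q-int-product}.
\item The bound $\ell(\mu)\le C$ is used only to stream the admissible configuration with $O(C)$ counters.
\end{itemize}
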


We also propose open problems which complement existing conjectures related to other complexity classes. 
\begin{enumerate}[leftmargin=2.5cm]
    \item[{Question~\makebox[2em][r]{\ref{q:CT}}}] Is the number of contingency tables of unbounded sizes $\log^k$-space computable for some $k$? If the answer is yes this would give evidence against their conjectured $\sharpP$-completeness.
    \item[{Question~\makebox[2em][r]{\ref{q:skewSYT}}}] Is the number of skew standard Young tableaux in $\sharpL$?
    \item[{Question~\makebox[2em][r]{\ref{q:Hermite}}}] Are the Hermite coefficients $a_{n[k]}^{(m+t,m)}$ in $\sharpL$?
    \item[{Question~\makebox[2em][r]{\ref{q:schur ps}}}] Are the coefficients of the principal specialisation of $s_\lambda$ in $\sharpL$ when $\ell(\lambda)$ is unbounded?
    \item[{Question~\makebox[2em][r]{\ref{q:GL2 plet}}}] Are the $\GL_2$-plethysm coefficients $a_{\mu[k]}^{(m+t,m)}$ $\log^2$-space computable when $\ell(\mu)$ is unbounded?
\end{enumerate}

\medskip

In recent years there has been a growing interest for the interplay between complexity theory and algebraic combinatorics. 
One early and influential source is the Geometric Complexity Theory programme towards $\P\ne\NP$ \cite{GCT1}. Mulmuley and Sohoni's programme raises important questions about representation-theoretic constants that had already been studied since the 1930s: plethysm and Kronecker coefficients.
Finding combinatorial interpretations for these constants constitute Problems~9 and~10 in Stanley's influential list \cite{StanleyList} of open problems in algebraic combinatorics.
Motivated by the positive results for the Littlewood-Richardson coefficients, Mulmuley \cite{Mul07,GCT6} conjectured that these constants are in $\sharpP$, which is the complexity class of functions that count the number of accepting paths of a non-deterministic Turing machine that runs in \emph{polynomial time}.

Stanley's problem and Mulmuley's conjecture have been subject to intense scrutiny. The simpler problem of deciding positivity of plethysm and Kronecker coefficients is shown to be $\NP$-hard in \cite{IMW} and \cite{FI}, hence determining their exact values is $\sharpP$-hard. 
Plethysm and Kronecker coefficients are known to be in $\GapP = \sharpP-\sharpP$,
as first shown in \cite{BI08,FI}, respectively, and many other quantities are as well, see \cite{PR}.
Kirillov's 2004 conjecture about $\sharpP$ containment of so-called \emph{reduced} Kronecker coefficients turned out to be equivalent to Stanley's problem 10 \cite{IP24}.
A related representation-theoretic constant (character values squared) does not belong to $\sharpP$, as shown in \cite{PosGroupCharacters}, under standard complexity theoretic assumptions, which implies that there cannot be a positive combinatorial interpretation. Parallel to that Pak conjectured that the Kronecker coefficients would not be in $\sharpP$~\cite{WhatIsAComb} under standard complexity theoretic assumptions, and hence not have a nice positive combinatorial interpretation. 
Both plethysm and Kronecker coefficients are shown to be in a quantum analogue of $\sharpP$, namely $\#\mathsf{BQP}$, in \cite{BCGHZ,IS25,christandl2026plethysm}. Further relationships, results and open problems on their complexity are described in~\cite{panova2025computational}. 
While belonging to $\sharpP$ is an intriguing question for the plethysm and Kronecker coefficients,
the $\sharpP$ membership question is too coarse for a large class of counting problems which trivially belong to $\sharpP$ but for which the combinatorial interpretations are interesting. For example, large families of Kronecker and plethysm coefficients are computable in poly-time and hence belong to $\FP \subset \sharpP$, see e.g.~\cite{P25}, yet there is no satisfactory positive combinatorial interpretation beyond computing the final answer itself. 
Hence, we study the containment in the counting class $\sharpL \subseteq \FP$.

\medskip

The paper is organised as follows. In \S\ref{sec:preliminaries} we review the main complexity theory concepts that we use throughout. We discuss combinatorial functions in $\sharpL$ in \S\ref{sec:combinatorics}, representation-theoretic functions in \S\ref{sec:structure constants}, and number-theoretic functions in~\S\ref{sec:number theory}. We finish by discussing $\GL_2$-plethysm coefficients and rectangular Kronecker coefficients in \S\ref{sec:OHara}. We define the combinatorial objects as they arise, referring to~\cite{EC1,StanleyEC2} for the necessary background in enumerative and algebraic combinatorics. We also state open problems and remarks on the various topics within their sections. 

\section{Preliminaries}\label{sec:preliminaries}

\subsection{Decision complexity: time and space}
We primarily work over the alphabet $\Sigma=\{0,1\}$ and languages $L$ are subsets of finite length strings, i.e., $L\subseteq\Sigma^*$.
Following \cite[\S1.5 and Ch.~2]{AroraBarak}, define
\[
\P = \bigcup_{c\ge1}\mathsf{DTIME}(n^c)
\quad
\text{and}
\quad
\NP = \bigcup_{c\ge1}\mathsf{NTIME}(n^c)
\]
as the classes of decision problems solved by deterministic and non-deterministic poly-time Turing machines, respectively.
In this paper, all nondeterministic Turing machines are required to halt on every input after a finite number of steps.
An alternative and useful description of $\NP$ can be given as follows:
 the class $\NP$ is the class of languages $L$ for which there is a polynomial $p$ and a poly-time deterministic Turing machine $M$ satisfying
\[
x\in L
\iff
\exists w\in\Sigma^{p(|x|)} : M(x,w) = \yes.
\]
We say $M$ is a \emph{verifier $\NP$ machine} to distinguish it from the non-deterministic $\NP$ machines considered above.

The class $\coNP$ is the class of languages $L$ for which there is a polynomial $p$ and a poly-time deterministic Turing machine $M$ satisfying
\[
x\not\in L
\iff
\exists w\in\Sigma^{p(|x|)} : M(x,w) = \no.
\]
It is a major open problem to show that $\NP \ne \coNP$.\medskip

Following \cite[Ch.~4]{AroraBarak}, define
\[
\L = \mathsf{SPACE}(\log(n))
\quad
\text{and}
\quad
\NL = \mathsf{NSPACE}(\log(n)).
\]
To use a verifier approach for defining $\NL$ one has to be careful (see~\cite[\S4.4.1]{AroraBarak}, the discussion before~\cite[Definition 2.2]{Satanic}, or \cite{Burtschick}), see Figure~\ref{fig:sharpL machine}. 
\begin{thm}\label{thm:NL machine}
The class $\NL$ is the class of languages $L$ for which there is a polynomial $p$ and a log-space deterministic Turing machine $M$ with an additional \emph{read-once, left-to-right} input tape (dedicated to the witness) satisfying
\[
x\in L
\iff
\exists w\in\Sigma^{p(|x|)} : M(x,w) = \yes.
\]
\end{thm}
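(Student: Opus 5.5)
We prove both inclusions by simulation, translating between nondeterministic choices and witness bits read from a one-way tape.

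\emph{From a nondeterministic machine to a verifier.} Let $L\in\NL$ be decided by a nondeterministic log-space machine $N$. We may assume $N$ makes binary choices. Since $N$ halts on every input, its number of configurations on an input $x$ is $2^{O(\log|x|)}=\mathrm{poly}(|x|)$. A configuration consists of the state, the input head position and the work tape contents. By the standard argument, some accepting computation, if one exists, has no repeated configuration, so its length is at most a polynomial $p(|x|)$. We may even clock $N$ with a log-space counter to enforce this, without changing acceptance. We also pad so that all witnesses have length exactly $p(|x|)$.

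The verifier $M(x,w)$ simulates $N$ on $x$ deterministically. Whenever $N$ makes a nondeterministic choice, $M$ reads the next bit of $w$ and moves the witness head right. The unused suffix of $w$ is ignored. $M$ uses only the work space of $N$ plus the counter, and it reads $w$ once, left to right. If $N$ has an accepting path on $x$, then encoding its choices gives a witness $w$ with $M(x,w)=\yes$. Conversely, any $w$ that $M$ accepts determines an accepting path of $N$.

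\emph{From a verifier to a nondeterministic machine.} Let $M$ be given with the read-once witness tape. The nondeterministic machine $N$ simulates $M$ on $x$. It never stores $w$. Whenever $M$ would advance its witness head onto a fresh cell, $N$ guesses that bit nondeterministically and supplies it to $M$. Because the witness head moves only left to right, each bit is needed only at the moment it is read. So $N$ need only remember the current bit under the head and a counter of how many bits have been consumed, at most $p(|x|)$. The counter costs $O(\log|x|)$ bits. If $M$ tries to read past position $p(|x|)$, $N$ rejects.

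Paths of $N$ then correspond to witnesses, up to the unread suffix, which does not affect the answer. Hence $N$ accepts $x$ if and only if some $w\in\Sigma^{p(|x|)}$ has $M(x,w)=\yes$. Since $M$ is deterministic and halts, $N$ halts on every input.

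\emph{The main point.} The only genuine subtlety is the read-once restriction, and we would stress why it is needed. If $M$ could move its witness head back and forth over $w$, then $N$ would have to store the polynomially long $w$, which it cannot do in logarithmic space. In fact, two-way access to poly-length witnesses with a log-space verifier captures all of $\NP$. The left-to-right condition is exactly what allows witness bits to be guessed on the fly and then forgotten.

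The other delicate step is bounding the length of computations of $N$ by a polynomial, which is what lets witnesses have polynomial length. That bound comes from the polynomial count of configurations together with the halting assumption, or equivalently from the log-space clock.
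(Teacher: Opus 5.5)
Your argument is correct and follows the standard simulation. Witness bits record nondeterministic choices in one direction, and witness bits are guessed on the fly in the other. The paper gives no proof of its own and cites this argument from the references, e.g.\ \cite[\S4.4.1]{AroraBarak}. One small slip: when $M$'s witness head moves past cell $p(|x|)$, $N$ should feed it the blank symbol rather than reject. This is because $M(x,w)$ runs with blanks after $w$ and may legitimately read past the end of $w$ before accepting; a verifier that scans its whole witness, for example, would otherwise be rejected on every input.
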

One can define $\coNL$ in a similar manner. A groundbreaking result of Immerman \cite{Immerman} and Szlepcs\'enyi \cite{Szelepcsenyi} is that $\NL = \coNL$.\medskip

The machines considered in Theorem~\ref{thm:NL machine} are called \emph{verifier $\NL$ machines}. The additional input tape is called the \emph{witness tape}. We illustrate a verifier $\NL$ machine in Figure~\ref{fig:sharpL machine}.
\begin{figure}[h]
    \centering
    \begin{tikzpicture}[x=1em, y=-1em]
        \draw (0,0) node[anchor=west, rectangle, white, fill=blue!70, rounded corners = .7em] {\texttt{read-only input tape}};
        \draw (11.5,0) node[anchor=west, rectangle, white, fill=green!70!black, rounded corners = .7em] {\texttt{read-once left-to-right witness tape}};
        \draw (0,2) node[anchor=west, rectangle, fill=gray!50, rounded corners = .7em] {\texttt{work tape}};
        \draw (0,4) node[anchor=west, rectangle, white, fill=blue!70, rounded corners = .7em] {\texttt{y/n}};
    \end{tikzpicture}
    \caption{A verifier $\NL$ machine: the input tape is of size $n$ and read-only; the witness tape is of $\mathrm{poly}(n)$ size and left-to-right read-once only; the work tape is of $\log(n)$ size; the output is $1$-bit.}
    \label{fig:sharpL machine}
\end{figure}

\subsection{On verifier \texorpdfstring{$\NL$}{NL} machines}\label{subsec:onvNLmachines}
The ``read-once, left-to-right'' restriction on the witness tape of a verifier $\NL$ machine plays an important role in Theorem~\ref{thm:NL machine}.
A verifier $\NP$ machine can simply copy the witness onto the work tape, since the witness is polynomial in size. Once in the work tape, the witness can be read multiple times. Hence a verifier $\NP$ machine with a read-once, left-to-right witness tape is simply a verifier $\NP$ machine. However, these reductions are outside of the capabilities of a verifier $\NL$ machine. If this ``read-once, left-to-right'' restriction is removed from the definition of verifier $\NL$ machines, then the computational model is much more powerful: we recover the notion of a verifier $\NP$ machine since one can record the successive states of the machine tape in the witness itself. 

The content of the work tape is a binary string, which we call a binary-encoded counter. With this notion, all verifier $\NL$-machines can be assumed without loss of generality to run an algorithm of the form of Algorithm~\ref{alg: simple NL machine}.
\begin{algorithm}
\caption{Simple verifier $\NL$ machine}\label{alg: simple NL machine}
\begin{algorithmic}
\Require $x, w$
\State {Initialize a $\log(|x|)$-space binary-encoded counter $\texttt{a}$}
\For{$w_i \in w$}
\State {Update $\texttt{a}$ via operations with $\log(|x|)$-space complexity}
\EndFor
\State {Check $\texttt{a}=0$}
\end{algorithmic}
\end{algorithm}

Nevertheless, there are three ways of relaxing the definition of a verifier $\NL$ machine which prove to be very useful in practice. Firstly, we can suppose the alphabet of symbols $\Sigma$ which can be written in the work and the witness tapes to be any alphabet of a fixed finite size $s$. Any letter of this new alphabet can be simulated with $\log_2(s)$ bits in the alphabet $\{0,1\}$. If the machine $M_\Sigma$ uses $O(\log(n))$ space, then the simulated machine $M'_{\{0,1\}}$ uses $\log_2(s)\cdot O(\log(n))=O(\log(n))$ space.

Secondly, we can suppose that a verifier $\NL$ machine has a constant number $c$ of work tapes (as opposed to just one). One can simulate a machine $M_c$ with a constant amount of log-space tapes with a machine $M'_1$ that has just one tape, by using an alphabet that has $|\Sigma|^c$ times as many letters, where $\Sigma$ is the alphabet of symbols that each of the $c$ work tapes uses. The entries of the work tape can thus be interpreted as $c$-tuples of letters of the original alphabet. That is, we represent the $c$ tapes of $M_c$ not next to each other, but ``on top of each other'' \cite[Problem 2.8.6]{Papadimitriou}. To simulate the positions of the $c$ pointers,
one increases the alphabet by another factor of $2^c$, so that each cell contains the information about which simulated machine heads are currently at this position.

Thirdly, we can suppose that the machine $M_C$ can read the witness tape a constant amount $C$ of times, left-to-right \cite{SE}. To simulate this with a verifier $\NL$ machine $M'_1$ with a read-once witness tape, start by guessing the state of the work tape after the 1st, 2nd, \ldots, $(C-1)$st pass and store the guesses. Then, run $C$ parallel computations in $C$ different work tapes, all the while reading the witness once. Finally, check that the state of the $i$th work tape after the computation matches the predicted state that launched the $(i+1)$st work tape. 

In Figure~\ref{fig:NL machine weak} we illustrate this more general version of verifier $\NL$ machines. See Algorithm~\ref{alg:cap} for the corresponding pseudo-code for this machine. All of the algorithms of this paper will be of the form of Algorithm~\ref{alg:cap} with only one exception in~\S\ref{sec:OHara}.

\begin{figure}[h]
    \centering
    \begin{tikzpicture}[x=1em, y=-1em]
        \draw (0,0) node[anchor=west, rectangle, white, fill=blue!70, rounded corners = .7em] {\texttt{read-only input tape}};
        \draw (11.5,0) node[anchor=west, rectangle, white, fill=green!70!black, rounded corners = .7em] {\texttt{read-$C$-times left-to-right witness tape}} ++(21.7,.5) node {\scriptsize$\Sigma$};
        \draw (0,2) node[anchor=west, rectangle, fill=gray!50, rounded corners = .7em] {\texttt{work tape 1}} ++(6.7,.5) node {\scriptsize$\Sigma$};
        \draw (0,4) node[anchor=west, rectangle, fill=gray!50, rounded corners = .7em] {\texttt{work tape 2}} ++(6.7,.5) node {\scriptsize$\Sigma$};
        \draw (2,5.2) node {\vdots};
        \draw (0,7) node[anchor=west, rectangle, fill=gray!50, rounded corners = .7em] {\texttt{work tape $c$}} ++(6.65,.5) node {\scriptsize$\Sigma$};
        \draw (0,9) node[anchor=west, rectangle, white, fill=blue!70, rounded corners = .7em] {\texttt{y/n}};
    \end{tikzpicture}
    \caption{A machine over an arbitrary alphabet $\Sigma$, with a constant number $c$ of work tapes, and in which the witness tape can be read a constant $C$ of times. This machine can be simulated by the verifier $\NL$ machine of Figure~\ref{fig:sharpL machine}.}
    \label{fig:NL machine weak}
\end{figure}

    \begin{algorithm}
    \caption{Verifier $\NL$ machine of Figure~\ref{fig:NL machine weak}}\label{alg:cap}
    \begin{algorithmic}
    \Require $x, w$
    \State {Initialize $\log(|x|)$-space $\Sigma$-counters $\mathtt{a_1}, \ldots, \mathtt{a}_{c}$}
    \For{$j=1,\ldots, C$}
    \For{$w_i \in w$}
    \State {Update $\mathtt{a_1}, \ldots, \mathtt{a}_{c}$ via operations with $\log(|x|)$-space complexity}
    \EndFor
    \EndFor
    \State {Check $\mathtt{a_1} = 0 = \mathtt{a_2} =  \cdots = \mathtt{a}_{c}$}
    \end{algorithmic}
    \end{algorithm}

\subsection{Counting complexity: time and space}

Most problems in algebraic combinatorics are \emph{counting problems}. The study of the complexity of counting was initiated by Valiant in~\cite{Valiant} and pioneered in algebraic combinatorics by Mulmuley~\cite{Mul07}.

\begin{de}
    The class $\FP$ is the class of functions $f:\Sigma^*\to\Sigma^*$ that are computable by a poly-time deterministic Turing machine with an output tape. The class $\sharpP$ is the class of functions
    counting the number of accepting paths of a non-deterministic $\NP$ machine.
    The class $\FP_{\ge0}$ is the class of poly-time computable functions with non-negative output, namely $\FP\cap\{f:\Sigma^*\to\N_0\}$.
    The class $\GapP=\sharpP - \sharpP$ is the class of functions which can be expressed as the difference between two $\sharpP$ functions.
\end{de}

    Equivalently, $\sharpP$ is the class of functions such that there exists a polynomial $p$ and a verifier $\NP$ machine $M$ such that
    \[
    f(x) = \#\{
    w\in\Sigma^{p(|x|)} \mid M(x,w) = \yes
    \}.
    \]
\begin{de}
    The class $\FL$ is the class of functions $f:\Sigma^*\to\Sigma^*$ that are computable by a log-space deterministic Turing machine with a write-only output tape. The class $\sharpL$ is the class of functions counting the number of accepting paths of a non-deterministic $\NL$ machine.
    The class $\FL_{\ge0}$ is defined as $\FL\cap\{f:\Sigma^*\to\N_0\}$. The class $\GapL$ is $\sharpL - \sharpL$.
\end{de}

The problem \textsc{stCon} for source-sink connectivity in directed graphs is $\sharpL$-complete.
The problem of computing the determinant of an integer matrix is complete for $\GapL$, see \cite{AO94}, where containment in $\GapL$ can be proved for example via a generalization of the Cayley--Hamilton theorem \cite{Ike25}, and hardness follows from Valiant's simulation of algebraic branching programs via determinants \cite{Val79completeness}.

    As an equivalent definition, $\sharpL$ is the class of functions such that there exists a polynomial $p$ and a verifier $\NL$ machine $M$ such that
    \[
    f(x) = \#\{
    w\in\Sigma^{p(|x|)} \mid M(x,w) = \yes
    \}.
    \]
    Recall that there exists a read-once constraint on the witness tape of verifier $\NL$ machines. This notion of $\sharpL$ coincides with the one studied in~\cite{AJ93, Burtschick} and included in \cite{zoo}, but beware: it does not come from an application of the $\#$ operators given in~\cite[Definitions 1.1 and 1.2]{Satanic} (which are not well suited for subpolynomial classes, see also~\cite[\S2]{Satanic}).

\subsection{Log-space reduction}\label{sec:log-space reduction}
In this section we illustrate some well-known facts about log-space reductions.
Following \cite[Def.~8.1]{Papadimitriou}, a language $L$ is \emph{reducible} to $L'$ if there is a function $R:\Sigma^*\to\Sigma^*$ computable by a deterministic log-space Turing machine with a write-only output tape and such that
\[
x\in L \iff R(x)\in L'.
\]
We call $R$ a \emph{log-space reduction} and the machine that computes $R$ a \emph{log-space transducer}.

The classes $\L$ and $\NL$ are closed under log-space reduction \cite[Prop.~8.3]{Papadimitriou}, meaning that if $L$ is reducible to a language $L'$ in $\L$ (resp.~in $\NL$) then also $L$ is in $\L$ (resp.~in $\NL$). We can illustrate this reduction as follows:
\begin{center}
    \begin{tikzpicture}[x=1em, y=-1em]
        \draw [decorate, decoration = {calligraphic brace}] (-2,5) -- (-2,-1);
        \node[anchor=east] (*) at (-2.5,1.5) {non-deterministic};
        \node[anchor=east] (*) at (-2.5,2.5) {$\NL$ machine};
        \draw[rounded corners = 1em] (-1,-1) rectangle (6,2.85);
        \draw (6.5,.2) node[anchor=west] {log-space};
        \draw (6.5,1.2) node[anchor=west] {transducer};
        \draw (7,3) node[anchor=west] {non-deterministic};
        \draw (7,4) node[anchor=west] {$\NL$ machine};
        \draw[rounded corners = 1em, line cap = round, line width = .8em, blue!70] (0,0) -- (5,0);
        \draw[rounded corners = 1em, line cap = round, line width = .8em, gray!70] (0,1) -- (2,1);
        \filldraw[rounded corners = 1em, line cap = round, line width = .8em, gray!70] (0,2.25) -- (5,2.25);
        \draw[rounded corners = 1em] (-1.5,1.625) rectangle (6.5,5.25);
        \draw[rounded corners = 1em, line cap = round, line width = .8em, gray!70] (0,3.5) -- (2,3.5);
        \draw[rounded corners = 1em, line cap = round, line width = .8em, blue!70] (0,4.5) -- (.25,4.5);
    \end{tikzpicture}
\end{center}
There is one subtlety to point out: the input of tape non-deterministic machine might not fit in log-space, in which case the whole deterministic machine is run once for each bit of the input tape of the non-deterministic machine.
Using the same idea, one can compose several log-space machines into a log-space machine. In particular, $\FL$ is closed under composition:
\begin{center}
    \begin{tikzpicture}[x=1em, y=-1em]
        \draw [decorate, decoration = {calligraphic brace}] (-2,5) -- (-2,-1);
        \node[anchor=east] (*) at (-2.5,2) {$\FL$ machine};
        \draw (9.5,.7) node {$\FL$ machine};
        \draw (10,3.7) node {$\FL$ machine};
        \draw[rounded corners = 1em] (-1,-1) rectangle (6,2.85);
        \draw[rounded corners = 1em, line cap = round, line width = .8em, blue!70] (0,0) -- (5,0);
        \draw[rounded corners = 1em, line cap = round, line width = .8em, gray!70] (0,1) -- (2,1);
        \draw[rounded corners = 1em, line cap = round, line width = .8em, gray!70] (0,2.25) -- (5,2.25);
        \draw[rounded corners = 1em] (-1.5,1.625) rectangle (6.5,5.25);
        \draw[rounded corners = 1em, line cap = round, line width = .8em, gray!70] (0,3.5) -- (2,3.5);
        \draw[rounded corners = 1em, line cap = round, line width = .8em, blue!70] (0,4.5) -- (5,4.5);
    \end{tikzpicture}
\end{center}
In this way we can show that 
$\FL_{\ge0}\subseteq\sharpL$. Indeed, the following picture shows how to create a verifier $\NL$ machine starting from a $\FL$ machine $M_1$ (in yellow). We do this by composing $M_1$ with the verifier $\NL$ machine $M_2$ that takes an input $x$ and a witness is a number $k$, and outputs whether $0\le k < M_2(x)$ or not.
\begin{equation}\label{eq:FL composition}
    \begin{tikzpicture}[x=1em, y=-1em, baseline=-2em]
        \draw (2,-2) node {$\FL$ machine $M_1$};
        \draw (11.5,3.5) node[anchor=east] {verifier $\NL$};
        \draw (11.5,4.5) node[anchor=east] {machine $M_2$};
        \draw[rounded corners = 1em] (-1,-1) rectangle (6,2.85);
        \draw[rounded corners = 1em, line cap = round, line width = .8em, blue!70] (0,0) -- (5,0);
        \draw[rounded corners = 1em, line cap = round, line width = .8em, green!70!black] (7,2.25) -- (11,2.25);
        \draw[rounded corners = 1em, line cap = round, line width = .8em, opacity=.3] (0,1) -- (2,1);
        \draw[rounded corners = 1em, line cap = round, line width = .8em, opacity=.3] (0,2.25) -- (5,2.25);
        \draw[rounded corners = 1em] (-1.5,1.625) rectangle (12.5,5.2);
        \draw[rounded corners = 1em, line cap = round, line width = .8em, opacity=.3] (0,3.5) -- (2,3.5);
        \draw[rounded corners = 1em, line cap = round, line width = .8em, blue!70] (0,4.5) -- (.25,4.5);
    \end{tikzpicture}
    \hspace{1em}
    \scalebox{1.3}{$\leftrightsquigarrow$}
    \hspace{1em}
    \begin{tikzpicture}[x=1em, y=-1em, baseline=-2.5em]
        \draw (10.5,2.5) node[anchor=east] {verifier $\NL$};
        \draw (10.5,3.5) node[anchor=east] {machine};
        \draw[rounded corners = 1em] (-1,0) rectangle (11,4.7);
        \draw[rounded corners = 1em, line cap = round, line width = .8em, blue!70] (0,1) -- (5,1);
        \draw[rounded corners = 1em, line cap = round, line width = .8em, green!70!black] (6,1) -- (10,1);
        \draw[rounded corners = 1em, line cap = round, line width = .8em, opacity=.3] (0,2) -- (2,2);
        \draw[rounded corners = 1em, line cap = round, line width = .8em, opacity=.3] (0,3) -- (2,3);
        \draw[rounded corners = 1em, line cap = round, line width = .8em, blue!70] (0,4) -- (.25,4);
    \end{tikzpicture}
\end{equation}
\medskip

In practice, we often want to compose more than one machine, so we would like a framework that allows for such compositions. 
The following very general result applies to a wide range of problems, specially useful for the analysis of combinatorial formulas.
\begin{thm}\label{thm:product}
    Let $f : \Sigma^*\to\N_0$ be a $\sharpL$ function. Let $g : \Sigma^*\times\N_0 \to \Sigma^*$ be an $\FL$ function. Let $p : \Sigma^*\to\N_0$ be an $\FL_{\ge0}$ function whose output is at most polynomial in the size of the input. Then the function
    \[
    F : x \mapsto \prod_{i=1}^{p(x)} f(g(x,i))
    \]
    is in $\sharpL$.
\end{thm}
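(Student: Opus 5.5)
The construction concatenates witnesses. Let $M_f$ be a verifier $\NL$ machine with polynomial $q$ such that $f(y)=\#\{w\in\Sigma^{q(|y|)} \mid M_f(y,w)=\yes\}$. On input $x$, the new machine $M$ will read a witness of the form $w=w^{(1)}w^{(2)}\cdots w^{(p(x))}$, where block $w^{(i)}$ has length $q(|g(x,i)|)$, and accept iff $M_f(g(x,i),w^{(i)})=\yes$ for every $i$. The accepting witnesses are then in bijection with tuples $(w^{(1)},\dots,w^{(p(x))})$ of accepting witnesses for the individual factors, so their number is $\prod_i f(g(x,i))$, as required. Two conventions need care. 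If $p(x)=0$, the empty witness should be accepted, giving the empty product $1$. And the total witness length should be a fixed polynomial in $|x|$, which we handle by padding below.

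Next we check that $M$ runs in logarithmic space with read-once access to $w$. First $M$ computes $p(x)$ with the $\FL$ machine; the value is polynomially bounded, so it fits in $O(\log|x|)$ bits. Then $M$ loops $i=1,\dots,p(x)$, keeping $i$ on a work tape. Inside the loop it simulates $M_f$ on input $g(x,i)$ and the next segment of the witness tape. The string $g(x,i)$ may not fit in log-space, so we use the standard composition from \S\ref{sec:log-space reduction}. Whenever $M_f$ needs bit $j$ of its input, $M$ reruns the $\FL$ transducer for $g$ on $(x,i)$, counting output symbols and discarding all but the $j$th. This needs only a counter $j$ and the transducer's log-space workspace. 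Since $g$ runs in polynomial time, $|g(x,i)|\le r(|x|)$ for a polynomial $r$, so $M_f$ uses $O(\log r(|x|))=O(\log|x|)$ space. The witness is only ever read forward, because each simulation of $M_f$ consumes its block left to right and the blocks are handled in order. Between iterations $M_f$'s workspace is erased, so the space used is the same as for a single simulation plus the counters for $i$ and $j$.

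The main obstacle is the block boundaries and the fixed witness length. Under the definition, a $\sharpL$ witness has length exactly $p'(|x|)$ for a single polynomial $p'$, and the machine must know where each block ends. I would proceed as follows. Compute $|g(x,i)|$ in log-space by running the transducer and counting, then compute $q(|g(x,i)|)$. Maintain a counter of witness bits consumed in the current block; the simulation of $M_f$ reads exactly that many bits, and $M$ reads any bits $M_f$ leaves unread and ignores them. Then $w^{(i)}$ is an arbitrary string of the right length checked by $M_f$, and the bijection holds.

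To fix the total length, let $Q(|x|)\ge \sum_i q(|g(x,i)|)$ be a polynomial bound; one exists since $p$, $r$ and $q$ are polynomially bounded. After all blocks, $M$ requires the remaining $Q(|x|)-\sum_i q(|g(x,i)|)$ witness bits to be all $0$. The padding is then unique and does not change the count. If $M_f$ is not already in normal form, we first put it in the form of Algorithm~\ref{alg: simple NL machine}, as allowed by the relaxations of \S\ref{subsec:onvNLmachines}.
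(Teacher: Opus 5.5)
Your proposal is correct and takes the same route as the paper: concatenate the witnesses $w^1,\ldots,w^{p(x)}$, compute $p(x)$ into a counter, and loop over $i$, verifying each block against $g(x,i)$ by log-space composition with the $\FL$ machine for $g$. You also handle block boundaries (computing $q(|g(x,i)|)$ and skipping unread bits) and pad to a fixed total witness length; these are details the paper's short proof leaves implicit.
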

\begin{proof}
    We construct a verifier $\NL$ machine for $F$ based on the verifier $\NL$ machine for $f$. 
    The witness is a concatenation of the witnesses for $f(g(x,1)), f(g(x,2)), \ldots, f(g(x,p(x)))$, which we call $w^1, w^2, \ldots, w^{p(x)}$.
    
    Begin by composing with an $\FL$ machine as in~\eqref{eq:FL composition} to compute $p(x)$ and store its output in a counter~$\mathtt{I}$. We can store this in binary, since $\log(\mathrm{poly}(|x|)) = O(\log(|x|))$.

    Initialise a counter $\mathtt{i}=1$. While $\mathtt{i} \le \mathtt{I}$, compose with an $\FL$ machine to compute $g(x,\mathtt{i})$, then check the validity of $w^{\mathtt{i}}$, and finally increase $\mathtt{i}$ by~$1$.
\end{proof}

A very similar idea as in the proof of Theorem~\ref{thm:product} can be applied to more complicated functions than the product.

\begin{thm}\label{thm:detofsharpL}
    Let $f : \Sigma^*\to\N_0$ be a $\sharpL$ function. Let $g : \Sigma^*\times\N\times\N \to \Sigma^*$ be an $\FL$ function. Let $p : \Sigma^*\to\N_0$ be an $\FL_{\ge0}$ function whose output is at most polynomial in the size of the input. Then the function
    \[
F : x \mapsto \det\begin{pmatrix}
f(g(x,1,1)) & \cdots & f(g(x,1,p(x)))
\\
\vdots&\ddots&\vdots
\\
f(g(x,p(x),1)) & \cdots & f(g(x,p(x),p(x)))
\end{pmatrix}
\]
is in $\GapL$.
\end{thm}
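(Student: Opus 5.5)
The plan is to mimic the proof of Theorem~\ref{thm:product}, replacing the single product by a sum over permutations. The permutations will be organised so that a log-space machine can handle them with a read-once witness. The most direct route would expand
\[
F(x)=\sum_{\sigma\in S_{p(x)}}\operatorname{sgn}(\sigma)\prod_{i=1}^{p(x)} f(g(x,i,\sigma(i))).
\]
A verifier $\NL$ machine cannot guess $\sigma$ and check that it is a permutation, because that would mean remembering which columns have been used. So instead I will use the clow (closed ordered walk) expansion of the determinant due to Mahajan--Vinay, the combinatorial form of the Cayley--Hamilton argument cited in \cite{Ike25}. It says that $\det(A)$ equals the signed sum, over all clow sequences of total length $p(x)$, of the product of the corresponding entries. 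Here a clow is a closed walk whose head (its starting vertex) is its smallest vertex. In a clow sequence the heads are strictly increasing, and the sign is $(-1)^{p(x)+\#\text{clows}}$. The non-permutation terms cancel in pairs.

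The machine works as follows. It first computes $\mathtt{I}=p(x)$ in binary, as in~\eqref{eq:FL composition}. The witness then describes a clow sequence edge by edge, and each edge $(i,j)$ is immediately followed by a witness $w$ for $f(g(x,i,j))$. While reading left to right, the work tape holds only $O(\log|x|)$ bits: the current head, the current vertex, the number of edges used so far, and the parity of the number of clows. For each edge the machine checks that $j$ is greater than the current head (or equal to it, which closes the clow). When a clow closes, it checks that the next head is larger than the previous one. It also checks that the total length is exactly $\mathtt{I}$. For each edge it recomputes $g(x,i,j)$ bit by bit via an $\FL$ composition and runs the $f$-verifier on the following segment of the witness. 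The product structure of the count of accepting witnesses is exactly as in Theorem~\ref{thm:product}.

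This yields two $\sharpL$ machines, $M_+$ and $M_-$. $M_+$ accepts exactly when all the checks pass and the sign $(-1)^{\mathtt{I}+\#\text{clows}}$ is $+1$; $M_-$ does the same with sign $-1$. The number of accepting witnesses of $M_\pm$ is the sum over clow sequences of that sign of $\prod f(g(x,i,j))$. Hence $F=\#M_+-\#M_-\in\GapL$.

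The main obstacle is encoding. Witnesses for $f$ may have variable length, and the numbers of witnesses must multiply correctly without being over-counted. I would pad every witness to a fixed polynomial length, using the uniform bound $q(|g(x,i,j)|)$, which is polynomial since $g\in\FL$ has polynomial-size output. I would also make the clow description unambiguous: each edge is written as a fixed-width binary block, and a marker bit indicates whether it closes the clow. With these conventions, each clow sequence together with a choice of accepting sub-witnesses corresponds bijectively to one accepting witness. Correctness of the clow expansion itself I would cite rather than reprove.
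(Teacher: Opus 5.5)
Your proposal is correct and follows essentially the same route as the paper. The paper also rests on the Mahajan--Vinay clow-sequence expansion, packaged as $\FL$-computable transition functions $t_\pm$ of a layered branching program whose states (parity, head, current vertex) are exactly what you track explicitly, with each of the $p(x)$ steps followed by a witness for the corresponding entry $f(g(x,\cdot,\cdot))$ checked by log-space composition; your fixed-length padding and unambiguous edge encoding (including a consistency check on the marker bit) supply details the paper leaves implicit.
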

\begin{proof}
Let ${\det}_d$ be the polynomial
\[
{\det}_d
\begin{pmatrix}
y_{1,1} & \cdots & y_{1,d}
\\
\vdots & \ddots & \vdots
\\
y_{d,1} & \cdots & y_{d,d}
\end{pmatrix}
= \sum_{\pi\in S_d} \sgn(\pi) \prod_{i=1}^d y_{i,\pi(i)}.
\]
Define $y_\bot := 0$.
We crucially use the non-trivial fact (one can deduce this from the references in \cite{AO94}, see also the recent \cite{Ike25})
which is explicitly elaborated in \cite[\S3]{MV97}
that there exist
polynomials ${\det}_{d,+}(\mathbf{y})$ and ${\det}_{d,-}(\mathbf{y})$
with ${\det}_{d}={\det}_{d,+}-{\det}_{d,-}$
and
functions
\[
t_+:\N\times\N\times\N\times\N \to (\N\times\N)\cup\{\bot\},
\qquad
t_-:\N\times\N\times\N\times\N \to (\N\times\N)\cup\{\bot\}
\]
in $\FL$ such that
\[
{\det}_{n,+}\begin{pmatrix}
y_{1,1} & \cdots & y_{1,d}
\\
\vdots & \ddots & \vdots
\\
y_{d,1} & \cdots & y_{d,d}
\end{pmatrix}
= \sum_{\substack{1\leq q_0 \leq 2d\\1\leq q_1,\ldots,q_{d-1}\leq 2d^2}} \,
\prod_{i=1}^d
y_{t_+(d,i,q_{i-1},q_{i})}
\]
and
\[
{\det}_{d,-}\begin{pmatrix}
y_{1,1} & \cdots & y_{1,d}
\\
\vdots & \ddots & \vdots
\\
y_{d,1} & \cdots & y_{d,d}
\end{pmatrix}
= \sum_{\substack{1\leq q_0 \leq 2d\\1\leq q_1,\ldots,q_{d-1}\leq 2d^2}} \,
\prod_{i=1}^d
y_{t_-(d,i,q_{i-1},q_{i})}.
\]
We discuss the witnesses for ${\det}_{d,+}$, where the witnesses for ${\det}_{d,-}$ are analogous.
Let $n:=|x|$.
Let $d:=p(x) \in \poly(n)$ and $q_d:=1$.
A witness is given by a list $(q_0,q_1,w^1,q_2,w^2,\ldots,q_d,w^d)$, where $w^i$ is a witness for $f(g(x,t_+(d,i,q_{i-1},q_i)))$.
When traversing the witness, we have two counters in which we always store $q_{i-1}$ and $q_i$,
which can be done in space $O(\log(n))$, because $q_i\leq 2d^2\in\poly(n)$.
The witness~$w^i$ is then verified
via composition as in~\eqref{eq:FL composition}.
\end{proof}
One can take care of the signs and prove Theorem~\ref{thm:detofsharpL} for $f \in \GapL$ instead of $\sharpL$ with an analogous proof, but we do not need it in this paper.

\subsection{The logarithmic zoo}\label{sec:zoo}
Let $\sharpTISP(f(n),g(n))$ denote the of functions counting the number of accepting paths of an $\mathsf{NSPACE}(g(n))$ machine for which there exists a constant $c$ such that the machine takes at most $c\cdot f(n)$ steps on each computation path (TISP stands for TIme and SPace).

Let $\poly(n)=n^{O(1)}$ and $\qpoly(n)=n^{\poly(\log(n))}$,
and let $\FQP$ denote the set of functions computable in $\qpoly(n)$ time.
We will discuss the following counting complexity classes:
\[
\begin{tikzpicture}[y=2.5em]
    \node[draw,rounded corners] (FL) at (0,0) {$\FL_{\ge0}$};
    \node[draw,rounded corners] (sL) at (0,1) {$\sharpL = \sharpTISP(\infty,\log(n)) = \sharpTISP(\poly(n),\log(n))$};
    \node[draw,rounded corners] (sL2p) at (0,2) {$\sharpTISP(\poly(n),\log^2(n))$};
    \node[draw,rounded corners] (sL3p) at (0,3.75) {$\sharpTISP(\poly(n),\log^3(n))$};
    \node[outer sep=-.4em] (sLkp) at (0,4.75) {\raisebox{.5em}{$\vdots$}};
    \node[draw,rounded corners] (ULkp) at (0,5.75) {$\sharpTISP(\poly(n),\mathrm{polylog}(n))$};
    \node[draw,rounded corners] (sL2) at (5,3.25) {\begin{minipage}{4cm}$\sharpTISP(\infty,\log^2(n))$ $=$\\ $\sharpTISP(\qpoly(n),\log^2(n))$\end{minipage}};
    \node[draw,rounded corners] (sL3) at (5,5) {\begin{minipage}{4cm}$\sharpTISP(\infty,\log^3(n))$ $=$\\ $\sharpTISP(\qpoly(n),\log^3(n))$\end{minipage}};
    \node[outer sep=-.4em] (sLk) at (5,6.25) {\raisebox{.5em}{$\vdots$}};
    \node[draw,rounded corners] (GapL) at (-3.5,4) {$\GapL_{\ge0}$};
    \node[draw,rounded corners] (FP) at (-3.5,6) {$\FP_{\ge0}$};
    \node[draw,rounded corners] (sP) at (0,7.2) {$\sharpP$};
    \node[draw,rounded corners] (GapP) at (0,8.4) {$\GapP_{\ge0}$};
    \node[draw,rounded corners] (FQP) at (5,7.2) {$\FQP_{\ge0}$};
    \draw[ultra thick, lightgray] 
    (FL) -- (sL)
    ($(sL.north west)!0.8!(sL.north east)$) -- (sL2) -- (sL3) -- (sLk) -- (FQP)
    ($(sL.north west)!0.2!(sL.north east)$) -- (GapL) -- (FP) -- (sP) -- (GapP)
    (sL3p) -- (sLkp) -- (ULkp) -- (sP)
    (sL) -- (sL2p) -- (sL3p)
    (ULkp) -- (FQP)
    (FP) -- (FQP)
    (sL2) -- (sL2p)
    (sL3) -- (sL3p);
\end{tikzpicture}
\]

A major open question is whether the containment $\sharpL \subseteq \sharpP$ is strict.
\begin{thm}
    If $\sharpL = \sharpP$ then $\P=\NP$.
\end{thm}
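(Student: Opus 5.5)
Assume $\sharpL = \sharpP$. The plan is to push this collapse down to the decision level, using $\sharpL \subseteq \FP$ (recorded in the zoo diagram), and then to read off $\NP\subseteq\P$. Concretely, I will take an arbitrary $L\in\NP$ and build a $\sharpP$ function whose positivity decides $L$. Then I will transfer that function to $\FL_{\ge 0}\subseteq\FP$, or more directly to $\sharpL\subseteq\FP$, and decide membership by computing the value and comparing it with $0$.

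\textbf{Step 1.} Let $L\in\NP$. By the verifier description of $\NP$ there is a polynomial $p$ and a poly-time deterministic verifier $M$ with $x\in L \iff \exists w\in\Sigma^{p(|x|)}: M(x,w)=\yes$. Define $f(x)=\#\{w\in\Sigma^{p(|x|)} \mid M(x,w)=\yes\}$. By the equivalent characterization of $\sharpP$ given above, $f\in\sharpP$, and $f(x)>0 \iff x\in L$.

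\textbf{Step 2.} By hypothesis $f\in\sharpL$. I then need the inclusion $\sharpL\subseteq\FP$. This is the step carrying the real content, and the paper presents it in the diagram without proof. I would justify it as follows. A nondeterministic log-space machine on input $x$ has only polynomially many configurations, namely $2^{O(\log n)}\cdot\poly(n)$ once the input-head position and state are included. The number of accepting paths is the number of paths from the start configuration to accepting configurations in the configuration graph. Because every machine halts on all inputs, this graph is acyclic, so the path count is finite. It can be computed by dynamic programming over a topological order, or by summing powers of the adjacency matrix up to the number of configurations. Every intermediate number has at most $\poly(n)$ bits, since there are at most $2^{\poly(n)}$ paths of polynomial length in a DAG with polynomially many vertices, so the whole computation runs in polynomial time. (If one prefers the verifier formulation with the read-once witness, the same configuration-graph argument applies: each witness bit advances the configuration, and the read-once restriction is exactly what makes the count a path count in this graph.)

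\textbf{Step 3.} Compute $f(x)$ in polynomial time and accept if and only if $f(x)\neq 0$. This places $L\in\P$, so $\NP\subseteq\P$ and hence $\P=\NP$.

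The only nontrivial point is Step 2. Its main obstacle is ensuring the configuration graph is acyclic so that path counts are finite and polynomially bounded in bit length. I would handle this by invoking the standing assumption that all nondeterministic machines halt on every input, together with the time bound $\sharpL=\sharpTISP(\poly(n),\log(n))$ shown in the zoo. That bound lets me add a polynomial-size step counter to the configuration, making the graph layered and manifestly acyclic.
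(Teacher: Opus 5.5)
Your proposal is correct and follows essentially the same route as the paper: an $\NP$ language is decided by the positivity of a $\sharpP$ function, and under the hypothesis that function lies in $\sharpL\subseteq\FP$ (the paper phrases this as $\FP_{\ge0}=\sharpP$ and identifies $\P$ with the $1$-bit-output functions in $\FP_{\ge0}$). The only addition on your side is the configuration-graph sketch of $\sharpL\subseteq\FP$, which the paper takes as known; your sketch of that inclusion is sound.
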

\begin{proof}
    If $\sharpL \subseteq \FP_{\ge0} \subseteq \sharpP = \sharpL$ then $\FP_{\ge0}=\sharpP$.
    The class $\P$ can be identified with the subclass of $\FP_{\ge0}$ of functions with a 1-bit output. Given a function $f$ in $\sharpP$, the decision problem $f>^?0$ is at most as hard as computing $f$ itself, hence we may write $\NP\subseteq\sharpP=\FP_{\ge0}$. So $\NP$ is a subset of $\FP_{\ge0}$ composed of functions with a $1$-bit output, that is $\NP \subseteq \P$.
\end{proof}

We have $\sharpL\subseteq \FP$, but such a containment might not hold for $\sharpTISP(-,\log^2(n))$, as the following proposition shows.
\begin{prop}
If $\sharpTISP(\poly(n),\log^2(n))\subseteq \FP$, then \textup{\textsc{\#3Sat}} can be solved in time $2^{O(\sqrt{n})}$. This violates the exponential time hypothesis (ETH).
\end{prop}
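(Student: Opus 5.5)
The idea is padding. Start from \textsc{\#3Sat} on a formula $\varphi$ with $n$ variables and $m$ clauses; after sparsification we may assume $m = O(n)$ in the ETH setting. Pad the input to a string $x$ of length $N := 2^{\lceil\sqrt{n}\rceil} + |\varphi|$, so that $\log^2 N \ge n$. We then give a nondeterministic machine, running in time $\poly(N)$ and space $O(\log^2 N)$, whose number of accepting paths equals the number of satisfying assignments of $\varphi$. This places the padded counting problem in $\sharpTISP(\poly(N),\log^2(N))$.

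If that class were contained in $\FP$, we could count satisfying assignments by building $x$ in time $2^{O(\sqrt n)}$ and running the assumed polynomial-time algorithm on it, for a total time of $\poly(N) = 2^{O(\sqrt n)}$. This would solve \textsc{\#3Sat}, and hence decide \textsc{3Sat}, in subexponential time $2^{O(\sqrt n)}=2^{o(n)}$, contradicting ETH.

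The machine works as follows. It first checks, deterministically in logarithmic space, that the input has the form $\varphi$ followed by the correct amount of padding. It then nondeterministically guesses an assignment $a\in\{0,1\}^n$, one bit per step, and writes it onto the work tape. This takes $n \le \log^2 N$ cells, so it fits in space $O(\log^2 N)$.

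Because the assignment is stored, the machine can re-read it as often as needed. It scans the clauses of $\varphi$ on the read-only input tape and evaluates each clause against the stored bits, using an $O(\log N)$ index to locate variables. It accepts exactly when every clause is satisfied.

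Each assignment corresponds to a unique nondeterministic path: the branching happens only during the guessing phase, and the rest is deterministic. Hence the number of accepting paths is exactly the number of satisfying assignments. The running time is $O(n + |\varphi|\cdot n)$ or so, plus the padding scan, which is $\poly(N)$.

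The main subtlety is keeping the padding bookkeeping honest. First, the padding length must itself be computable in log-space, or simply be verified while scanning the input. Second, the witness must be \emph{stored} rather than read once; this is legitimate here because $\sharpTISP$ is defined by nondeterministic paths, so a guessed bit can be written to the work tape, and the $\log^2$ space bound is exactly what permits storing $n$ bits. Finally, one has to handle the translation back to the original problem: the reduction from $\varphi$ to $x$ takes $2^{O(\sqrt n)}$ time, which is acceptable. The phrase ``violates ETH'' should be read as: decision \textsc{3Sat} in time $2^{o(n)}$, obtained by testing whether the count is positive.
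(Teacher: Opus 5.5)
Your proposal is correct and follows the paper's argument: pad $\varphi$ to length about $2^{\sqrt n}$ so that an $n$-bit assignment fits in $O(\log^2 N)$ work space, count satisfying assignments by guessing one and storing it on the work tape before checking the clauses, and then run the assumed polynomial-time algorithm in time $N^{O(1)}=2^{O(\sqrt n)}$. Your additive padding $2^{\lceil\sqrt n\rceil}+|\varphi|$ is slightly cleaner bookkeeping than the paper's total length $2^{\sqrt n}$. The sparsification step is unnecessary: the paper simply takes $n$ clauses, and in general the running time is $2^{O(\sqrt n)}\cdot\poly(|\varphi|)$, which already contradicts ETH.
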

\begin{proof}
Let $\ast^a$ denote a sequence of $a$ many arbitrary symbols.
Consider the following counting function \textsc{\#padded3Sat}.
\[
\textsc{\#padded3Sat}(\varphi,\ast^{2^{\sqrt{n}}-|\varphi|-1})
 = \textsc{\#3Sat}(\varphi),
\]
where $\varphi$ denotes the binary encoding of a Boolean formula on $n$ variables and $n$ clauses.
The input length is $N := 2^{\sqrt{n}}$,
and a witness is a string of $n=(\log(N))^2$ many bits that represents a truth assignment.
Checking a witness can be done by copying it to the work tape and then going over all clauses in $\varphi$. This can be done in polynomial time.
Hence, $\textsc{\#padded3Sat}\in\sharpTISP(\poly(n),\log^2(n))$.
By assumption we have $\sharpTISP(\poly(n),\log^2(n))\subseteq\FP$, so there exists $c$ such that $\textsc{\#padded3Sat}$ can be solved in time $N^c = 2^{c\sqrt{n}}$.
Hence, also $\textsc{\#3Sat}$ can be solved in time $2^{c\sqrt{n}}$, using the same algorithm.
\end{proof}

\begin{note}
For Kronecker and plethysm coefficients the problem whether they are non-zero is already $\NP$-hard (even if the input is encoded in unary) \cite{IMW,FI}, hence if their computation is in $\GapL$ or $\sharpL$, which are subsets of $\FP$, it follows that $\P=\NP$. In our Theorem~\ref{thm:GL2 in L2}, we study  important subfamilies of inputs where the coefficients are in $\GapL$.
\end{note}

\subsection{Worst-case running time for log-space machines}
To separate computational classes, we can check what is the largest growing function computable in a given class.
\begin{thm}
\label{thm:sharpLgrowth}
    If $f\in\sharpL$ then $\exists c : f \in O(2^{n^c})$.
\end{thm}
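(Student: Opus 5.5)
We use the verifier characterisation of $\sharpL$ given after the definition of $\FL$ and $\sharpL$. If $f\in\sharpL$, there is a polynomial $p$ and a verifier $\NL$ machine $M$ with
\[
f(x)=\#\{w\in\Sigma^{p(|x|)} \mid M(x,w)=\yes\}.
\]
Counting strings gives $f(x)\le |\Sigma^{p(|x|)}| = 2^{p(|x|)}$. Since $p$ is a polynomial, there is a constant $c$ with $p(n)\le n^c$ for all $n\ge 2$, which yields $f\in O(2^{n^c})$.

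The only point needing care is that the witness length really is polynomially bounded. In the alternative definition, $f$ counts accepting paths of a non-deterministic log-space machine that halts on every input. For that case I would argue directly, without passing through the verifier form. A configuration of the machine on an input of length $n$ consists of the state, the input head position, the work-tape contents and the work-head position. The number of configurations is therefore at most $|Q|\cdot n\cdot |\Sigma|^{O(\log n)}\cdot O(\log n)=n^{O(1)}$.

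A computation path that repeats a configuration lies on a cycle of the configuration graph. That cycle could be traversed indefinitely, which contradicts the requirement that the machine halts on every path. Hence every path has length at most the number of configurations, $N=n^{O(1)}$. Each step offers at most a constant number $b$ of non-deterministic choices, so the number of accepting paths is at most $b^{N}=2^{O(n^{O(1)})}$. After enlarging the exponent, this is at most $2^{n^c}$ for some constant $c$.

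The main obstacle is this halting and acyclicity argument, which bounds the path length and hence the witness length by a polynomial. Once that is in place, the bound is a direct count of witnesses. Both routes give the statement, and the verifier route is immediate from the equivalence already recorded in the paper.
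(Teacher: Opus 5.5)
Your proposal is correct. Your direct argument is essentially the paper's. The paper also bounds the number of configurations polynomially and uses the halting requirement to conclude that the configuration graph is acyclic. It then finishes with an extremal induction: a DAG on $N$ vertices with out-degree at most $2$ has at most $2^{N}$ source--sink paths, with the doubled path being extremal. You instead bound the path length by $N$ and count choice sequences, obtaining $b^N$.

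Your final count is somewhat cleaner. It needs no extremal-graph lemma, and because it counts computation paths as choice sequences it automatically handles two choices leading to the same configuration, which the paper handles only implicitly through a multigraph. Your configuration count also includes the state and the two head positions. The paper's $2^{c\log(n)}$ omits these, harmlessly, since they contribute only a polynomial factor.

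Your verifier route is a legitimate one-line shortcut given the equivalence the paper records, but it is not independent. Converting a non-deterministic log-space machine into a verifier with witnesses of length $p(|x|)$ requires exactly the polynomial bound on path length that your second argument and the paper's proof supply. So the configuration-graph argument is where the content lies, as you correctly noted.
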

\begin{proof}
An $\NL$ machine with $c\log(n)$ space in the work tape can have at most $2^{c\log(n)} = n^c$ many configurations. Since the machine is not allowed to loop endlessly, the whole configuration graph must have no directed cycles. Since the non-deterministic guesses of non-deterministic Turing machines are in binary, a configuration can only lead to one of two possible configurations.\medskip

We claim that the graph with the most possible source-sink paths on $N$ vertices and with the above contraints is the following:
\[
\bullet \rightrightarrows \bullet \rightrightarrows \bullet \cdots \bullet \rightrightarrows \bullet
\]
Indeed, suppose by induction that it is true for graphs on $N-1$ vertices. Note that it has $2^{N-1}$ source-sink paths. Consider a graph on $N$ vertices now. Let $s$ be the source and $a, b$ be its children; let $t$ be the sink. By induction hypothesis, the number of $a$-$t$ paths is at most $2^{N-1}$, and the number of $b$-$t$ too by a similar argument. Since there are no cycles, there are at most $2^N$ many $s$-$t$ paths. The above graph realizes this maximum. \medskip

Since $\sharpL$ is the class of functions that count paths in the configuration graph, this count is bounded above by $2^{n^c}$.
\end{proof}

\begin{thm}\label{thm:FL upper bound}
    If $f\in\FL$ then $f \in O(2^{n^c})$. Moreover, there exists $f\in\FL$ such that $f\in\Theta(2^{n^c})$.
\end{thm}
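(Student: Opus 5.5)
Both claims follow from how many output bits a log-space transducer can write.

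For the upper bound, fix an $\FL$ machine $M$ computing $f$ with work space $c\log(n)$. A configuration of $M$, ignoring the write-only output tape, consists of the state, the input head position ($n+1$ choices), the work head position ($O(\log n)$ choices) and the work tape contents ($2^{c\log n}=n^c$ choices). So there are at most $\poly(n)=n^{c'}$ configurations.

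Since $M$ is deterministic and halts on every input, no configuration repeats on a run, so it takes at most $n^{c'}$ steps. The output head advances at most one cell per step, so the output has at most $n^{c'}$ bits. Read in binary, the output value is therefore below $2^{n^{c'}}$, which gives $f\in O(2^{n^{c'}})$. This mirrors the configuration-counting argument in the proof of Theorem~\ref{thm:sharpLgrowth}, but is simpler because there is just one computation path. If the output tape could also move left, the bound is unchanged: the written portion never exceeds the number of steps.

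For the lower bound, take $f(x)=2^{|x|^c}-1$, whose binary output is the string $1^{n^c}$ with $n=|x|$. Alternatively, take $f(x)=2^{n^c}$, i.e.\ a $1$ followed by $n^c$ zeros. An $\FL$ machine computes $n$ by scanning the input and keeping a binary counter of $O(\log n)$ bits. It then computes $N=n^c$ in binary by repeated multiplication, which needs $O(c\log n)=O(\log n)$ bits. Finally it loops a counter $i$ from $1$ to $N$, writing one symbol per iteration. Every stored quantity is polynomial in $n$, so the workspace is $O(\log n)$, and the output has value $\Theta(2^{n^c})$.

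The main obstacle is the upper bound, specifically justifying that the number of steps, and hence the output length, is polynomial. This relies on determinism together with the halting convention fixed in \S\ref{sec:preliminaries}. The paper's convention that machines halt on every input means an infinite loop cannot occur, and a deterministic run that visited some configuration twice would loop forever. The lower-bound construction is routine.
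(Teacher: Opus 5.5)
Your proof is correct. The lower-bound construction is the same as the paper's: store $n^c$ in an $O(\log n)$-bit counter, then write a $1$ followed by $n^c$ zeros.

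The upper bound follows a different route. The paper gets it in one line from Theorem~\ref{thm:sharpLgrowth}, using the inclusion $\FL_{\ge0}\subseteq\sharpL$ obtained from the composition~\eqref{eq:FL composition}. In that view, $f$ counts source--sink paths in a polynomial-size acyclic configuration graph of out-degree at most two. You instead bound the running time of the deterministic transducer directly. You count configurations, excluding the write-only output tape, which is legitimate since that tape is never read. This bounds the number of steps, hence the output length, hence the value.

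The paper's argument is shorter given what precedes it. However, its inclusion $\FL_{\ge0}\subseteq\sharpL$ implicitly uses that an $\FL$ machine writes only polynomially many bits. The witness $k$ with $0\le k<M_1(x)$ must fit on a read-once tape of polynomial length, and that polynomial output length is exactly what you establish. Your argument is self-contained and gives the stronger structural fact that $\FL$ transducers run in polynomial time. It also applies to any output string read as a binary number, without first passing through a counting class.
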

\begin{proof}
    The upper bound is given by the previous theorem. The existence is given by the following construction. Consider the machine that takes as input $n$, computes and stores $n^c$ in a counter $\mathtt{a}$ (occupying $c\log(n)$ bits). Then outputs a single $1$, and while $\mathtt{a} > 0$ it outputs a $0$ then decreases $\mathtt{a}$ by~$1$.
\end{proof}

The following lemma will be useful.
\begin{lem}
\label{lem:elementary}
The functions $(x,y)\mapsto x+y$ and $(x,y)\mapsto xy$ are in $\FL$, where the input and output are encoded in binary.
\end{lem}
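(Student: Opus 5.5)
We construct explicit deterministic log-space transducers that implement the schoolbook algorithms while streaming the output bits one at a time. The key observation is that every output bit of $x+y$ or $xy$ depends only on the input bits and a carry of logarithmically many bits, and these quantities can be recomputed on demand by re-reading the input tape instead of storing intermediate results. We assume $x=x_{m-1}\cdots x_0$ and $y=y_{m-1}\cdots y_0$ have total length $n$, and we output the result least-significant bit first. If the opposite order is required, we can reverse the output in $\FL$: to emit the $j$-th bit from the top, rerun the machine and discard all other bits, keeping only an $O(\log n)$ index counter. The construction also relies on the fact that $\FL$ is closed under composition (\S\ref{sec:log-space reduction}).

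\emph{Addition.} We maintain a position counter $i$ of $O(\log n)$ bits and a one-bit carry $c$. For $i=0,1,\ldots,m$, we read $x_i$ and $y_i$ (treating them as $0$ beyond the input) from the read-only tape by moving the head to position $i$. Moving the head there needs only the counter $i$. We then write $(x_i+y_i+c)\bmod 2$ to the output and set $c\leftarrow\lfloor (x_i+y_i+c)/2\rfloor$. The work space is $O(\log n)$, so addition is in $\FL$.

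\emph{Multiplication.} We write $xy=\sum_k 2^k s_k$ with $s_k=\sum_{i+j=k}x_iy_j$, so that $s_k\le m$. We compute the output bits for $k=0,1,\ldots,2m$ while maintaining a carry $C_k$. Assuming $C_k$ is known, we compute $s_k$ with two counters $i,j$, accumulating $s_k$ in an $O(\log n)$-bit counter. Output bit $k$ is then $(s_k+C_k)\bmod 2$, and the new carry is $C_{k+1}=\lfloor (s_k+C_k)/2\rfloor$. The carry stays small: by induction $C_k\le m$, since $C_{k+1}\le (m+m)/2=m$. Therefore the carry also fits in $O(\log n)$ bits.

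The only point needing care is this carry bound, because it guarantees that the carry never outgrows logarithmic space. The remaining space usage consists of a constant number of counters bounded by $\poly(n)$, namely $i$, $j$, $k$, $s_k$ and $C_k$, for a total of $O(\log n)$ work space. This places multiplication in $\FL$, completing the proof.
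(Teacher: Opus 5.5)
Your proof is correct. For addition it is the paper's argument: a schoolbook scan with a one-bit carry. For multiplication you take a different route.

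The paper's one-line proof keeps a running total and adds each shifted partial product $x_i 2^i y$ as soon as it is formed. Taken literally, that running total has $\Theta(n)$ bits, so it stays in logarithmic space only when the operands are themselves $O(\log n)$-bit numbers. That is exactly how the lemma is used later in the paper, where it multiplies work-tape counters (for instance in the integer-points-in-polytopes theorem and in Euclidean division).

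You instead compute the column sums $s_k=\sum_{i+j=k}x_iy_j$ by re-reading the input rather than storing anything. The inductive bound $C_k\le m$ on the carry keeps everything in a constant number of $O(\log n)$-bit counters, and rerunning the machine emits the output most-significant bit first. This proves the statement as written, with operands as long as the whole input, at the cost of a slightly more careful argument. The paper's version is shorter but only justifies the small-operand case it actually needs.
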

\begin{proof}
The elementary school algorithm suffices, with a minor modification for the multiplication: Instead of adding up all shifted products of a factor with the digits of the other factor, an additions is performed after every product of a digit with a factor.
\end{proof}

\section{Enumerative combinatorics}\label{sec:combinatorics}

We begin by considering a problem in greater depth, in order to illustrate the remaining proofs of this section.
\begin{thm}[Binomial coefficients]\label{thm:binomial}
The function $(1^n \, 0 \, 1^k)\mapsto \binom{n}{k}$ is in $\sharpL$.
\end{thm}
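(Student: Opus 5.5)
The plan is to exhibit a verifier $\NL$ machine in the form of Algorithm~\ref{alg: simple NL machine}. Its accepting witnesses are in bijection with the $k$-element subsets of $\{1,\dots,n\}$. The natural witness is the characteristic word $w=w_1w_2\cdots w_n\in\{0,1\}^n$ of a subset, so that $w_i=1$ exactly when $i$ is chosen. Such words with exactly $k$ ones number $\binom{n}{k}$. The input $1^n\,0\,1^k$ has length $N=n+k+1$, and the witness length $n\le N$ is polynomial in $N$, as Theorem~\ref{thm:NL machine} requires.

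The machine first reads the input tape and stores two binary counters on the work tape. The first is $\mathtt{a}=k$, obtained by counting the ones after the separating $0$. The second is $\mathtt{m}=n$, obtained by counting the ones before the separating $0$. Each counter uses $O(\log N)$ bits. The machine then scans the witness once, left to right. For each symbol read it decrements $\mathtt{m}$. If the symbol is $1$ it also decrements $\mathtt{a}$, rejecting immediately if $\mathtt{a}$ is already $0$. It stops when $\mathtt{m}=0$, and accepts if and only if $\mathtt{a}=0$. These are constant-many increments, decrements and zero tests on $O(\log N)$-bit counters, so they fit in log-space, and the witness is read only once in order. Every computation path halts after $O(N)$ witness symbols, with polynomially many steps per symbol. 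If $k>n$, no word is accepted, which matches $\binom nk=0$.

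Next comes the correctness check: the accepting witnesses are exactly the words in $\{0,1\}^n$ with exactly $k$ ones. If the machine is instead required to consider witnesses of a fixed length $p(N)$, there is a padding issue: one should require the symbols after position $n$ to be $0$, or equivalently fix them. This keeps the correspondence bijective and prevents spurious extra witnesses.

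The only real obstacle is this witness-length bookkeeping, so that distinct accepting paths correspond exactly to subsets. The arithmetic itself is trivial. As an alternative sanity check, one could instead count lattice paths via Pascal's recursion $\binom nk=\binom{n-1}{k-1}+\binom{n-1}{k}$ as source–sink paths in a DAG. That construction, however, reduces to the same characteristic-word witness.
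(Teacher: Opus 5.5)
Your proposal is correct and is essentially the paper's own proof. The witness is the characteristic word of a $k$-subset of $[n]$, and two binary counters initialised to $n$ and $k$ are decremented during a single left-to-right pass (the second only on $1$s), accepting iff both end at $0$. Your remark about padding to a fixed witness length $p(N)$ by forcing the trailing symbols to be $0$ handles a detail the paper leaves implicit.
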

In the definition of the function, $1^n$ denotes the string $11\!\stackrel{n}{\ldots}\!1$, that is, a sequence of $n$ many 1s. Any string which is not of the form $11\!\stackrel{n}{\ldots}\!1011\!\stackrel{k}{\ldots}\!1$  is assumed to be mapped to $0$ by the function.
\begin{proof}
    The binomial coefficient $\binom{n}{k}$ counts the number of $k$-subsets of $[n]$. 
    Encode a $k$-subset $S$ of $[n]$ as a word $w_1w_2\ldots w_n$ where $w_i = 1$ if $i\in S$ and $0$ otherwise.
    We construct a verifier $\NL$ machine with $2$ binary-encoded counters that takes as input $(1^n \, 0 \, 1^k)$ and a read-once left-to-right witness $w$ as a $\{0,1\}$-string, and accepts if and only if $w$ is encoding a $k$-subset of $n$.

    The first counter $\mathtt{a}$ is allocated $\log(n)$ space, and the second counter $\mathtt{b}$ is allocated $\log(k)$ space. Note that both are in $O(\log(n+k+1))$-space.
    
    We begin by reading $1^n$ and $1^k$ and storing their values in the two counters $\mathtt{a}$ and $\mathtt{b}$.
    We then read $w$ once left-to-right. For each $w_i$, the first counter always decreases by one, and the second counter decreases by one only if $w_i = 1$. After reading the whole witness $w$, the machine accepts if and only if both counters are at~$0$.
\end{proof}

The pseudo-code of the above machine is included in Algorithm~\ref{alg:binom}. This verifier $\NL$ machine has been implemented in Ugarte's Turing Machine Simulator~\cite{TMsimulator} and is available at \url{https://turingmachinesimulator.com/shared/ubutbvatkh}.
    \begin{algorithm}
    \caption{Verifier $\NL$ machine for binomial coefficients}\label{alg:binom}
    \begin{algorithmic}
    \Require $(1^n\,0\,1^k), w$
    \State {Initialize $\mathtt{a}=n$ and $\mathtt{b}=k$ (encoded in binary)}
    \For{$w_i \in w$}
    \State {Decrease $\mathtt{a}$ by~$1$}
    \InlineIfThen{$w_i=1$}
        {Decrease $\mathtt{b}$ by~$1$}\EndInlineIf
    \EndFor
    \State {Check $\mathtt{a} = 0 = \mathtt{b}$}
    \end{algorithmic}
    \end{algorithm}

By sequentially iterating this same basic algorithm, we can construct a verifier $\NL$ machine for multinomial coefficients.

\begin{thm}[Multinomial coefficients]\label{thm:multinomial}
    The function $(1^n\, 0\, 1^{a_1}\, 0\,\dots\, 0\, 1^{a_k}) \mapsto \binom{n}{a_1,\ldots,a_k}$ is in $\sharpL$. Here $k$ is not assumed to be fixed.
\end{thm}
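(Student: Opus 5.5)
The plan is to build a single verifier $\NL$ machine directly. The multinomial coefficient $\binom{n}{a_1,\ldots,a_k}$ (defined to be $0$ unless $a_1+\cdots+a_k=n$) counts words $w=w_1\cdots w_n$ over the alphabet $[k]$ in which the letter $j$ occurs exactly $a_j$ times. Since $k$ is not fixed, the alphabet is unbounded, so the letters cannot be written with constantly many symbols. Instead, I would encode the witness as $k$ consecutive blocks $w^1,\dots,w^k$, each a $\{0,1\}$-string of length $n$. Block $w^j$ is the indicator vector of the positions carrying letter $j$. Such a tuple encodes a valid word exactly when each $w^j$ has $a_j$ ones and every position $i\in[n]$ has a $1$ in exactly one block.

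The checking of a single block is the machine from Theorem~\ref{thm:binomial}. I would keep a counter for the length, reset to $n$ at the start of each block, and a counter initialised to $a_j$, which is read off the input by scanning to the $j$-th run of $1$s using a block index $\mathtt{j}$ of $O(\log|x|)$ bits. The constraint that each position is covered exactly once is not local to one block, so checking it is the main obstacle. Verifying it naively would mean remembering which positions are already used.

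To avoid this, I would change the encoding. Following the hint that we sequentially iterate, I would use the factorisation
\[
\binom{n}{a_1,\ldots,a_k}=\prod_{j=1}^{k}\binom{n-a_1-\cdots-a_{j-1}}{a_j}.
\]
Block $w^j$ then has length $m_j:=n-a_1-\cdots-a_{j-1}$ and chooses the $a_j$ positions, among the still-free ones, that receive letter $j$. Concretely, the machine keeps a running counter $\mathtt{r}$, initialised to $n$. For block $j$ it reads $m_j=\mathtt{r}$ bits, checks that exactly $a_j$ of them are $1$, and then sets $\mathtt{r}\leftarrow\mathtt{r}-a_j$. At the end it checks $\mathtt{r}=0$, which enforces $\sum a_j=n$. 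This is a bijection between words and accepted witnesses, because the free positions are always ordered left to right. Each step is a binary-counter operation of logarithmic space.

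It remains to confirm membership in $\sharpL$. The witness length is $\sum_j m_j\le kn$, which is polynomial in the input length. Alternatively, the result follows from Theorem~\ref{thm:product}. In that argument, $f$ is the binomial function of Theorem~\ref{thm:binomial} and $p(x)=k$. The map $g(x,j)$ outputs $1^{m_j}01^{a_j}$, which is in $\FL$ since $m_j$ is computed with a counter by Lemma~\ref{lem:elementary}. A separate deterministic check that $\sum a_j=n$ is needed, returning a factor $0$ otherwise.
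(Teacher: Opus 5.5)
Your proposal is correct and follows essentially the paper's route. The paper also factors $\binom{n}{a_1,\ldots,a_k}=\prod_j\binom{n-a_1-\cdots-a_{j-1}}{a_j}$ and applies Theorem~\ref{thm:product}, with $f$ the binomial function, $p(x)=k$, and an $\FL$ map $g$ outputting $1^{n-a_1-\cdots-a_{i-1}}\,0\,1^{a_i}$; your direct machine with concatenated binomial witnesses is that construction unrolled. Your final check $\mathtt{r}=0$ is a detail the paper's proof omits: without it, when $\sum_j a_j<n$ the product equals $n!/(a_1!\cdots a_k!\,(n-\sum_j a_j)!)$ rather than $0$.
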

\begin{proof}
    We use the formula $\binom{n}{a_1,\ldots,a_k}= \binom{n}{a_1} \binom{n-a_1}{a_2}  \cdots = \binom{m_1}{a_1}\binom{m_2}{a_2}\cdots$ to reduce the problem to the setting of Theorem~\ref{thm:product}. 

    The function $f : (1^n\,0\,1^k) \mapsto \binom{n}{k}$ is in $\sharpL$ by Theorem~\ref{thm:binomial}.
Next, we claim the function
\[g : (x \,0\,0\, i) \mapsto (1^{n-a_1-\cdots-a_{i-1}}\,0\,1^{a_i})
\]
is in $\FL$,
where $x=(1^n\, 0\, 1^{a_1}\, 0\,\dots\, 0\, 1^{a_k})$, $k\leq n$, and
$1\le i\le k$ is encoded in binary. Initialise binary-encoded counters $\mathtt{A} = 2n$ and $\mathtt{b} = i$. Read the input $x$ from the left until we read a $0$, then decrease $\mathtt{b}$ by~$1$ and keep reading. When $\mathtt{b} = 0$ stop; we have found the $i$th $0$ in $x$. Now start reading from the current position and to the left; every time we find a $1$ decrease $\mathtt{A}$ by~$1$. When you reach the beginning of the input tape we have $\mathtt{A}=n-a_1-\cdots-a_{i-1}$. Write a $1$ in the output tape, decrease $\mathtt{A}$ by~$1$, repeat until $\mathtt{A}=0$. Then write a $0$ in the output. Use the same technique to find the $i$th $0$ of $x$ again, and read to the right to set $\mathtt{A} \leftarrow a_i$. Output $\mathtt{A}$ in unary again to finish.

    By similar considerations, the function $p : x\mapsto k$ (the number of $0$s in $x$, written in binary) is in $\FL$ and its output is $O(\log(|x|))$ in size. Hence we are in the hypotheses of Theorem~\ref{thm:product}.
\end{proof}

The machine constructed for binomial coefficients is part of a very simple family of combinatorial log-space machines. We include three more examples.

\begin{thm}[Catalan numbers]\label{thm:catalan}
The function $(1^n) \mapsto \frac{1}{n+1}\binom{2n}{n}$ is in $\sharpL$.
\end{thm}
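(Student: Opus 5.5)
We use the classical interpretation of $C_n=\frac{1}{n+1}\binom{2n}{n}$ as the number of Dyck words: words $w=w_1\cdots w_{2n}\in\{0,1\}^{2n}$ with exactly $n$ ones (up-steps) in which every prefix has at least as many ones as zeros (down-steps). So it suffices to build a verifier $\NL$ machine, in the style of Algorithm~\ref{alg:binom}, that accepts exactly the witnesses $w$ that are Dyck words of semilength $n$.

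The machine reads $1^n$ from the input and keeps three binary-encoded counters, each of size $O(\log n)$ because every stored value is at most $2n$. The counter $\mathtt{a}$ is initialised to $2n$ and is decreased by $1$ at every witness symbol, so it checks that the witness has length $2n$. The counter $\mathtt{b}$ is initialised to $n$ and is decreased by $1$ whenever $w_i=1$, so it checks that there are exactly $n$ up-steps. The counter $\mathtt{h}$ records the current height: it starts at $0$, is increased by $1$ when $w_i=1$, and is decreased by $1$ when $w_i=0$. If a decrement is ever requested while $\mathtt{h}=0$, the machine rejects immediately. The machine also rejects if $\mathtt{a}$ would become negative, so that it halts on every path. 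At the end it accepts if and only if $\mathtt{a}=0=\mathtt{b}$, and it may additionally check $\mathtt{h}=0$.

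Correctness is immediate. The witness is read once, left to right, and the prefix condition is checked online by $\mathtt{h}$, so the read-once restriction causes no difficulty. Length $2n$ with $n$ ones forces the final height to be $0$, and the prefix check forces the height never to go negative. Hence the accepting witnesses are in bijection with Dyck paths, and there are $C_n$ of them.

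The only step that needs care is fixing the witness length and rejecting early on a failed height test, so that each Dyck word is counted exactly once and no other strings are accepted. This is exactly the role of $\mathtt{a}$, as in the proof of Theorem~\ref{thm:binomial}. There is no real obstacle beyond citing the standard Dyck-path interpretation of the Catalan numbers.
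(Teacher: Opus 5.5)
Your proposal is correct and matches the paper's proof. The paper also encodes Dyck paths as $\{0,1\}$-witnesses and uses a length counter $\mathtt{a}=2n$ together with a height counter that must stay non-negative and end at $0$. Your extra counter $\mathtt{b}$ for the number of up-steps is harmless but redundant, because length $2n$ and final height $0$ already force exactly $n$ up-steps.
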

\begin{proof}
    Catalan numbers count the number of Dyck paths of length $2n$.
    We construct an algorithm of the type of Algorithm~\ref{alg:binom}.
    Encode a Dyck path as a binary string by interpreting $1$ as an up-step and $0$ as a down-step. Initialise two binary-encoded counters: $\mathtt{a}=2n$ will check the length of the string and $\mathtt{b}=0$ will check that the Dyck path stays at non-negative height. For each bit $w_i$ in the witness: if $w_i=1$ then decrease $\mathtt{a}$ by one, increase $\mathtt{b}$ by one; if $w_i=0$ then decrease both counters by one. Then check that both counters are at least $0$.

    At the end, check that $\mathtt{a}=0=\mathtt{b}$.
\end{proof}
\begin{note}
    The machine for Catalan numbers is generalised in Theorem~\ref{thm:ballots}.
\end{note}

\begin{thm}[Narayana numbers]
The function $(1^n\,0\,1^k) \mapsto \frac{1}{n}\binom{n}{k}\binom{n}{k-1}$ is in~$\sharpL$.
\end{thm}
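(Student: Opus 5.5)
Narayana numbers $N(n,k)=\frac{1}{n}\binom{n}{k}\binom{n}{k-1}$ count Dyck paths of semilength $n$ with exactly $k$ peaks, where a peak is an up-step immediately followed by a down-step. I would therefore extend the verifier $\NL$ machine of Theorem~\ref{thm:catalan} (an algorithm of the form of Algorithm~\ref{alg:binom}) with one more counter and one bit of memory.

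Encode a Dyck path as a binary string, with $1$ an up-step and $0$ a down-step. Initialise binary-encoded counters $\mathtt{a}=2n$ (remaining length), $\mathtt{b}=0$ (current height) and $\mathtt{c}=k$ (remaining peaks), plus a one-bit register $\mathtt{p}$ storing the previous witness letter, initially $0$. Read the witness once from left to right. For each $w_i$:
\begin{itemize}
\item decrease $\mathtt{a}$ by one;
\item if $w_i=1$, increase $\mathtt{b}$ by one;
\item if $w_i=0$, decrease $\mathtt{b}$ by one, and if also $\mathtt{p}=1$, decrease $\mathtt{c}$ by one;
\item set $\mathtt{p}\leftarrow w_i$.
\end{itemize}
After each step, reject if $\mathtt{a}<0$, $\mathtt{b}<0$ or $\mathtt{c}<0$. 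At the end, accept if and only if $\mathtt{a}=\mathtt{b}=\mathtt{c}=0$.

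All counters are bounded by $2n+k$, so they use $O(\log(n+k))$ space. By the relaxations in \S\ref{subsec:onvNLmachines} (constant alphabet, constant number of work tapes), this is a legitimate verifier $\NL$ machine. Each accepted witness is exactly a Dyck path of semilength $n$ with $k$ peaks, and each such path is accepted by exactly one witness, so the number of accepting witnesses equals $N(n,k)$. Inputs not of the form $1^n01^k$ are rejected.

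The only substantive step is the combinatorial fact that Dyck paths with $k$ peaks are counted by $\frac1n\binom{n}{k}\binom{n}{k-1}$. This is classical (see \cite{EC1}), so I would cite it. The edge case $n=0$ does not affect the argument: the formula is undefined there, and either convention is matched by a trivial special case.
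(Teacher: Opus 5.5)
Your proposal is correct and essentially matches the paper's proof. The paper likewise extends the Catalan machine of Theorem~\ref{thm:catalan} with a peak counter initialised at $k$ and a one-bit register that detects an up-step immediately followed by a down-step, and accepts when all counters end at $0$.
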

\begin{proof}
    The Narayana number $N(n,k)$ counts the number of Dyck paths of length $2n$ with exactly $k$ peaks.
    We take the algorithm from Theorem~\ref{thm:catalan} and make a small modifications to it.
    We add one more binary-encoded counter $\mathtt{c}$ initialised at $k$, which will count the number of peaks. We also add a $1$-bit counter $\mathtt{d}$ to detect these peaks.
    
In the main loop, if $w_i=1$ then set $\mathtt{d}=1$; if $w_i=0$ and $\mathtt{d}=0$ then do nothing; if $w_i=0$ and $\mathtt{d}=1$ then set $\mathtt{d}=0$ and decrease $\mathtt{c}$ by one.

    At the end, check also that $\mathtt{c}=0=\mathtt{d}$.
\end{proof}

\begin{thm}[Fibonacci numbers]
The function $(1^n) \mapsto F_n$ is in $\sharpL$.
\end{thm}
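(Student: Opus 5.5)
We follow the template of Algorithm~\ref{alg:binom} and exhibit a combinatorial family counted by $F_n$ whose members can be verified by reading a read-once witness left to right. We use the standard convention $F_1=F_2=1$. Under this convention $F_n$ equals the number of compositions of $n-1$ into parts equal to $1$ or $2$. Equivalently, it counts tilings of a $1\times(n-1)$ strip by monominoes and dominoes; for $n=1$ this is the single empty tiling. Any other indexing convention only shifts $n$ by a constant, and the machine below accommodates such a shift by adjusting the initial value of its counter.

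We encode a composition as a binary word $w=w_1w_2\cdots w_m$, where $w_i=0$ stands for a part equal to $1$ and $w_i=1$ stands for a part equal to $2$. This encoding is a bijection between compositions of $n-1$ with parts in $\{1,2\}$ and binary words whose weighted length $\sum_i (1+w_i)$ equals $n-1$. Such a word has length at most $n-1$. A witness is therefore polynomial in size, and we take the witness tape to hold exactly such a word.

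The verifier initialises a single binary-encoded counter $\mathtt{a}=n-1$. It obtains this value by reading $1^n$ once and then decrementing, which uses $O(\log n)$ space. For each witness symbol $w_i$, the machine decreases $\mathtt{a}$ by $1$ if $w_i=0$ and by $2$ if $w_i=1$. If $\mathtt{a}$ would become negative, the machine rejects immediately. At the end it accepts if and only if $\mathtt{a}=0$.

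The accepting witnesses are then exactly the encodings of the compositions of $n-1$ with parts in $\{1,2\}$, and there are $F_n$ of them. The standard recurrence confirms this count: the last part is either $1$ or $2$, which gives $F_n=F_{n-1}+F_{n-2}$. The only step requiring care is the witness-length formalism, since different accepting witnesses have different lengths. We handle it in the same way as in the earlier theorems. Either witnesses of variable length are allowed and delimited by the end of the tape, or we pad each word to length $n-1$ with a fixed end-marker symbol. Padding uses an alphabet of size $3$, which can be simulated in binary as explained in \S\ref{subsec:onvNLmachines}. The machine then checks that once the end-marker appears, no further $0$ or $1$ occurs. This makes the padding unique for each word, so the number of accepting witnesses is still exactly $F_n$.
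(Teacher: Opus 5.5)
Your proposal is correct and is essentially the paper's argument. Both count compositions into parts $1$ and $2$ with one decrementing counter read once left to right. The paper uses the convention $F_1=1$, $F_2=2$, so its counter starts at $n$, which your shift remark covers.

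The only real difference is the encoding. The paper writes the parts $1$ and $2$ as the binary strings $1$ and $10$, so every witness has length exactly $n$ and no padding or variable-length discussion is needed. The cost is a one-bit memory checking that every $0$ is immediately preceded by a $1$.
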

\begin{proof}
    The Fibonacci number $F_n$, with $F_1=1,F_2=2$, counts domino sequences of $n$: the number of $\{1,2\}$-strings which sum to $n$. If each $1$ and $2$ is written in binary, then a domino sequence is a $\{0,1\}$-string of length $n$ such that each $0$ is preceded by a $1$; this will be our witness.
    We construct an algorithm of the form of Algorithm~\ref{alg:cap}.

    Initialise a binary-encoded counter $\mathtt{a}=n$ and a $1$-bit counter $\mathtt{b}=0$ which we use to remember the last bit encountered. For each $w_i$, decrease $\mathtt{a}$ by one. If $w_i=1$ 
    then set $\mathtt{b}=1$. If $w_i=0$ then check $\mathtt{b}=1$ (otherwise reject) and set $\mathtt{b}=0$. 

    At the end, accept if $\mathtt{a}=0$, otherwise reject.
\end{proof}

The machine for Fibonacci numbers is a special case of the following machine.

\begin{thm}\label{thm:recurrence}
    Let $A(1), A(2), \ldots$ be a sequence of non-negative integers satisfying a linear recurrence
    \[
    A(n)= \sum_{i=1}^C D(n,i) A({n-i})
    \]
    with a constant number $C$ of terms and whose coefficients $D(n,i)$ are non-negative integers, of $\mathrm{poly}(n)$ size, and such that $(1^n\,0\,1^i)\mapsto D(n,i)$ is in $\FL_{\ge0}$. 
    Then the function $(1^n) \mapsto A(n)$ is in $\sharpL$. Here $A(1), \ldots, A(C)$ are given constants.
\end{thm}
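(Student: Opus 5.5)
Unrolling the recurrence gives a combinatorial interpretation that a verifier $\NL$ machine can check with a constant number of binary-encoded counters, in the style of the Fibonacci machine. Expanding $A(n)=\sum_{i=1}^C D(n,i)A(n-i)$ repeatedly until the argument drops into $\{1,\ldots,C\}$ writes $A(n)$ as a sum over \emph{compositions} $n=m_0>m_1>\cdots>m_r$ with steps $s_j=m_{j-1}-m_j\in\{1,\ldots,C\}$ and terminal value $m_r\in\{1,\ldots,C\}$. Each such composition contributes
\[
\Bigl(\prod_{j=1}^{r} D(m_{j-1},s_j)\Bigr)\cdot A(m_r).
\]
Since all $D(n,i)$ and the initial values are non-negative integers, this equals the number of tuples $\bigl((s_1,c_1),\ldots,(s_r,c_r),c_{\mathrm{end}}\bigr)$ with $0\le c_j<D(m_{j-1},s_j)$ and $0\le c_{\mathrm{end}}<A(m_r)$. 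One must fix the convention that expansion stops exactly when the argument first lands in $\{1,\ldots,C\}$ (for $n\le C$ we have $r=0$). Arguments $\le 0$ never arise, since we only expand at arguments $>C$ and steps are at most $C$.

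The witness will be the concatenation of the blocks $(s_j,c_j)$, followed by $c_{\mathrm{end}}$. Here $s_j$ is written in a fixed alphabet of size $C$, and $c_j$ in binary with $O(\log n)$ bits, padded to a fixed width so that parsing is unambiguous. The padding width is computable because $D(m,i)\le\poly(m)\le\poly(n)$.

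The machine keeps a binary counter $\mathtt{m}$, initialised to $n$ by reading $1^n$. While $\mathtt{m}>C$, it reads $s$ from the witness, rejects if $s>\mathtt{m}-1$, and reads $c$ into a second counter. It then compares $c$ with $D(\mathtt{m},s)$ by composing with the $\FL$ machine for $D$, as in~\eqref{eq:FL composition}. The input $(1^{\mathtt{m}}\,0\,1^{s})$ to that machine is not stored; each requested bit is regenerated on the fly from the counters, as in the discussion of log-space reductions. The machine then sets $\mathtt{m}\leftarrow\mathtt{m}-s$. Once $\mathtt{m}\in\{1,\ldots,C\}$, it reads $c_{\mathrm{end}}$ and checks $c_{\mathrm{end}}<A(\mathtt{m})$ against a hard-coded constant table, then verifies that the witness is exhausted. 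Only a constant number of $O(\log n)$-bit counters are used, and the witness is read once left to right. This is an instance of Algorithm~\ref{alg:cap}, and accepting witnesses biject with the tuples above, so the count equals $A(n)$.

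The main obstacle is the careful bijection and bookkeeping: showing that the unrolled sum equals $A(n)$ exactly and that each witness is parsed uniquely. Both follow by induction on $n$, with the induction hypothesis that the number of accepted witness suffixes starting from counter value $m$ is $A(m)$. A secondary point is the $\FL$ composition with $D$ when its input is given only implicitly through counters; this is the standard re-run-per-bit trick from \S\ref{sec:log-space reduction}. Alternatively, one could phrase the construction as Theorem~\ref{thm:product}-style nesting, but the direct machine is cleaner.
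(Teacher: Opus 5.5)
Your proposal is correct and is essentially the paper's proof. The paper views $A(n)$ as the number of paths from $n$ to $0$ in the multigraph with $D(k,i)$ arcs $k\to k-i$ for $k>C$ and $A(k)$ arcs $k\to 0$ for $1\le k\le C$, and checks exactly your witness $i_1,d_1,\ldots,i_\ell,d_\ell,a$ (jump sizes, arc choices, final arc choice) sequentially with a constant number of log-space counters; your fixed-width encoding of the $c_j$ and your on-the-fly composition with the $\FL$ machine for $D$ simply make explicit details that the paper leaves implicit.
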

\begin{proof}
    Consider the directed multigraph whose vertices are the non-negative integers and with
    \begin{itemize}
        \item $D(k,i)$ arcs from $k$ to $k-i$ for each $k>C$ and $1\le i\le C$,
        \item $A(k)$ arcs from $k$ to $0$ for each $1\le k \le C$.
    \end{itemize}

    Then $\sum_{i=1}^C D(n,i) A({n-i}) = A(n)$ counts the number of paths from $n$ to $0$. 
    For instance, the following is the graph associated to the Fibonacci numbers $\{F_n\}_{n\ge1}$, where the base cases are $n=1$ and $n=2$ for consistency:
\[
\begin{tikzpicture}[x=4em]
    \foreach\i in {0,...,6}{
        \node[circle] (\i) at (\i,0) {$\i$};
    }
    \node[circle] (7) at (7,0) {$\cdots$};
    \draw[-stealth] (1) to (0);
    \draw[-stealth] (2) to[bend left] (0);
    \draw[-stealth] (2) to[bend right] (0);
    \draw[-stealth] (3) to (2);
    \draw[-stealth] (3) to[bend right] (1);
    \draw[-stealth] (4) to (3);
    \draw[-stealth] (4) to[bend left] (2);
    \draw[-stealth] (5) to (4);
    \draw[-stealth] (5) to[bend right] (3);
    \draw[-stealth] (6) to (5);
    \draw[-stealth] (6) to[bend left] (4);
    \draw[-stealth] (7) to (6);
    \draw[-stealth] (7) to[bend right] (5);
\end{tikzpicture}
\]

A path $(p_0 = n, p_1, p_2, \ldots, p_\ell, 0)$ from $n$ to $0$ can be encoded with a tuple of numbers
    \[
    i_1, d_1, i_2, d_2, \ldots, i_\ell, d_\ell, a
    \]
such that the $1\le i_j \le C$
encode the jump sizes from a given node $p_{j-1}$ of the path to the next node $p_j = p_{j-1}-i_j$, the
$1 \le d_j \le D(p_{j-1},i_j)$ choose which of the arcs to use for this one step, and $1\le a\le A(p_{\ell})$ chooses which of the arcs to take from $p_{\ell}$ to $0$. In the case of Fibonacci numbers we have $d_j=1$ always, and the tuple of the $i_j\in\{1,2\}$ is a domino sequence.

A sequence encodes a valid path if and only if
\[
\begin{cases}
    1\le i_j \le C & \text{for all~}1\le j\le \ell,\\
    1\le d_j \le D(p_{j-1},i_j) & \text{for all~}1\le j\le \ell,\\
    1\le a\le A(p_{\ell}),\\
    1 \le p_\ell \le C < p_{\ell-1}.
\end{cases}
\]
Note that these properties can be checked sequentially: first check the inequalities for $j=1$, then for $j=2$, etc. Two counters can be used to keep track of $p_{j-1}$ and $p_j = p_{j-1} - i_j$ at any given time.
The parameters $C$ and $A(1),\ldots,A(C)$ are constants; the parameters $D(k,i)$ are $\mathrm{poly}(k)$ size (and so in particular $\mathrm{poly}(n)$ size) and computable in $O(\log(k+i)) = O(\log(n))$ space.
Therefore all of these properties can be checked in log-space using an algorithm like the one in Algorithm~\ref{alg:cap}.
\end{proof}

\begin{cor}[Factorial]\label{cor:factorial}
    The function $(1^n) \mapsto n!$ is in $\sharpL$.
\end{cor}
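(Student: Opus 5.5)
The factorial is a corollary, so I will obtain it from Theorem~\ref{thm:recurrence} with the one-term recurrence $A(n) = n\cdot A(n-1)$. That is, I take $C=1$, $D(n,1)=n$ and base value $A(1)=1$, so that $A(n)=n!$ for all $n\ge1$.

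The hypotheses of Theorem~\ref{thm:recurrence} need checking, and all are routine. The number of terms $C=1$ is constant. The coefficient $D(n,1)=n$ is a non-negative integer of polynomial size in $n$. The map $(1^n\,0\,1^i)\mapsto D(n,i)$ is in $\FL_{\ge0}$: count the leading $1$s of the input into a binary counter (space $O(\log n)$) and output that counter. Since only $i=1$ matters, the map may output $n$ when $i=1$ and $0$ otherwise. The base value $A(1)=1$ is a constant. Theorem~\ref{thm:recurrence} then gives $(1^n)\mapsto n!$ in $\sharpL$. The input $1^0$, for which $0!=1$, is a single fixed input, and the machine handles it by a hard-coded case.

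As an alternative, Theorem~\ref{thm:product} applies directly. Take $f$ to be the $\FL_{\ge0}$ map $(1^m)\mapsto m$, which lies in $\sharpL$ via composition \eqref{eq:FL composition}. Take $g(x,i)=1^i$ and $p(x)=n$. Then $\prod_{i=1}^n f(g(x,i)) = n!$.

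There is no real obstacle here. The only point needing care is the bookkeeping of the arcs in the multigraph of Theorem~\ref{thm:recurrence}. A path from $n$ to $0$ is specified by the choices $d_j\in\{1,\dots,p_{j-1}\}$ at each step, and this is exactly the standard encoding of a permutation of $[n]$ by a Lehmer-code-like sequence. So the count $n!$ is correct.
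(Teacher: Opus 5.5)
Your proposal is correct and matches the paper's own arguments. The paper's first proof applies Theorem~\ref{thm:recurrence} to $n! = n\cdot(n-1)!$, and its second proof applies Theorem~\ref{thm:product} with $f = p : (1^n)\mapsto n$ and $g$ returning $i$; these are exactly your two routes, and the paper likewise notes that the witness is a Lehmer code, while your explicit handling of the input $1^0$ is a harmless detail the paper leaves implicit.
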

\begin{proof}[First proof of Corollary~\ref{cor:factorial}]
    We have $n! = n\cdot(n-1)!$ and thus it follows from the previous theorem.
\end{proof}

\begin{proof}[Second proof of Corollary~\ref{cor:factorial}]
    The factorial is in the hypotheses of Theorem~\ref{thm:product}, where $f = p : (1^n) \mapsto n$ is in $\FL_{\ge0}$ and poly-size, and $g : (1^n\,0\,1^i) \mapsto i$ in $\FL$.
\end{proof}

\begin{note}
In the first proof of Corollary~\ref{cor:factorial}, the graph for $n!$ is the following:
\[
\begin{tikzpicture}[x=4em]
    \foreach\i in {0,...,5}{
        \node[circle] (\i) at (\i,0) {$\i$};
    }
    \node[circle] (6) at (6,0) {$\cdots$};
    \draw[-stealth] (1) to (0);
    \draw[-stealth] (2) to[bend left = 15] (1);
    \draw[-stealth] (2) to[bend right = 15] (1);
    \draw[-stealth] (3) to[bend left = 30] (2);
    \draw[-stealth] (3) to (2);
    \draw[-stealth] (3) to[bend right = 30] (2);
    \draw[-stealth] (4) to[bend left = 45] (3);
    \draw[-stealth] (4) to[bend left = 15] (3);
    \draw[-stealth] (4) to[bend right = 15] (3);
    \draw[-stealth] (4) to[bend right = 45] (3);
    \draw[-stealth] (5) to[bend left = 63] (4);
    \draw[-stealth] (5) to[bend left = 30] (4);
    \draw[-stealth] (5) to (4);
    \draw[-stealth] (5) to[bend right = 30] (4);
    \draw[-stealth] (5) to[bend right = 63] (4);
    \draw[-stealth] (6) to[bend left = 80] (5);
    \draw[-stealth] (6) to[bend left = 45] (5);
    \draw[-stealth] (6) to[bend left = 15] (5);
    \draw[-stealth] (6) to[bend right = 15] (5);
    \draw[-stealth] (6) to[bend right = 45] (5);
    \draw[-stealth] (6) to[bend right = 80] (5);
\end{tikzpicture}
\]
The witness that one constructs is a Lehmer code, namely a list $(a_1, a_2, \ldots, a_n)$ such that $1\le a_i \le i$ for all $i$.

In the second proof, one constructs the same witness.
\end{note}
\smallskip

The above recursion only applies to sequences. We can show a similar result for for $2$-dimensional or higher dimensional recursions. The quintessential example are the binomial coefficients, which are subject to the Pascal identity
\(\binom{n}{k} = \binom{n-1}{k} + \binom{n-1}{k-1}\). The binomial coefficients are in $\sharpL$ by Theorem~\ref{thm:binomial}.
\begin{thm}\label{thm:2-dim}
    Let $A(1,1), A(1,2), A(2,1), \ldots$ be a 2-dimensional array of non-negative integers satisfying a linear recurrence
    \[
    A(n,k) = \sum_{\ell=1}^C D(n,k,i_\ell,j_\ell) A(n-i_\ell,k-j_\ell)
    \]
    for some fixed set of $C$ pairs $(i_1,j_1), \ldots, (i_C,j_C)$ such that
    \((n-i_\ell,k-j_\ell)<_{\mathrm{lex}}(n,k)\)
    for all $n,k,$ and $\ell$.
    Suppose that the coefficients $D(n,k,i,j)$ are positive integers of $\mathrm{poly}(n+k)$ size and the function $(1^n\,0\,1^k\,0\,1^i\,0\,1^j)\mapsto D(n,k,i,j)$ is in $\FL_{\ge0}$.
    Then the function $$(1^n\,0\,1^k) \mapsto A(n,k)$$ is in $\sharpL$. 
    Here, the set of base cases $\{A(x,y)\mid (x,y)\in B\}$ is given and its elements are constants.
\end{thm}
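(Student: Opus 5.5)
We mimic the proof of Theorem~\ref{thm:recurrence}, replacing the integer line by the two-dimensional grid. Build a directed multigraph $G$ whose vertices are the pairs $(x,y)$ that can be reached from $(n,k)$, together with a sink $\star$. A non-base vertex $(x,y)\notin B$ has $D(x,y,i_\ell,j_\ell)$ parallel arcs to $(x-i_\ell,y-j_\ell)$ for each $\ell=1,\dots,C$. A base vertex $(x,y)\in B$ has $A(x,y)$ arcs to $\star$ and no other outgoing arcs. Unrolling the recurrence by induction along the lexicographic order shows that the number of $(n,k)$-to-$\star$ paths equals $A(n,k)$. Any path that leaves the region where the recurrence and the base cases are defined is simply rejected; this is consistent with the implicit convention that such terms contribute $0$.

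The witness encodes a path as a list $\ell_1,d_1,\ell_2,d_2,\dots,\ell_m,d_m,a$. Here $1\le\ell_r\le C$ chooses which of the $C$ shifts to apply, $1\le d_r\le D(p_{r-1},i_{\ell_r},j_{\ell_r})$ chooses one of the parallel arcs, and $1\le a\le A(p_m)$ chooses the final arc to $\star$. The verifier keeps two binary counters holding the current position $p_{r-1}=(x,y)$, initialised to $(n,k)$. On reading $\ell_r$, it checks that $(x,y)\notin B$ and updates the counters to $(x-i_{\ell_r},y-j_{\ell_r})$, remembering the old value long enough to check the bound on $d_r$. That check composes, as in~\eqref{eq:FL composition}, with the $\FL$ machine for $D$: the unary input $(1^x\,0\,1^y\,0\,1^{i}\,0\,1^{j})$ is regenerated bit by bit from the counters, and the output is compared against $d_r$, which is read from the witness in binary and has $O(\log(n+k))$ bits because $D$ is polynomially bounded. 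At the end the verifier checks $p_m\in B$ and $1\le a\le A(p_m)$. Membership in $B$ and the values $A(p_m)$ are given by a constant-size table in the finite case. In infinite but regular cases such as the boundary $\{(x,0)\}\cup\{(x,x)\}$, membership is decidable in log-space, and I would add that hypothesis implicitly. All of this fits the form of Algorithm~\ref{alg:cap}.

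The main obstacle is guaranteeing that coordinates stay polynomially bounded and that paths have polynomial length, since lexicographic decrease alone does not bound $y$. For example, with shift $(1,-1)$ the second coordinate grows, but it can grow by at most a constant per step, and the first coordinate decreases at most $n$ times. Between two decreases of $x$ the shifts have $i_\ell=0$ and $j_\ell>0$, so $y$ strictly decreases there. I would argue the bound as follows. Every shift satisfies $|j_\ell|\le J$ for a constant $J$. Along a path $x$ decreases at most $n$ times, each time by a positive amount, so $y\le k+nJ$ throughout. Between consecutive $x$-decreases, $y$ drops by at least $1$ per step and must stay nonnegative (otherwise we reject), giving at most $k+nJ$ steps per phase. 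Hence the path length is $O(n(k+nJ))$ and all counters use $O(\log(n+k))$ bits. The machine halts on every witness because it also rejects once the witness exceeds this polynomial length, which it tracks with a further counter.
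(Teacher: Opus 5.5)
Your proposal is correct and is essentially the paper's proof. The witness is a path in the recurrence's dependency multigraph ending at the first vertex of $B$. It is verified step by step with a constant number of $O(\log(n+k))$-bit counters, composing with the $\FL$ machine for $D$ at the current position.

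The differences are minor. The paper writes the visited coordinates $(n_s,k_s)$ into the witness and recovers each shift by matching against the $C$ pairs, whereas you write the shift index, as in the proof of Theorem~\ref{thm:recurrence}. You also make explicit the polynomial bound on coordinates and path length, and the log-space decidability of $B$, both of which the paper's proof leaves implicit.
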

\begin{proof}
    The witness is a sequence $(n_0,k_0,d_0),(n_1,k_1,d_1),\ldots,(n_m,k_m,d_m)$, where the numbers $n_i,k_i,d_i$ are encoded in binary. They are subject to
    \[
    \begin{cases}
        (n_0,k_0) = (n,k)\\
        \exists \ell=\ell(s) : (n_s,k_s) = (n_{s-1}-i_\ell, k_{s-1}-j_\ell) &\text{for all}~1\le s < m,\\
        1\le d_{s-1} \le D(n_{s-1},k_{s-1},i_{\ell(s)},j_{\ell(s)})&\text{for all}~1\le s < m,\\
        (n_m,k_m)\in B,\\
        1\le d_m \le A(n_m,k_m).
    \end{cases}
    \]
    
    We keep 4 main counters $\mathtt{a},\mathtt{b},\mathtt{c},\mathtt{d}$. 
    Initialise $(\mathtt{a},\mathtt{b}) \leftarrow (n_0, k_0)$ and check that they are equal to $n$ and $k$ respectively.
    In the $s$th step, the counters $(\mathtt{a},\mathtt{b})$ store $(n_{s-1}, k_{s-1})$ and we read  $(\mathtt{c},\mathtt{d}) \leftarrow (n_s, k_s)$.
    Check the local validity of the witness: for $\ell=1,\ldots,C$ check if $(\mathtt{c},\mathtt{d}) = (\mathtt{a} - i_\ell, \mathtt{b} - j_\ell)$. If all checks fail, then reject the witness. If on the contrary a pair $(i_\ell,j_\ell)$ is found, compute and store $\mathtt{D}\leftarrow D(n,k,i_\ell,j_\ell)$ and check that $1\le d_{s-1}\le \mathtt{D}$. If $(\mathtt{c},\mathtt{d})\in B$ is a base case, then check $1\le d_s \le A(\mathtt{c},\mathtt{d})$ and that this is the last entry of the witness.
    Otherwise reassign $(\mathtt{a}, \mathtt{b}) \leftarrow (\mathtt{c}, \mathtt{d})$ and repeat.
\end{proof}

The above theorem places many more well-known functions in $\sharpL$. We highlight two that are of interest later in the paper.

\begin{cor}[Stirling numbers]\label{cor:stirling}
The functions sending $(1^n\,0\,1^k)$ to the
Stirling numbers of the first type $c(n,k)$ and second type $S(n,k)$ are both in $\sharpL$.
\end{cor}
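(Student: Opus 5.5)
The plan is to apply Theorem~\ref{thm:2-dim} directly, using the standard triangular recurrences
\[
c(n,k) = (n-1)\,c(n-1,k) + c(n-1,k-1), \qquad S(n,k) = k\,S(n-1,k) + S(n-1,k-1).
\]
Both use the fixed set of $C=2$ shifts $(i_1,j_1)=(1,0)$ and $(i_2,j_2)=(1,1)$, and both satisfy $(n-1,k-j)<_{\mathrm{lex}}(n,k)$. The coefficient functions are $D(n,k,1,0)=n-1$ (respectively $k$) and $D(n,k,1,1)=1$. They are of size at most $n+k$, hence $\mathrm{poly}(n+k)$. Each is computable in $\FL_{\ge0}$ from $(1^n\,0\,1^k\,0\,1^i\,0\,1^j)$: count the $1$s in the relevant block in a binary counter, subtract $1$ if needed, and output the result.

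The only genuine care is needed for the base cases and the positivity hypothesis. Theorem~\ref{thm:2-dim} asks for positive coefficients and a finite set $B$ of constant base values. However, the recurrences run down to the boundary rows $k=0$ and $k=n$, where the values are $0$ or $1$, and the coefficient $n-1$ (resp.\ $k$) vanishes at $n=1$ (resp.\ $k=0$).

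To handle this, I would restrict to the region $1\le k\le n$, since outside it the function is $0$ and the machine can reject immediately. I would take the base set to be $B=\{(m,m): m\ge 1\}\cup\{(m,1): m\ge1\}$ wherever the value is known. For Stirling numbers of the second kind, $S(m,m)=S(m,1)=1$. For the first kind, $c(m,m)=1$ but $c(m,1)=(m-1)!$, which is not constant.

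For $c$, I would therefore stop instead at the base set $\{(m,m)\}\cup\{(m,0)\}$ with $c(m,0)=[m=0]$. Any arcs leading to a zero-valued base case, or arcs whose coefficient is zero, are simply never accepted. This is harmless, because the proof of Theorem~\ref{thm:2-dim} only checks $1\le d\le D$ along the path, and a zero coefficient just means no valid $d$ exists. I would note explicitly that this proof does not actually use positivity or finiteness of $B$, only that membership in $B$ and the base value $A(x,y)\in\{0,1\}$ are log-space decidable.

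Alternatively, one could avoid relying on that observation by giving direct witnesses. For $c(n,k)$ the witness would be a Lehmer-code-like sequence in which, at each step $m$, one either starts a new cycle or inserts $m$ after one of $m-1$ earlier elements. For $S(n,k)$ the witness would be a restricted growth word $a_1\cdots a_n$ with $a_{m}\le 1+\max_{i<m} a_i$ and maximum exactly $k$. The latter is checkable with two counters, in the same way as Algorithm~\ref{alg:binom}.

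I expect the boundary and base-case bookkeeping to be the main obstacle, and the rest to be routine.
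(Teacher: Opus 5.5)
Your proposal is correct and takes the same route as the paper, which just cites these two recurrences (written as $c(n+1,k)=n\,c(n,k)+c(n,k-1)$ and $S(n+1,k)=k\,S(n,k)+S(n,k-1)$) and invokes Theorem~\ref{thm:2-dim}. Your extra bookkeeping fills in details the paper leaves implicit: restricting to $1\le k\le n$, choosing log-space-decidable base sets with values in $\{0,1\}$ (and rightly refusing $c(m,1)=(m-1)!$ as a base value), and observing that the proof of Theorem~\ref{thm:2-dim} only needs $D\ge 0$. The one nit is that $c(0,0)=S(0,0)=1$ falls outside your region $1\le k\le n$, which a trivial special case handles.
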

\begin{proof}
    They are subject to the $2$-dimensional recursions
    \[
    c(n+1,k) = n\, c(n,k) + c(n,k-1)
    \quad\text{and}\quad
    S(n+1,k) = k\, S(n,k) + S(n,k-1),
    \]
    and thus the result follows from Theorem~\ref{thm:2-dim}.
\end{proof}
\begin{note}
    The witnesses obtained for the Stirling numbers are classical combinatorial interpretations.     
    On the one hand, by \cite[Corollary~2.2]{SaganLogConcave}, we can express the Stirling numbers of the first kind as the specialisation of some elementary symmetric polynomial,
    \(c(n,k) = e_{n-k}(1,2,\ldots,n-1)\).
    Hence Stirling numbers of the first kind count sequences
    \[
    T_1, a_1, T_2, a_2, \ldots, T_{n-k}, a_{n-k}
    \]
    such that $0 < T_1 < T_2 < \cdots < T_{n-k} < n$ and $1\le a_i \le T_i$ for all $i$.

    On the other hand, Corollary~\ref{cor:stirling} implies that $c(n,k)$ counts sequences
    \[
    n, k_0, d_0, n-1, k_1, d_1, n-2, k_2, d_2,\ldots
    \]
    where $n-s\ge k_s\ge1$, we have $k_{s+1}\in\{k_s, k_s+1\}$, and where $d_s$ is $1$ if $k_{s+1}=k_s$ or a number between $1$ and $n-s$ otherwise.

    These two sets are in bijection, where the $T_i$ are precisely the values $k_{s+1}$ where there is a jump, $k_{s+1} \ne k_s$, and the $a_i$ get mapped to the $d_s$.

    The situation with Stirling numbers of the second kind is analogous.
\end{note}

\begin{thm}[Ballot sequences]\label{thm:ballots}
Let $C$ be a constant and let $\mathbf{a}:=(a_1 \geq \cdots \geq a_C)$ be a sequence of non-negative integers encoded in unary. Let $b_n(\mathbf{a})$ be the number of $\mathbf{a}$-shifted ballot sequence $w_1,\ldots,w_n$ with $w_i\in\{1,\ldots,C\}$, that is, sequences such that for every $1\leq i < C$ and $j\leq n$ we have 
$$\#\{k\mid w_k =i,\, k\leq j\} +a_i \geq \#\{k\mid w_k=i+1,\, k\leq j\}+a_{i+1}$$
for every $k$. Then the function $(\mathbf{a}, 1^n) \to b_n(\mathbf{a})$ is in $\sharpL$.
\end{thm}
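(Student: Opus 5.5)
We build a verifier $\NL$ machine of the form of Algorithm~\ref{alg:cap}, extending the Catalan machine of Theorem~\ref{thm:catalan}. The witness is the sequence $w_1,\ldots,w_n$ itself, each letter $w_k\in\{1,\ldots,C\}$ written in a fixed-size alphabet (or in $\lceil\log_2 C\rceil$ bits, which is harmless by the alphabet relaxation of \S\ref{subsec:onvNLmachines}). Distinct accepted witnesses then correspond exactly to distinct $\mathbf{a}$-shifted ballot sequences, so the number of accepting witnesses equals $b_n(\mathbf{a})$.

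The machine keeps $C+1$ binary-encoded counters, a constant number of them.
\begin{enumerate}
\item First read the unary input and initialise $\mathtt{c}_i\leftarrow a_i$ for $i=1,\ldots,C$, and $\mathtt{m}\leftarrow n$. Each value is at most the input length, so each counter uses $O(\log(|x|))$ space; since $C$ is constant, the total is still logarithmic.
\item Then read the witness once, left to right. For each letter $w_k$, check that it is a valid symbol in $\{1,\ldots,C\}$, increase $\mathtt{c}_{w_k}$ by one, and decrease $\mathtt{m}$ by one.
\item Immediately after each update, check $\mathtt{c}_i\ge \mathtt{c}_{i+1}$ for every $1\le i<C$, rejecting on failure. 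Each comparison of two binary counters is in $\FL$, by Lemma~\ref{lem:elementary} or by direct bitwise comparison.
\item At the end, accept if and only if $\mathtt{m}=0$.
\end{enumerate}

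Correctness rests on one invariant. After reading $w_1,\ldots,w_j$, counter $\mathtt{c}_i$ equals $a_i+\#\{k\le j\mid w_k=i\}$, so the checks performed at step $j$ are exactly the defining inequalities of the statement for that prefix. At $j=0$ the inequalities are $a_i\ge a_{i+1}$, which hold because $\mathbf{a}$ is weakly decreasing. Checking at every prefix length $j=1,\ldots,n$ therefore certifies the ballot condition for all $j\le n$. The condition $\mathtt{m}=0$ certifies that the length is exactly $n$; the counter $\mathtt{m}$ may go negative if the witness is too long, so either store a sign bit or reject as soon as $\mathtt{m}$ would drop below $0$.

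The main point needing care is the input format. Parsing $(\mathbf{a},1^n)$ requires the input to be a fixed number $C$ of unary blocks separated by $0$s. Malformed inputs, and inputs with $\mathbf{a}$ not weakly decreasing, must be sent to $0$ by always rejecting, which is a log-space syntactic check done before reading the witness. Everything else is routine bookkeeping in the spirit of Theorems~\ref{thm:binomial} and~\ref{thm:catalan}, and the boundedness of $C$ is exactly what keeps the number of counters constant.
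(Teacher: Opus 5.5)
Your proposal is correct and follows essentially the same approach as the paper. Both use $C$ binary counters initialised to $a_i$, read the witness once while incrementing the counter of each letter and checking the ballot inequalities, and finish with a length check.

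The differences are cosmetic. The paper counts the length up from $0$ where you count down from $n$. The paper also checks only the single inequality $\mathtt{b}_i\le\mathtt{b}_{i-1}$ that the increment can break, whereas you check all $C-1$ inequalities; this is equivalent by induction, since $\mathbf{a}$ is weakly decreasing.
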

\begin{proof}
    Initialize a counter $\mathtt{j}=0$
    and $C$ many counters $\mathtt{b}_i = a_i$
    for $i=1,\ldots,C$ encoded in binary in $O(\log(n+|\mathbf{a}|))$ space. The witness is the sequence $w_1,\ldots,w_n$ encoded with the $C$ symbols: $1,2,\ldots,C$.  We read the witness symbol by symbol. 
    At each new symbol $w_j$ increase $\mathtt{j}$ by one (so that $\mathtt{j} = j$). Set $i:=w_{\mathtt{j}}$ and increase $\mathtt{b}_i$ by one, then, if $i>1$, check if $\mathtt{b}_i\leq \mathtt{b}_{i-1}$. If that check fails then the witness is not a ballot sequence. When the witness has been fully read, check $\mathtt{j}=n$.
\end{proof}

The \emph{Young diagram} of a partition $\lambda$ is the set $[\lambda]=\{(i,j)\mid 1\le j \le \lambda_i\}$, which we represent by the English convention \cite[page 29]{EC1}. The \emph{transpose} of $\lambda$ is the partition with Young diagram $[\lambda']=\{(j,i)\mid (i,j)\in[\lambda]\}$.
If $\lambda\vdash n$, a \emph{standard Young tableaux} of shape $\lambda$ is a bijection $T:[\lambda]\to[n]$ which is increasing along the rows and down the columns. Let $\SYT(\lambda)$ be the set of standard Young tableaux of shape~$\lambda$. The cardinality of $\SYT(\lambda)$ is given by the \emph{hook-length formula} \cite[Corollary 7.21.6]{StanleyEC2}
    \[
    \#\SYT(\lambda) = \frac{n!}{\prod_{c\in[\lambda]} \hl_\lambda(c)}
    \]
where $\hl_\lambda(i,j) = \lambda_i+\lambda_j' - i - j + 1$.
As a corollary of the previous theorem, we obtain $\#\SYT(\lambda)$ in $\sharpL$ when the length (or the width) of $\lambda$ is bounded. In Theorem~\ref{thm:hook-length} we will see how to place $\#\SYT(\lambda)$ in $\sharpL$ without any constraints.
\begin{cor}[SYTs of fixed length]\label{cor:syt_fixed_ell}
    Let $C$ be a fixed constant and let $\lambda$ be a partition of $n$ with $C$ parts. 
    The function $(1^{\lambda_1}\,0\,1^{\lambda_2}\,0\,\dots \,0\,1^{\lambda_C}) \mapsto \# \SYT(\lambda)$ is in $\sharpL$.
    Similarly, let $\lambda$ be a partition of $n$ with $\lambda_1\le C$. 
    The function $(1^{\lambda_1}\,0\,1^{\lambda_2}\,0\,\dots) \mapsto \# \SYT(\lambda)$ is in $\sharpL$.
\end{cor}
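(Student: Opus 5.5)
The plan is to reduce both statements to Theorem~\ref{thm:ballots} through the classical bijection between standard Young tableaux and ballot (Yamanouchi) sequences. Given $T\in\SYT(\lambda)$ with $\ell(\lambda)=C$, define $w_k$ to be the row of $T$ containing the entry $k$. Then $T\mapsto w_1\cdots w_n$ is a bijection from $\SYT(\lambda)$ onto the words in $\{1,\dots,C\}^n$ that satisfy two conditions: every prefix contains at least as many $i$'s as $(i+1)$'s, and the letter $i$ occurs exactly $\lambda_i$ times. In the notation of Theorem~\ref{thm:ballots}, these are exactly the $\mathbf{a}$-shifted ballot sequences with $\mathbf{a}=(0,\dots,0)$, subject to the additional content constraint.

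For the first claim, I would use the verifier $\NL$ machine from the proof of Theorem~\ref{thm:ballots} with $\mathbf a=0$ and $n=|\lambda|$. Here $n$ is obtained by counting the $1$s of the input into a binary counter. After the witness has been read, the machine makes $C$ further checks, namely $\mathtt{b}_i=\lambda_i$ for each $i$. It compares the binary counter $\mathtt b_i$ with the $i$th unary block of the input by locating the $i$th block (by counting $0$s) and decrementing a copy of $\mathtt b_i$ while scanning that block. All of this uses $O(C\log n)=O(\log n)$ space. Accepting witnesses correspond bijectively to $\SYT(\lambda)$, so the function lies in $\sharpL$. It is cleaner to say the content check is a trivial extension of the machine than to claim the statement of Theorem~\ref{thm:ballots} applies verbatim, because that theorem does not fix the content.

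For the second claim, with $\lambda_1\le C$, I would use transposition: $\#\SYT(\lambda)=\#\SYT(\lambda')$, since transposing a tableau preserves standardness, and $\ell(\lambda')=\lambda_1\le C$. One option is to encode $T$ by the sequence of column indices $c_k\in\{1,\dots,C\}$ of the entries $k$. This is a ballot sequence with content $\lambda'$, where $\lambda'_j=\#\{i:\lambda_i\ge j\}$. The machine then checks at the end that $\mathtt b_j=\lambda'_j$ for each $j\le C$. Each $\lambda'_j$ is computed in log-space by scanning the input once and counting the blocks of length at least $j$, comparing each block's length against $j$ with a small counter. The alternative is to compose with an $\FL$ transducer that outputs $\lambda'$ in unary and then apply the first part through the composition in~\eqref{eq:FL composition}. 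Either way, the space stays $O(\log n)$.

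The main obstacle is only bookkeeping: handling inputs that are malformed or non-partitions (these map to $0$, which is easy to check in log-space) and making sure the content comparison is carried out with binary counters rather than by storing the input. Neither step requires any idea beyond the proof of Theorem~\ref{thm:ballots}.
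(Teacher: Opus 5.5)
Your proposal is correct and matches the paper's proof. The paper also encodes $T$ by its row-ballot sequence and runs the machine of Theorem~\ref{thm:ballots} with $\mathbf a=0$ plus a final check $\mathtt b_i=\lambda_i$. For bounded width it swaps rows and columns and composes with the $\FL$ transducer of Lemma~\ref{lem:transposition} via~\eqref{eq:FL composition}, which is your second option.
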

\begin{proof}
    Standard Young tableaux of a given shape are in correspondence to ballot sequences as follows. An SYT $T$ gives the ballot sequence $w(T)$ where $w(T)_i$ is equal to the row number of the row where the box with entry $i$ in $T$ is. Apply the algorithm from Theorem~\ref{thm:ballots} with $a_i=0$ and adding a check that $\mathtt{b}_i=\lambda_i$ in the end. 

    If the width of $\lambda$ is bounded instead, then change the role of \emph{rows} for \emph{columns} in the previous correspondence. Compose the $\FL$ function from Lemma~\ref{lem:transposition} and the verifier $\NL$ machine of Theorem~\ref{thm:ballots} as in~\eqref{eq:FL composition}.
\end{proof}
\begin{lem}[Transposition]\label{lem:transposition}
    Fix a constant $C$. The transposition of partitions of length at most $C$ is a log-space operation. More precisely, the function
    \[
    (1^{\lambda_1}\,0\,1^{\lambda_2}\,0\,\dots\,0\,1^{\lambda_C}\,0) 
    \mapsto
    (1^{\lambda'_1}\,0\,1^{\lambda'_2}\,0\,\dots\,0\,1^{\lambda'_{\lambda_1}}\,0)
    \]
    is in $\FL$. Similarly, the transposition of partitions of width at most $C$ is a log-space operation.
\end{lem}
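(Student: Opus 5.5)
We build a log-space transducer directly, using the description $\lambda'_j = \#\{i \le C : \lambda_i \ge j\}$ for $1 \le j \le \lambda_1$. The machine uses $C+2$ binary-encoded counters: one counter $\mathtt{j}$ ranging over $1,\ldots,\lambda_1$, a counter $\mathtt{c}$ that accumulates $\lambda'_{\mathtt{j}}$, and scratch counters used to measure the unary blocks of the input. Each counter stores a number at most $|x|$, so the total space is $O(C\log|x|)=O(\log|x|)$.

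First, read the first block of $1$s to get $\lambda_1$; we only need it as a loop bound. Then, for $\mathtt{j}=1,\ldots,\lambda_1$, do the following:
\begin{itemize}
\item set $\mathtt{c}=0$;
\item scan the input from left to right, measuring the length $\lambda_i$ of each block of $1$s delimited by $0$s in a scratch counter;
\item after each block, increment $\mathtt{c}$ if $\lambda_i\ge \mathtt{j}$;
\item once the scan is complete, write $\mathtt{c}$ in unary on the output tape, i.e.\ $\mathtt{c}$ ones followed by a $0$, by decrementing a copy of $\mathtt{c}$.
\end{itemize}
The output tape is write-only and is written strictly left to right, exactly as in the proof of Theorem~\ref{thm:multinomial}. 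The only slightly delicate point is that we never store the whole partition. Rescanning the read-only input once per value of $\mathtt{j}$ avoids this, and it costs only polynomial time, which is irrelevant for $\FL$. We may also check that the input is well-formed and weakly decreasing along the way, outputting a default value if it is not.

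For partitions of width at most $C$, the input $(1^{\lambda_1}0\,1^{\lambda_2}0\cdots)$ may have unboundedly many rows, but each $\lambda_i\le C$. In this case the output has at most $C$ blocks, and $\lambda'_j=\#\{i:\lambda_i\ge j\}$ for $j=1,\ldots,C$. So we run the same procedure with the loop $\mathtt{j}=1,\ldots,C$. The counter $\mathtt{c}$ may now be as large as the number of rows, but that is still at most $|x|$, so it fits in $O(\log|x|)$ bits. Alternatively, one could keep $C$ counters $\mathtt{c}_1,\ldots,\mathtt{c}_C$ and fill them in a single pass.

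I do not expect a real obstacle. The one thing to keep in view is the principle, made explicit in \S\ref{sec:log-space reduction}, that an $\FL$ machine may reread its input freely but must emit its output sequentially. The column-by-column loop satisfies exactly this requirement.
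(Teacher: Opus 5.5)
Your proof is correct. For the width-bounded case it is the paper's argument: the paper also runs a counter $\mathtt{a}=1,\dots,\lambda_1$ and, for each value, rescans the input, writing a $1$ for every run of length at least $\mathtt{a}$ and then a $0$. Your loop bound $C$ instead of $\lambda_1$ only appends $C-\lambda_1$ empty blocks, giving the padded length-$C$ format of the first half; use $\lambda_1$ if you want exactly $\lambda_1$ blocks.

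For the length-bounded case the paper takes a different route. It loads $\lambda_1,\dots,\lambda_C$ into $C$ counters and emits the output by runs of equal columns: $\lambda_C$ copies of $1^C0$, then $\lambda_{C-1}-\lambda_C$ copies of $1^{C-1}0$, and so on down to $\lambda_1-\lambda_2$ copies of $10$. After each block it decrements all surviving counters, and it discards each counter once it is exhausted. This is exactly where constancy of $C$ is used.

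Your column-by-column rescan needs only a constant number of counters, independent of $C$: the loop index, the accumulator $\mathtt{c}$, one scratch counter for the current block, and a copy of $\mathtt{c}$ for writing. So your count of ``$C+2$'' counters is an overcount. More importantly, your argument never uses either bound. It shows that transposition of unary-encoded partitions is in $\FL$ with no restriction on length or width, so both halves of the lemma are special cases. The only thing the paper's version gains in the length case is that the input is read once, which is irrelevant for membership in $\FL$.
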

\begin{proof}
    Suppose $\ell(\lambda)\le C$.
    The output of the function can be rewritten as 
    \[
    \Big(\underbrace{
    \underline{1^C\,0}~
    \underline{1^C\,0}~
    \dots
    \underline{1^C\,0}
    }_{\lambda_C~\text{times}}
    \,
    \dots
    \,
    \underbrace{
    \underline{1^2\,0}~
    \underline{1^2\,0}~
    \dots
    \underline{1^2\,0}
    }_{\lambda_2-\lambda_3~\text{times}}
    \,
    \underbrace{
    \underline{1\,0}~
    \underline{1\,0}~
    \dots
    \underline{1\,0}
    }_{\lambda_1-\lambda_2~\text{times}}
    \Big)\,.
    \]
    Initialise $C$ counters $\mathtt{a}_1 = \lambda_1, \dots, \mathtt{a}_C = \lambda_C$.
    While $\mathtt{a}_C > 0$, write $1^C\,0$ in the output tape and decrease all counters by one. Once the while-loop is finished, delete the counter $\mathtt{a}_C$.
    At this stage we have $\mathtt{a}_i = \lambda_i-\lambda_C$ for all $i$.
    Run a new while-loop, this time controlled by $\mathtt{a}_{C-1} > 0$, and writing $1^{C-1}\,0$ in each iteration. After this second while-loop, $\mathtt{a}_i = \lambda_i-\lambda_{C-1}$ for all $i$.
    Iterate this process until all counters have been deleted.\medskip

    Suppose now $\lambda_1\le C$. Initialise a counter $\mathtt{a}=1$. For each run of $1$s in the input, if the run is of length at least $\mathtt{a}$ then write a $1$ in the output tape. Then write a $0$ in the output, increase $\mathtt{a}$, and repeat. Once $\mathtt{a} > \lambda_1$, stop.
\end{proof}

A permutation $\sigma$ is an involution if $\sigma^2$ is the identity.
The number of permutations of $S_n$ that are involutions is given by $\sum_{\lambda\vdash n} \#\SYT(\lambda)$ \cite[page~52]{Fulton}.
\begin{cor}[Number of involutions of $S_n$]
    The function $(1^n) \mapsto \sum_{\lambda\vdash n} \#\SYT(\lambda)$ is in~$\sharpL$.
\end{cor}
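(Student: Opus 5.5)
We need to show that $(1^n)\mapsto \#\{\sigma\in S_n : \sigma^2=\mathrm{id}\}$ is in $\sharpL$. The summation over all $\lambda\vdash n$ with unbounded length is awkward for a log-space machine: the witness would have to encode $\lambda$, and checking the standard Young tableau afterwards needs $\ell(\lambda)$ counters. Corollary~\ref{cor:syt_fixed_ell} only handles bounded length. So instead of proving the sum formula term by term, I would count involutions directly with a verifier $\NL$ machine, and use the identity $\sum_{\lambda\vdash n}\#\SYT(\lambda)=\#\{\text{involutions}\}$ (via RSK, as cited from \cite{Fulton}) only to identify the two functions.

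The most economical route is the one-dimensional recursion
\[
I(n) = I(n-1) + (n-1)\, I(n-2), \qquad I(1)=1,\ I(2)=2.
\]
It comes from the observation that $n$ is either a fixed point or is paired with one of the other $n-1$ elements. This recurrence has $C=2$ terms. Its coefficients are $D(n,1)=1$ and $D(n,2)=n-1$, which are non-negative, of polynomial size, and the map $(1^n\,0\,1^i)\mapsto D(n,i)$ is clearly in $\FL_{\ge0}$: read $1^n$ into a binary counter and subtract one. The base values $I(1),I(2)$ are constants. Hence Theorem~\ref{thm:recurrence} applies directly and gives $(1^n)\mapsto I(n)\in\sharpL$, and therefore the stated function is in $\sharpL$.

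The resulting witness is a sequence of steps that either peel off a fixed point, or peel off a pair together with an index $d\le p-1$ saying which partner the current largest element $p$ is matched with. This is a direct encoding of the involution. The only steps needing care are checking the recurrence (routine) and the base-case conventions of Theorem~\ref{thm:recurrence}, e.g.\ the value at $n=0$. Neither is a real obstacle.

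As an alternative, one could try to sum Corollary~\ref{cor:syt_fixed_ell} over $\lambda$, but I would not pursue it. Without a length bound the ballot-sequence checker needs unboundedly many counters, and the bounded-length case would only cover part of the sum.
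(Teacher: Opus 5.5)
Your proposal is correct and matches the paper's proof: the paper also uses the identification of $\sum_{\lambda\vdash n}\#\SYT(\lambda)$ with the number of involutions, observes that $F(n)=F(n-1)+(n-1)F(n-2)$, and concludes directly from Theorem~\ref{thm:recurrence}. Your explicit check of the hypotheses ($D(n,1)=1$, $D(n,2)=n-1$, base cases $I(1)=1$, $I(2)=2$) spells out what the paper leaves implicit.
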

\begin{proof}
    This function satisfies the recursion
    \(
    F(n) = F(n-1) + (n-1)F(n-2)
    \)
    and so the claim follows from Theorem \ref{thm:recurrence}.
\end{proof}

A \emph{semistandard Young tableau}  of shape $\lambda$ and entries in $[N]$ is a map $T:[\lambda]\to[N]$ such that $T(i,j) \le T(i,j+1)$ and $T(i,j) < T(i+1,j)$ whenever the expressions are defined.
The  set $\SSYT_N(\lambda)$ of semistandard Young tableaux of shape $\lambda$ and entries in $[N]$ is counted by the \emph{hook-content formula} \cite[Corollary 7.21.4]{StanleyEC2}
  \[\#\SSYT_N(\lambda) = \prod_{c\in[\lambda]}  \frac{N+\ct(c)}{\hl_\lambda(c)}\]
where $\ct(i,j) = j-i$.
\begin{cor}[$\#\SSYT$s of shape $\lambda$ and bounded entries]\label{cor:ssyt_fixed_ell}
  Let $C$ be a fixed constant and let $\lambda$ be a partition of $n$ with $C$ parts. Let $N$ be an integer no larger than $n$.
  Then the function $(1^N\,0\,1^{\lambda_1}\,0\,1^{\lambda_2}\,0\,\dots\,0\,1^{\lambda_C}) \mapsto \# \SSYT_N(\lambda)$ is in $\sharpL$.

  Similarly, let $\lambda$ be a partition with $\lambda_1\le C$, then the function $(1^N\,0\,1^{\lambda_1}\,0\,1^{\lambda_2}\,0\,\dots) \mapsto \# \SSYT_N(\lambda)$ is in $\sharpL$.
\end{cor}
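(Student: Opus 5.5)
Since $N\le n$ and the length of $\lambda$ is bounded by $C$, we will encode a semistandard Young tableau in a way a read-once verifier can check, mirroring the ballot-sequence proof of Corollary~\ref{cor:syt_fixed_ell}. First note that a column of length greater than $N$ forces $\#\SSYT_N(\lambda)=0$, so we may assume $C\le N$ (or simply let the checks below reject). For the bounded-length case, we read the tableau by its entries in increasing order. For each value $v=1,\ldots,N$, the cells containing $v$ form a horizontal strip, and they are described by how many boxes with entry $v$ lie in each row $i=1,\ldots,C$.

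The witness is therefore a sequence of blocks $B_1,\ldots,B_N$. Block $B_v$ is a word over the alphabet $\{1,\ldots,C\}$ whose letters are weakly increasing; a letter $i$ means that one box in row $i$ gets entry $v$. Separators between blocks are given by an extra symbol, so the alphabet has size $C+1$, which is constant. The machine keeps $C$ counters $\mathtt{b}_1,\ldots,\mathtt{b}_C$ recording the current row lengths, that is, the shape $\mu^{(v)}$ filled by entries $\le v$. It also keeps $C$ snapshot counters $\mathtt{s}_1,\ldots,\mathtt{s}_C$ holding $\mu^{(v-1)}$, and a block counter $\mathtt{v}$. All of these are at most $n$ in value, so together they use $O(C\log n)=O(\log n)$ space.

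When reading a letter $i$ in block $v$, the machine increments $\mathtt{b}_i$ and checks that $\mathtt{b}_i\le \mathtt{s}_{i-1}$ for $i>1$. This enforces the horizontal-strip condition $\mu^{(v)}_i\le\mu^{(v-1)}_{i-1}$, which is exactly the column-strictness condition $T(i,j)<T(i+1,j)$. Weak increase of the letters within a block is checked with one extra register holding the previous letter. Its only purpose is to make the encoding of each strip canonical, so that each SSYT has exactly one witness. At each separator the machine copies $\mathtt{b}$ into $\mathtt{s}$ and increments $\mathtt{v}$. At the end it checks that $\mathtt{v}=N$ and that $\mathtt{b}_i=\lambda_i$ for all $i$; the values $\lambda_i$ are obtained by rescanning the unary input.

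Row weak-increase holds automatically, because entries are appended left to right in increasing value. Together with the strip condition this gives a bijection between accepted witnesses and $\SSYT_N(\lambda)$. The witness length is $n+N=O(n)$, which is polynomial.

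For the case $\lambda_1\le C$, transposing via Lemma~\ref{lem:transposition} does not directly help, since SSYT are not symmetric under transposition. Instead we use the same scheme with a different witness: each value $v$ now fills a horizontal strip, and in a shape of width at most $C$ a horizontal strip contains at most one box per column. So block $B_v$ is a subset of the columns $\{1,\ldots,C\}$, encoded by $C$ bits, and there are $N$ such blocks. The machine keeps $C$ column-height counters $\mathtt{h}_1,\ldots,\mathtt{h}_C$ and a snapshot of them from the previous block. Adding a box to column $j$ is legal precisely when $\mathtt{h}_j+1\le \mathtt{h}^{\text{old}}_{j-1}$ for $j>1$, which keeps the shape a partition and the strip horizontal. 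At the end the machine checks that the column heights equal $\lambda'$, which is available in log-space by Lemma~\ref{lem:transposition} and used via composition as in~\eqref{eq:FL composition}.

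The main obstacle is making sure the strip condition is checked against the previous shape $\mu^{(v-1)}$ and not the current one. A correct check needs the snapshot counters; checking against the running counters would be wrong, because it would allow a vertical pair of boxes with equal entries. It also needs the canonical ordering inside each block, so that witnesses are in bijection with tableaux rather than being overcounted.
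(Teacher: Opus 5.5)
Your bounded-length argument is correct, and it takes a different route from the paper. The paper standardizes an SSYT into a triple $(S,R,T)$: the set of values used, a set of break positions containing the descents, and an SYT $T$. It then verifies $T$ through its ballot sequence, decrementing $|S|$ at descents and at extra break points marked by `$r$'. You instead encode the tableau directly as the chain $\emptyset=\mu^{(0)}\subseteq\cdots\subseteq\mu^{(N)}=\lambda$ of horizontal strips, and check the interlacing $\mu^{(v)}_i\le\mu^{(v-1)}_{i-1}$ against a snapshot of the previous shape. This avoids the standardization bijection and the descent bookkeeping entirely. The paper's route has the advantage of reusing the ballot-sequence machine of Corollary~\ref{cor:syt_fixed_ell}, and its $(S,R,T)$ description is recycled in the Stirling log-concavity proof. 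Your encoding would also serve there, by attaching to each box of block $v$ a number in $[1,v]$.

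The bounded-width half, however, has a wrong legality test.
\begin{itemize}
\item You allow a box in column $j$ only if $h_j+1\le h^{\mathrm{old}}_{j-1}$, that is, only if column $j-1$ was already strictly taller \emph{before} block $v$.
\item This forces $T(r,j-1)<T(r,j)$. So the machine counts fillings that strictly increase along rows as well as down columns, not $\#\SSYT_N(\lambda)$.
\item Concretely, take $\lambda=(2)$ and $N=1$. The unique tableau has $B_1=\{1,2\}$, and the test for column $2$ reads $1\le 0$, so it is rejected. For $N=2$ the machine accepts only the tableau with row $1,2$ and outputs $1$ instead of $3$.
\end{itemize}

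The fix is to process the $C$ bits in increasing column order and compare with the \emph{running} counter. That is, require $h_j+1\le h_{j-1}$, where $h_{j-1}$ already includes any box added to column $j-1$ in the current block. This keeps the new shape a partition. No snapshot is needed in this case: the one-bit-per-column encoding already makes each block a horizontal strip. So the ``main obstacle'' you describe is specific to the row encoding. In the column encoding, comparing with the old shape is exactly what goes wrong.
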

\begin{proof}
    Suppose that the length of $\lambda$ is bounded by $C$.
    It is well known that SSYTs of shape $\lambda$ and entries $\{1,\ldots,m\}$ correspond via \emph{standardization} to SYTs of shape $\lambda$ with at most $m-1$ descents and set $R \subset [n] $ of $m$ elements, such that the descents of $T$ are a subset of $R$ and $n\in R$.
    Here $i$ is a descent of $T$ if it appears in a higher row than $i+1$ in $T$. Thus SSYTs with entries $\leq N$ are in bijection with 
    \[
    \Big\{(S,R, T) \,\Big|\, 
    \begin{array}{ll}
    S \subseteq [N], ~R \subseteq [n],~ |S|=|R| \geq \ell(\lambda), \\[-.6em]
          T\in\SYT(\lambda),~\text{ descents of $T$ are $\subseteq R$}
    \end{array} \Big\}.
    \]
    The correspondence is as follows: Given an SSYT $P$, let $S=\{s_1,\ldots,s_m\}$ be the labels of the elements which appear at least once in $P$. Then $T$ is the standardization of $P$: reading the smallest entries with values $s_1$ left-to-right, replace them with $1,2,\ldots,a_1$ in that order and put $a_1$ in the set $R$. Then read the elements equal to $s_2$, left-to-right, replace them with $a_1+1,\ldots,a_2$, add $a_2$ to $R$ and continue. Since entries of the same values in $Q$ form horizontal strips, their standardized values would not form any descents in $T$.
    
    Here our witness is thus an indicator sequence giving the set $S$ and a ballot sequence $w$ corresponding to an SYT $T$ as in Corollary~\ref{cor:syt_fixed_ell} with some symbols `$r$' between entries in $w_i$. Set a counter $\mathtt{m}=|S|$. While reading the ballot sequence $w$ we set $\mathtt{m} \leftarrow \mathtt{m}-1$ every time $w_{i+1}>w_{i}$ as this is equivalent to $i$ being a descent of the corresponding $T$ or if we have the symbol `$r$' written between $w_i$ and $w_{i+1}$ with $w_i\geq w_{i+1}$, which corresponds to an entry in the set $R$ that does not come from a descent (if `$r$' is between $w_i>  w_{i+1}$ we consider the witness invalid). In the end we should have $\mathtt{m}=1$. \medskip

    Suppose now the width of $\lambda$ is bounded instead. Then encode the tableau $T$ by its column ballot sequence $w$ instead. Now there is a descent if $w_i \geq w_{i+1}$. The rest of the algorithm works similarly.
\end{proof}

While on the topic of hook-length formulas, we consider the following result of Knuth~\cite[\S5.1.4, Ex.~20]{Knuth}, which expresses the number of linear extensions of a rooted tree poset $T$ (in computer science this is also called the number of topological orderings of a rooted tree $T$) as 
    \[
    e(T) = \frac{n!}{\prod_{v\in T} \hl_T(v)},
    \]
where the \emph{hook-length} $\hl_T(v)$ of a node $v$ is the number of descendants (including the node itself).

A plane rooted tree $T$ can be encoded by its Depth-First-Search traversal $\mathrm{dfs}(T)$, a sequence of $0$s and $1$s in which each $1$ represents going down an edge away from the root and a $0$ represents returning towards the root. For instance,
\[
T =
\scalebox{.8}{
\begin{tikzpicture}[
    >=Stealth,
    every node/.style={circle, draw, minimum size=1em, inner sep=3pt},
    level distance=2.5em,
    sibling distance=4em,
    baseline=-5em, semithick
]

\node (R) {$r$}
    child { node (A) {} 
        child { node (B) {} }
        child { node (C) {} }
    }
    child { node (H) {} }
    child { node (D) {}
        child { node (E) {} 
            child { node (F) {} }
        }
        child { node (G) {} }
    };

\draw[dotted, thick, ->, blue, rounded corners=7pt] 
    ($(R.west) + (-1em,0)$) --
    ($(A.north west) + (-0.7em,0.7em)$) --
    ($(B.west) + (-1em,0)$) -- 
    ($(B.south) + (0,-1em)$) -- 
    ($(B.east) + (1em,0)$) -- 
    ($(A.south) + (0,-.7em)$) -- 
    ($(C.west) + (-1em,0)$) -- 
    ($(C.south) + (0,-1em)$) -- 
    ($(C.east) + (1em,0)$) -- 
    ($(A.east) + (1em,0)$) -- 
    ($(R.south west) + (-.7em,-.7em)$) --
    ($(H.south west) + (-.7em,0)$) -- 
    ($(H.south) + (0,-1em)$) -- 
    ($(H.south east) + (.7em,0)$) -- 
    ($(R.south east) + (.7em,-.7em)$) -- 
    ($(D.west) + (-1em,0)$) --
    ($(E.west) + (-1em,0)$) --
    ($(F.west) + (-1em,0)$) --
    ($(F.south) + (0,-1em)$) --
    ($(F.east) + (1em,0)$) --
    ($(E.south east) + (1em,0)$) --
    ($(D.south) + (0,-.7em)$) --
    ($(G.south west) + (-1em,0)$) --
    ($(G.south) + (0,-1em)$) --
    ($(G.east) + (1em,0)$) --
    ($(D.north east) + (1.3em,0)$) --
    ($(R.east) + (1em,0)$);
\end{tikzpicture}
}
\quad{\scalebox{1.5}{$\leftrightsquigarrow$}}\quad
\mathrm{dfs}(T) = 1101001011100100\,.
\]
\begin{thm}[Tree hook-length formula]
    Let $T$ be a rooted tree poset. 
    The function $\mathrm{dfs}(T) \mapsto e(T)$ is in $\sharpL$.
\end{thm}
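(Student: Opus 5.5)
I will use the recursive structure of linear extensions instead of the hook-length formula. If the root $r$ of $T$ has subtrees $T_1,\dots,T_m$ with $|T_j|=n_j$, then
\[
e(T)=\binom{n_1+\cdots+n_m}{n_1,\ldots,n_m}\prod_{j=1}^m e(T_j).
\]
Unfolding this over all vertices gives $e(T)=\prod_{v\in T}\binom{|T_v|-1}{|T_{c_1}|,\ldots,|T_{c_{m_v}}|}$, where $c_1,\dots,c_{m_v}$ are the children of $v$ and $T_u$ is the subtree rooted at $u$. This is a product, indexed by the vertices of $T$, of multinomial coefficients. So the plan is to apply Theorem~\ref{thm:product} with $f$ the multinomial function of Theorem~\ref{thm:multinomial}. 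The counter $i$ ranges over the $n=|T|\le |\mathrm{dfs}(T)|/2+1$ vertices, ordered by their first visit in the DFS, so $p(x)=|x|/2+1$ is clearly in $\FL_{\ge0}$ and polynomially bounded.

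The key step is to show that the map
\[
g:(\mathrm{dfs}(T),i)\mapsto (1^{|T_v|-1}\,0\,1^{|T_{c_1}|}\,0\,\cdots\,0\,1^{|T_{c_{m}}|}),
\]
with $v$ the $i$th vertex, is in $\FL$.

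First, locate $v$. For $i=1$ it is the root, at position $0$. For $i\ge2$, scan $\mathrm{dfs}(T)$ counting $1$s until the $(i-1)$st $1$, and store its position $s$ in binary. This position is where the DFS enters $v$.

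Next, find the extent of the subtree $T_v$. From $s$, scan right while maintaining a height counter, increased on $1$ and decreased on $0$. The matching $0$ at position $t$ is the first point where the height returns to its starting level minus one. Then $|T_v|-1=(t-s-1)/2$, which is the number of $1$s strictly between $s$ and $t$. This can be output in unary by scanning that interval and writing a $1$ for each $1$ read. For the root we take the whole string, and $|T|-1$ is the total number of $1$s.

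Next, produce the children's blocks. Scan again from $s+1$ to $t-1$ with a height counter. Each child's subtree corresponds to a maximal excursion that starts at relative height $0$ and returns to $0$. Within each excursion, output a $1$ for every $1$ read, which gives $|T_{c_j}|$ in unary. At the end of each excursion, output the separator $0$, omitting it after the last child. Only $O(\log|x|)$ bits are used, for the positions and heights.

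There are some conventions to check. A leaf has $m_v=0$, so its factor must be $1$. I will either make $g$ output the trivial instance $(1^0\,0\,1^0)$, i.e.\ $\binom00$, or restrict the product to internal vertices via an $\FL$ reindexing. I also need $g$'s output format to match exactly the input format of Theorem~\ref{thm:multinomial}, including the leading block of size $|T_v|-1=\sum_j|T_{c_j}|$.

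The main obstacle is this bookkeeping of $g$, together with confirming the recursive identity. The identity is standard: a linear extension of $T$ puts $r$ first and then interleaves linear extensions of the subtrees arbitrarily. Everything else is a direct instance of Theorem~\ref{thm:product}.
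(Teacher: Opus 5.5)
Your proposal is correct and follows the paper's proof. Like the paper, you write $e(T)=\prod_{v}\binom{|T_v|-1}{|T_{c_1}|,\ldots,|T_{c_m}|}$, take $f$ to be the multinomial function of Theorem~\ref{thm:multinomial}, and apply Theorem~\ref{thm:product} once subtree sizes are shown to be $\FL$-computable by scanning $\mathrm{dfs}(T)$ with a height counter; your explicit construction of $g$, which locates the children's excursions and handles leaves via $\binom{0}{0}$, supplies bookkeeping that the paper leaves implicit.
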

\begin{proof}
    Let $n$ be the number of vertices of $T$. We have $|\mathrm{dfs}(T)| = 2(n-1) \in O(n)$, and hence we need a verifier $\NL$ machine in $O(\log(n))$ space.
    
    Let $r$ be the root of $T$, let $\Delta(r) = (v_1,v_2,\ldots,v_{\delta(r)})$ be its children.
    Knuth's formula admits the recursion
    \[
    e(T) = \binom{\hl_T(r)-1}{\hl_T(v_1),\ldots,\hl_T(v_{\delta(r)})} \prod_{v_i\in\Delta(r)} e(T_{v_i}),
    \]
    where $T_v$ is the subtree rooted at $v$ (and thus $T_v$ has $\hl_T(v)$ nodes). 
    Use $\mathrm{Coeff}_T(r)$ to denote the multinomial coefficient in this formula. We then have
    \[
    e(T) = \prod_{v\in T} \mathrm{Coeff}_T(v).
    \]
    Since the multinomial coefficients are in $\sharpL$ (Theorem~\ref{thm:multinomial}), we are almost in the hypotheses of Theorem~\ref{thm:product}; it remains to show that the hook-lengths are $\FL$ computable.
    
    Let $v_1, v_2, \ldots, v_n$ be the nodes of $T$, ordered by their first visit in $\mathrm{dfs}(T)$. That is, $v_1$ is the root, and for each $1$ in $\mathrm{dfs}(T)$ we find a new node. To compute $\hl_T(v_i)$ initialise two counters $\mathtt{a}=1$ and $\mathtt{b}=1$. Scan $\mathrm{dfs}(T)$ left-to-right starting at $v_i$. For each $1$ increase $\mathtt{a}$ and $\mathtt{b}$. For each $0$ decrease $\mathtt{b}$. When $\mathtt{b}=0$ we have scanned through all of $T_{v_i}$ and return $\mathtt{a} = \hl_T(v_i)$.

    Apply Theorem~\ref{thm:product} to conclude.
\end{proof}

As corollaries of the hook-length and hook-content formulas, we can establish unimodality and (strong\footnote{For sequences of integers, strong log-concavity is equivalent to log-concavity.}) log-concavity of the binomial coefficients injectively.

\begin{cor}[Binomial unimodality]
    The function $(1^n\,0\,1^k)\mapsto\binom{n}{k}-\binom{n}{k-1}$ for $2k\le n$ is in $\sharpL$.
\end{cor}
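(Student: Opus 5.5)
The plan is to interpret $\binom{n}{k}-\binom{n}{k-1}$ as the number of standard Young tableaux of the two-row shape $(n-k,k)$, and then appeal to Corollary~\ref{cor:syt_fixed_ell} with $C=2$. The identity $\#\SYT(n-k,k)=\binom{n}{k}-\binom{n}{k-1}$ for $2k\le n$ is classical. One can read it off the hook-length formula directly. Alternatively, use the reflection principle: ballot sequences in $\{1,2\}^n$ with $k$ twos are the words counted by $\binom{n}{k}$ that never let the number of $2$s exceed the number of $1$s. The bad words are in bijection with all words having $k-1$ twos, by reflecting the prefix up to the first violation.

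Given this identity, the machine is a composition. First, a log-space transducer converts the input $(1^n\,0\,1^k)$ into the partition encoding $(1^{n-k}\,0\,1^{k})$. It computes $n$ and $k$ in binary counters, subtracts, and writes $n-k$ ones, a $0$, and then $k$ ones. This map is in $\FL$, and the condition $2k\le n$ guarantees that $(n-k,k)$ is a partition. Second, we compose as in~\eqref{eq:FL composition} with the verifier $\NL$ machine of Corollary~\ref{cor:syt_fixed_ell} for partitions of length at most $2$. Concretely, that machine reads a word $w\in\{1,2\}^n$ and keeps two counters $\mathtt{b}_1,\mathtt{b}_2$. At each prefix it checks $\mathtt{b}_2\le \mathtt{b}_1$, and at the end it checks $\mathtt{b}_1=n-k$ and $\mathtt{b}_2=k$. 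This machine can also be written directly without invoking the corollary, since it is just the Algorithm~\ref{alg:binom} machine with one extra inequality check.

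On inputs violating $2k\le n$ or not of the required form, the machine rejects every witness, so it outputs $0$.

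The only real content is the combinatorial identity, and that is standard. The rest is routine log-space bookkeeping, so I expect no serious obstacle. The one point to state carefully is the boundary case $k=0$, where $\binom{n}{-1}=0$ and the single tableau of shape $(n)$ gives $1$, consistent with the formula.
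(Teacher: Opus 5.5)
Your proposal is correct and essentially matches the paper's argument. The paper likewise observes $\#\SYT((n-k,k))=\binom{n}{k}-\binom{n}{k-1}$ and invokes Corollary~\ref{cor:syt_fixed_ell}. Its primary route, Pak's ballot-word interpretation run through the machine of Theorem~\ref{thm:ballots} with an extra counter, is exactly the two-counter machine you write out explicitly.
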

\begin{proof}
    It is shown in \cite{Pak:inequalities} that the difference $\binom{n}{k}-\binom{n}{k-1}$ counts the number of $k$-subsets of $[n]$ whose encoding word $w$ is a \emph{ballot sequence}: for each prefix $w_1\ldots w_i$ there are at most as many $1$s as $0$s. We apply now the algorithm from Theorem~\ref{thm:ballots} with $a_i=0$ and an additional counter $\mathtt{k}$ which reads the number of $1$s. 

    Alternatively, notice that $\#\SYT((n-k,k)) = \binom{n}{k}-\binom{n}{k-1}$ and the result follows from Corollary~\ref{cor:syt_fixed_ell}. 
\end{proof}

For the next two results, we need to define the \emph{Schur polynomials} $s_\lambda(x_1, \ldots, x_N)$, which are the generating functions of semistandard Young tableaux,
\[
s_\lambda(x_1,\ldots,x_N) = \sum_{T\in\SSYT_N(\lambda)}
\prod_{u\in[\lambda]} x_{T(u)}.
\]
They are symmetric polynomials, and hence some linear combinations of elementary symmetric polynomials (similarly, of complete homogeneous symmetric polynomials). We point to \cite[Chapter 7]{StanleyEC2} for a general reference.

\begin{cor}[Binomial log-concavity]
    The functions sending $(1^n\,0\,1^a\,0\,1^b)$ where $1 \le a \le b \le n$ to the differences 
    \[
    \binom{n}{a}\binom{n}{b}-\binom{n}{a-1}\binom{n}{b+1}
    \quad\text{and}\quad
    \binom{n+a}{n}\binom{n+b}{n}-\binom{n+a-1}{n}\binom{n+b+1}{n}
    \]
    are both in $\sharpL$.
\end{cor}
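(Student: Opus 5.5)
Both differences are counts of tableaux of a two-column shape, via the Jacobi--Trudi identities. Work with the conjugate shape $\lambda=(b,a)'$, the partition with $a$ rows of length $2$ and $b-a$ rows of length $1$.

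\textbf{Step 1: the elementary case.} By the dual Jacobi--Trudi identity,
\[
s_{(b,a)'} = \det\begin{pmatrix} e_b & e_{b+1}\\ e_{a-1} & e_a\end{pmatrix} = e_a e_b - e_{a-1}e_{b+1}.
\]
Specialising at $x_1=\cdots=x_n=1$ gives $e_k(1^n)=\binom{n}{k}$. Hence
\[
\binom{n}{a}\binom{n}{b}-\binom{n}{a-1}\binom{n}{b+1} = \#\SSYT_n\big((b,a)'\big).
\]
The shape $(b,a)'$ has width $\lambda_1\le 2$, so the width-bounded half of Corollary~\ref{cor:ssyt_fixed_ell} applies with $C=2$. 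If $b>n$, there are no tableaux (the first column is too long), and indeed both sides vanish.

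\textbf{Step 2: the complete homogeneous case.} Use the Jacobi--Trudi identity
\[
s_{(b,a)} = h_a h_b - h_{a-1}h_{b+1},
\]
with $a\le b$ and the rows ordered as $(b,a)$. Specialise at $N=n+1$ variables all equal to $1$: then $h_k(1^{n+1})=\binom{n+k}{k}=\binom{n+k}{n}$. So the second difference equals $\#\SSYT_{n+1}((b,a))$. This is a shape of length $2$, covered by the length-bounded half of Corollary~\ref{cor:ssyt_fixed_ell} with $C=2$.

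\textbf{Step 3: input formatting.} Corollary~\ref{cor:ssyt_fixed_ell} expects inputs of the form $(1^N\,0\,1^{\lambda_1}\,0\,1^{\lambda_2}\,0\,\dots)$. We produce these from $(1^n\,0\,1^a\,0\,1^b)$ with an $\FL$ transducer and compose as in~\eqref{eq:FL composition}:
\begin{itemize}
\item For the first difference, write $1^n$, then $a$ copies of $1^2\,0$, then $b-a$ copies of $1\,0$. This takes two counters, much as in Lemma~\ref{lem:transposition}.
\item For the second difference, write $1^{n+1}\,0\,1^b\,0\,1^a$.
\end{itemize}

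\textbf{Main obstacle.} The hypothesis $N\le |\lambda|$ in Corollary~\ref{cor:ssyt_fixed_ell} may fail here, for instance $n+1>a+b$. I would argue this hypothesis is inessential. The witness is a subset $S\subseteq[N]$ together with a ballot sequence, and the counters use only $O(\log(N+|\lambda|))$ space, which is logarithmic in the input length. Since $N$ is given in unary, this space bound holds regardless of whether $N\le|\lambda|$.

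Alternatively, one could avoid Corollary~\ref{cor:ssyt_fixed_ell} altogether. A direct verifier reads a two-row (or two-column) SSYT column by column, checking strictness within columns and weak increase along rows using $O(1)$ counters. This reuses the ballot-sequence technique of Theorem~\ref{thm:ballots}.
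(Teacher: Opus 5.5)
Your proof is correct and follows the paper's argument: the dual and primal Jacobi--Trudi identities identify the two differences with tableau counts for the shapes $(b,a)'$ and $(b,a)$, and Corollary~\ref{cor:ssyt_fixed_ell} then gives membership in $\sharpL$. Two of your details are right where the paper is loose. The second difference is $\#\SSYT_{n+1}((b,a))$, whereas the paper writes $\#\SSYT_n((b,a))$, an off-by-one slip since $h_k(1^{n})=\binom{n+k-1}{k}$. Your remark that the hypothesis $N\le|\lambda|$ in Corollary~\ref{cor:ssyt_fixed_ell} is inessential, because $N$ is given in unary, addresses a point the paper silently skips.
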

\begin{proof}
    By the dual and the primal Jacobi--Trudi identities~\cite[Theorems 7.16.1 and 7.16.2]{StanleyEC2}, the differences equal
    \[
    \#\SSYT_n((b,a)')
    \quad\text{and}\quad
    \#\SSYT_n((b,a)),
    \]
    respectively.
    The results follow therefore from Corollary~\ref{cor:ssyt_fixed_ell}.
\end{proof}

\begin{cor}[Stirling log-concavity]
    The functions sending $(1^n\,0\,1^a\,0\,1^b)$ where $1 \le a \le b \le n$ to the differences 
    \begin{multline*}  
    c(n,a)c(n,b)-c(n,a-1)c(n,b+1)
    \\\text{and}\quad
    S(n+a,n)S(n+b,n)-S(n+a-1,n)S(n+b+1,n)
    \end{multline*}
    involving Stirling numbers of the first and second type are both in $\sharpL$.
\end{cor}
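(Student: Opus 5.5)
The plan is to follow the proof of binomial log-concavity: rewrite each difference as a $2\times 2$ Jacobi--Trudi determinant, i.e.\ as a Schur polynomial of a two-row or two-column shape. Unlike the binomial case, the variables will be specialised to $1,2,\ldots$ instead of all $1$'s. The Stirling numbers are specialisations of elementary and complete homogeneous symmetric polynomials (\cite[Corollary~2.2]{SaganLogConcave} and its dual):
\[
c(n,k)=e_{n-k}(1,2,\ldots,n-1),\qquad S(n+k,n)=h_k(1,2,\ldots,n).
\]
Put $p=n-a\ge q=n-b$, using $a\le b$. The first difference becomes $e_pe_q-e_{p+1}e_{q-1}$ evaluated at $(1,\ldots,n-1)$. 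By the dual Jacobi--Trudi identity \cite[Theorem 7.16.2]{StanleyEC2} this equals $s_{(p,q)'}(1,\ldots,n-1)$. The second difference is $h_bh_a-h_{b+1}h_{a-1}$ evaluated at $(1,\ldots,n)$, which by the primal Jacobi--Trudi identity equals $s_{(b,a)}(1,\ldots,n)$. I must check the boundary conventions: $e_{-1}=h_{-1}=0$, and also the case $q=0$. In each case the shape has either at most two rows or at most two columns.

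Next I give a combinatorial interpretation of $s_\lambda(1,2,\ldots,N)$. Since $s_\lambda(1,\ldots,N)=\sum_{T\in\SSYT_N(\lambda)}\prod_{u}T(u)$, it counts pairs $(T,(d_u)_u)$ with $T\in\SSYT_N(\lambda)$ and $1\le d_u\le T(u)$ for every cell $u$. Here $N\le n$, every $d_u$ fits in $O(\log n)$ bits, and the shape has $O(n)$ cells, so such a pair is a polynomial-size witness.

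The remaining work is a verifier $\NL$ machine for these pairs, of the form of Algorithm~\ref{alg:cap}. The witness is read value by value, $v=1,\ldots,N$, with a counter storing the current $v$.
\begin{itemize}
\item \emph{Two-row shape $(b,a)$.} For each $v$ the witness lists the row-1 cells with entry $v$, then the row-2 cells with entry $v$, each followed by its $d_u$ in binary. The machine keeps counters $r_1,r_2$ for the number of cells placed so far in each row. Rows are weakly increasing automatically. Column strictness is equivalent to: for every $v$, the number of row-2 entries $\le v$ is at most the number of row-1 entries $\le v-1$. So the machine saves the value of $r_1$ before processing $v$ and checks $r_2\le$ that saved value after processing the row-2 cells of value $v$.
\item \emph{Two-column shape $(p,q)'$.} For each $v$ the witness gives two bits saying whether $v$ occurs in column $1$ or column $2$, together with the corresponding $d_u$. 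Columns are then strictly increasing automatically. Row weakness is the check that, after each $v$, the column-2 count is at most the column-1 count.
\end{itemize}
In both cases the machine also checks $1\le d_u\le v$ for every cell, and at the end that the row (or column) counters equal the prescribed lengths. The lengths are computed from the unary input by simple $\FL$ arithmetic, $p=n-a$ and $q=n-b$. All counters are $O(\log n)$ bits.

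I expect the main obstacle to be the first step. The exact form of the $e$- and $h$-specialisations must be checked, including the indexing shift for $S(n+k,n)$, and the index reversal $a\mapsto n-a$ must actually produce the Jacobi--Trudi determinant with the correct sign and a genuine partition shape. The fallback if this fails is to read the specialisations directly off the recursions in Corollary~\ref{cor:stirling}. Once the Schur-function identity is in place, the machine is a routine extension of the ballot-sequence machines of Theorem~\ref{thm:ballots} and Corollary~\ref{cor:ssyt_fixed_ell}.
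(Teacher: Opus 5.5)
Your proposal is correct and follows the paper's route. Sagan's specialisations $c(n,k)=e_{n-k}(1,\ldots,n-1)$ and $S(n+k,n)=h_k(1,\ldots,n)$, together with the dual and primal Jacobi--Trudi identities, turn the two differences into $s_{(n-a,n-b)'}(1,\ldots,n-1)$ and $s_{(b,a)}(1,\ldots,n)$, which are then counted as semistandard tableaux with a choice $1\le d_u\le T(u)$ in each cell. The only difference is the witness encoding: you list the entries value by value and check semistandardness with prefix-count inequalities, whereas the paper goes through the standardization $(S,R,T)$ and the ballot sequences of Corollary~\ref{cor:ssyt_fixed_ell}; your encoding is simpler and avoids the paper's bookkeeping of where the $s_j$ are inserted.
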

\begin{proof}
By \cite[Corollary~2.2]{SaganLogConcave}, we can express the Stirling numbers as the specialisation of some elementary or complete homogeneous symmetric polynomials,
    \[
    c(n,k) = e_{n-k}(1,2,\ldots,n-1)
    ~~\text{and}~~
    S(n,k) = h_{n-k}(1,2,\ldots,k).
    \]
    By the dual and the primal Jacobi--Trudi identities~\cite[Theorems 7.16.1 and 7.16.2]{StanleyEC2}, the differences in the statement equal specialisations of Schur polynomials
    \[
    s_{(n-a,n-b)'}(1,2,\ldots,n-1)
    \quad\text{and}\quad
    s_{(b,a)}(1,2,\ldots,n),
    \]
    respectively. 
    Recall that $s_\lambda(x_1, \ldots, x_n)$ is the generating function of $\SSYT_n(\lambda)$, which is in bijection with
    \[
    \Big\{(S,R, T) \,\Big|\, 
    \begin{array}{ll}
    S \subseteq [N], ~R \subseteq [n],~ |S|=|R| \geq \ell(\lambda), \\[-.6em]
          T\in\SYT(\lambda),~\text{ descents of $T$ are $\subseteq R$}
    \end{array} \Big\}
    \]
    as in the proof of Corollary~\ref{cor:ssyt_fixed_ell}.
    Hence the specialisation $s_{(a,b)}(1,2,\ldots,n)$ counts the number of tuples
    $(S,R,W,C)$ such that
    \begin{itemize}
        \item $(S,R,T)$ belong to the set above,
        \item $W$ is the (row) ballot sequence of $T$,
        \item if the descents of $W$ are at positions $d_1, d_2, \ldots, d_{m-1}$ then 
        for all $1\le j\le m$ we have
        $1\le c_i \le s_j$ for all $d_{j-1} < i \le d_j$. (Here $i_0$ is set to $0$.)
    \end{itemize}
    To encode this as a witness, we order it as 
    \newcommand{\Circled}[1]{
    \begin{tikzpicture}[baseline=-.15em]
        \node[circle, draw, inner sep=.2em] (*) at (0,0) {$#1$};
    \end{tikzpicture}
  }
    \begin{multline*}
        w_1, *, \Circled{s_1}, c_1, w_2, *, c_2, \ldots, w_{d_1}, *, c_{d_1},\\
        w_{d_1+1}, \Circled{s_2}, *, c_{d_1+1}, \ldots, w_{d_2}, *, c_{d_2}, \\
        \ldots, w_{d_{m-1}+1}, \Circled{s_m}, *, c_{d_{m-1}+1}, \ldots, w_{d_m}, *, c_{d_m}
    \end{multline*}
    where each $*$ is either a 0 or a symbol `$r$'. That is, we alternate $W$, the indicator function for $R$, and $C$, and we place the $j$th element of $S$ right after the $(j-1)$st descent $(w_{d_{j-1}}, w_{d_j})$ of $W$.
    
The elements of $S$ are written in increasing order.
The validity of $(S,R,T)$ can be checked in log-space as in the proof of Corollary~\ref{cor:ssyt_fixed_ell}.
   To check the validity of $C$, initialize a counter $\mathtt{b}=1$. After the $j$th descent, set $\mathtt{b} = s_j$. Use one work tape to check $W$ is a ballot sequence, use another work tape to check when there is a descent, another to check $1 \le c_i \le \mathtt{b}$ for each $c_i$ read, and another to check $S$ is an increasing tuple. Each work tape takes up log-space (as they appear either in the machines of Corollaries~\ref{cor:factorial}, ~\ref{cor:stirling} or~\ref{cor:ssyt_fixed_ell}).

    For the Stirling numbers of the first kind, the proof is similar, using column ballot sequences instead.
\end{proof}

\begin{note}
Establishing Stirling unimodality injectively is open (it is not even known where the peak of the sequences is).
\end{note}\smallskip

The number of permutations in $S_n$ of a given cycle type $\alpha=(1^{m_1}2^{m_2}\ldots n^{m_n})$ is given by
$$\frac{n!}{z_\alpha} = \frac{ n!}{\prod_{i=1}^n i^{m_i} m_i!}.$$
The size of the permutation is recovered from $\alpha$ via $n = \sum_i m_i i$.

\begin{thm}
    The number of permutations of cycle type $\alpha=(1^{m_1}2^{m_2}\ldots)$  is in $\sharpL$ as the function $(1^{m_1}\,0\,1^{m_2}\,0\,\dots) \mapsto n!/z_\alpha$.
\end{thm}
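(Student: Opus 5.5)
We will factor $n!/z_\alpha$ as a product of $\sharpL$ functions and then apply Theorem~\ref{thm:product}. Concretely,
\[
\frac{n!}{z_\alpha} = \binom{n}{m_1,\,2m_2,\,3m_3,\,\ldots}\prod_{i\ge1} \frac{(i m_i)!}{i^{m_i}\, m_i!}.
\]
The first factor chooses which elements are placed in cycles of length $i$. Each factor $(im_i)!/(i^{m_i}m_i!)$ counts permutations of $im_i$ letters consisting of $m_i$ cycles of length $i$.

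The multinomial factor is in $\sharpL$ by Theorem~\ref{thm:multinomial}, composed with an $\FL$ map sending $(1^{m_1}0 1^{m_2}0\cdots)$ to $(1^n\,0\,1^{m_1}\,0\,1^{2m_2}\,0\cdots)$. That map needs only counters and is $\FL$ by Lemma~\ref{lem:elementary}. Note that $n=\sum i m_i$ is polynomial in the input length, since the $m_i$ are given in unary and there are at most $n$ parts.

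For each $i$, I will write $(im_i)!/(i^{m_i}m_i!) = \prod_{j=1}^{m_i} (ij-1)!/(i-1)!^{\,0}$ explicitly, using a sequential choice. Take the smallest unused element and close a cycle through it. This gives
\[
\frac{(im_i)!}{i^{m_i}m_i!}=\prod_{j=1}^{m_i}\big((ij-1)(ij-2)\cdots(ij-i+1)\big),
\]
which is a product of polynomially many linear factors $ij-t$ with $1\le t\le i-1$. Each such factor is a poly-size $\FL_{\ge0}$ function (indeed $(1^N)\mapsto N$). So the whole of $n!/z_\alpha$ becomes one product, over an index set of polynomial size, of numbers given by $\FL$ functions. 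The index set is the triples $(i,j,t)$ together with the multinomial factor, and it can be linearly ordered by an $\FL$ bijection with $\{1,\ldots,p(x)\}$.

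With that ordering, Theorem~\ref{thm:product} applies, with $f$ the identity-on-unary map (in $\FL_{\ge0}\subseteq\sharpL$), or the multinomial function for one index. Alternatively one can avoid the multinomial by writing directly $n!/z_\alpha=\prod$ over blocks of $(N-1)(N-2)\cdots(N-i+1)$, where $N$ is the number of remaining unused elements when a new $i$-cycle is started at the smallest unused element.

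The main obstacle is the bookkeeping needed to make the index map $g(x,k)$ computable in log-space. Given $k$, the machine must locate the corresponding block $(i,j,t)$ and output the factor value in unary. I would do this by scanning the input while maintaining running counters for $i$, the cumulative count $\sum_{i'<i} i'm_{i'}$, and the position within the block. All of these are $O(\log n)$-bit numbers, so this is routine in the style of the function $g$ in the proof of Theorem~\ref{thm:multinomial}. Factors equal to $1$, which arise when $i=1$, are handled by the empty product.
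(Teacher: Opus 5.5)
Your argument is correct, but it takes a different route from the paper's. The paper also applies Theorem~\ref{thm:product} with one factor per cycle length $i$. Its factor is the number of ways to place $m_i$ disjoint $i$-cycles in $[N_i]$, with $N_i=n-\sum_{j<i}jm_j$. This is written as a \emph{sum} over the increasing sequence $a_1<\dots<a_{m_i}$ of cycle maxima of the products $\prod_{s}\binom{a_s-(s-1)i-1}{i-1}(i-1)!$. That requires a bespoke verifier which reads the $a_s$ and checks interleaved witnesses for these binomials and factorials.

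You instead split off the assignment of elements to cycle lengths as the multinomial $\binom{n}{m_1,2m_2,3m_3,\dots}$. The normalisation ``the smallest unused element opens the next cycle'' then turns $(im_i)!/(i^{m_i}m_i!)$ into a genuine product $\prod_{j=1}^{m_i}\prod_{t=1}^{i-1}(ij-t)$ of numbers at most $n$. After that no new verifier is needed: Theorem~\ref{thm:multinomial}, the trivial $\sharpL$ function $1^N\mapsto N$, and one application of Theorem~\ref{thm:product} with $p(x)=1+\sum_i(i-1)m_i$ suffice, in the spirit of the second proof of Corollary~\ref{cor:factorial}. Your route is more modular; the paper's witness is closer to a direct encoding of the permutation through its cycle maxima.

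A few small repairs are needed. The first display $\prod_j (ij-1)!/(i-1)!^{\,0}$ should read $\prod_j (ij-1)!/(ij-i)!$; your second display is the correct one. Theorem~\ref{thm:product} takes a single $f$, so let $g$ prepend a tag bit distinguishing the multinomial instance from the unary ones. Finally, $n$ is polynomial in the input length $L$ because every $i$ with $m_i>0$ and also $\sum_i m_i$ are at most $L$, so $n\le L^2$; the reason is not that there are at most $n$ parts.

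Drop the ``alternatively'' remark, which is false. Prescribing the length of the cycle opened by the smallest unused element undercounts. For $\alpha=(1^1\,2^1)$ it gives $1\cdot 1=1$ or $2\cdot 1=2$, depending on the order in which lengths are processed, instead of $3!/z_\alpha=3$. The missing information is which cycle length contains the smallest remaining element, and that is exactly what your multinomial factor supplies.
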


\begin{proof}
    Suppose that we have to choose from $N$ many elements to arrange into $m$ many cycles of length~$i$ each. Assume the elements are chosen from $[N]$.  Choose integers $1 \leq a_1<a_2< \cdots <a_{m} \leq N$, then choose a set $S_1 \subset [a_1-1]$ of $i-1$ many elements, arrange them in $(i-1)!$ many ways, with $a_1$ at the beginning these form the first cycle of length $i$. Then choose the $i-1$ many elements $S_2$ from the $a_2-i-1$ remaining elements and so on. The number of cycles with maximal elements $a_1,\ldots,a_{m}$ is then 
    \begin{multline*}
        Z(N,m,i;\,a_1, \ldots, a_{m}) := \\ \binom{a_1-1}{i-1} (i-1)! \binom{a_2-i-1}{i-1} (i-1)! \cdots \binom{a_{m} - (m-1)i -1}{i-1} (i-1)!.
    \end{multline*}
    
    To create a permutation of $n$ with cycle type $\alpha$, choose the elements in $1$-cycles from $[n]$, the elements in $2$-cycles from $[n-m_1]$, those in $3$-cycles from $[n-m_1-2m_2]$, etc. We obtain
    \[
    \frac{n!}{z_\alpha} = \prod_{i=1}^n ~\sum_{\mathbf{a}\in\binom{[N_i]}{m_i}} Z(N_i,m_i,i;\,\mathbf{a})
    \]
    where $N_i = n- \sum_{j=1}^{i-1} j m_j = N_{i-1} - (i-1) m_{i-1}$.

    If $x=(1^{m_1}\,0\,1^{m_2}\,0\,\dots)$ denotes the input, the functions sending $(1^i\,0\, x)$ to $N_i$ and $m_i$ are in $\FL$.
    We now show that
    \[
    (1^N\,0\,1^m\,0\,1^i)\mapsto \sum_{\mathbf{a}\in\binom{[N]}{m}} Z(N,m,i;\,\mathbf{a})
    \]
    is in $\sharpL$, after which the proof concludes by an application of Theorem~\ref{thm:product}.
    
    The algorithm then proceeds as follows. We keep counters $
    \mathtt{a}, 
    \mathtt{A}, 
    \mathtt{b}$. 
    The witness encodes $m$ elements $a_1,\ldots,a_{m}$ and the witnesses for 
    binomials and factorials which we have constructed in Theorem~\ref{thm:binomial} and Corollary~\ref{cor:factorial}.
    The witness is ordered
    \[\textstyle
        a_1,~ \mathtt{witness}\big(\binom{a_1-1}{i-1}\big),~ \mathtt{witness}((i-1)!);\quad
        a_2,~ \mathtt{witness}\big(\binom{a_2-i-1}{i-1}\big),~ \mathtt{witness}((i-1)!);~~ \ldots
    \]
    Set $\mathtt{a}=a_1$ and $\mathtt{A} = \mathtt{a}-1$. Check that the next two blocks of the witness are a witness for $\binom{\mathtt{A}}{i-1}$ and $(i-1)!$. Then set $\mathtt{b}=\mathtt{a}$ and read $\mathtt{a}\leftarrow a_2$. Check $\mathtt{b}<\mathtt{a}\leq N$. Set $\mathtt{A} \leftarrow \mathtt{A}+(\mathtt{a}-\mathtt{b}) - i$ and once again check that the next two blocks of the witness are a witness for $\binom{\mathtt{A}}{i-1}$ and $(i-1)!$. Repeat until having read all of the witness.
\end{proof}

The number of permutations with a given descent set $D=\{d_1,d_2,\ldots,d_k\}$ is denoted by $\beta_n(D)$ and can be defined as
$$\beta_n(D):=\#\{ w\in S_n: w_i>w_{i+1} \text{ iff }i\in D\}.$$
By the formula from \cite[\S 2.2]{EC1} we have that, setting $d_{k+1}=n$ and $d_0=0$,
$$\beta_n(D) = \det \left[ \binom{n-d_i}{d_{j+1}-d_i} \right]_{i,j=0}^k =n! \det \left[ \frac{1}{(d_{j+1}-d_i)!}\right]_{i,j=0}^{k} \,.$$

\begin{thm}
    The function $(1^n01^{d_1}01^{d_2}0\cdots) \mapsto \beta_n(d_1,d_2,\ldots)$ is in $\GapL$.
\end{thm}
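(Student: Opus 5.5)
We will apply Theorem~\ref{thm:detofsharpL} directly to the first determinantal formula
\[
\beta_n(D) = \det\left[\binom{n-d_i}{d_{j+1}-d_i}\right]_{i,j=0}^{k},
\]
with $d_0=0$ and $d_{k+1}=n$. Take $f:(1^a\,0\,1^b)\mapsto\binom{a}{b}$, which is in $\sharpL$ by Theorem~\ref{thm:binomial}. Take $p:x\mapsto k+1$, where $x=(1^n01^{d_1}0\cdots01^{d_k})$. This is the number of $0$s in $x$ plus one, computed in binary with a single counter, so $p\in\FL_{\ge0}$ and $p(x)\le |x|+1$ is polynomially bounded.

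The main step is the $\FL$ function
\[
g(x,r,s)=\big(1^{\,n-d_{r-1}}\,0\,1^{\,d_{s}-d_{r-1}}\big),
\]
which handles the shift from the $0$-indexed matrix to the $1$-indexed indexing of Theorem~\ref{thm:detofsharpL}. Any entry with $d_s<d_{r-1}$ should be $0$. To arrange this, let $g$ output a fixed malformed string (say $0$), which $f$ maps to $0$ by the paper's convention on ill-formed inputs.

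To compute $g$ in log-space, we proceed as in the proof of Theorem~\ref{thm:multinomial}. Counting zeros locates the relevant block of $1$s and reads off its length into a binary counter. This gives $n$, $d_{r-1}$ and $d_s$, with the conventions $d_0=0$ and $d_{k+1}=n$ handled by a case check on $r=1$ and $s=k+1$. Each counter is at most $|x|$, so it fits in $O(\log|x|)$ bits. We compare $d_s$ with $d_{r-1}$, subtract, and write the two results in unary on the output tape by decrementing counters, as in Lemma~\ref{lem:transposition}.

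We should also confirm that the indexing matches. The $(i,j)$ entry, $0\le i,j\le k$, uses $d_i$ and $d_{j+1}$. With $r=i+1$ and $s=j+1$ this becomes $d_{r-1}$ and $d_s$, as written above.

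Theorem~\ref{thm:detofsharpL} then puts $x\mapsto\beta_n(D)$ in $\GapL$. Strings not of the valid input form, or with non-increasing $d_i$, can be sent to $0$ by an $\FL$ pre-check. The only step needing care is the construction of $g$, in particular the zero-entry convention and the boundary indices; the rest is bookkeeping.
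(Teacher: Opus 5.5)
Your proposal is correct and follows the paper's proof: the entries $\binom{n-d_i}{d_{j+1}-d_i}$ are in $\sharpL$ by Theorem~\ref{thm:binomial}, and Theorem~\ref{thm:detofsharpL} gives $\GapL$, with you spelling out the $\FL$ functions $g$ and $p$ and the index shift that the paper leaves implicit. One small slip: the string $0$ is not malformed, since it equals $1^0\,0\,1^0$ and encodes $\binom{0}{0}=1$, so the entries with $d_s<d_{r-1}$ would become $1$ and the determinant would in general be wrong once $k\ge 2$; output instead something like $0\,1$ (encoding $\binom{0}{1}=0$) or $00$.
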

\begin{proof}
    The coefficients of the determinant are computable in $\sharpL$ per Theorem~\ref{thm:binomial}. The determinant is then in $\GapL$ from Theorem~\ref{thm:detofsharpL}. 
\end{proof}

A particular case of interest here are the alternating (zig-zag) permutations, corresponding to descent sets $D=\{1,3,5,\ldots\}$. The \emph{Euler numbers} count the numbers of zigzag permutations,
$E_n=\#\{ w\in S_n: w_1>w_2<w_3>\cdots\}$. We refer to~\cite{EC1} for their properties listed below. Their exponential generating function is not rational and is given by
$$\sum_{n\geq 0} E_n \frac{x^n}{n!} = \sec(x) + \tan(x)$$
This gives a quadratic and a signed recurrence relation, but neither allows us to see $\sharpL$ containment easily. 
A refinement of these numbers are the \emph{Entringer numbers} $E(n,k)=\#\mathcal{E}(n,k)$ for $\mathcal{E}(n,k)=\{ w\in S_{n+1}: k+1=w_1>w_2<w_3>\cdots\}$ as defined in~\cite{entringer1966combinatorial} with $E(1,1)=1$, $E(1,0)=0$ and $E(n,0)=0$ for $n>1$. Note that $E_n=E(n,n)$, by mapping $w\in\mathcal{E}(n,k)$ with $w_1=n+1$ to the zigzag permutation $ (n+1-w_2)>(n+1-w_3)<\cdots (n+1-w_{n+1})$. These numbers satisfy the following recurrence
\begin{equation}\label{eq:euler}
    E(n,k) = E(n,k-1) + E(n-1,n-k).
\end{equation}
One way to see this identity is as follows. Let $w \in \mathcal{E}(n,k)$, so $w_1=k+1>w_2$. If $w_2=k$, this corresponds to permutations $(n-k+1)>(n+1 - w_3')<(n+1-w_4')>\cdots \in \mathcal{E}(n-1,n-k)$, where $w'_i = w_i$ if $w_i <k$ and $w'_i = w_i-1$ if $w_i >k+1$ for $i\geq 3$. If $w_2 <k$, then the permutation $w$ corresponds to $w(1,j) \in \mathcal{E}(n,k-1)$, where $j$ is the index for which $w_j=k$. 

\begin{thm}[Euler--Entringer numbers]\label{thm:euler} The function $(1^n\,0\,1^k) \mapsto E(n,k)$ is in $\sharpL$. In particular, computing the Euler numbers $E_n=E(n,n)$ is in $\sharpL$. 
\end{thm}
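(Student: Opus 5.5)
The plan is to run the argument of Theorem~\ref{thm:2-dim} on the recurrence~\eqref{eq:euler}. Theorem~\ref{thm:2-dim} cannot be quoted directly: the second term of~\eqref{eq:euler} moves from $(n,k)$ to $(n-1,n-k)$, which is not a fixed offset $(n-i_\ell,k-j_\ell)$. Its proof, however, only needs three things: the successor pairs are $\FL$-computable from the current pair, they are lexicographically smaller, and paths have polynomial length. All three hold here.

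\textbf{The path model.} Consider the directed graph on pairs $(m,j)$ with $0\le j\le m$. Each pair $(m,j)$ with $m\ge 2$ and $j\ge1$ has two outgoing arcs:
\[
(m,j)\to(m,j-1), \qquad (m,j)\to(m-1,m-j).
\]
Both targets are lexicographically smaller than $(m,j)$. Both stay in the region $0\le j\le m$: in the second arc, $1\le j\le m$ gives $0\le m-j\le m-1$. The base cases are:
\begin{itemize}
\item $(1,1)$, with value $1$, which is the accepting sink;
\item $(1,0)$ and $(m,0)$, with value $0$, at which the machine rejects.
\end{itemize}
By induction along the lexicographic order using~\eqref{eq:euler}, $E(n,k)$ equals the number of directed paths from $(n,k)$ to $(1,1)$. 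Inputs with $k>n$ are mapped to $0$, since $\mathcal{E}(n,k)$ is empty when $k+1>n+1$.

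\textbf{The machine.} The witness is a binary string in which each bit selects one of the two arcs. The machine keeps two binary counters $\mathtt{a},\mathtt{b}$, initialised to $(n,k)$ after checking $k\le n$, and reads the witness left to right:
\begin{itemize}
\item on bit $0$ it sets $\mathtt{b}\leftarrow\mathtt{b}-1$;
\item on bit $1$ it sets $(\mathtt{a},\mathtt{b})\leftarrow(\mathtt{a}-1,\mathtt{a}-\mathtt{b})$, which is a log-space operation by Lemma~\ref{lem:elementary};
\item after each step, if $\mathtt{b}=0$ it rejects, and if $(\mathtt{a},\mathtt{b})=(1,1)$ it checks that the witness has ended and accepts.
\end{itemize}
This is an instance of Algorithm~\ref{alg:cap} with two counters of $O(\log n)$ bits.

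\textbf{The main check.} The step needing care is that the witness has polynomial length and that the machine halts on every input, as required of the verifier model. Between two consecutive $1$-bits the counter $\mathtt{b}$ strictly decreases and stays positive, so there are at most $n$ consecutive $0$s. There are at most $n-1$ ones, since each decreases $\mathtt{a}$. Hence every accepted witness has length $O(n^2)$. The machine also rejects any witness longer than this bound, which it can track with an extra $O(\log n)$-bit counter.

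It then remains to check that distinct accepted witnesses are exactly the distinct paths, which holds because each bit determines the arc taken. The case $E_n=E(n,n)$ is the specialisation $k=n$.
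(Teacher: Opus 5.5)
Your proposal is correct and takes essentially the paper's approach: both count directed paths from $(n,k)$ to the base case $(1,1)$ in the graph of the recurrence~\eqref{eq:euler}, and both verify such a path with a constant number of $O(\log n)$-bit counters. The only difference is the encoding: the paper's witness lists the visited pairs $(n_s,k_s)$ in binary and checks each transition, whereas yours records only the arc choices and lets the machine recompute the pair, which also makes your $O(n^2)$ length bound and halting argument more transparent.
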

\begin{proof}
    The idea is similar to Theorem~\ref{thm:2-dim}, but it does not follow directly because of the term $E(n-1,n-k)$ of slightly different form.
     The witness is a sequence ${(n_0,k_0),(n_1,k_1),\ldots,(n_m,k_m)}$, where the numbers $n_i,k_i$ are encoded in binary. We keep 4 counters $\mathtt{a},\mathtt{b},\mathtt{c},\mathtt{d}$. Read the witness $\mathtt{a} \leftarrow n_0$, $\mathtt{b}\leftarrow k_0$ and check that they are equal to $n$ and $k$ respectively. Next read $\mathtt{c}\leftarrow n_1$, $\mathtt{d}\leftarrow k_1$ and check the local validity of the witness: if $\mathtt{a}=1$ and $\mathtt{b}=1$ we stop and accept the witness, and if $\mathtt{b}=0$ we reject the witness; otherwise if $\mathtt{a}>1$ check if $\mathtt{c} = \mathtt{a}$ and $\mathtt{d}= \mathtt{b}-1>0$ or else if $\mathtt{c}=\mathtt{a}-1$ and $\mathtt{d}=\mathtt{a}-\mathtt{b}>0$. If neither condition is satisfied, reject the witness; otherwise assign $\mathtt{a} \leftarrow \mathtt{c}, \mathtt{b} \leftarrow \mathtt{d}$, and read into $(\mathtt{c},\mathtt{d}) \leftarrow (n_2,k_2)$. Perform the checks and continue with the reassignments of counter values and reading the witness. The end of the witness should be $(0,0)$, we then stop. 
     The number of pairs that are read is $m<n^2$ and the witness is polynomially sized. 
\end{proof}

A related vast topic in enumerative combinatorics is counting pattern avoiding permutations. Let $\sigma \in S_k$ be a permutation, denote $${\rm Av}_n(\sigma):=\{ w \in S_n: \nexists \; i_1<i_2<\cdots<i_k
\ \forall 1\leq j < l \leq k
: w_{i_j}<w_{i_l} \Leftrightarrow \sigma_j<\sigma_l \},$$
the set of permutations avoiding the pattern $\sigma$. That is, permutations where there is no subsequence of elements which have the same relative ordering as $\sigma$. A classical fact is that ${\rm Av}_n(\sigma)=C_n$ for every pattern $\sigma$ of length 3. At the same time $\#{\rm Av}_n(1324)$ remains unsolved in the sense that there is no explicit generating function or formula known, and no algorithm to compute them that runs in $\poly(n)$ time. In general, we know that the size grows exponentially. We have that $\#{\rm Av}_n(\sigma) \leq C_\sigma^n$ for some constant $C_\sigma$ depending on $\sigma$; this is the famous Stanley--Wilf conjecture proven by Marcus and Tardos~\cite{marcus2004excluded}. At the same time, since permutations avoiding the initial pattern $\sigma_1\sigma_2\sigma_3$ also avoid $\sigma$, we have that $\#{\rm Av}_n(\sigma) \geq C_n \sim \frac{4^n}{\sqrt{\pi}n^{3/2}}$.  For many patterns $\sigma$ of fixed length there is a poly-time algorithm computing $\#{\rm Av}_n(\sigma)$, yet this is not known in general to be true. Containment in $\sharpL$ would imply that, and so we ask the following question.
\begin{question}\label{q:pattern avoidance}
    For which patterns $\sigma$ of fixed length $k\geq 4$ is the function $(1^n001^{\sigma_1}01^{\sigma_2}0\cdots01^{\sigma_k}) \mapsto \#{\rm Av}_n(\sigma)$ in $\sharpL$? For which cases is it in $\GapL$?
\end{question}
Note that the case $k=3$ is given by the Catalan numbers which we already showed to be in $\sharpL$, see Theorem~\ref{thm:catalan}.

A special subproblem concerns patterns $\sigma=12\ldots k$, which is the equivalent problem of counting permutations with longest increasing subsequence at most $k-1$. 

\begin{thm}[Longest Increasing Subsequences]\label{thm:lis}
    Let $k$ be a fixed integer. The function mapping $(1^n) \mapsto \#{\rm Av}_n(12\ldots k)$ is in $\sharpL$.
\end{thm}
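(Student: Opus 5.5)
Via the RSK correspondence, a permutation $w\in S_n$ corresponds to a pair $(P,Q)$ of standard Young tableaux of the same shape $\lambda\vdash n$, and the length of the longest increasing subsequence of $w$ equals $\lambda_1$ (Schensted's theorem). Hence
\[
\#{\rm Av}_n(12\ldots k)=\sum_{\lambda\vdash n,\ \lambda_1\le k-1}\#\SYT(\lambda)^2 .
\]
The plan is to build a verifier $\NL$ machine whose witness is a triple $(\lambda,P,Q)$: the partition $\lambda$ (of width at most $k-1$) followed by two column ballot sequences encoding $P$ and $Q$. Equivalently, I would use the transpose $\lambda'$, which has length at most $k-1$, and row ballot sequences for $\lambda'$. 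By symmetry of RSK under transposition this still counts the right quantity.

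The key steps are as follows.

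\textbf{Step 1: guess the shape.} The witness begins with $\mu=\lambda'$, written as at most $k-1$ parts $\mu_1\ge\cdots\ge\mu_{k-1}\ge0$ in binary, each at most $n$. These $k-1$ numbers fit in $O(\log n)$ space, since $k$ is constant. The machine stores them in counters and checks that $\sum_i\mu_i=n$ and that the parts are weakly decreasing. Distinct witnesses in this block give distinct partitions, so no overcounting occurs.

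\textbf{Step 2: verify the two tableaux.} The machine then reads a ballot sequence $w^1$ over the alphabet $\{1,\ldots,k-1\}$ and, as in Corollary~\ref{cor:syt_fixed_ell}, checks that its final content equals $\mu$ using the stored counters. It then reads a second ballot sequence $w^2$ and checks it the same way against the same stored $\mu$. Both checks reuse Theorem~\ref{thm:ballots} with $a_i=0$, run sequentially on fresh counters. Since $\mu$ is kept on the work tape, the constraint that the shapes coincide costs nothing extra. Alternatively, the product $\#\SYT(\mu)\cdot\#\SYT(\mu)$ can be obtained from Theorem~\ref{thm:product} with $p=2$.

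\textbf{Step 3: count.} The number of accepting witnesses is $\sum_{\mu}\#\SYT(\mu)^2$, with $\mu$ ranging over partitions of $n$ with at most $k-1$ parts. This equals the desired count.

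The only nontrivial ingredient is the correctness of the identity, via RSK and Schensted's theorem, which I would cite rather than reprove. Algorithmically nothing is hard. The one point to watch is that the shape must be fixed once on the witness and then remembered for both tableaux. This is why the shape is guessed explicitly in binary rather than inferred from the first ballot sequence: a read-once machine cannot re-read $w^1$. Inferring it from $w^1$ would in fact also work, since the final counters of $w^1$ give $\mu$ and can be retained for checking $w^2$.
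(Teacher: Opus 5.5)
Your proposal is correct and follows essentially the same route as the paper. It uses the RSK/Greene--Schensted identity $\#{\rm Av}_n(12\ldots k)=\sum_{\lambda\vdash n,\,\lambda_1<k}\#\SYT(\lambda)^2$, transposes to shapes of length less than $k$, and takes as witness the shape (kept in a constant number of $O(\log n)$-space counters) followed by two ballot-sequence witnesses as in Corollary~\ref{cor:syt_fixed_ell}. Your side remark is also valid: the ballot condition forces the final counters of the first sequence to form a partition, so the shape could be read off from them rather than guessed explicitly as the paper does.
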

\begin{proof}
    A crucial part of the problem is that we cannot encode a permutation directly as a witness, as we cannot check if it is indeed a sequence of distinct numbers by reading the witness only a fixed number of times. 
    In this case, the RSK algorithm, which maps permutations $w$ to SYTs $(P,Q)$ of the same shape $\lambda$, gives that the length of the longest increasing subsequence of $w$ is equal to the length of the first row, i.e. $\lambda_1$ via Greene's theorem, see e.g.~\cite{EC1}. Since $\#\SYT(\lambda) = \#\SYT(\lambda')$, we can consider instead having $\ell(\lambda) <k$.
    So
    $$\#{\rm Av}_n(1\ldots k) = \sum_{\lambda \vdash n, \ell(\lambda)<k} (\#\SYT(\lambda))^2.$$
    Then our witness consists of the partition $\lambda$ encoded as $(1^{\lambda_1}\,0\,1^{\lambda_2}\,0\dots0\,1^{\lambda_\ell})$ and two witnesses for $\# \SYT(\lambda)$ from Corollary~\ref{cor:syt_fixed_ell}. It is crucial that the partition $\lambda$ can be stored in memory with a fixed number of log-space counters (encoded in binary): $\mathtt{\lambda_1,\lambda_2,\ldots,\lambda_{\ell}}$ with $\ell<k$ and the partition can thus be read many times to verify the shapes of the two SYTs. 
\end{proof}

We finish with a problem on trees. 
Given a tree $T$ with vertex set $[n]$, let $D(T) = (d(i,j))_{1\le i,j\le n}$ be its distance matrix. The \emph{Graham--Pollak formula} gives $\det D(T) = (-1)^{n-1} (n-1)2^{n-2}$, which remarkably does not depend on the tree structure~\cite{GrahamPollak}.
\begin{thm}[Graham--Pollak formula]
    Let $T$ be a plane rooted tree on $n$ vertices. 
    The function $\mathrm{dfs}(T) \mapsto (-1)^{n-1}(n-1)2^{n-2}$ is in $\FL$,
    where the output's first bit denotes the sign of the binary-encoded output integer.
\end{thm}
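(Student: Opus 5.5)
The plan is to use the Graham--Pollak formula itself: the determinant depends only on $n$, so the only task is to compute $n$ from $\mathrm{dfs}(T)$ in log-space and then write $(-1)^{n-1}(n-1)2^{n-2}$ in binary on the write-only output tape. No linear algebra is needed.

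First, we recover $n$. Since $|\mathrm{dfs}(T)| = 2(n-1)$, a single left-to-right scan with a binary counter $\mathtt{n}$ suffices. We start at $1$ for the root and increment on each $1$ read. This counter uses $O(\log n)$ bits. Optionally, we also check that the input is a valid DFS word: a second height counter must never go negative and must end at $0$. On invalid inputs we output $0$ by convention. We treat $n=1$ separately: $D(T)=(0)$ has determinant $0$, which agrees with the formula's factor $n-1=0$, so we output $0$.

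Second, we produce the output. We write the sign bit first: $0$ if $n-1$ is even, $1$ otherwise. This only requires the lowest bit of $\mathtt{n}-1$. The magnitude $(n-1)2^{n-2}$ in binary is the binary expansion of $n-1$ followed by exactly $n-2$ zeros. The main subtlety is that the output has length about $n+\log n$, which is longer than the work tape. This is harmless because the output tape is write-only and is not counted toward the space bound.

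Concretely, we store $\mathtt{m}=n-1$ in binary and emit its bits from most significant to least significant, indexing bit positions with an $O(\log\log n)$ counter. We then initialise $\mathtt{z}=n-2$ and, while $\mathtt{z}>0$, write a $0$ and decrement $\mathtt{z}$. This is exactly the technique in the proof of Theorem~\ref{thm:FL upper bound}. If the convention is least-significant-bit first, we instead emit the $n-2$ zeros first and then the bits of $n-1$ in the reverse order.

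I expect no real obstacle. The only point needing care is the observation about output length versus space.
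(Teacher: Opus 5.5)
Your proposal is correct and matches the paper's proof: both recover $n-1$ from the input (the paper from the length $2(n-1)$, you by counting $1$s), emit its least significant bit as the sign, copy $n-1$ in binary, and then write $n-2$ zeros with a decrementing counter. The extras you add — input validation, the $n=1$ case, and the remark that output length is not charged to the work tape — are fine and do not change the argument.
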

\begin{proof}
    The length of the input is $2(n-1)$, so we can initialize a counter $\mathtt{a}\leftarrow n-1$. Output the least significant digit of $\mathtt{a}$; that is the sign $(-1)^{n-1}$. Copy $\mathtt{a}$ to the output tape. Then, while $\mathtt{a} > 1$, write a $0$ in the output tape and decrease $\mathtt{a}$ by~$1$.
\end{proof}

    \begin{note}
        There are several generalisations of the Graham--Pollak formula obtained by placing weights on the tree in various ways.
        It is shown in \cite{BEGLR} that all (additive) generalisations of the determinant in the literature count weighted \emph{unital arrowflows} (a choice of edge $e\in E(T)$ and an orientation of $T\setminus\{e\}$). To place such a formula in $\sharpL$ requires us to witness an orientation of a tree in log-space.
        One possible way of encoding an orientation is by alternating the bits of $\mathrm{dfs}(T)$ with some extra symbols (for instance $\uparrow$ for arcs oriented towards the root, $\downarrow$ otherwise). Each edge appears twice in $\mathrm{dfs}(T)$, and hence one has to check that the two symbols associated to this edge are equal; this in general requires $O(n)$ many counters. We do not know if there is any better way of encoding an orientation so that it can be witnessed by a verifier $\NL$ machine.
    \end{note}

\section{Representation theory and geometry}\label{sec:structure constants}

A fundamental object arising in this section and beyond is the set of integer points in a polytope. 

\begin{thm}[Integer points in polytopes]\label{thm:polytopes}
Consider the polytope $P$ of points $\mathbf{x} \in \mathbb{R}^n$ given by inequalities $A \mathbf{x} \leq \mathbf{b}$ where $A$ is an  $m \times n$ integer matrix and $\mathbf{b} \in \mathbb{Z}^m$. For fixed $n$  the function $(A,\mathbf{b}) \mapsto \#\{ \mathbf{x} \in \mathbb{Z}^n \cap P\} $ is in $\sharpL$, where $A_{i,j}$ and $b_j$ are encoded in unary, with one extra bit to indicate their sign.
\end{thm}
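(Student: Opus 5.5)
The witness will simply be the lattice point $\mathbf{x}=(x_1,\ldots,x_n)\in\Z^n$ itself, each coordinate written in binary with a sign bit. The verifier checks the $m$ inequalities $\sum_j A_{i,j}x_j\le b_i$ one at a time. This gives a bijection between accepting witnesses and integer points of $P$, provided two things hold. First, every integer point of $P$ has coordinates of size polynomial in the input length $N$, so that each $x_j$ fits in $O(\log N)$ bits and the witness has polynomial length. Second, we fix a canonical encoding of each $x_j$ in a fixed number of bits, so that distinct witnesses give distinct points.

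The key step is the size bound. Since $P$ is a polytope, it is bounded. By Cramer's rule, every vertex of $P$ has coordinates that are ratios of $n\times n$ minors of $[A\mid\mathbf{b}]$. Hadamard's bound then gives $|v_j|\le n!\,M^n$, where $M$ is the maximum absolute value of an entry. Because the entries are encoded in unary, $M\le N$. Because $n$ is a fixed constant, $n!\,M^n\in\poly(N)$. Every point of $P$ is a convex combination of vertices, so every $\mathbf{x}\in P$ satisfies $|x_j|\le R:=n!\,N^n$. Each coordinate therefore fits into $c\log N$ bits for a constant $c$. The machine first computes the bit length $\lceil\log_2(R+1)\rceil$, which is possible in $\FL$ by Lemma~\ref{lem:elementary} since $n$ is constant. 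It then insists that each witness coordinate has exactly this length plus a sign bit, and it rejects the negative zero so that encodings are unique.

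For verification, the machine reads the $n$ coordinates once, left to right, into $n$ counters of $O(\log N)$ bits each; this is a constant number of counters. It then loops over $i=1,\ldots,m$, using a counter of $O(\log N)$ bits for the index $i$. For each row it scans the input tape to locate row $i$ of $A$ and the entry $b_i$, reading each $A_{i,j}$ from unary into a binary counter. It accumulates the signed sum $\sum_j A_{i,j}x_j$ in a counter using Lemma~\ref{lem:elementary}. This sum is bounded by $n\cdot N\cdot R\in\poly(N)$, so it needs only $O(\log N)$ bits. The machine compares the sum with $b_i$ and rejects on any violation. Signed arithmetic is handled by keeping a separate sign bit and doing case analysis. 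The whole procedure is of the form of Algorithm~\ref{alg:cap}, with the witness read only once.

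I expect the main obstacle to be making the bound rigorous and uniform. This needs the observation that vertices of $P$ are determined by $n$ linearly independent tight constraints, and it needs the Hadamard estimate on minors. The vertex argument must also be checked against a degenerate case: if $P$ were unbounded, the count would be infinite. I would read the hypothesis ``polytope'' as guaranteeing boundedness and state this explicitly. The remaining steps are routine log-space bookkeeping.
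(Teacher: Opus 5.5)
Your proposal is correct and follows the paper's proof. The witness is the lattice point itself in binary, read once into $n$ counters. Each inequality $\sum_j A_{i,j}x_j\le b_i$ is then checked row by row with log-space arithmetic. The paper only asserts the coordinate bound $|x_i|<(Nn)^n$, so your Cramer's-rule derivation of it and your canonical fixed-length encoding (which makes witnesses biject with integer points) usefully fill in details the paper leaves implicit.
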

\begin{proof}
    Here, the input size is $N = \sum_{i,j} |A_{i,j}|+ \sum_i |b_i| + 2nm+2m$. The witness is $\mathbf{x}=(x_1,\ldots,x_n)$ with each integer encoded in binary. Note that a rough upper bound on the coordinates is $|x_i|<(Nn)^n$. We will read the witness into $n$ counters $\mathtt{x_i} \leftarrow x_i$. Keep counters $\mathtt{s}, \mathtt{a}, \mathtt{i}, \mathtt{j}, \mathtt{b}$ in $\log(N)$-space encoded in binary. Set $\mathtt{i}=1$, $\mathtt{s}=0$ and $\mathtt{j}=1$. Read $\mathtt{a} \leftarrow A_{i,j}$, set $\mathtt{s} \leftarrow \mathtt{s} + \mathtt{a}*\mathtt{x_j}$, see Lemma~\ref{lem:elementary}, and keep increasing $\mathtt{j}$ until it reaches $n$. Read $\mathtt{b}\leftarrow b_i$ and check if $\mathtt{s} \leq \mathtt{b}$. Increase $\mathtt{i}$ by 1, reset the counters and repeat. 
\end{proof}

\begin{note}
If $n$ is fixed, then Theorem~\ref{thm:polytopes} implies that counting the integer points in a polytope is in $\FP$, which is generally true for fixed dimensional polytopes via Barvinok's algorithm~\cite{barvinok1994polynomial}, but allows inputs encoded in binary.
If $n$ (and hence $m$) are not fixed then the number of integer points is $\sharpP$-complete even when the entries of $A$ and $\mathbf{b}$ are bounded, as $\#\textsc{SAT}$ can be reduced to that problem.
\end{note}

\begin{cor}[Ehrhart polynomial]
    Let $P=\{A\mathbf{x}\le\mathbf{b}\}$ be a polytope as in Theorem~\ref{thm:polytopes}. For fixed $n$ and $m$ the Ehrhart function $(A, \mathbf{b},1^k)\mapsto\mathrm{ehr}_P(k) = \#\{\mathbf{x}\in\ZZ^n\cap kP\}$ is in $\sharpL$.
\end{cor}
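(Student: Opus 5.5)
The plan is to reduce the corollary to Theorem~\ref{thm:polytopes}. For a positive integer $k$ we have $kP = \{\mathbf{x} \mid A\mathbf{x} \le k\mathbf{b}\}$. So $\mathrm{ehr}_P(k)$ is the number of integer points in the polytope with the same matrix $A$ and right-hand side $k\mathbf{b}$. Theorem~\ref{thm:polytopes} takes its input in unary, so we would like to compose, as in~\eqref{eq:FL composition}, an $\FL$ transducer $R:(A,\mathbf{b},1^k)\mapsto(A,k\mathbf{b})$, with every entry in unary plus a sign bit, with the verifier $\NL$ machine of Theorem~\ref{thm:polytopes}.

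The transducer is routine. It copies $A$ verbatim. For each $b_i$ it copies the sign bit, stores $|b_i|$ in a binary counter, and then writes $1^{|b_i|}$ once for each of the $k$ copies. It needs only $O(1)$ binary counters of size $O(\log N)$, where $N=|(A,\mathbf{b},1^k)|$.

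The key point to check is that the output has length polynomial in $N$. Since $m$ is fixed, the output length is at most
\[
N+\sum_i k|b_i| \le N + mN^2 = \poly(N).
\]
Hence a $\log$ of the output length is $O(\log N)$, and by the composition discussion in \S\ref{sec:log-space reduction} the composed machine is a verifier $\NL$ machine on the original input. The fixed-$n$ hypothesis is needed only so that Theorem~\ref{thm:polytopes} applies.

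The main obstacle I expect is that unary blow-up of the input, and I would double-check it. An alternative that avoids even writing $k\mathbf{b}$ is to modify the proof of Theorem~\ref{thm:polytopes} directly. The witness $\mathbf{x}$ would be read into $n$ counters, now of bit-size $O(n\log(kN))=O(\log N)$, since the bound $|x_i|<(N'n)^n$ holds with $N'$ the blown-up size. For each row we would then compute $\mathtt{s}=\sum_j A_{i,j}x_j$ and check
\[
\mathtt{s}\le k\cdot b_i
\]
using the multiplication of Lemma~\ref{lem:elementary} on $O(\log N)$-bit numbers. Either route gives $\mathrm{ehr}_P\in\sharpL$. I would present the second as the proof, since it is self-contained, and note the first as the conceptual reason.

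A small case to address is $k=0$, where $kP=\{\mathbf{0}\}$ for a nonempty polytope. It is handled uniformly by the inequality $A\mathbf{x}\le 0\cdot\mathbf{b}$ provided $P$ is bounded, and otherwise by the convention $\mathrm{ehr}_P(0)=1$, which can be output by an $\FL_{\ge0}$ special case.
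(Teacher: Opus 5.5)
Your proposal is correct, and your first route is exactly the paper's proof: it writes $kP=\{A\mathbf{x}\le k\mathbf{b}\}$ and composes the $\FL$ map $(1^{b_i},1^k)\mapsto 1^{kb_i}$ with the verifier $\NL$ machine of Theorem~\ref{thm:polytopes}, and your polynomial bound on the output length is the justification the paper leaves implicit. Your preferred second route only moves the multiplication $k b_i$ inside that verifier, so it is the same argument presented differently and is equally valid.
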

\begin{proof}
We have $kP = \#\{A\mathbf{x}\le k\mathbf{b}\}$.
    Compose the function $(1^{b_i},1^k) \mapsto (1^{kb_i})$ (which is in $\FL$) with the verifier $\NL$ machine of Theorem~\ref{thm:polytopes}.
\end{proof}

An important class of polytopes are the transportation polytopes. Their integer points correspond to contingency tables, which play a role in algebraic combinatorics, as shown below.

Given two compositions $\alpha=(\alpha_1,\ldots,\alpha_n), \beta=(\beta_1,\ldots,\beta_m)$ of the same integer and $n, m$ many parts, respectively, a \emph{contingency table} with marginals $\alpha, \beta$ is an $n \times m$ matrix $A$ with $A_{i,j} \in \mathbb{N}_0$, such that 
$$\sum_i A_{ij} = \beta_j \text{ for $j=1,\ldots,m$}, \qquad \sum_j A_{ij} =\alpha_i \text{ for $i=1,\ldots,n$}.$$
We denote the set of such matrices by $CT(\alpha,\beta)$.
A \emph{0-1 contingency table}, sometimes referred to as ``binary'', is such a matrix $A$ with the additional requirement that $A_{i,j}\in\{0,1\}$. We denote the set of such 0-1 matrices by $CT_0(\alpha,\beta)$. To view these as integer points in a polytope, we can convert the linear equalities $a=b$ into inequalities $a\leq b$ and $-a \leq -b$.

These quantities appear in the theory of symmetric functions as the transfer coefficients between the complete homogenous/elementary symmetric functions and the monomial symmetric functions \cite[Theorems~7.4.1 and~7.5.1]{StanleyEC2}. Namely,
$$h_\lambda = \sum_{\mu} \#CT(\lambda,\mu) m_\mu, \qquad e_\lambda = \sum_\mu \#CT_0(\lambda,\mu) m_\mu.$$

The following comes directly from Theorem~\ref{thm:polytopes} as the polytopes are of dimensions $mn$, the variables corresponding to $A_{ij}$'s, and the number of inequalities is $2(m+n) + 2mn$. 
\begin{cor}
    Let $m,n$ be constant integers. Then the functions sending 
    $$(1^{\alpha_1}\,0\,1^{\alpha_2}\,0\,\cdots \,0\,1^{\alpha_n}\,0\,0\,1^{\beta_1}\,0\,\cdots\,0\,1^{\beta_m})$$ 
    to $\# CT(\alpha,\beta)$ and $\# CT_0(\alpha,\beta)$ are in $\sharpL$.
\end{cor}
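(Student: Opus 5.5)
The plan is to reduce this corollary to Theorem~\ref{thm:polytopes} by composing with a log-space transducer, as in~\eqref{eq:FL composition}. Since $m$ and $n$ are constants, the dimension $mn$ of the ambient space is fixed. Index the variables $x_{(i,j)}=A_{i,j}$ for $1\le i\le n$, $1\le j\le m$. The constraint system for $\#CT(\alpha,\beta)$ consists of the following families.
\begin{itemize}
\item The nonnegativity constraints $-x_{(i,j)}\le 0$.
\item For each $i$, the two inequalities $\sum_j x_{(i,j)}\le \alpha_i$ and $-\sum_j x_{(i,j)}\le -\alpha_i$.
\item For each $j$, the analogous pair of inequalities with right-hand sides $\pm\beta_j$.
\end{itemize}
For $\#CT_0(\alpha,\beta)$ we add the $mn$ inequalities $x_{(i,j)}\le 1$. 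The constraint matrix then has $2(m+n)+2mn$ rows, a constant number. All entries lie in $\{-1,0,1\}$, and the right-hand sides are $0$, $1$, or $\pm\alpha_i$, $\pm\beta_j$.

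The key step is to check that the map from the input $(1^{\alpha_1}\,0\,\cdots\,0\,1^{\alpha_n}\,0\,0\,1^{\beta_1}\,0\,\cdots\,0\,1^{\beta_m})$ to the encoding of $(A,\mathbf b)$ required by Theorem~\ref{thm:polytopes} is in $\FL$. That encoding uses unary entries with a sign bit. The matrix $A$ is a fixed constant-size pattern, so the transducer can write it directly. For each right-hand side $\pm\alpha_i$, it locates the $i$th block of $1$s using a binary counter of $O(\log N)$ bits, as in the proof of Theorem~\ref{thm:multinomial}. It then copies that block to the output in unary, preceded by the appropriate sign bit; the $\beta_j$ are handled the same way. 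Since $\mathbf b$ has a constant number of entries, the output has size $O(N)$, which keeps the bounds used in the proof of Theorem~\ref{thm:polytopes} polynomial in the original input size.

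Composing this transducer with the verifier $\NL$ machine of Theorem~\ref{thm:polytopes} then gives membership in $\sharpL$. The only point needing care is that the integer points of the polytope are exactly $CT(\alpha,\beta)$, respectively $CT_0(\alpha,\beta)$. This holds because each equality constraint is encoded as two opposite inequalities, and nonnegativity, together with the upper bound $1$ in the binary case, is imposed explicitly. No step is a real obstacle. The mildest one is confirming that the size blowup of the composition is polynomial, and this is immediate because $m$ and $n$ are constants.
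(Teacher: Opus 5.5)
Your proposal is correct and follows the paper's approach: the paper also obtains this corollary directly from Theorem~\ref{thm:polytopes}, using the $mn$ variables $A_{i,j}$ and $2(m+n)+2mn$ inequalities. You additionally spell out the log-space transducer that produces the unary, sign-bit encoding of $(A,\mathbf b)$, a detail the paper leaves implicit.
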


However, here we can say more, these functions are still in $\sharpL$ when one of the dimensions is a variable. The result implies the well-known belonging to $\FP$, which is usually shown with dynamic programming.

\begin{thm}[Contingency tables]
\label{thm:contingencytables}
    Let $m$ be a constant. The functions sending
    $$(1^n\,0\,0\,1^{\alpha_1}\,0\,1^{\alpha_2}\,0\,\cdots \,0\,1^{\alpha_n}\,0\,0\,1^{\beta_1}\,0\,\cdots\,0\,1^{\beta_m})$$ 
    to $\# CT(\alpha,\beta)$ and $\# CT_0(\alpha,\beta)$ are in $\sharpL$.
\end{thm}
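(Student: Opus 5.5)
We read the table row by row and use the fact that only a constant number $m$ of column sums have to be tracked. The witness is the table itself: the rows $A_{1,\cdot},A_{2,\cdot},\ldots,A_{n,\cdot}$ in order, each row written as the $m$ entries $A_{i,1},\ldots,A_{i,m}$ in binary and separated by a delimiter symbol. Each entry is at most $\max_j\beta_j\le N$, where $N$ is the input length, so each entry fits in $O(\log N)$ bits. The witness therefore has polynomial length.

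The machine has the form of Algorithm~\ref{alg:cap} with $m+3$ counters. It keeps $m$ binary counters $\mathtt{c}_1,\ldots,\mathtt{c}_m$, initialised to $\beta_1,\ldots,\beta_m$ by scanning the last block of the input. It also keeps a row counter $\mathtt{i}$, a row-sum counter $\mathtt{r}$, and a buffer $\mathtt{x}$ for the current entry. All of these fit in $O(\log N)$ space because $m$ is constant.

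For row $i$, the machine first locates the $i$th block of ones in the $\alpha$ part of the input and sets $\mathtt{r}\leftarrow\alpha_i$. This is the same $\FL$ subroutine as the function $g$ in the proof of Theorem~\ref{thm:multinomial}, which finds the $i$th zero using a binary counter. The machine then reads the $m$ entries of row $i$ one at a time into $\mathtt{x}$. For each entry $A_{i,j}$ it checks $\mathtt{x}\le\mathtt{c}_j$ and $\mathtt{x}\le\mathtt{r}$, and then subtracts $\mathtt{x}$ from both counters using Lemma~\ref{lem:elementary}. In the $0$-$1$ case it additionally checks $\mathtt{x}\in\{0,1\}$; equivalently, each entry can simply be written as a single bit. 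After the $m$ entries it checks $\mathtt{r}=0$ and increments $\mathtt{i}$.

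When the witness is exhausted, the machine checks three things: that $\mathtt{i}=n$, that $n$ agrees with the number of $\alpha$ blocks in the input, and that all $\mathtt{c}_j=0$. Accepted witnesses correspond bijectively to elements of $CT(\alpha,\beta)$, respectively $CT_0(\alpha,\beta)$.

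The one point needing care is uniqueness of encodings, since the count must be exact. I will fix a canonical binary encoding of each entry, for example a fixed width of $\lceil\log_2(N+1)\rceil$ bits, or no leading zeros, and reject any witness that deviates from it. The other possible concern is re-locating $\alpha_i$ for each row while the head moves. This is harmless, because the input tape is read-only with free head movement; only the witness must be read once, and it is.
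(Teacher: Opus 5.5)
Your proposal is correct and is essentially the paper's proof. The witness is the table written row by row; you keep a constant number $m$ of column counters plus a row-sum counter, check the row sum against $\alpha_i$ at each row delimiter, and check the column sums against $\beta_j$ at the end. The only differences are cosmetic: the paper writes the entries in unary, which makes uniqueness of the encoding automatic, and it accumulates the column sums upward from $0$ rather than decrementing from $\beta_j$.
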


\begin{proof}
     The input size is $N = 2|\alpha|+n+m$ since for a valid input we should have $|\beta|=|\alpha|$. We consider the general case, the 0-1 contingency tables are treated in the same way. The witness is 
     $$w = A_{1,1},A_{1,2},A_{1,3},\cdots,A_{1,m};A_{2,1},A_{2,2},\ldots,A_{2,m};\ldots;A_{n,1},\ldots,A_{n,m},$$
     with $A_{i,j}$ encoded in unary  and we use the symbols $,$ and $;$ as delimiters. 
     In the algorithm we keep $m$ many counters $\mathtt{b_i}$ initialized at $0$ for $i=1,\ldots,m$, counters $\mathtt{a}$ and $\mathtt{A}$, indices $\mathtt{i},\mathtt{j}$ and $\mathtt{m},\mathtt{n}$. We read the first $n$ bits from $w$ into $\mathtt{n}$ recorded in $\log(n)$ space in binary. Next read $\mathtt{a} \leftarrow \alpha_1$ in binary. Set $\mathtt{i}\leftarrow 1, \mathtt{j} \leftarrow 1$. Read $\mathtt{A} \leftarrow A_{1,1}$ (i.e. the first part of $w$ before the first $,$) in binary, set $\mathtt{a} \leftarrow \mathtt{a}-\mathtt{A}$. Set $\mathtt{b_j} \leftarrow \mathtt{b_j}+\mathtt{A}$. Next, increase $\mathtt{j} $ by 1, and repeat the last steps: $\mathtt{A} \leftarrow A_{1,2}$, $\mathtt{a} \leftarrow \mathtt{a} - \mathtt{A}$, $\mathtt{b_j} \leftarrow \mathtt{b_j}+\mathtt{A}$. When reaching the first $;$ we check if $\mathtt{a}=0$, otherwise the witness is invalid. Then increase $\mathtt{i}$, reset $\mathtt{j}\leftarrow 1$, read the input $\mathtt{a} \leftarrow \alpha_i$ and repeat the above steps. 
     In the end we have the counters $\mathtt{b_1},\ldots,\mathtt{b_m}$ and we check if $\mathtt{b_j}=\beta_j$ from reading the input. 
\end{proof}
It is a classical result that counting contingency tables is $\sharpP$-complete when the marginals are encoded in binary~\cite{dyer1997sampling} and at least one dimension is variable. 
Pak and Panova had conjectured that counting contingency tables of non-fixed sizes and unary input (so input size is $O(m+n+\sum |A_{i,j}|)$) is still $\sharpP$-complete, see e.g.~\cite{panova2025computational}. One way of disproving such conjecture, conditionally, would be to consider $\sharpTISP(-,\log^k(n))$ containment.  
\begin{question}
    \label{q:CT}
Let $m$ and $n$ be arbitrary. Is the function sending
    $$(1^n\,0\,0\,1^{\alpha_1}\,0\,1^{\alpha_2}\,0\,\cdots \,0\,1^{\alpha_n}\,0\,0\,1^{\beta_1}\,0\,\cdots\,0\,1^{\beta_m})$$ 
    to $\# CT(\alpha,\beta)$ and $\# CT_0(\alpha,\beta)$ in $\sharpTISP(\infty,\log^k(n))\subseteq \FQP$ for some $k$?
\end{question}

\smallskip

The different bases of the algebra $\Lambda[x_1,\ldots,x_n]$ of symmetric polynomials in $n$ variables are labelled by partitions of length or width at most $n$. For a given basis $\{b_\lambda\}$, the structure constants are the coefficients in the expansion
\[
b_\mu(x_1,\ldots,x_n) \circledast b_\nu(x_1,\ldots,x_n) = \sum_\lambda d_{\mu,\nu}^\lambda b_\lambda(x_1,\ldots,x_n)
\]
of a certain binary operation $\circledast$ on the basis. For instance, if $\circledast$ is the usual product and the basis is the Schur basis, then the structure constants are the $\GL_n$-Littlewood--Richardson coefficients.

In this section we study problems that take as input a tuple of partitions encoded in unary via
\[
\lambda \leftrightsquigarrow (1^{\lambda_1}\,0\,1^{\lambda_2}\,0\,\dots\,0\,1^{\lambda_{\ell(\lambda)}}).
\]
Note that one cannot store a partition of $k$ in $\log(k)$ space\footnote{The Hardy--Ramanujan asymptotic for the partition function shows that there are exponentially many partitions of $k$. Hence no matter the encoding, one needs polynomial amounts of space to distinguish between all partitions.}. However, for any fixed $n$, one can encode a partition of $k$ of length at most $n$ with $n$ many $\log(k)$ counters.

\begin{thm}[$\GL_n$-Littlewood--Richardson coefficients]
Fix $n$. Let $\lambda, \mu, \nu$ be partitions of length at most $n$. Then, the function 
$$(1^{\lambda_1}\,0\,1^{\lambda_2}\,0\,\dots\,0\,1^{\lambda_n}\,0
\,1^{\mu_1}\,0\dots\,0\,1^{\mu_n}
\dots\,1^{\nu_1}\,0\dots\,0\,1^{\nu_n}) \mapsto c_{\mu,\nu}^\lambda$$
     is in~$\sharpL$.
\end{thm}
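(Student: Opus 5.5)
We use the Littlewood--Richardson rule in its ballot-sequence form. The coefficient $c_{\mu,\nu}^\lambda$ counts the semistandard fillings $T$ of the skew shape $\lambda/\mu$ with content $\nu$ whose reverse reading word is a lattice (ballot) word. Since $\ell(\lambda)\le n$ and $\ell(\nu)\le n$, every entry of $T$ lies in $\{1,\ldots,n\}$, and the skew shape has at most $n$ rows. So $T$ is determined by the numbers $t_{r,i}$, the count of entries equal to $i$ in row $r$, for $1\le r,i\le n$. That is a constant number $n^2$ of integers, each at most $|\lambda|$. We first check in log-space that $|\lambda|=|\mu|+|\nu|$ and $\mu\subseteq\lambda$, rejecting otherwise. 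The witness is then the list $t_{1,1},\ldots,t_{1,n};\ldots;t_{n,1},\ldots,t_{n,n}$, written in binary, or in unary with delimiters as in Theorem~\ref{thm:contingencytables}. The verifier reads all $n^2$ values into $n^2$ counters of size $O(\log N)$. From then on the witness is no longer needed, and everything is a deterministic log-space check on a constant number of counters and the input.

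The conditions to verify, in order, are the following. First, the row sums match the skew shape: $\sum_i t_{r,i}=\lambda_r-\mu_r$ for each $r$, with $\lambda_r,\mu_r$ read from the input via an $\FL$ subroutine as in the proof of Theorem~\ref{thm:multinomial}. Second, the content is $\nu$: $\sum_r t_{r,i}=\nu_i$. Third, entries strictly increase down columns. Since rows are weakly increasing and fully described by the $t_{r,i}$, this is the standard linear-inequality condition
\[
\mu_{r+1}+\sum_{i\le k} t_{r+1,i}\;\le\;\mu_r+\sum_{i<k} t_{r,i}\qquad\text{for all } r,k .
\]
This says that the boxes of row $r+1$ with entries $\le k$ end no later than the boxes of row $r$ with entries $<k$. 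Fourth, the lattice condition on the reverse reading word (rows read right to left, top to bottom). It is equivalent to the linear inequalities
\[
\sum_{r'\le r} t_{r',i+1}\;\le\;\sum_{r'<r} t_{r',i}\qquad\text{for all } r,i .
\]
This holds because, within a row read right to left, all the $(i+1)$'s appear before any $i$'s, so the critical prefixes are those ending after the $(i+1)$'s of row $r$. Each of these polynomially many checks, in fact $O(n^3)$ of them (a constant), involves sums of a constant number of counters. By Lemma~\ref{lem:elementary}, each is computable in log-space.

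Equivalently, we can invoke Theorem~\ref{thm:polytopes} directly. The tables $(t_{r,i})$ are the integer points of a polytope in the fixed dimension $n^2$, cut out by $O(n^3)$ inequalities whose coefficients are in $\{0,\pm1\}$ and whose right-hand sides are linear in the unary-encoded parts of $\lambda,\mu,\nu$. A log-space transducer produces $(A,\mathbf b)$ in unary from the input, and we compose it with the machine of Theorem~\ref{thm:polytopes} as in~\eqref{eq:FL composition}. This is the cleanest route, and the one I would write.

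The main obstacle is correctness rather than complexity. We must verify carefully that the SSYT column-strictness and the lattice-word condition are exactly captured by the linear inequalities above, i.e.\ that the standard hive/BZ-type encoding of LR tableaux by the row counts $t_{r,i}$ is a bijection. In particular we must handle the boundary cases: $t_{r,i}=0$ for $i>r$ in the straight-shape part, and empty rows. Once that bijection is established, membership in $\sharpL$ is immediate from Theorem~\ref{thm:polytopes}.
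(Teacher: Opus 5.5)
Your proof is correct and follows the paper's route. Like the paper, you realise $c^\lambda_{\mu,\nu}$ as the number of lattice points of a polytope of fixed dimension, with coefficients in $\{0,\pm1\}$ and right-hand sides that are (differences of) unary-encoded parts, and then invoke Theorem~\ref{thm:polytopes}. The only difference is the model: the paper uses the hive polytope, while you use the row-content matrix $(t_{r,i})$ of LR tableaux. Your inequalities are exactly the horizontal-strip conditions $\mu^{(k)}_{r+1}\le\mu^{(k-1)}_r$ for $\mu^{(k)}_r=\mu_r+\sum_{i\le k}t_{r,i}$, together with the Yamanouchi condition (you should also list $t_{r,i}\ge 0$), so the bijection you flag as the ``main obstacle'' is already settled by your own justifications.
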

\begin{proof}
    Littlewood--Richardson coefficients count integer points in the hive polytope which is a triangular array of variables with local inequalities and boundary conditions. The dimension of the polytope is $\binom{n}{2}$, the inequalities are of the type $a+c \geq b+d$ for all rhombi in a triangular grid of the variables. The boundary conditions give that the variables on the sides are $\lambda$, $\mu$, $\nu$, respectively. Then the resulting polytope has a fixed number of variables, inequalities, the linear inequalities have constant coefficients $\{-1,0,1\}$ and the vector $b$ has entries $0$ or $\in \{\lambda_i,\mu_i,\nu_i, i=1,\ldots,n\}$. Theorem~\ref{thm:polytopes} then applies and the result follows.
\end{proof}

The Kostka numbers are a special case of the LR numbers, see e.g.~\cite{Nar06}, so we have the following corollary.

\begin{cor}[$\GL_n$-Kotska numbers]
    For fixed $n$, the function
    \[
    (1^{\lambda_1}\,0\,1^{\lambda_2}\,0\,\dots\,0\,1^{\lambda_n}\,0
    \,1^{\alpha_1}\,0\dots\,0\,1^{\alpha_n}) \mapsto K_{\lambda,\alpha}
    \] 
    is in~$\sharpL$.
\end{cor}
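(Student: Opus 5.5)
We prove the claim by reducing it to the $\GL_n$-Littlewood--Richardson theorem just established, or, if that reduction turns out to be inconvenient, by applying Theorem~\ref{thm:polytopes} directly. The cleanest direct route is the Gelfand--Tsetlin polytope. We take $|\lambda|=|\alpha|$ (otherwise output $0$) and assume $\alpha$ has at most $n$ parts; $\alpha$ need not be a partition, since Kostka numbers are defined for compositions. Semistandard Young tableaux of shape $\lambda$ and content $\alpha$ correspond bijectively to Gelfand--Tsetlin patterns $(x_{i,j})_{1\le j\le i\le n}$. Such a pattern has top row $x_{n,j}=\lambda_j$, satisfies the interlacing inequalities $x_{i+1,j}\ge x_{i,j}\ge x_{i+1,j+1}$, and has row sums $\sum_j x_{i,j}=\alpha_1+\cdots+\alpha_i$. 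The entry $x_{i,j}$ records the length of row $j$ of the subtableau filled with entries $\le i$.

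First, we note that the free variables are the $x_{i,j}$ with $i<n$, and there are $\binom{n}{2}$ of them. This number is fixed because $n$ is fixed. The constraints number $O(n^2)$ and have coefficients in $\{-1,0,1\}$. The right-hand sides are $0$, the parts $\lambda_j$, or partial sums $\alpha_1+\cdots+\alpha_i$, with each equality written as two inequalities. Second, we observe that the unary encoding of $(A,\mathbf{b})$ required by Theorem~\ref{thm:polytopes} is produced from the input by a log-space transducer. The transducer has to output each partial sum $\alpha_1+\cdots+\alpha_i$ in unary, which it does by scanning the first $i$ blocks of $1$s and copying them, with a binary counter keeping track of the block index. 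The matrix $A$ is a constant depending only on $n$, so it is hard-coded. Third, we compose this transducer with the verifier $\NL$ machine of Theorem~\ref{thm:polytopes} as in~\eqref{eq:FL composition}. The resulting machine has witnesses that are exactly the lattice points of the Gelfand--Tsetlin polytope, so it counts $K_{\lambda,\alpha}$.

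Alternatively, one can follow the remark preceding the statement and use the identity $K_{\lambda,\alpha}=c^{\tilde\lambda}_{\mu,\nu}$ for suitable partitions with a bounded number of rows, for example via skew shapes built from $\alpha$ as in~\cite{Nar06}. For this route one must check two things: the number of rows stays bounded by a function of $n$, and the map $(\lambda,\alpha)\mapsto(\tilde\lambda,\mu,\nu)$ is in $\FL$, since each new part is a linear combination of the $\lambda_i$ and $\alpha_i$ output in unary. I would present the Gelfand--Tsetlin argument as the main proof because it avoids this bookkeeping.

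The only step requiring any care is verifying the bijection together with its boundary conventions. One point is that the bottom entry $x_{1,1}$ equals $\alpha_1$. Another is handling padding when $\ell(\lambda)<n$, where zero parts are appended. Beyond that, confirming that the polytope size is independent of the input is immediate, and the log-space transducer is routine.
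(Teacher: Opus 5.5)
Your proof is correct, but your main argument takes a different route from the paper's. The paper proves the corollary in one line: Kostka numbers are special cases of Littlewood--Richardson coefficients (citing \cite{Nar06}), so the claim follows from the preceding $\GL_n$-Littlewood--Richardson theorem. That theorem was itself proved by applying Theorem~\ref{thm:polytopes} to the hive polytope.

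You instead apply Theorem~\ref{thm:polytopes} directly to the Gelfand--Tsetlin polytope, whose dimension $\binom{n}{2}$ and constraint matrix depend only on $n$. You also spell out the log-space transducer that writes $(A,\mathbf{b})$ in unary before composing as in~\eqref{eq:FL composition}. Your route is self-contained: it needs only the classical bijection between tableaux and patterns.

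The paper's one-line argument, by contrast, silently relies on the two facts you flag in your alternative paragraph: the Kostka-to-LR reduction must keep the number of rows bounded by a function of $n$, and it must be computable in $\FL$. Both facts are easy. Set $\theta_i=\alpha_i+\cdots+\alpha_n$ and $\rho_i=\alpha_{i+1}+\cdots+\alpha_n$. Then the skew shape $\theta/\rho$ is a union of rows of lengths $\alpha_1,\ldots,\alpha_n$ that share no columns, so $s_{\theta/\rho}=h_{\alpha_1}\cdots h_{\alpha_n}$ and hence $K_{\lambda,\alpha}=c^{\theta}_{\rho,\lambda}$. All three partitions have length at most $n$, and every part is a partial sum of $\alpha$, which can be output in unary in log-space.

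The paper's route gains brevity by reusing a theorem already in hand, while yours avoids the detour through LR coefficients. Both ultimately rest on the same fixed-dimension lattice-point count.
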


\begin{note}
    It is worth noting the following relation, corollary of the RSK correspondence:
    $$\sum_\lambda K_{\lambda \alpha}K_{\lambda \beta} = \#CT(\alpha,\beta).$$
    If it is possible to construct a verifier $\NL$ machine which checks if two tableaux are SSYTs of the same shape and types $\alpha$, $\beta$ respectively for variable lengths of $\alpha$, $\beta$ we would have $\#CT(\alpha,\beta) \in \sharpL$, which would generalize Theorem~\ref{thm:contingencytables}.
\end{note}
\smallskip

Recall that a composition is a finite sequence of positive integers, and a weak composition is a finite sequence of non-negative integers. Let $\mathrm{Comp}$ and $\mathrm{WComp}$ be the sets of compositions and weak compositions.
\begin{thm}
For fixed $n$, the structure constants for the product of the monomial basis of $\Lambda[x_1,\ldots,x_n]$ are in $\sharpL$.
\end{thm}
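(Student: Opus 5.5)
We need the coefficient $d_{\mu,\nu}^\lambda$ in $m_\mu m_\nu = \sum_\lambda d_{\mu,\nu}^\lambda m_\lambda$, where $\mu,\nu,\lambda$ are partitions of length at most $n$ given in unary. The plan is to use the standard combinatorial description: $d_{\mu,\nu}^\lambda$ is the coefficient of the monomial $x^\lambda = x_1^{\lambda_1}\cdots x_n^{\lambda_n}$ in $m_\mu m_\nu$. Hence it equals the number of pairs $(\alpha,\beta)$ of weak compositions of length $n$ such that $\alpha$ is a rearrangement of $\mu$ (padded with zeros to length $n$), $\beta$ is a rearrangement of $\nu$, and $\alpha+\beta=\lambda$ coordinatewise. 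Each distinct monomial of $m_\mu$ corresponds to exactly one distinct rearrangement $\alpha\in\mathrm{WComp}$, so the count has no multiplicities to correct for.

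Since $\beta=\lambda-\alpha$ is determined by $\alpha$, the witness is just the sequence $\alpha_1,\dots,\alpha_n$, each written in binary. This occupies $n\cdot O(\log N)$ bits, where $N$ is the input size, and this is still logarithmic because $n$ is fixed. The verifier $\NL$ machine reads the witness once into $n$ counters $\mathtt{a}_1,\dots,\mathtt{a}_n$. It then computes $\mathtt{b}_i=\lambda_i-\mathtt{a}_i$ by reading the input, rejecting if any $\mathtt{b}_i<0$.

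The remaining task is to check that the stored tuple $(\mathtt{a}_i)$ is a permutation of $(\mu_1,\dots,\mu_n)$, and likewise that $(\mathtt{b}_i)$ is a permutation of $\nu$. Because everything fits in a constant number of $O(\log N)$ counters, this is a deterministic log-space test. One option is to compare multisets: for each $i$, count how many $j$ satisfy $\mathtt{a}_j=\mathtt{a}_i$, count how many $j$ satisfy $\mu_j=\mathtt{a}_i$ by scanning the unary input, and check that the two counts agree. This uses $O(n^2)$ comparisons of log-size numbers. A simpler alternative is to sort the $n$ counters with a fixed sorting network of constant size and compare the result to $\mu$ entrywise. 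The same check is then applied to $\mathtt{b}$ and $\nu$. Each such witness is accepted exactly once, which gives the count. The procedure fits the shape of Algorithm~\ref{alg:cap}.

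There is no real obstacle here. The only points needing care are, first, the correctness of the monomial-coefficient description, including the observation that $m_\mu$ has coefficient $1$ on each distinct rearrangement so there is no overcounting. Second, one must note that all quantities are bounded by $|\lambda|\le N$, so the counters have size $O(\log N)$. Both rely on $n$ being fixed; for unbounded $n$ the witness $\alpha$ could not be stored and the permutation check would fail, in line with the earlier discussion of encoding permutations in log-space.
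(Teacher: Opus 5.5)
Your proposal is correct and follows essentially the same route as the paper: count pairs $(\alpha,\beta)$ of weak compositions with $\mathrm{sort}(\alpha)=\mu$, $\mathrm{sort}(\beta)=\nu$ and $\alpha+\beta=\lambda$, store them in a constant number of $O(\log N)$ counters, and check the rearrangement condition by sorting (the paper uses bubblesort). The only cosmetic difference is that you witness $\alpha$ alone and set $\beta=\lambda-\alpha$ with a non-negativity check, whereas the paper reads both $\alpha$ and $\beta$ from the witness and checks $\alpha_i+\beta_i=\lambda_i$.
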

\begin{proof}
    We have
    \[
    m_\mu\cdot m_\nu(x_1,\ldots,x_n) = \sum k_{\mu,\nu}^\lambda m_\lambda(x_1,\ldots,x_n),
    \]
    where
    \[
    k_{\mu,\nu}^\lambda = \#\{
    (\alpha,\beta)\in\mathrm{WComp}^2 \mid \mathrm{sort}(\alpha) = \mu, ~
    \mathrm{sort}(\beta) = \nu, ~
    \alpha + \beta = \lambda
    \}.
    \]

    We construct a machine taking input $\lambda,\mu,\nu$ in unary and witness $w$, which checks if $w$ is in the above set.
    Initialize $3n$ counters
    \[
    \mathtt{c_1^\lambda}, \ldots,
    \mathtt{c_n^\lambda},
    \mathtt{c_1^\mu}, \ldots,
    \mathtt{c_n^\mu},
    \mathtt{c_1^\nu}, \ldots,
    \mathtt{c_n^\nu}
    \]
    to the values of the parts of $\lambda,\mu$ and $\nu$.
    We then read the witness and use another $2n$ (fixed number) counters to store $\alpha$ and $\beta$. With another index counter $\mathtt{i}$ we check that $\mathtt{\alpha_i}+\mathtt{\beta_i}=\lambda_i.$
    We then sort $\mathtt{\alpha}$ and $\mathtt{\beta}$ using bubblesort and check if they are equal to $\mu,\nu$, respectively.  
\end{proof}

Another rich source of combinatorial rules and formulas related to symmetric polynomials is the theory of crystals. For a Lie algebra $\g$, the $\g$-\emph{crystal} of a representation $V$ is a certain directed graph. 
Its edges are best understood by Kashiwara's tensor product rule.
See \cite{BumpSchilling} for a reference on crystals as combinatorial objects.

The tensor product rule for $\sl_2$-crystal is much older and is often attributed to Clebsch and Gordan (for the algebra) or Greene and Kleitman (for the combinatorics). The $\sl_2$-crystal for $(\CC^2)^{\otimes m}$ is denoted $\mathbb{B}_2^{\otimes m}$ and is naturally identified with a symmetric chain decomposition (SCD) of the Boolean lattice. Elements of the lattice are strings of length $m$ with two symbols, `\texttt{(}' and `\texttt{)}', with \texttt{)))}\dots\texttt{)} being the initial object and \texttt{(((}\dots\texttt{(} being the terminal object. 
Pair up parentheses following the usual grammatical rules of English.
In the SCD, there is an directed edge from $x$ to $y$ if $x$ is created from $y$ by flipping the right-most unpaired `\texttt{)}' to a `\texttt{(}'. We say that $E\,.\,x = y$ and call $E$ the \emph{raising operator}. 
If there is no unpaired `\texttt{)}' in $x$, then the previous algorithm is undefined; we write $E\,.\,x=0$ and say $x$ is a \emph{highest-weight element}. An analogous algorithm (flipping the left-most unpaired `\texttt{(}' to a `\texttt{)}' instead) defines the \textit{lowering operator} $F\,.\,y = x$.

Now consider the $\sl_n$-crystal $\mathbb{B}_n^{\otimes m}$.
Vertices of the crystal are strings of length $m$ of $n$ different symbols $1, 2, \ldots, n$.
There are $n-1$ raising operators $E_1, \ldots, E_{n-1}$. 
To compute $E_i\,.\,x$, begin by changing every instance in $x$ of the symbol $i$ to a `\texttt{(}' and every instance of $i+1$ to a `\texttt{)}'. Then compute the $\sl_2$-raising operator on this substring, as before. Revert the change of symbols to obtain $y = E_i\,.\,x$. Again, if this process is not well defined, then we write $E_i\,.\,x=0$ and say $x$ is a highest-weight element.

\begin{thm}[Raising operator]\label{thm:crystal operator}
    Let $x\in[n]^m$ be a vertex of $\mathbb{B}_n^{\otimes m}$.
    The function
    \[
    (1^n\,0\,1^m\,0\,1^i\,0\,x_1\,x_2\,\ldots\,x_m) \mapsto E_i\,.\,x
    \]
    is in $\FL$, where each $x_j$ is encoded in binary taking up $\lceil\log_2(n)\rceil$-many bits.
\end{thm}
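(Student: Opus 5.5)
To compute $E_i\,.\,x$ in logarithmic space, I will use the bracket-matching description directly, so that nothing of length $m$ ever has to be stored. A position $j$ with $x_j=i+1$ is a `\texttt{)}'. Such a position is \emph{unpaired} if and only if, in the suffix $x_j x_{j+1}\cdots x_m$, every suffix starting at $j$ contains at least as many `\texttt{)}' as `\texttt{(}'. Equivalently, scanning from $j$ rightwards with a counter $\mathtt{h}$ that increases on $i+1$ and decreases on $i$, the counter never drops to $0$ after the first step. A more convenient criterion scans from the right: starting at position $m$ and moving left, a `\texttt{(}' (symbol $i$) increments an ``open'' counter. A `\texttt{)}' (symbol $i+1$) either pairs with a pending `\texttt{(}', in which case we decrement that counter, or is unpaired if the counter is $0$.

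Matching `\texttt{(}' with a later `\texttt{)}' is the usual grammar rule, so the `\texttt{(}' at position $k$ pairs with the nearest later unmatched `\texttt{)}'. A right-to-left scan therefore processes `\texttt{)}' as pending and `\texttt{(}' as closers. The natural way to find the rightmost unpaired `\texttt{)}' is then a left-to-right scan keeping a counter $\mathtt{u}$ of currently unpaired `\texttt{(}' symbols:
\begin{itemize}
\item on symbol $i$, increment $\mathtt{u}$;
\item on symbol $i+1$, if $\mathtt{u}>0$, decrement $\mathtt{u}$ (it is paired);
\item otherwise this `\texttt{)}' is unpaired, and record its position in a counter $\mathtt{p}$.
\end{itemize}
Because we overwrite $\mathtt{p}$ each time, at the end $\mathtt{p}$ holds the rightmost unpaired `\texttt{)}', or a sentinel $0$ if none exists. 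I will verify the greedy correctness of this one-pass matching: it is the standard stack algorithm with the stack replaced by its height, which is legitimate because the alphabet is just two brackets. This verification is the only conceptual step. The counters $\mathtt{u},\mathtt{p}\le m$ and a position index use $O(\log m)$ bits. Comparing each $x_j$, a $\lceil\log_2 n\rceil$-bit block, with $i$ and $i+1$ is done by reading $i$ from unary into a binary counter and comparing bit by bit against the input, all within $O(\log(n+m))$ space, which is logarithmic in the input length.

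In the second pass, if $\mathtt{p}=0$ we output the symbol $0$ (the highest-weight case). Otherwise we copy $x_1\cdots x_m$ block by block to the write-only output tape, replacing block $\mathtt{p}$ by the binary encoding of $i$. That encoding can be produced from the counter holding $i$.

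The parsing of the unary prefix $(1^n\,0\,1^m\,0\,1^i\,0)$ into binary counters $n,m,i$ is routine. So is locating block $j$, which starts at offset $n+m+i+3+(j-1)\lceil\log_2 n\rceil$; computing this offset uses Lemma~\ref{lem:elementary}, and $\lceil\log_2 n\rceil$ is computed by repeated halving. I expect no real obstacle beyond stating the bracket-matching invariant carefully, namely that after reading a prefix, $\mathtt{u}$ equals the number of unpaired `\texttt{(}' in that prefix, and that a `\texttt{)}' left unpaired in a prefix stays unpaired in the whole word.
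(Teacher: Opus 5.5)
Your final algorithm is correct and is essentially the paper's proof. It makes one left-to-right pass, keeping a count of unpaired \texttt{(} and overwriting a position register at each unpaired \texttt{)}, then makes a copying pass that replaces that block by $i$; your sentinel $\mathtt{p}=0$ with output $0$ also repairs an omission in the paper, whose position counter starts at $1$ and therefore never reports the highest-weight case. Do delete your opening paragraph, though: its suffix-based and right-to-left criteria describe the reversed pairing \texttt{)(} and are false for the paper's convention (for $x=\texttt{()}$ they declare the \texttt{)} unpaired), and nothing later depends on them.
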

\begin{proof}
    Initialise two binary-encoded counters $\mathtt{i}=i$ and $\mathtt{I}=i+1$; these will require at most $\log(n)$-space each. Initialise two more counters: $\mathtt{a}=0$ to keep track of which parentheses pair up, and $\mathtt{j}=1$ that will be used to keep track of the position of the last open bracket ever read; these will require at most $\log(m)$-space each.

    We read $x$ from left to right. If the current entry $x_k$ is not equal to $\mathtt{i}$ nor $\mathtt{I}$, do nothing. If $x_k = \mathtt{i}$, then increase $\mathtt{a}$ by~$1$. If $x_k = \mathtt{I}$ and $\mathtt{a}>0$, then decrease $\mathtt{a}$ by~$1$. If $x_k = \mathtt{I}$ and $\mathtt{a}=0$, then set $\mathtt{j} \leftarrow k$.
    
    When we have read the whole of $x$, the counter $\mathtt{j}$ points to the right-most unpaired closing bracket. Read through $x$ once again, copying everything to the output tape. Except, when at $x_\mathtt{j}$, copy $\mathtt{i}$ to the output tape instead.
\end{proof}

\vspace{-.5em}
\begin{note}
The rule of matching parentheses and swapping one of them is called the signature rule.
The same signature rule generalises to all finite type crystals.

The standard crystal $\mathbb{B}_n$ of $\sl_n$ is
    \[
    1 \xrightarrow{1} 2 \xrightarrow{2} 3 \xrightarrow{3} \cdots \xrightarrow{n-1} n 
    \]
    where the arrow $\xrightarrow{i}$ corresponds to the lowering operator $F_i$.
    This standard crystal governs the signature rule: it is the reason $i$ can only ever raise to $i+1$.
    For other finite type crystals, the $\FL$ machine would need to be governed by a different standard crystal instead. We avoid doing this for simplicity. 
    
    Moreover, Theorem~\ref{thm:crystal operator} also works in the same way for the crystal $\mathcal{B}_\lambda$ of semistandard Young tableaux of shape $\lambda\vdash m$ and entries from $\{1,\ldots,n\}$.
\end{note}

\begin{cor}[Highest weight]\label{cor:highest weight}
    Let $x\in[n]^m$ be a vertex of $\mathbb{B}_n^{\otimes m}$.
    The function $(1^n\,0\,1^m\,x_1\,\ldots\, x_m) \mapsto$ `is $x$ a highest weight element?' is in $\L$.
\end{cor}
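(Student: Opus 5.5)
The plan is to reduce to the raising operators of Theorem~\ref{thm:crystal operator}. An element $x$ of $\mathbb{B}_n^{\otimes m}$ is a highest weight element exactly when $E_i\,.\,x=0$ for every $i=1,\ldots,n-1$. The deciding machine therefore loops over $i$ with a binary counter $\mathtt{i}$ using $O(\log n)$ bits. For each value of $\mathtt{i}$ it runs a variant of the $\FL$ machine from the proof of Theorem~\ref{thm:crystal operator}. Instead of producing $E_i\,.\,x$, the variant only decides whether an unpaired closing bracket exists.

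Concretely, for fixed $\mathtt{i}$, initialise $\mathtt{a}=0$ and a one-bit flag $\mathtt{f}=0$, then scan $x$ left to right. At each position we compare the block $x_k$, of $\lceil\log_2 n\rceil$ bits, with $\mathtt{i}$ and $\mathtt{i}+1$, which is a log-space comparison.
\begin{itemize}
\item If $x_k=\mathtt{i}$, increment $\mathtt{a}$.
\item If $x_k=\mathtt{i}+1$ and $\mathtt{a}>0$, decrement $\mathtt{a}$.
\item If $x_k=\mathtt{i}+1$ and $\mathtt{a}=0$, set $\mathtt{f}=1$, since this `\texttt{)}' is unpaired.
\end{itemize}
This is exactly the bracket matching in the raising operator. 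It is correct because a `\texttt{)}' is unpaired precisely when no open `\texttt{(}' is pending at its position, and that is what $\mathtt{a}=0$ records. If $\mathtt{f}=1$ at the end, then $E_i\,.\,x\ne0$ and we reject. Otherwise we advance to $\mathtt{i}+1$ and repeat. We accept if all $n-1$ passes finish with $\mathtt{f}=0$.

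The space used is a constant number of counters bounded by $\max(n,m)$, each stored in binary. This is $O(\log|{\rm input}|)$, since $n$ and $m$ appear in unary in the input. The input is read-only and may be rescanned, so no read-once issue arises and the machine is deterministic, which places the function in $\L$.

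An alternative is to compose the $\FL$ machine of Theorem~\ref{thm:crystal operator} with a check of whether its output is $0$, using closure under composition as in \S\ref{sec:log-space reduction}. The direct scan avoids needing a convention for how the output ``$0$'' is encoded. The only mildly delicate point is parsing the input: locating where $1^m$ ends and the fixed-width blocks $x_k$ begin. Since $n$ is recovered in unary, the block width $\lceil\log_2 n\rceil$ can be computed into a counter, and the scan can then track block boundaries with one more $O(\log)$ counter. I expect no genuine obstacle beyond this bookkeeping.
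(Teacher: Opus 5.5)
Your proposal is correct and follows the paper's proof. Both loop over $\mathtt{i}=1,\ldots,n-1$ and accept iff $E_{\mathtt{i}}\,.\,x=0$ for every $\mathtt{i}$; the paper composes the $\FL$ machine of Theorem~\ref{thm:crystal operator}, whereas you inline its bracket-matching scan. Your flag $\mathtt{f}$ is a small improvement: that machine initialises $\mathtt{j}=1$ and never explicitly outputs $0$, so your version makes explicit a case the paper leaves implicit.
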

\begin{proof}
    Loop through $\mathtt{i}=1,\ldots,n-1$. Then compose with the $\FL$ function from Theorem~\ref{thm:crystal operator} to compute $E_{\mathtt{i}}\,.\,x$. Accept if $E_{\mathtt{i}}\,.\,x=0$ for all $\mathtt{i}$.
\end{proof}

The \emph{weight} of a highest weight element $x$ is the partition $\lambda$ where $\lambda_i$ counts the number of instances of $i$ in $x$.

\begin{cor}[Weight of an element]\label{cor:hw of lambda}
    Let $x\in[n]^m$ be a vertex of $\mathbb{B}_n^{\otimes m}$.
    The function sending
    $$(1^n\,0\,1^m\,0\,x_1\,\ldots\, x_m\,0\,1^{\lambda_1}\,0\,1^{\lambda_2}\,0\,\ldots)$$
to 
    `is $x$ a highest weight element of weight $\lambda$?' is in $\L$.
\end{cor}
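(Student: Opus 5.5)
The plan is to split the question into two independent log-space checks and combine them, as in Corollary~\ref{cor:highest weight}. First, decide whether $x$ is a highest weight element; this is exactly the language of Corollary~\ref{cor:highest weight}. Second, decide whether the weight of $x$ equals $\lambda$, meaning that for each $i\in[n]$ the number of occurrences of $i$ in $x$ equals $\lambda_i$. Accept if and only if both checks accept. Since $\L$ is closed under intersection (run one deterministic machine, then the other, reusing the same work space), membership in $\L$ follows.

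For the first check I would compose with the machine of Corollary~\ref{cor:highest weight}, which is itself built from the $\FL$ raising operator of Theorem~\ref{thm:crystal operator}. The input here has a different format, namely the extra $0$ and the appended $\lambda$. So I first apply an $\FL$ transducer that copies the prefix $(1^n\,0\,1^m\,x_1\ldots x_m)$. Its boundaries are located by counting: the unary blocks determine $n$ and $m$, which are stored in binary counters. The block $x$ then occupies exactly $m\lceil\log_2 n\rceil$ bits. By closure of $\L$ under log-space reductions (\S\ref{sec:log-space reduction}), this part is in $\L$.

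For the second check I would loop over a counter $\mathtt{i}=1,\ldots,n$, which needs $O(\log n)$ space. For each value, I scan $x$ and count the entries $x_k=\mathtt{i}$ in a counter $\mathtt{c}$ of $O(\log m)$ bits. Each $x_k$ is compared with $\mathtt{i}$ bit by bit, which uses a position pointer of $O(\log(nm))$ space. I then locate the $\mathtt{i}$-th unary block of the $\lambda$-part by counting $0$s, as in the proof of Theorem~\ref{thm:multinomial}, and check that its length equals $\mathtt{c}$.

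Two additional conditions need to be checked. The $\lambda$-part must contain at most $n$ blocks, with missing parts treated as $0$. We should also require $|\lambda|=m$, although this already follows from the counts matching for every $i$, since every entry of $x$ lies in $[n]$. In addition, each $x_k$ must actually encode a value in $[n]$; this is checked in the same scan.

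The only delicate point, and a minor one, is bookkeeping of the input format. This includes parsing the binary-encoded block $x$ correctly, and handling partitions $\lambda$ with fewer than $n$ listed parts or trailing zeros. All other steps are routine applications of earlier results.
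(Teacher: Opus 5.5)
Your proposal is correct and follows essentially the same route as the paper: apply Corollary~\ref{cor:highest weight} for the highest-weight test, then loop over $\mathtt{i}=1,\ldots,n$, scan $x$, and match the number of occurrences of $\mathtt{i}$ against $\lambda_{\mathtt{i}}$. The paper does this last step by decrementing a counter initialised at $\lambda_{\mathtt{i}}$ and checking that it ends at $0$, which is equivalent to your count-and-compare. Your extra steps (reformatting the input for the earlier corollary, and handling missing or surplus parts of $\lambda$) are bookkeeping that the paper leaves implicit.
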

\begin{proof}
    Begin by asking `is $x$ a highest weight element?', which we can do in $\L$ by Corollary~\ref{cor:highest weight}.

    Then, for $\mathtt{i}=1,2,\ldots,n$, initialise a counter $\mathtt{b}=\lambda_\mathtt{i}$. Read through $x$ again; if $x_k=\mathtt{i}$ then $\mathtt{b}\leftarrow\mathtt{b}-1$. When $x$ is fully read, check $\mathtt{b}=0$; otherwise reject.
\end{proof}
\begin{note}\label{note:crystal operators}
Let $\lambda$ be a partition of $n$.
    It is a consequence of Schur--Weyl duality \cite[Chapter 8]{BumpSchilling} that the number of highest weight elements of weight $\lambda$ in $\mathbb{B}_{n}^{\otimes n}$ is $\#\SYT(\lambda)$. However, this does \emph{not} place $\#\SYT(\lambda)$ in $\sharpL$, since the machine of Corollary~\ref{cor:hw of lambda} reads the element $O(n)$ times, even if the length of $\lambda$ is bounded.
    
    See Corollary~\ref{cor:syt_fixed_ell} and Theorem~\ref{thm:hook-length} for successful approaches for this problem.
\end{note}
\smallskip

Kashiwara's tensor product rule says that if $\mathcal{B}$ and $\mathcal{C}$ are $\sl_n$-crystals then
\[
E_i\,.\,(x\otimes y) = \begin{cases}
    (E_i\,.\,x)\otimes y & \text{if}~\varphi_i(y) < \varepsilon_i(x),\\
    x\otimes(E_i\,.\,y) & \text{if}~\varphi_i(y) \ge \varepsilon_i(x)\\
\end{cases}
\]
defines an $\sl_n$-crystal $\mathcal{B}\otimes\mathcal{C}$ on the Cartesian product of the vertex sets, where $\varphi_i(y) = \max\{r\mid F_i^r\,.\,y \ne 0\}$ and $\varepsilon_i(x) = \max\{r\mid E_i^r\,.\,x \ne 0\}$.
\begin{thm}[Kashiwara's tensor product rule]
    If the raising operators of two $\sl_n$-crystals $\mathcal{B}$ and $\mathcal{C}$ are in $\FL$ then the raising operator of $\mathcal{B}\otimes\mathcal{C}$ is in $\FL$.
    
    Here, raising operators are encoded as in Theorem~\ref{thm:crystal operator}, since there exists $m$ such that $\mathcal{B}\subseteq\mathbb{B}_n^{\otimes m}$, and similarly for the other crystals.
\end{thm}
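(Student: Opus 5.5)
Given the input $(x\otimes y)$ and an index $i$, we build an $\FL$ machine that decides which case of Kashiwara's rule applies and then outputs $(E_i\,.\,x)\otimes y$ or $x\otimes(E_i\,.\,y)$. The input encodes $x$ and $y$ as words in $[n]^{m_1}$ and $[n]^{m_2}$, so the two factors can be located on the read-only input tape with a couple of log-space pointers. Once the case is known, the output is produced by streaming: for the branch $(E_i\,.\,x)\otimes y$, run the given $\FL$ transducer for $E_i$ on $x$ (its input is simulated by pointer arithmetic into the input tape, as in \S\ref{sec:log-space reduction}), then copy $y$ verbatim. The other branch is symmetric. If the relevant raising operator returns $0$, output $0$. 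By closure of $\FL$ under composition, this is all in log-space, provided the case test itself is in $\L$.

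The key step is computing $\varepsilon_i(x)$ and $\varphi_i(y)$ in log-space. Each is at most the length of the element, so the value fits in $O(\log)$ bits. The obvious approach iterates the operator: set a counter $\mathtt{r}=0$ and repeatedly apply $E_i$, composing the $\FL$ machine with itself $\mathtt{r}$ times. However, naive $r$-fold composition of log-space transducers costs $O(r\log)$ space, which is too much when $r$ is unbounded. This is the main obstacle.

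I would first try to avoid iteration entirely by using the signature rule. For crystals embedded in $\mathbb{B}_n^{\otimes m}$, as the statement's encoding convention allows, $\varepsilon_i(x)$ is the number of unpaired `\texttt{)}' in the $i/(i+1)$ bracket word of $x$, and $\varphi_i(y)$ is the number of unpaired `\texttt{(}'. Both can be counted in one left-to-right pass with the counter $\mathtt{a}$ from the proof of Theorem~\ref{thm:crystal operator}: the unpaired `\texttt{)}' are those read when $\mathtt{a}=0$, and the number of unpaired `\texttt{(}' is the final value of $\mathtt{a}$.

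If one insists on using only the black-box hypothesis that $E_i,F_i\in\FL$, the fallback is different. Note that $E_i^r\,.\,x\neq 0$ can be decided bitwise by recomputation: each bit of $E_i^{r-1}\,.\,x$ is recomputed on demand. Even so, the recursion depth $r$ makes the stack grow by $O(\log)$ per level, so I expect one genuinely needs the signature-rule structure guaranteed by the embedding into $\mathbb{B}_n^{\otimes m}$.

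With $\varepsilon_i(x)$ and $\varphi_i(y)$ stored in two counters, compare them to choose the branch and stream the output as described above.
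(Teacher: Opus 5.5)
Your proposal is correct and follows the paper's proof. The paper also uses the signature-rule machine of Theorem~\ref{thm:crystal operator} rather than the black-box $\FL$ hypothesis. It adds a counter that increments whenever an unpaired \texttt{)} is read with $\mathtt{a}=0$, which gives $\varepsilon_i(x)$, and obtains $\varphi_i(y)$ analogously by reading $y$ right-to-left; your use of the final value of $\mathtt{a}$ is equivalent. It then compares the two counters and outputs either $(E_i\,.\,x)\otimes y$ or $x\otimes(E_i\,.\,y)$.
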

\begin{proof}
    We adapt the machine from Theorem~\ref{thm:crystal operator} to compute $\varepsilon_i(x)$. To do this, we add a counter $\mathtt{e}=0$. Whenever $\mathtt{a}=0$ in the machine of Theorem~\ref{thm:crystal operator}, we have found an unpaired `\texttt{)}' and thus we increase $\mathtt{e}$ by~$1$. When we finish reading $x$ we have $\mathtt{e}=\varepsilon_i(x)$. Output this value instead of $E_i\,.\,x$.

    A similar machine can be constructed for computing $\varphi_i(y)$, by reading $y$ right-to-left. Store it in a counter $\mathtt{f}$.

    Both of these values are upper-bounded by the length of $(x,y)$.

    Finally output $E_i\,.\,x$ followed by $y$ if $\mathtt{f} < \mathtt{e}$, or $x$ followed by $E_i\,.\,y$ otherwise. 
\end{proof}

\section{Number theory}\label{sec:number theory}

In the following, we discuss several number theoretic functions $f(n)$. To interpret them as counting problems, we encode $n$ in unary.

\begin{thm}[Partition function]\label{thm:partition}
    The function $(1^n) \mapsto \#\mathrm{Par}(n)$ is in $\sharpL$.
\end{thm}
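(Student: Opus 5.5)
The plan is to build a verifier $\NL$ machine whose witnesses are exactly the partitions of $n$, written in a form that can be checked in one left-to-right pass using $O(\log n)$ space. The obvious encoding lists the parts $\lambda_1\ge\lambda_2\ge\cdots\ge\lambda_\ell$ in binary, separated by a delimiter. The machine keeps two binary counters: $\mathtt{s}$, the running sum, initialised to $n$; and $\mathtt{p}$, the previous part, initialised to $n$. Both fit in $O(\log n)$ space because every part is at most $n$.

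The verification proceeds as follows. Read the parts one at a time into a third counter $\mathtt{c}$. For each part, check that $1\le\mathtt{c}\le\mathtt{p}$ and that $\mathtt{c}\le\mathtt{s}$. Then set $\mathtt{s}\leftarrow\mathtt{s}-\mathtt{c}$ and $\mathtt{p}\leftarrow\mathtt{c}$. At the end of the witness, accept if and only if $\mathtt{s}=0$.

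Correctness and polynomial witness length must also be checked.
\begin{itemize}
\item Each partition of $n$ has a unique weakly decreasing encoding, and every accepted witness is such an encoding. Hence the number of accepting witnesses is exactly $\#\mathrm{Par}(n)$.
\item There are at most $n$ parts, each of $O(\log n)$ bits, so the witness length is $O(n\log n)$, which is polynomial.
\end{itemize}

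The main obstacle is making the witness-to-object correspondence exactly bijective, so that no partition is counted twice through a non-canonical representation. Two points need care:
\begin{itemize}
\item Binary numbers must carry no leading zeros, or else be written in a fixed width of $\lceil\log_2(n+1)\rceil$ bits.
\item A trailing delimiter must not create a second encoding of the same partition.
\end{itemize}
I would use fixed-width blocks, so the machine simply counts bits within each block. An alternative that sidesteps binary entirely is to write the parts in unary, e.g.\ $1^{\lambda_1}0\,1^{\lambda_2}0\cdots$. Comparing each part against the previous one then needs a second counter that records the current run length. The unary witness is unique and trivially of length $O(n)$.

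A second, cross-checking route would use Theorem~\ref{thm:2-dim}. Let $p(n,k)$ be the number of partitions of $n$ with largest part at most $k$. It satisfies the recurrence $p(n,k)=p(n,k-1)+p(n-k,k)$. This recurrence does not fit Theorem~\ref{thm:2-dim} directly, because the shift $(0,1),(k,0)$ depends on $k$. However, the path-following argument in its proof adapts verbatim, exactly as was done for the Entringer numbers in Theorem~\ref{thm:euler}. One would then take $k=n$ to recover $\#\mathrm{Par}(n)$.
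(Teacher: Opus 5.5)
Your proposal is correct and is the paper's proof: the witness is the list of parts written in binary, one counter checks that the parts sum to $n$, and counters holding the previous part check $\lambda_i \le \lambda_{i-1}$. Your extra care about canonical encodings (fixed-width blocks, forbidding zero parts) fills in details the paper leaves implicit. Your recurrence route via $p(n,k)=p(n,k-1)+p(n-k,k)$ is also valid, and it encodes essentially the same witness.
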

\begin{proof}
    The witness is a partition $\lambda_1, \lambda_2, \ldots$ written in binary. A counter is used to check that $\sum_i \lambda_i = n$. Another two counters check that $\lambda_i \le \lambda_{i-1}$ for all $i$.
\end{proof}
We remark that the above theorem implies the following result. We note however, that establishing the corollary by itself is non-trivial; a naive algorithm does not work, and one has to use e.g.~Euler's pentagonal theorem.
\begin{cor}
    The function $(1^n) \mapsto \#\mathrm{Par}(n)$ is in $\FP$.
\end{cor}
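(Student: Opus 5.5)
The corollary should follow directly from Theorem~\ref{thm:partition} together with the inclusion $\sharpL\subseteq\FP$ recalled in \S\ref{sec:zoo}. So the plan is to make that inclusion concrete for the specific verifier $\NL$ machine $M$ of Theorem~\ref{thm:partition}, rather than to look for a new counting formula.

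\textbf{Step 1: the configuration graph.} On input $1^n$, the machine $M$ uses $O(\log n)$ work space. Its configurations therefore consist of the work tape contents, the head positions on the input and work tapes, and the internal state, and there are only $\poly(n)$ of them. Each step of reading a witness symbol moves from one configuration to another, so I will build the configuration graph $G$ with one vertex per configuration and one arc per such move.

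\textbf{Step 2: counting paths.} Accepting witnesses of $M$ are in bijection with paths in $G$ from the initial configuration to an accepting configuration, since the machine is deterministic apart from the witness. Because $M$ halts on every path, $G$ is acyclic. Hence the number of such paths can be computed by dynamic programming in a topological order (for instance, by the number of witness symbols read, which is bounded by a polynomial).

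\textbf{Step 3: running time.} Each path count is at most $2^{\poly(n)}$ by Theorem~\ref{thm:sharpLgrowth}, so it fits in polynomially many bits. There are polynomially many vertices and arcs, and each update is a big-integer addition, which costs polynomial time. The whole computation is therefore polynomial in $n$, and the output equals $\#\mathrm{Par}(n)$.

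The only point needing care is the one the paper stresses for the partition function: a naive algorithm does not work. Enumerating partitions is exponential, and a recursion indexed by entire partitions would need exponentially many states. The reduction above avoids this because the verifier's state is only the last part, the running sum and a position. This amounts to the classical table $p(n,k)$ counting partitions of $n$ with largest part at most $k$, so I expect no real obstacle. If desired, one can instead cite the general fact $\sharpL\subseteq\FP$, for example via $\sharpL\subseteq\GapL$ and polynomial-time determinant computation.
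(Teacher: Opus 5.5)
Your proposal is correct and takes the same route as the paper, which derives this corollary by combining Theorem~\ref{thm:partition} with the inclusion $\sharpL\subseteq\FP$. Your Steps~1--3 simply unpack the standard proof of that inclusion, namely counting accepting paths in the polynomial-size acyclic configuration graph of the verifier; your closing remark is also right that the classical table $p(n,k)=p(n,k-1)+p(n-k,k)$ already gives a direct polynomial-time algorithm for unary input, so the verifier-based argument is a convenience rather than a necessity.
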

\begin{note}
    The same approach would not work to show that $\#\mathrm{Par}(n)$ is in $\FL$, since it relies on a recursion which computes exponentially large values that cannot be stored in log-space. It is conceivable though that the machine could write the sequence of $0,1$s encoding the number of partitions in binary on the output tape without storing all of them on the work tape, and hence we cannot exclude the possibility of $\#\mathrm{Par}(n)$ in $\FL$. 
\end{note}

\begin{cor}[Partitions in a box]\label{cor:partitions in a box}
    The function 
    \[
    (1^n\,0\,1^\ell\,0\,1^k)\mapsto p_n(\ell\times k) := \#\{\lambda\in\mathrm{Par}(n) \mid \ell(\lambda)\le\ell, ~\lambda_1\le k\}
    \]
    is in $\sharpL$.
\end{cor}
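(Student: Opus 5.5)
The plan is to modify the verifier $\NL$ machine of Theorem~\ref{thm:partition} by adding two length and size constraints, both of which can be checked with a constant number of extra binary counters. The witness will again be a partition $\lambda_1,\lambda_2,\ldots,\lambda_r$, each part written in binary and separated by a delimiter symbol. Every part satisfies $\lambda_i\le n$, so each one fits in $O(\log n)$ space. The input size is $N=n+\ell+k+2$, and all counters below will use $O(\log N)$ bits.

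The machine reads $n$, $\ell$ and $k$ from the unary input into binary counters $\mathtt{s}\leftarrow n$, $\mathtt{l}\leftarrow \ell$ and $\mathtt{K}\leftarrow k$. It then reads the witness left to right, keeping the previous part in a counter $\mathtt{p}$, initialised to $\mathtt{K}$, and the current part in a counter $\mathtt{c}$. For each part $\lambda_i$ read into $\mathtt{c}$, it checks $1\le \mathtt{c}\le \mathtt{p}$. Since $\mathtt{p}$ starts at $k$, the check for $i=1$ is exactly $\lambda_1\le k$, and the later checks give weak decrease. It then sets $\mathtt{s}\leftarrow\mathtt{s}-\mathtt{c}$ and rejects if this becomes negative, decreases $\mathtt{l}$ by one and rejects if it becomes negative, and finally sets $\mathtt{p}\leftarrow\mathtt{c}$. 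At the end it accepts if and only if $\mathtt{s}=0$. The comparisons and subtractions are log-space by Lemma~\ref{lem:elementary}.

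It remains to verify that the accepted witnesses are in bijection with the partitions being counted. A witness is accepted exactly when it lists the positive parts of a partition of $n$ in weakly decreasing order, with at most $\ell$ parts and largest part at most $k$, so the encoding is unique. The only delicate point is making the encoding unambiguous: requiring positive parts and a fixed delimiter, with no leading zeros in the binary strings, prevents the same partition from being counted more than once. The witness length is $O(n\log n)$, which is polynomial, and only a constant number of counters is used, so the function is in $\sharpL$.

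I expect no real obstacle here. The step most likely to need care is the uniqueness of the witness encoding, rather than any complexity consideration.
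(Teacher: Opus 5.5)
Your proposal is correct and is essentially the paper's proof: take the partition verifier of Theorem~\ref{thm:partition}, add a counter that bounds the number of parts by $\ell$, and check $\lambda_1\le k$. You fold the second check into initialising the previous-part counter to $k$. Your attention to a unique encoding of the witness (positive parts, no leading zeros) is a detail the paper leaves implicit.
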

\begin{proof}
    Take the verifier $\NL$ machine from Theorem~\ref{thm:partition}. Add one counter to check the length of $\lambda$ is less than $\ell$. Add another counter to check that $\lambda_1\le k$.
\end{proof}

Define the $q$-integer $(n)_q = (1-q^n)/(1-q)$, the $q$-factorial $(n)_q! = (n)_q\cdot(n-1)_q\cdots(1)_q$, and $q$-binomial $\binom{n+m}{n}_{\!q} = (m+n)_q! / ((n)_q!(m)_q!)$. More generally, the $q$-multinomial
\[
\binom{n}{a_1,\ldots,a_k}_{\!\!q} = \binom{n}{a_1}_{\!\!q} \binom{n-a_1}{a_2}_{\!\!q} \cdots\,.
\]

\begin{cor}[$q$-binomials and $q$-multinomials]
    The function 
    \[(1^n\,0\,1^r\,0\,1^{a_1}\,0\,1^{a_2}\,0\,\cdots\,0\,1^{a_k}\,) \mapsto \mathrm{coeff}_{q^r}\binom{n}{a_1,a_2,\ldots,a_k}_{\!\!q}
    \]
    giving the coefficient at $q^r$ in the $q$-multinomial coefficient is in $\sharpL$.
\end{cor}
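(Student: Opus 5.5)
The plan is to reduce to a single $q$-binomial and then apply Theorem~\ref{thm:product} in a convolved form. The combinatorial interpretation I will use is that $\mathrm{coeff}_{q^r}\binom{n}{a_1,\ldots,a_k}_q$ counts words $w\in[k]^n$ with exactly $a_j$ letters equal to $j$ and with $\mathrm{inv}(w)=r$. Because the inversion count needs pairwise comparisons, it cannot be checked in one left-to-right pass with boundedly many counters when $k$ is unbounded. I will therefore not witness the word directly, but use the factorisation $\binom{n}{a_1,\ldots,a_k}_q=\prod_i \binom{m_i}{a_i}_q$ with $m_i=n-a_1-\cdots-a_{i-1}$.

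First I handle a single $q$-binomial. It is classical that $\mathrm{coeff}_{q^r}\binom{m}{a}_q$ equals the number of partitions of $r$ that fit in an $a\times(m-a)$ box. By Corollary~\ref{cor:partitions in a box}, the function $(1^r\,0\,1^{m}\,0\,1^{a})\mapsto p_r(a\times(m-a))$ is therefore in $\sharpL$, after composing with the $\FL$ map $(1^m,1^a)\mapsto(1^r\,0\,1^a\,0\,1^{m-a})$ as in~\eqref{eq:FL composition}.

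Next I combine the factors. The coefficient of $q^r$ in a product is a sum over splittings $r=r_1+\cdots+r_k$ of $\prod_i \mathrm{coeff}_{q^{r_i}}\binom{m_i}{a_i}_q$. The witness is the concatenation
\[
r_1,\ w^1,\ r_2,\ w^2,\ \ldots,\ r_k,\ w^k ,
\]
where each $r_i$ is written in binary and $w^i$ is a witness for the box-partition count with parameters $(r_i,m_i,a_i)$. This is the proof of Theorem~\ref{thm:product}, with the extra twist that each $r_i$ is read from the witness rather than computed. The machine keeps a running counter $\mathtt{R}$, initialised to $r$, and subtracts each $r_i$ from it. It also checks $r_i\le a_i(m_i-a_i)\le n^2$, so the counter fits in $O(\log n)$ bits. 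It computes $m_i$ and $a_i$ via the $\FL$ function $g$ from the proof of Theorem~\ref{thm:multinomial}, verifies $w^i$ by composition as in~\eqref{eq:FL composition}, and at the end checks $\mathtt{R}=0$. Accepting witnesses are then in bijection with the terms of the convolution sum, so their number is the desired coefficient. The witness has polynomial length because $k\le n$ and each $w^i$ is polynomial.

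The main obstacle is the interaction between the sum over splittings and the product. Theorem~\ref{thm:product} as stated only covers a product of values computed from $x$ and $i$, whereas here the arguments depend on witness-chosen $r_i$. I expect this to be harmless: the read-once witness supplies each $r_i$ just before $w^i$, the $\FL$ subroutine recomputes $m_i,a_i$ from the input tape, and only $O(1)$ counters of size $O(\log n)$ are live at any time. A secondary point is the input encoding. The box corollary expects unary input, so $r_i$ (stored in binary) must be fed to that machine by re-outputting it bit by bit in unary on demand. This is the usual log-space transducer composition.
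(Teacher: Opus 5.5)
Your proposal is correct and follows essentially the same route as the paper. Both factor the $q$-multinomial as $\prod_i\binom{m_i}{a_i}_q$, read each coefficient as a count of partitions in a box via Corollary~\ref{cor:partitions in a box}, and witness the convolution by the string $r_1,w^1,r_2,w^2,\ldots$, with a single counter tracking $\sum_i r_i$ and compared against $r$ at the end. Your explicit check $r_i\le a_i(m_i-a_i)$, which keeps that counter in $O(\log n)$ bits even when a witness supplies an overly long binary $r_i$, is a small detail the paper leaves implicit.
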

\begin{proof}
    We first observe that 
    $$\binom{n}{a}_{\!\!q} = \sum_r p_r( (n-a) \times a)\cdot q^r$$
    and therefore
    \[
        \binom{n}{a_1,\ldots,a_k}_{\!\!q} = \sum_{r_1,r_2,\ldots} p_{r_1}((n-a_1)\times a_1) p_{r_2}( (n-a_1-a_2) \times a_2) \cdots q^{r_1+r_2+\cdots} .
    \]
    We proceed like in the proof of Theorem~\ref{thm:product}. 
    The witness is a sequence of the witnesses $w^{(1)},w^{(2)},\ldots$ from Corollary~\ref{cor:partitions in a box}, together with  integers $r_i$, so $w=r_1, w^1; r_2, w^2; \cdots$. We initialise counters $\mathtt{n}=n$, $\mathtt{i}=1,\mathtt{R}=0,\mathtt{d}=0$. After reading $\mathtt{d}\leftarrow r_\mathtt{i}$, we set $\mathtt{R} \leftarrow \mathtt{R} + \mathtt{d}$. Read and check the validity of $w^{\mathtt{i}}$ as a witness for $p_{\mathtt{r_i}}((\mathtt{n}-a_{\mathtt{i}})\times a_\mathtt{i})$. Then increase $\mathtt{i}$ by~$1$, set $\mathtt{n}\leftarrow\mathtt{n}- a_\mathtt{i}$, and repeat. In the end check $\mathtt{R}=r$.
\end{proof}

\begin{thm}[Euclidean division]\label{thm:euclidean div}
    The function $$(1^n\,0\,1^k)\mapsto(\lfloor{n/k}\rfloor,\, n-k\lfloor{n/k}\rfloor)$$ is in $\FL$.
\end{thm}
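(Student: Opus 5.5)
Since the input is in unary, $n$ and $k$ are bounded by the input length $N$. Every quantity we need (the quotient, the remainder, and all intermediate values) is at most $n$, so each fits in a binary-encoded counter of $O(\log N)$ bits. The plan is to simulate repeated subtraction with a constant number of such counters and then write the two results to the output tape in binary. Lemma~\ref{lem:elementary} covers any arithmetic beyond incrementing and decrementing, but simple counters should suffice.

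First, scan the input once. Count the leading block of $1$s into a counter $\mathtt{r}\leftarrow n$, skip the separating $0$, and count the remaining $1$s into $\mathtt{k}\leftarrow k$. If the input is not of the form $1^n\,0\,1^k$, or if $k=0$, output a fixed default value, matching the convention used for ill-formed inputs throughout \S\ref{sec:combinatorics}. Next, initialise $\mathtt{q}\leftarrow 0$. While $\mathtt{r}\ge\mathtt{k}$, set $\mathtt{r}\leftarrow\mathtt{r}-\mathtt{k}$ and $\mathtt{q}\leftarrow\mathtt{q}+1$. The comparison and the subtraction of two $O(\log N)$-bit binary numbers can be done in place by the schoolbook algorithm using $O(\log N)$ auxiliary space. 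Alternatively, the subtraction can be done by decrementing $\mathtt{r}$ and a copy of $\mathtt{k}$ in lockstep. Either way the loop runs at most $n$ times, so the machine halts, and at no point does any counter exceed $n$.

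At the end we have the invariants $n=\mathtt{q}\,k+\mathtt{r}$ and $0\le\mathtt{r}<k$. These give $\mathtt{q}=\lfloor n/k\rfloor$ and $\mathtt{r}=n-k\lfloor n/k\rfloor$. Finally, write $\mathtt{q}$, a delimiter, and then $\mathtt{r}$ onto the write-only output tape. Nothing here is delicate. The only points that need checking are that in-place comparison and subtraction of binary counters use only logarithmic space, and that the number of counters stays constant; both are immediate. If instead we wanted binary-encoded input, we would replace repeated subtraction with schoolbook long division on $O(\log N)$-bit registers, but under the unary encoding in the statement that refinement is not needed.
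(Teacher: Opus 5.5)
Your proof is correct and takes essentially the same approach as the paper: both find the quotient by a linear search using a constant number of $O(\log n)$-bit binary counters. The difference is cosmetic. The paper increments $\mathtt{q}$ while $\mathtt{q}\cdot\mathtt{k}\le\mathtt{n}$, recomputing the product via Lemma~\ref{lem:elementary}, whereas you maintain the remainder by repeated subtraction, which avoids multiplication; your explicit handling of $k=0$, which the paper leaves implicit, makes your version slightly tidier.
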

\begin{proof}
    Initialise two binary-encoded counters $\mathtt{n}=n$ and $\mathtt{k}=k$. Initialise one more binary-encoded counters $\mathtt{q}$.
    While $\mathtt{q}*\mathtt{k} \le \mathtt{n}$, increase $\mathtt{q}$ by 1. Once $\mathtt{q}*\mathtt{k} > \mathtt{n}$, decrease $\mathtt{q}$ by 1. Return $\mathtt{q}$ and $\mathtt{n}-\mathtt{q}*\mathtt{k}$. Multiplication, comparison, and difference require an extra auxiliary counter, so in total 4 counters are needed.  
\end{proof}
\begin{thm}[Primes]\label{primes}
    The decision problem $(1^n) \mapsto$ `is $n$ prime?' is in $\L$.
\end{thm}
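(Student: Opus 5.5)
We decide primality by trial division, using the $\FL$ machine of Theorem~\ref{thm:euclidean div} as a subroutine. The input is $1^n$, so the input length is $n$. Any counter holding a number at most $n$, written in binary, uses $O(\log(n))$ space. The plan is to check every candidate divisor $k$ with $2\le k\le n-1$ and accept exactly when none of them divides $n$. The degenerate cases $n\le 1$ are handled first: the machine reads the input once, and if it contains at most one symbol it rejects.

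The main loop works as follows.
\begin{itemize}
\item Initialise a binary counter $\mathtt{k}=2$.
\item While $\mathtt{k}<n$, compose with the machine of Theorem~\ref{thm:euclidean div} on the input $(1^n\,0\,1^{\mathtt{k}})$ to obtain the remainder $n-\mathtt{k}\lfloor n/\mathtt{k}\rfloor$. If the remainder is $0$, reject; otherwise increase $\mathtt{k}$ by~$1$.
\item If the loop finishes, accept.
\end{itemize}
The comparison $\mathtt{k}<n$ is carried out by a second counter $\mathtt{n}$, which is obtained by reading the unary input once and storing its length in binary.

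The only point needing care is the composition. The division machine expects its input $(1^n\,0\,1^{\mathtt{k}})$ written in unary, and this string has length $\Theta(n)$, so it cannot be written on the work tape. We use the standard technique of \S\ref{sec:log-space reduction}. Whenever the simulated machine asks for bit $j$ of its virtual input, we compute that bit on the fly from the counters $\mathtt{n}$ and $\mathtt{k}$: it is $1$ if $j\le n$, it is $0$ if $j=n+1$, and it is $1$ if $n+1<j\le n+1+\mathtt{k}$. The position of the virtual input head is itself a number of size $O(n)$, so a binary counter for it also fits in logarithmic space.

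Alternatively, we can avoid the composition entirely and inline the division, as the proof of Theorem~\ref{thm:euclidean div} itself does: increase $\mathtt{q}$ while $\mathtt{q}\cdot\mathtt{k}\le\mathtt{n}$, then test whether $\mathtt{q}\cdot\mathtt{k}=\mathtt{n}$. All products involved are at most about $2n$, so Lemma~\ref{lem:elementary} computes them with $O(\log(n))$ bits.

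Altogether the machine uses a constant number of binary counters ($\mathtt{n}$, $\mathtt{k}$, $\mathtt{q}$, the virtual head position, and the auxiliary counters for multiplication), so it runs in space $O(\log(n))$. By the second relaxation in \S\ref{subsec:onvNLmachines}, a constant number of logarithmic tapes can be simulated by a single one, so this places the decision problem in $\L$.

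The main obstacle is the bookkeeping in the composition: the virtual unary input must be simulated from binary counters and never copied. Once this is done, the rest of the argument is routine.
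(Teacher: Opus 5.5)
Your proposal is correct and is essentially the paper's proof: trial division over $2\le k\le n-1$ using the Euclidean division machine of Theorem~\ref{thm:euclidean div}, rejecting as soon as a remainder is $0$. Your explicit rejection of $n\le 1$ is a small improvement, because the paper's loop is empty in those cases and would accept.
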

\begin{proof}
    Create a counter $\mathtt{n}=n$.
    Loop through $\mathtt{k}=2,\ldots,n-1$. Perform Euclidean division $n/\mathtt{k}$ to find a remainder. If at any point the remainder is $0$, reject. Otherwise, accept.
\end{proof}

We continue discussing some classical multiplicative functions.
\begin{thm}[Divisor function]
    The function $\sigma_k(n)$ given by $(1^n\,0\,1^k) \mapsto \sum_{d|n} d^k$ is in $\sharpL$.
\end{thm}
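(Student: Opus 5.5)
The plan is to build a verifier $\NL$ machine whose accepted witnesses are in bijection with the set of pairs $(d,\mathbf{c})$, where $d$ is a divisor of $n$ and $\mathbf{c}=(c_1,\ldots,c_k)\in[d]^k$. Since there are exactly $d^k$ such tuples for each divisor $d$, the number of accepted witnesses is $\sum_{d\mid n} d^k=\sigma_k(n)$. Equivalently, I would write
\[
\sigma_k(n)=\sum_{d=1}^{n} [d\mid n]\cdot d^k,
\]
and realise the sum by nondeterministically guessing $d$ and the product $d^k$ as in Theorem~\ref{thm:product}.

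The witness is $d$ written in binary, followed by $c_1,\ldots,c_k$, each written in binary. The verification proceeds in the following order.
\begin{enumerate}
\item Store $n$ (read off the input) in a counter $\mathtt{n}$, and read $d$ into a counter $\mathtt{d}$.
\item Check $1\le \mathtt{d}\le \mathtt{n}$.
\item Run the $\FL$ procedure of Theorem~\ref{thm:euclidean div} on $(\mathtt{n},\mathtt{d})$ and check that the remainder is $0$. This uses only a constant number of $O(\log n)$-bit counters, so it can be done on the work tapes, exactly as in the proof of Theorem~\ref{primes}.
\item Initialise a counter $\mathtt{j}=k$ and read the $c_i$ one at a time into a counter $\mathtt{c}$. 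For each one, check $1\le\mathtt{c}\le\mathtt{d}$ and decrease $\mathtt{j}$ by~$1$.
\item Accept if and only if $\mathtt{j}=0$ at the end.
\end{enumerate}
Each accepted witness determines its pair $(d,\mathbf{c})$ uniquely, so no overcounting occurs.

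For the space bound, every counter holds a value at most $\max(n,k)$, so it uses $O(\log(n+k))$ bits, which is logarithmic in the input length $n+k+1$. The witness has length $O((k+1)\log n)$, polynomial in the input size. The witness is read once, left to right.

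To align with Theorem~\ref{thm:product} formally, one could instead note that $(1^n\,0\,1^d)\mapsto [d\mid n]\cdot d$ is an $\FL_{\ge0}$ function, hence lies in $\sharpL$ by the composition~\eqref{eq:FL composition}. Theorem~\ref{thm:product} with $p=k$ then gives $[d\mid n]\,d^k$ (using $[d\mid n]^k=[d\mid n]$ for $k\ge1$). The outer sum over $d$ is absorbed by guessing $d$ first. Closure of $\sharpL$ under polynomially bounded sums indexed by a guessed log-size index is immediate in the verifier model, since the index is stored in a counter before the rest of the witness is read.

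I expect the only delicate points to be bookkeeping rather than substance. First, the case $k=0$ must be handled: there the machine accepts the empty tuple, which correctly counts the number of divisors. Second, the step I expect to need the most care is convincing oneself that the divisibility test of Theorem~\ref{thm:euclidean div} can be run on a number $\mathtt{d}$ stored on the work tape rather than given in unary. This holds because that proof only ever manipulates binary counters $\mathtt{n},\mathtt{k},\mathtt{q}$, together with the multiplication and comparison of Lemma~\ref{lem:elementary}, all within logarithmic space.
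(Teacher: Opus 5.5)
Your proposal is correct and is essentially the paper's proof. The paper's witness is $(d,q,a_1,\ldots,a_k)$ in binary, with checks $1\le d\le n=dq$ and $1\le a_i\le d$. It thus guesses the cofactor $q$ and verifies $n=dq$ with a single multiplication (Lemma~\ref{lem:elementary}), where you instead test $d\mid n$ deterministically via Theorem~\ref{thm:euclidean div}; since $q$ is determined by $d$, both witness sets give the count $\sigma_k(n)$.
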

\begin{proof}
    The witness is a tuple $d, q, a_1, a_2, \ldots, a_{k}$ of $k+2$ integers in binary, such that $1 \le d \le n = dq$, and such that $1\le a_i \le d$ for all $i$.
\end{proof}
We can also consider deterministic computations of the divisor function. 
The asymptotic growth of $\sigma_k(n)$ as $n\to\infty$ is upper bounded by $n^k \zeta(k)$ where $\zeta$ is the Riemann zeta function. We have $\zeta(k)=1 + O(2^{-k})$. But if $k$ is part of the input $x$ of the divisor function, then the intermediate sums involved in a naive computation of $\sigma_k(n) \in O(n^k\zeta(k)) = O(n^k) = O(2^{|x|\log(|x|)})$ are too big for a $\FL$ machine to store in the work tape. We thus fix $k$ in our next result. (Note that $(1^n\,0\,1^k) \mapsto \sigma_k(n)$ might still be in $\FL$, as its growth is inside the range of Theorem~\ref{thm:FL upper bound}.) 
\begin{thm}[Divisor function]
    Fix a non-negative integer $k$. The function $\sigma_k(n)$ given by $(1^n) \mapsto \sum_{d|n} d^k$ is in $\FL$.
\end{thm}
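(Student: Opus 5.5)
The plan is a deterministic log-space transducer that iterates over the divisors of $n$ and accumulates $\sum_{d\mid n} d^k$ in a single binary counter. The point is that for fixed $k$ all quantities involved are polynomially bounded in $n$: every divisor $d\le n$ satisfies $d^k\le n^k$, and $\sigma_k(n)\le \sum_{d=1}^n d^k\le n^{k+1}$. Hence each intermediate value fits in $O((k+1)\log n)=O(\log n)$ bits, which is exactly what was missing when $k$ was part of the input.

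The machine works as follows. First read $1^n$ and store $\mathtt{n}=n$ in binary. Initialise an accumulator $\mathtt{S}=0$ and loop $\mathtt{d}=1,\ldots,n$. In each iteration, compute the remainder of $n$ divided by $\mathtt{d}$ using the Euclidean division machine of Theorem~\ref{thm:euclidean div}. That theorem is stated for unary input, but its proof only uses binary counters and the operations of Lemma~\ref{lem:elementary}, so it runs directly on the counters $\mathtt{n}$ and $\mathtt{d}$. If the remainder is $0$, compute $\mathtt{p}=\mathtt{d}^k$ by $k$ successive multiplications with Lemma~\ref{lem:elementary}, using a constant number of auxiliary counters, and then set $\mathtt{S}\leftarrow\mathtt{S}+\mathtt{p}$. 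After the loop, copy $\mathtt{S}$ to the output tape. Since $k$ is a constant, the number of counters is constant, and each counter has $O(\log n)$ bits.

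The only step needing care is justifying that the binary arithmetic of Lemma~\ref{lem:elementary} can be applied to operands stored on the work tape, not just on the input. This is fine because the operands have $O(\log n)$ bits, so the schoolbook algorithms use $O(\log n)$ extra space. Alternatively, the composition principle of \S\ref{sec:log-space reduction} for $\FL$ handles it: recompute bits on demand. I therefore do not expect a real obstacle here; the content of the theorem is the polynomial size bound that comes from fixing $k$.
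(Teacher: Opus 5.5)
Your proposal is correct and follows essentially the same route as the paper: loop over $\mathtt{d}=1,\ldots,n$, test divisibility using the Euclidean division machine of Theorem~\ref{thm:euclidean div}, and add $\mathtt{d}^k$ to an accumulator that is output at the end. Your explicit bound $\sigma_k(n)\le n^{k+1}$ shows that every counter fits in $O(\log n)$ bits for fixed $k$; the paper states this only in the discussion before the theorem, not in the proof itself.
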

\begin{proof}
    For $\mathtt{d}=1,\ldots,n$ call the $\FL$ machine from Theorem~\ref{thm:euclidean div} to compute $n/\mathtt{d}$. If the remainder is $0$, add $\mathtt{d}^k$ to a cumulative counter $\mathtt{c}$. At the end, output $\mathtt{c}$.    
\end{proof}

\begin{thm}[Euler totient function]
    The function $\varphi(n)$ given by $$(1^n) \mapsto \#\{k \in [n] \mid k~\text{coprime with}~n\}$$ is in $\sharpL$.
\end{thm}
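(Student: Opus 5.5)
The plan is to build a verifier $\NL$ machine whose accepting witnesses are exactly the integers $k\in[n]$ coprime with $n$. The witness will be a single integer $k$, written in binary with $O(\log n)$ bits. Since the witness is that short, the machine can simply copy it onto the work tape. Once it is there, the read-once restriction plays no role, and all that remains is a deterministic log-space decision procedure for the predicate ``$1\le k\le n$ and $\gcd(k,n)=1$''. This is the same pattern as in the proofs of the divisor function and of Theorem~\ref{primes}.

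The steps, in order, are as follows.

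First, the machine reads the input $1^n$ and stores $n$ in a binary counter $\mathtt{n}$. It then reads the witness into a counter $\mathtt{k}$ and checks that $1\le \mathtt{k}\le \mathtt{n}$, rejecting otherwise.

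Second, it decides coprimality in log-space by running the Euclidean algorithm on $(\mathtt{n},\mathtt{k})$. Two counters $\mathtt{x}\leftarrow \mathtt{n}$ and $\mathtt{y}\leftarrow\mathtt{k}$ are kept. While $\mathtt{y}>0$, the machine computes the remainder $\mathtt{r}$ of $\mathtt{x}$ divided by $\mathtt{y}$ and sets $(\mathtt{x},\mathtt{y})\leftarrow(\mathtt{y},\mathtt{r})$. At the end it accepts if and only if $\mathtt{x}=1$.

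Each remainder is computed by the $\FL$ procedure of Theorem~\ref{thm:euclidean div}. Its proof only manipulates binary counters bounded by $n$, together with the multiplication of Lemma~\ref{lem:elementary}, so it can be run directly on counter contents instead of on unary input; a constant number of extra counters suffices. All values stay at most $n$, so the space is $O(\log n)$. The loop terminates, in fact within $O(\log n)$ iterations, so the machine halts on every path.

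An even simpler alternative avoids Euclid altogether. Loop over $\mathtt{d}=2,\ldots,\mathtt{k}$, and reject if $\mathtt{d}$ divides both $\mathtt{n}$ and $\mathtt{k}$, testing each divisibility with Theorem~\ref{thm:euclidean div}. This uses polynomial time and logarithmic space.

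The conclusion is then that the number of accepting witnesses equals $\varphi(n)$. In fact the decision procedure is deterministic log-space, so the function is of the form ``count the $k\le n$ satisfying an $\L$ predicate'', which is in $\sharpL$ by the composition principle~\eqref{eq:FL composition}.

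The only point requiring care is that Theorem~\ref{thm:euclidean div} is stated for unary inputs, while here it is applied to binary counters. I expect this to be a bookkeeping issue rather than a real obstacle. Either one observes that its proof is really a counter-level procedure, or one composes log-space transducers, regenerating the unary string $1^{\mathtt{x}}0\,1^{\mathtt{y}}$ on demand, bit by bit, as in \S\ref{sec:log-space reduction}.
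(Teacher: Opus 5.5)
Your proposal is correct and follows the paper's proof: the witness is $k$ itself, and the machine accepts iff $k\le n$ and $\gcd(k,n)=1$, with the gcd computed by iterating the Euclidean division of Theorem~\ref{thm:euclidean div} on log-space counters. Your explicit check $1\le k$ and your remark about applying that theorem to binary counters rather than unary input are sensible bookkeeping that the paper leaves implicit.
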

\begin{proof}
    The witness is $k$. 
    Compute the greatest common divisor of $n$ and $k$ by successive applications of the Euclidean division algorithm from Theorem~\ref{thm:euclidean div}. Accept if $\gcd(k,n) = 1$ and $k\le n$.
\end{proof}

The next result, although easy, will serve to motivate a new proof technique.

\begin{thm}[Radical]\label{thm:radical}
    The function $(1^n) \mapsto \prod_{p|n~\text{prime}} p$ is in $\sharpL$.
\end{thm}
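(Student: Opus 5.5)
The plan is to apply Theorem~\ref{thm:product} with a product indexed by all candidate primes. Define
\[
F(1^n) = \prod_{i=1}^{n} f(g(1^n,i)),
\]
where the factor is $i$ if $i$ is a prime dividing $n$, and $1$ otherwise. This product is exactly $\prod_{p\mid n\text{ prime}} p$. The bound $p(1^n)=n$ is clearly in $\FL_{\ge0}$ and of polynomial size.

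\textbf{Choice of $g$.} Take $g(1^n,i)$ to be the string $1^i$ when $i$ is prime and $i\mid n$, and the string $1$ otherwise. Take $f:(1^m)\mapsto m$, which is in $\FL_{\ge0}\subseteq\sharpL$ by the composition~\eqref{eq:FL composition}; the witness for $f(1^m)$ is a binary number $k$ with $0\le k<m$.

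\textbf{$g$ is in $\FL$.} On input $(1^n, i)$ with $i$ in binary, do the following.
\begin{enumerate}
\item Copy $n$ and $i$ into binary counters.
\item Decide whether $i$ is prime by the loop in Theorem~\ref{primes}. That theorem takes unary input, but the loop only needs counters and Euclidean division on binary counters. Theorem~\ref{thm:euclidean div} performs this with repeated multiplication and comparison, and Lemma~\ref{lem:elementary} gives these in log-space.
\item Decide whether the remainder of $n$ divided by $i$ is $0$, again by Theorem~\ref{thm:euclidean div}.
\item If both tests pass, write $i$ in unary on the output tape by decrementing a copy of the counter. Otherwise write $1$.
\end{enumerate}
All counters are bounded by $n$, so the space used is $O(\log n)$.

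\textbf{Main obstacle.} The only delicate point is that the earlier results are stated for unary inputs, while here the arguments live in binary counters. I would resolve this by noting that those machines only ever operate on binary counters of size $O(\log n)$. Alternatively, by log-space composition (\S\ref{sec:log-space reduction}), $g$ can first produce $1^i$ and then feed it to the unary-input machines, recomputing output bits as needed. Once $g$ is in $\FL$, Theorem~\ref{thm:product} gives $F\in\sharpL$ directly.

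The resulting witness is simply a tuple $(k_p)_{p\mid n}$ with $0\le k_p<p$, one entry for each prime divisor of $n$.
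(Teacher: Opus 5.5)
Your proposal is correct and follows the paper's proof: you apply Theorem~\ref{thm:product} with $f:(1^m)\mapsto m$ and an $\FL$ selector $g$ built from Theorems~\ref{thm:euclidean div} and~\ref{primes}, taking the product over all candidates $i\le n$. Your deviations are harmless, and slightly tidier than the paper. You output the string $1$ so that the factor is $1$, where the paper uses the convention $f(\emptyset)=1$. You also handle explicitly the binary index $i$ that Theorem~\ref{thm:product} passes to $g$.
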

\begin{proof}
    The statement follows by an application of Theorem~\ref{thm:product}. 
    The function $f : (1^p)\mapsto p$ is clearly in $\sharpL$. Set $f(\emptyset)=1$. The function 
    \begin{equation}
        \label{eq:is prime divisor}
    g : (1^n\,0\,1^p) \mapsto \begin{cases}
        1^p & \text{if $p|n$ and $p$ is prime},\\
        \emptyset & \text{otherwise}
    \end{cases}
    \end{equation}
    is in $\FL$ by Theorems~\ref{thm:euclidean div} and~\ref{primes}. The radical of $n$ is the function $(1^n)\mapsto\prod_{p=1}^n f(g(1^n\,0\,1^p))$, which is therefore in $\sharpL$.
\end{proof}

Let $p$ be a prime. The \emph{$p$-adic valuation} $v_p(n)$ of an integer $n$ is the largest power of $p$ that divides $n$. 
We have $v_p(nm) = v_p(n) + v_p(m)$ and $v_p(n/m) = v_p(n) - v_p(m)$.
A proof technique suggested to us by ChatGPT is to use $p$-adic valuations to turn rational formulas into product formulas via
\[
n = \prod_{p|n~\text{prime}} p^{v_p(n)},
\]
which in conjunction with Theorem~\ref{thm:product} (for products of $\sharpL$ functions), gives a new way of checking containment in $\sharpL$.
For instance, we can give a third proof of Corollary~\ref{cor:factorial} which places the factorial in $\sharpL$.
\begin{proof}[Third proof of Corollary~\ref{cor:factorial}]
    Fix a prime $p$.
    Legendre's formula gives $v_p(n!) = \lfloor n/p\rfloor + \lfloor n/p^2\rfloor + \lfloor n/p^3\rfloor + \cdots$.
    Note that the sum is finite. 
    We can therefore compute $v_p(n!)$ in $O(\log(n))$-space by adding the results of Euclidean divisions of $n$ by the successive powers $p^k$ of $p$, while $p^k \le n$. The function $(1^n)\mapsto p^{v_p(n!)}$ is in $\sharpL$, a witness being a tuple of $v_p(n!)$-many numbers $1\le a_i \le p$. 
    (Here we are applying Theorem~\ref{thm:product}, where the number of factors is $\log_p(n!) = O(n \log (n))$, and hence can be stored in $O(\log_2(\log_p(n!))) = O(\log_2(n))$-space.)
    
    All prime divisors of $n!$ are smaller or equal than $n$. Hence the formula
    \[
    n! = \prod_{p\le n~\text{prime}} p^{v_p(n!)}
    \]
    is in the hypotheses of Theorem~\ref{thm:product}.
\end{proof}

The \emph{Maya diagram} of a partition $\lambda$ is the $\{0,1\}$-string $\mathrm{Maya}(\lambda)$ which reads the outline of $[\lambda]$ (starting in the bottom-left, ending in the top-right), encoding an up-step as a 1 and a right-step as a 0. For instance,
\[
\ytableausetup{boxsize={.96em}}
[\lambda] =
\ydiagram{7,7,6,4,3,2,2} 
\begin{tikzpicture}[x=1em, y=1em, overlay, remember picture,xshift=-6.9em,yshift=-3.2em, blue]
    \draw (0.5,0) node {\scriptsize$0$};
    \draw (1.5,0) node {\scriptsize$0$};
    \draw (2,0.5) node {\scriptsize$1$};
    \draw (2,1.5) node {\scriptsize$1$};
    \draw (2.5,2) node {\scriptsize$0$};
    \draw (3,2.5) node {\scriptsize$1$};
    \draw (3.5,3) node {\scriptsize$0$};
    \draw (4,3.5) node {\scriptsize$1$};
    \draw (4.5,4) node {\scriptsize$0$};
    \draw (5.5,4) node {\scriptsize$0$};
    \draw (6,4.5) node {\scriptsize$1$};
    \draw (6.5,5) node {\scriptsize$0$};
    \draw (7,5.5) node {\scriptsize$1$};
    \draw (7,6.5) node {\scriptsize$1$};
\end{tikzpicture}
\leftrightsquigarrow 
~~
\mathrm{Maya}(\lambda) = 00110101001011\,.
\]
Then $[\lambda]$ is in bijection with the set of $(0,1)$-pairs of (non-necessarily consecutive) entries in the Maya diagram, where the $0$ precedes the $1$.
\begin{lem}\label{lem:maya size}
    Let $\lambda$ be a partition. The function $\mathrm{Maya}(\lambda) \mapsto |\lambda|$ is in $\FL$.
\end{lem}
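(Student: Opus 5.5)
The plan is to use the bijection stated just before the lemma: the cells of $[\lambda]$ correspond to pairs of positions $i<j$ in $\mathrm{Maya}(\lambda)$ with a $0$ at position $i$ and a $1$ at position $j$. So $|\lambda|$ is the number of such $(0,1)$-inversions. This number can be computed in one left-to-right scan with two binary-encoded counters, and written to the output tape at the end.

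The counters are $\mathtt{z}$ and $\mathtt{s}$, both initialised to $0$. The counter $\mathtt{z}$ records how many $0$s have been read so far, and $\mathtt{s}$ accumulates the answer. When the machine reads a $0$, it increments $\mathtt{z}$. When it reads a $1$, it performs $\mathtt{s}\leftarrow\mathtt{s}+\mathtt{z}$ using the binary addition of Lemma~\ref{lem:elementary}. Each $1$ then contributes the number of $0$s to its left, which is the number of boxes in the corresponding row. After the scan, $\mathtt{s}=|\lambda|$, and the machine copies $\mathtt{s}$ in binary to the write-only output tape.

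The space bound comes next. Let $N=|\mathrm{Maya}(\lambda)|=\lambda_1+\ell(\lambda)$. Then $\mathtt{z}\le N$ and $\mathtt{s}\le \lambda_1\ell(\lambda)\le N^2$, so both counters fit in $O(\log N)$ bits. Lemma~\ref{lem:elementary} says the addition itself needs only logarithmic auxiliary space. So the whole machine runs in $O(\log N)$ space.

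The only real point to check is that the $(0,1)$-pair bijection gives the count correctly. I would verify it directly: the outline is read from bottom-left to top-right, so the $1$s are the up-steps, one for each row from the bottom up. The $0$s preceding a given up-step are exactly the right-steps taken before reaching that row's right edge, and their number equals the row's length $\lambda_i$. With this row-by-row argument the correctness is immediate, and I expect no genuine obstacle.
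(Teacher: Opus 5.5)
Your proof is correct and is essentially the paper's: both count the $(0,1)$-pairs of $\mathrm{Maya}(\lambda)$, with each $1$ contributing the number of $0$s to its left (its row length), using counters of $O(\log N)$ bits. The only difference is implementation: you keep a running tally of $0$s in a single left-to-right pass, whereas the paper re-scans leftward from each $1$ with a nested loop, a structure it then reuses in Lemma~\ref{lem:hl val} by letting the inner loop step by $\mathtt{r}$ along a runner of the abacus.
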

\begin{proof}
The input is of length $m=\lambda_1+\ell(\lambda) = O(|\lambda|)$, so we need to construct a $\log(|\lambda|)$-space machine. We count the number of $(0,1)$ ordered pairs.

Initialise a counter $\mathtt{v}=0$.
Read $x=\mathrm{Maya}(\lambda)$ from right to left, keeping track of the position with a counter $\mathtt{i}$ in a loop, starting with $\mathtt{i} = m$.

If $x_{\mathtt{i}}=0$, set $\mathtt{i}\leftarrow \mathtt{i}-1$.
If $x_{\mathtt{i}}=1$, start another counter $\mathtt{j}=\mathtt{i}-1$ which performs an inner loop. Check if $x_{\mathtt{j}} = 0$, in which case $\mathtt{v}\leftarrow\mathtt{v}+1$; otherwise do nothing. Then decrease $\mathtt{j}$ by~$1$ and repeat. When $\mathtt{j}<0$, stop this inner $\mathtt{j}$-loop; we have added to $\mathtt{v}$ the number of 0s to the left of the 1 at $x_\mathtt{i}$; this corresponds to the number of boxes in $[\lambda]$ in one row.
\[
\ytableausetup{boxsize={.96em}}
\ydiagram{7,7,6,4,3,2,2}*[*(blue!30)]{0,0,0,0,3}
\begin{tikzpicture}[x=1em, y=1em, overlay, remember picture,xshift=-6.9em,yshift=-3.2em, blue!50]
    \draw[red] (0.5,0) node {\scriptsize$\mathbf 0$};
    \draw[red] (1.5,0) node {\scriptsize$\mathbf 0$};
    \draw (2,0.5) node {\scriptsize$1$};
    \draw (2,1.5) node {\scriptsize$1$};
    \draw[red] (2.5,2) node {\scriptsize$\mathbf 0$};
    \draw[red] (3,2.5) node {\scriptsize$\mathbf 1$};
    \draw (3.5,3) node {\scriptsize$0$};
    \draw (4,3.5) node {\scriptsize$1$};
    \draw (4.5,4) node {\scriptsize$0$};
    \draw (5.5,4) node {\scriptsize$0$};
    \draw (6,4.5) node {\scriptsize$1$};
    \draw (6.5,5) node {\scriptsize$0$};
    \draw (7,5.5) node {\scriptsize$1$};
    \draw (7,6.5) node {\scriptsize$1$};
\end{tikzpicture}
\]
Then set $\mathtt{i}\leftarrow \mathtt{i}-1$ and repeat until $\mathtt{i}=0$.

At the end, output the value of $\mathtt{v}$.
\end{proof}
For each $r\in\N_0$, the \emph{$r$-abacus} of $\lambda$ is the array $\mathrm{abc}_r(\lambda)$ of width $r$ created by splitting the Maya diagram of $\lambda$ into consecutive blocks of length $r$. Now 1s represent \emph{beads} of an abacus and 0s represent empty spaces (\emph{holes}). For instance, if $\lambda$ is the partition from the previous example, then the following is the $3$-abacus of $\lambda$.
\[
\mathrm{abc}_3(\lambda) = 
\begin{array}{ccc}
0&0&1\\[-.5em]
1&0&1\\[-.5em]
0&1&0\\[-.5em]
0&1&0\\[-.5em]
1&1& 
\end{array}
\hspace{10em}
\begin{tikzpicture}[x=1em, y=-1em, overlay, remember picture,xshift=.75em,yshift=3.5em]
    \fill[orange!70] (-.3,0) rectangle ++(3.6,-.3);
    \fill[orange!70] (-.1,0) rectangle ++(.2,7);
    \fill[orange!70] (1.4,0) rectangle ++(.2,7);
    \fill[orange!70] (2.9,0) rectangle ++(.2,7);
    \begin{scope}[yshift=-.7em, blue!80]
        \fill (0,1.3) circle (.5em);
        \fill (0,5.2) circle (.5em);
        \fill (1.5,2.6) circle (.5em);
        \fill (1.5,3.9) circle (.5em);
        \fill (1.5,5.2) circle (.5em);
        \fill (3,0) circle (.5em);
        \fill (3,1.3) circle (.5em);
    \end{scope}
\end{tikzpicture}
\begin{array}{ccc}
&&\textcolor{white}{1}\\[-.5em]
\textcolor{white}{1}&&
\textcolor{white}{1}\\[-.5em]
&\textcolor{white}{1}&\\[-.5em]
&\textcolor{white}{1}&\\[-.5em]
\textcolor{white}{1}&
\textcolor{white}{1}& 
\end{array}
\hspace{4em}{\,}
\smallskip
\]
A cell of $[\lambda]$ with hook-length divisible by $r$ becomes a now a pair (bead, hole) in $\mathrm{abc}_r(\lambda)$ both in the same column (\emph{runner}), with the hole above the bead 
\cite[\S\!\S11.3, 11.4]{Loehr}. The following proof is suggested to us by Michał Szwej.

\begin{lem}\label{lem:hl val}
    Fix a prime $p$.
    Let $\lambda$ be a partition. The function $\mathrm{Maya}(\lambda) \mapsto v_p(\prod_{c\in[\lambda]}\hl_\lambda(c))$ is in $\FL$.
\end{lem}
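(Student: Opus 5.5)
We plan to write the valuation as a sum over powers of $p$ and count hooks divisible by each power. Since $v_p(h)=\sum_{k\ge1}[p^k \mid h]$, we have
\[
v_p\Big(\prod_{c\in[\lambda]}\hl_\lambda(c)\Big)=\sum_{k\ge1}\#\{c\in[\lambda] \mid p^k \text{ divides } \hl_\lambda(c)\}.
\]
Every hook length is at most $\lambda_1+\ell(\lambda)-1<m:=|\mathrm{Maya}(\lambda)|$. Hence only the values of $k$ with $p^k\le m$ contribute, and there are $O(\log m)$ of them. The machine loops over these $k$, keeping $r=p^k$ in a binary counter of $O(\log m)$ bits; this is computed by repeated multiplication using Lemma~\ref{lem:elementary}. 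It then adds $\#\{c \mid r \text{ divides } \hl_\lambda(c)\}$ to an accumulator. This accumulator is bounded by $|\lambda|\cdot\log m$, which is polynomial in $m$, so it fits in log-space.

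The core step is computing $N_r:=\#\{c\in[\lambda]\mid r \text{ divides } \hl_\lambda(c)\}$ in $\FL$ from $\mathrm{Maya}(\lambda)$. We will use the bijection recalled before the lemma, refined slightly. Cells $c$ correspond to pairs of positions $(i,j)$ with $i<j$, $x_i=0$ and $x_j=1$ in $x=\mathrm{Maya}(\lambda)$, and the hook length of the cell equals $j-i$. In the abacus language this is exactly the statement that hooks divisible by $r$ are hole--bead pairs on the same runner with the hole above. We will justify the claim $\hl_\lambda(c)=j-i$ directly: walking the outline from the $0$ at the bottom of the cell's column to the $1$ at the end of the cell's row traverses (arm) right-steps and (leg) up-steps plus one extra step, giving arm $+$ leg $+1$. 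This verification is the only real content and the step we expect to need the most care, namely getting the indexing conventions of the Maya diagram right. Given this, $N_r$ is the number of pairs $i<j$ with $x_i=0$, $x_j=1$ and $j\equiv i \pmod r$.

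The $\FL$ algorithm is a double loop in the style of Lemma~\ref{lem:maya size}. For each $j$ with $x_j=1$, scan $i=j-r, j-2r,\ldots$ down to $1$, and increment a counter whenever $x_i=0$. The counters $i$, $j$, $r$, the count and the accumulator each take $O(\log m)$ bits, and the index arithmetic uses Lemma~\ref{lem:elementary}. Since $p$ is fixed, $p$ is a hard-coded constant. Finally we output the accumulator in binary, which completes the $\FL$ machine. We note that $m=\lambda_1+\ell(\lambda)$ may be much smaller than $|\lambda|$, but all quantities stay polynomial in $m$ (the value $|\lambda|\le m^2$), so log-space is respected throughout.
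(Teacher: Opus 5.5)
Your proposal is correct and follows essentially the same route as the paper. Both write $v_p$ as a sum over $r=p^k \le |\mathrm{Maya}(\lambda)|$ of the number of cells with hook length divisible by $r$, and count those cells as $(0,1)$-pairs in the Maya word whose positions differ by a multiple of $r$, using the double loop of Lemma~\ref{lem:maya size} with the inner index stepping by $r$. Your direct check that the pair at positions $i<j$ has hook length $j-i$, and your space bound stated in terms of the input length, supply details the paper leaves to the abacus reference.
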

\begin{proof}
Let $n = |\lambda|$.
The input is of length $\lambda_1+\ell(\lambda) = O(n)$, so we need to construct a $\log(n)$-space machine. To do so, we interpret $\mathrm{Maya}(\lambda)$ as a $p$-abacus, count the number of beads below holes, then interpret it as a $p^2$-abacus, count the number of beads below holes, etc.

Initialise two counters $\mathtt{r}=p$ and $\mathtt{v}=0$.
Read $x=\mathrm{Maya}(\lambda)$ from right to left, keeping track of the position with a counter $\mathtt{i}$ in a loop.

Now perform the $\mathtt{i}$-loop and the $\mathtt{j}$-inner loop described in the proof of Lemma~\ref{lem:maya size}, except that $\mathtt{j}$ is initialised at $\mathtt{i}-\mathtt{r}$ and decreased by $\mathtt{r}$ each time.

At the end of the $\mathtt{i}$-loop, the value of $\mathtt{v}$ counts the cells of $[\lambda]$ whose hook-length is divisible by $\mathtt{r} = p$. Set $\mathtt{r} \leftarrow \mathtt{r}*p$ and repeat, adding to the same counter $\mathtt{v}$.
Whenever $\mathtt{r} > |x|$, the value of $\mathtt{v}$ is $v_p(\prod_{c\in[\lambda]}\hl_\lambda(c))$.

The maximum value that $\mathtt{v}$ can attain is upper-bounded by $\log_p(\prod \hl_\lambda(c))$, which by the hook-length formula is upper bounded by $\log_p(n!) = O(n\log(n)) = O(n^2)$ and hence can be stored in $O(\log_2(n))$-space.
\end{proof}

\begin{thm}[Hook-length formula]\label{thm:hook-length}
    The function $\mathrm{Maya}(\lambda)\mapsto\#\SYT(\lambda)$ is in $\sharpL$.
\end{thm}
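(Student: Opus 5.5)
The plan is to follow the third proof of Corollary~\ref{cor:factorial}: write $\#\SYT(\lambda)$ as a product of prime powers and apply Theorem~\ref{thm:product}. Let $n=|\lambda|$, which is computable from $\mathrm{Maya}(\lambda)$ by Lemma~\ref{lem:maya size}. Every prime dividing $\#\SYT(\lambda)=n!/\prod_c\hl_\lambda(c)$ is at most $n$. Since the hook-length formula gives an integer, we have
\[
\#\SYT(\lambda)=\prod_{p\le n~\text{prime}} p^{\,v_p(n!)-v_p(\prod_{c\in[\lambda]}\hl_\lambda(c))},
\]
and every exponent is non-negative.

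The index set for Theorem~\ref{thm:product} is $i=1,\dots,n$. The function $p(x)=n$ is in $\FL_{\ge0}$ and is polynomially bounded, by Lemma~\ref{lem:maya size}. The $\FL$ function $g(x,i)$ behaves as follows.
\begin{itemize}
\item If $i$ is not prime (checked via Theorem~\ref{primes}), it outputs a string encoding the trivial factor $1$.
\item Otherwise it outputs the pair $(1^i, e_i)$ with $e_i$ in binary, where $e_i=v_i(n!)-v_i(\prod_c\hl_\lambda(c))$.
\end{itemize}
The term $v_i(n!)$ is computed by Legendre's formula using Euclidean division (Theorem~\ref{thm:euclidean div}). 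The term $v_i(\prod_c\hl_\lambda(c))$ comes from Lemma~\ref{lem:hl val}. Both values are $O(n\log n)$, so they fit in $O(\log n)$ space, and their difference is formed with Lemma~\ref{lem:elementary}. All of this is a composition of $\FL$ machines, which is again in $\FL$ by the discussion in \S\ref{sec:log-space reduction}.

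The $\sharpL$ function $f$ maps $(1^p,e)$ to $p^e$. A witness is a sequence of $e$ numbers $1\le a_j\le p$ in binary. A verifier $\NL$ machine counts them with an $O(\log n)$ counter against $e$ and checks each bound against $p$ stored in a counter. This is the same witness used in the third proof of Corollary~\ref{cor:factorial}, so $f\in\sharpL$. Strictly speaking $f$ is an application of Theorem~\ref{thm:product} with $e$ factors, but it is simpler to verify it directly.

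The main obstacle I expect is a subtle point in Lemma~\ref{lem:hl val}. That lemma is stated for a fixed prime $p$, but here $p$ ranges up to $n$ and is part of the computation. I would re-examine its proof to check that it is uniform in $p$. The prime is held in a counter $\mathtt{r}$ of $O(\log n)$ bits and is multiplied repeatedly (Lemma~\ref{lem:elementary}) until it exceeds $|x|$. The inner loop steps $\mathtt{j}$ by $\mathtt{r}$. None of this uses $p$ being constant, so the function $(\mathrm{Maya}(\lambda),1^p)\mapsto v_p(\prod_c \hl_\lambda(c))$ is in $\FL$. The same uniformity holds for Legendre's formula.

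Given this uniformity, Theorem~\ref{thm:product} applies and yields the claim.
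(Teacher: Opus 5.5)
Your proposal is essentially the paper's proof. You factor $\#\SYT(\lambda)$ over primes $p\le n$, compute the exponents in $\FL$ via Legendre's formula and Lemma~\ref{lem:hl val}, and apply Theorem~\ref{thm:product}. You merge the paper's two applications of Theorem~\ref{thm:product} into one. You also make explicit the uniformity in $p$ of Lemma~\ref{lem:hl val}, which the paper needs but glosses over with ``given any fixed $p$''.

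One small repair is needed. With $e$ in binary, $f\colon(1^p,e)\mapsto p^e$ is not in $\sharpL$ as a standalone function: for $p=2$ its value is doubly exponential in the input length, contradicting Theorem~\ref{thm:sharpLgrowth}. So have $g$ write $e_i$ in unary, which is possible since $e_i=O(n\log n)$. Then $f(1^p\,0\,1^e)=p^e$ is in $\sharpL$, either by your direct verifier with counters logarithmic in its input or by Theorem~\ref{thm:product}.
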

\begin{proof}
    Write
    \[
    \#\SYT(\lambda) = \frac{n!}{\prod_{c\in[\lambda]} \hl_\lambda(c)} = \prod_{p\le n~\text{prime}} p^{v_p(n!) - v_p(\prod_{c\in[\lambda]} \hl_\lambda(c))},
    \]
    where we used that the prime divisors of $n!$ are at most $n$, and since $\#\SYT(\lambda)$ is an integer the same holds for the denominator.
    The function sending $\mathrm{Maya}(\lambda)$ to $v_p(n!) - v_p(\prod_{c\in[\lambda]} \hl_\lambda(c))$ is in $\FL$, as we saw in the third proof of Corollary~\ref{cor:factorial} and in Lemma~\ref{lem:hl val}.
    The function $(1^p) \mapsto p$ is trivially in $\sharpL$ (where we set $\emptyset \mapsto 1$).
    Hence by Theorem~\ref{thm:product}, given any fixed $p$ the function
    \[
    \mathrm{Maya}(\lambda) \mapsto p^{v_p(n!) - v_p(\prod_{c\in[\lambda]} \hl_\lambda(c))}
    \]
    is in $\sharpL$.     
    The function sending $(1^p)$ to $(1^p)$ if $p$ is a prime or $\emptyset$ otherwise is in $\FL$ by Theorem~\ref{primes}. Apply Theorem~\ref{thm:product} again to conclude.
\end{proof}

\begin{note}
    A similar proof can also show that if the partition $\lambda$ is encoded as $(1^{\lambda_1}\,0\,1^{\lambda_2}\,0\,\dots)$, then the function $\#\SYT(\lambda)$ is also in $\sharpL$. The idea is again to use prime factorization, either via the hook-length formula or Young's quotient formula
    \[
    \#\SYT(\lambda) = 
    |\lambda|!~\frac{\prod_{i<j}(\lambda_i-i-\lambda_j+j)}{\prod_{i}(\lambda_i-i+\ell(\lambda))!}
    \,.\] 
Alternatively, it is easy to see that a log-space machine can produce the Maya diagram (bit by bit) from $(1^{\lambda_1}\,0\,1^{\lambda_2}\,0\,\dots)$ and using log-space reductions as in \S\ref{sec:log-space reduction} apply the algorithm from Theorem~\ref{thm:hook-length}
\end{note}

\begin{question}\label{q:skewSYT}
    Given a pair of partitions $\lambda$, $\mu$ encoded as $(1^{\lambda_1}\,0\,1^{\lambda_2}\,0\,\ldots$ $\,0\,0\,1^{\mu_1}\,0\,1^{\mu_2}\,0\,\ldots)$, is the number of skew standard Young tableaux $\#\SYT(\lambda/\mu)$ also in $\sharpL$? 
\end{question}
\begin{note}
Our witness in Theorem~\ref{thm:hook-length} relies heavily on the presence of an explicit product formula instead of constructing SYTs, and the proof does not extend. In general there are no direct product formulas for the number of SYTs of skew shapes. They can be computed via Aitken's determinantal formula \cite[Corollary 7.16.3]{StanleyEC2}, which gives only $\GapL$, or as positive sums via the Littlewood--Richardson (LR) rule or via the Naruse hook-length formula for skew shapes (NHLF) as in~\cite{mpp1}. Via the LR rule, we have
$$f^{\lambda/\mu} = \sum_{\nu \vdash n} c^{\lambda}_{\mu\nu} f^\nu$$
where $n = |\lambda|-|\mu|$,
and hence a candidate witness would come from a pair of LR tableaux and an SYT of shape $\nu$. However, we do not believe that general LR coefficients (without a constant number of rows) are in $\sharpL$. 
The NHLF formula reads
$$f^{\lambda/\mu}=\sum_{D \in E(\lambda/\mu)}  \frac{n!}{\prod_{u \in [\lambda] \setminus D}\hl_\lambda(u)},$$
where the sum is over certain \emph{excited} diagrams $D$. 
Here, however, the individual terms $n!/\prod_u \hl_\lambda(u)$ are usually not integers, and so they do not count anything and are not in $\sharpL$. 
\end{note}

\section{Plethysm and Kronecker coefficients}\label{sec:OHara}
\ytableausetup{boxsize=.3em, centertableaux}

The plethysm product and the Kronecker product of Schur polynomials correspond to very natural operations on representations. Namely, the plethysm product corresponds to the composition of $\GL_n$-representations; the Kronecker product corresponds to the tensor product of $S_n$-representations. The \emph{plethysm coefficients} $a_{\mu[\nu]}^\lambda$ and \emph{Kronecker coefficients} $g(\lambda,\mu,\nu)$ are the structure constants of these products. 
In terms of symmetric polynomials,
\[
s_\mu\circ s_\nu = \sum_\lambda a^\lambda_{\mu[\nu]} s_\lambda
\quad\text{and}\quad
s_\mu*s_\nu = \sum_\lambda g(\lambda,\mu,\nu) s_\lambda.
\]
A family of $\GL_2$-plethysm coefficients coincides with certain rectangular Kronecker coefficients, see e.q.~\cite{IOT}:
\begin{equation}\label{eq:arectkron}
  a_{n[k]}^{(\lambda_1,\lambda_2)} = g((\lambda_1,\lambda_2),(n^k),(n^k)),
\end{equation}
but the general relationship remains mysterious with only few known inequalities, see e.g.~\cite{IP17}.
We refer to these as \emph{Hermite coefficients}, since they describe the spaces of covariants and invariants appearing in Hermite's law of reciprocity.

Hermite coefficients satisfy the following well-known formula 
\begin{equation}\label{eq:plet as diff}
a_{n[k]}^{(nk-r,r)} = g((nk-r,r),(n^k),(n^k)) = p_r(n\times k) - p_{r-1}(n\times k),
\end{equation}
where $p_r(n\times k)$ is the number of partitions of size $r$, of length at most $k$, and of width at most $n$. This formula places Hermite coefficients in $\FP$, since $p_r(n\times k)$ can be computed recursively in time $O(nk)$. Moreover, we have seen that $p_r(n\times k)$ is contained in $\sharpL$ (Corollary~\ref{cor:partitions in a box}), which places Hermite coefficients in $\GapL$, which is a subset of $\FP$. Note that both $\FP$ and $\GapL$ are superclasses of $\sharpL$. In Theorem \ref{thm:Hermite}, we show that these coefficients belong to another superclass of $\sharpL$, namely the class $\sharpTISP(\poly(n),\log^2(n))$ of functions counting the number of accepting paths of a non-deterministic $\poly$-time $\log^2$-space machine (see~\S\ref{sec:zoo}).

\begin{thm}[Hermite coefficients]\label{thm:Hermite}
The function $(1^n\,0\,1^k\,0\,1^r) \mapsto a_{n[k]}^{(nk-r,r)}$ is in $\sharpTISP(\poly(n),\log^2(n))\cap \GapL$.
\end{thm}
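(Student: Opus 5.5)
The membership in $\GapL$ comes from \eqref{eq:plet as diff}. By Corollary~\ref{cor:partitions in a box}, both $(1^n\,0\,1^k\,0\,1^r)\mapsto p_r(n\times k)$ and $(1^n\,0\,1^k\,0\,1^r)\mapsto p_{r-1}(n\times k)$ lie in $\sharpL$. The second needs only an $\FL$ preprocessing step that deletes one $1$ from the last block, composed as in~\eqref{eq:FL composition}. The difference of these two functions is therefore in $\GapL$ by definition. The real work is the containment in $\sharpTISP(\poly(n),\log^2(n))$, which needs a \emph{positive} witness set for $p_r(n\times k)-p_{r-1}(n\times k)$ whose elements can be checked in polynomial time and $O(\log^2)$ space.

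For the positive interpretation I would use the combinatorial model of \cite{PakPanovaSwanson}, which is built on O'Hara's constructive proof of the unimodality of $q$-binomial coefficients, i.e.\ on the KOH identity. Roughly, that identity expresses $\binom{n+k}{k}_q$ as a sum over partitions $\lambda\vdash k$ of $q^{2n(\lambda)}$ times a product of $q$-binomials $\binom{a_i(\lambda)+m_i}{m_i}_q$. Here $m_i$ is the multiplicity of $i$ in $\lambda$, and $a_i$ is given by an explicit linear formula in $n$ and the partial sums of $\lambda'$. Each summand is symmetric and unimodal about the same centre. Unfolding the recursion therefore writes $p_r(n\times k)-p_{r-1}(n\times k)$ as the number of recursion trees of the following kind: a node carries a triple $(N,K,\lambda)$ with $\lambda\vdash K$, its children are the subproblems $(a_i(\lambda),m_i)$, and each leaf records a trivial $q$-binomial. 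A node counts towards the difference exactly when its accumulated degree equals $r$ and it is a ``lowest-degree'' contribution in the symmetric chain decomposition. Concretely, the witness is the depth-first encoding of such a tree. At each node it lists $\lambda$ in binary as in Theorem~\ref{thm:partition}, together with the degree offset contributed there.

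The verifier traverses this encoding left to right. At each node it checks with $O(1)$ counters that $\lambda$ is a partition of the current $K$, exactly as in Theorem~\ref{thm:partition}. It computes each $a_i(\lambda)$ and $m_i$ on the fly in $\FL$, as in Lemma~\ref{lem:elementary}, and pushes the child parameters onto a stack. Finally it adds up the degree and checks at the end that the total is $r$ and that the unimodality condition holds. Every frame uses $O(\log n)$ bits, and the total running time is polynomial because the tree has polynomially many nodes.

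The main obstacle is bounding the stack depth by $O(\log n)$, since a naive recursion can nest as deeply as $k$. My first attempt would be a heavy-path or tail-call trick. The key observation is that the children $m_i$ of a node satisfy $\sum_i i\,m_i=K$. Consequently every child except possibly the one with the largest $m_i$ has parameter at most $K/2$. So I would let the witness order the children so that the largest-$m_i$ child comes last, and process that last child without pushing a new frame, overwriting the current one instead. Then each stored frame at least halves $K$ relative to its parent, so at most $\log k$ frames are live at any time. This gives $O(\log^2 n)$ space with polynomial time. If the model's children do not shrink in this way, I would fall back on guessing and later checking intermediate states, as in the constant-pass simulation of \S\ref{subsec:onvNLmachines}, but I expect the halving argument to suffice.
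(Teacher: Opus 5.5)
Your $\GapL$ part is correct and is the paper's argument. Your depth-halving idea is also sound, and it is essentially what the paper does. A small KOH witness (Definition~\ref{de:witness}) re-hangs each chain of $1$-edges as siblings $(j,0),(j,1),(j,2),\dots$ under its main ancestor, with $1$-edges ordered rightmost. This is a heavy-path/tail-call scheme whose heavy child is always the $j=1$ child, since $m_j\le K/j\le K/2$ for $j\ge2$. Your choice of the largest-$m_i$ child works equally well, because at most one child can have $m_i>K/2$. Overwriting the frame even lets you update $a$ along the chain directly instead of using Lemma~\ref{lem:a}. Incidentally, the naive depth is not bounded by $k$: for $k=2$ a chain of $(1^2)$-nodes has length about $n/2$, since $a$ drops by $2$ per step. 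This is harmless for your scheme. There are, however, two genuine gaps.

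\textbf{First gap: your witness does not count the difference $p_r(n\times k)-p_{r-1}(n\times k)$.} The KOH recursion bottoms out at nodes with $b=1$, and $\qbinom{a+1}{1}=[a+1]$ is a fixed point of the recursion, not a trivial factor. So a fully unfolded tree contributes the product $\prod_i[a_i+1]$ over its leaves, not a monomial. Its contribution to the Hermite coefficient is the Clebsch--Gordan multiplicity of $[nk-2r+1]$ in that product, which is typically larger than $1$ (e.g.\ $[2]^4=[5]+3[3]+2[1]$). Hence ``a node counts when its accumulated degree equals $r$ and it is a lowest-degree contribution'' does not define a witness set, and ``check that the unimodality condition holds'' is not an actual check. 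If the leaves simply choose exponents so that the total degree is $r$, you are counting $p_r(n\times k)$ itself. The missing ingredient is the marking of \cite{PakPanovaSwanson}: integers $t_i$ on the leaves in DFS order with $t_1=0$, $t_{i-1}\le t_i\le a_i+t_{i-1}$, $t_i\le\sum_{j<i}a_j-t_{i-1}$, and $t_K=r+(\sum_j a_j-nk)/2$ (Definition~\ref{de:KOH}, Lemma~\ref{lem:q-int-product}). These are checkable in one pass with $O(1)$ global counters of logarithmic size, because there are at most $k$ leaves and $\sum_j a_j\le nk$ (Lemmas~\ref{lem:at most b leaves} and~\ref{lem:sum of a values}). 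You must also make the child order canonical and enforce it, so that each tree is counted with one fixed leaf order.

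\textbf{Second gap: your frames are not $O(\log n)$ bits.} You write $\lambda$ explicitly at a node and then push the parameters of all its children. But $\lambda\vdash K$ can have $\Theta(\sqrt K)$ distinct parts, and since the witness is read once you cannot re-read $\lambda$ when you reach the $i$-th child. Moreover $a_i(\lambda)$ needs $|\lambda'_{\le i}|=\sum_{p\le i}p\,m_p+i\,(\ell(\lambda)-\sum_{p\le i}m_p)$, which depends on all of $\lambda$ through $\ell(\lambda)$. The repair is the paper's. Never write $\lambda$; store at the node only $\ell(\lambda)$, plus, in your variant, the heavy child's pair $(h,m_h)$. Let each child carry its own multiplicity, and visit the remaining children in increasing $j$ while accumulating the column sums (the $c_\Sigma$ of Definition~\ref{de:witness}). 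Check $\sum_j m_j=\ell(\lambda)$ and $\sum_j j\,m_j=K$ just before the tail call. Finally, your polynomial running time needs an actual node bound, such as Lemma~\ref{lem:poly many nodes} (at most $k$ leaves times depth $O(nk)$). With these repairs your argument becomes a mild variant of the paper's proof.
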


\begin{question}\label{q:Hermite}
  Are Hermite coefficients in $\sharpL$?
\end{question}

\begin{note}
    Any representation of $\sl_2$ has a crystal. But whereas Clebsch--Gordan's rule (or, more generally, Kashiwara's tensor product rule) constructs the crystal of the tensor product of two representations, no such rule exists for the plethysm product. In~\cite[Conjecture~4.18]{GutCrystals} a conjecturally correct construction for this rule is given. It is however likely that this would not help solving Question~\ref{q:Hermite}, in the same way that having an $\FL$ computable raising operator for tensor products (Theorem~\ref{thm:crystal operator}) does not place the relevant structure constants in~$\sharpL$ (see Note~\ref{note:crystal operators}).
\end{note}

\subsection{A combinatorial interpretation}
In this section we use work with quantum integers $[n]=(q^n-q^{-n})/({q-q^{-1}})$, quantum factorials $[n]!=[n]\cdot[n-1]\cdots[1]$,
and quantum binomials $\qbinom{n+m}{n}=[n+m]!/([n]![m]!)$, see \cite[\S2]{BD99}.
A celebrated combinatorial construction by O'Hara \cite{OHara} gives rise to the \emph{KOH formula} for quantum binomials due to Zeilberger \cite{Zeilberger, Macdonald}
    \begin{equation}
        \label{KOH formula}
    \qbinom{a+b}{b} = \sum_{\mu\vdash b} \, \prod_{m_j(\mu)\ne0} \qbinom{(a+2)j - 2|\mu'_{\le j}| + m_j(\mu)}{m_j(\mu)}, \tag{KOH}
    \end{equation}
where $|\mu'_{\le j}| = \mu_1' + \cdots + \mu_j'$.
This recurrence relation allows for expressing quantum binomials as products and sums of smaller quantum binomials iteratively, which continues to this day to facilitate the derivation of important consequences \cite{BQQW01, Zan11, PakPanovaUnimodality}. Of interest to us, it gives a positive combinatorial $\sharpP$-formula for \eqref{eq:arectkron}, see Proposition~\ref{p:polytime} below.

\begin{de}[\cite{PakPanovaSwanson}]
\label{de:KOH}
    A \emph{KOH tree} (see Figure \ref{fig:KOH tree}) is a rooted tree described as follows. Each vertex is labelled by a tuple $(\mu,a,b)$, where $\mu \vdash b$ is a partition, and $a\ge0$ and $b>0$ are integers. 
    Each edge is labelled by a positive integer.
    If $\mu = r^{m_r}\ldots2^{m_2}1^{m_1}$, then the vertex has a child for each $m_j\ne 0$. Let $J$ be the set of distinct row lengths of~$\mu$. Note that $J$ indexes the set of children. 
    The labels satisfy the following constraints: 
    \begin{enumerate}
        \item The vertex is a leaf if and only if $|\mu|=1$. In which case, we  abbreviate the label as $a$.
        \item A vertex $(\mu,a,b)$ has an outgoing edge labelled $j$ for each distinct row length $m_{j}(\mu)\ne0$. The corresponding child is labelled by $(\mu^{(j)},a^{(j)},m_{j}),$ where $\mu^{(j)}\vdash m_{j}$ and 
        \begin{equation}\label{eq:KOHtreea}
        a^{(j)} = (a+2)j - 2|\mu'_{\le j}|.
        \end{equation}
    \end{enumerate}
This defines what a KOH tree is.
Defining the left-to-right order of edges for a KOH tree makes it a plane tree,
which then induces a left-to-right order on the leaves.
In \cite{PakPanovaSwanson} the order is arbitrary,
but for us it is beneficial to order the edges lexicographically from left to right with one notable exception: every edge labelled $1$ is always the \emph{right-most} edge, see Figure~\ref{fig:KOH tree}.
In this way, the bijection in Proposition~\ref{p:bijection} preserves the order of the leaves.
    A \emph{marked KOH tree} of type $(n,k,r)$ is a KOH tree whose root label is some tuple $(-,n,k)$ (i.e., the first entry of the tuple is an arbitrary partition of $k$) and in which each leaf $a_i$ is marked by an integer~$t_i$ satisfying $t_1=0$,
\begin{itemize}
\item $t_{i-1}\leq t_i\leq a_i + t_{i-1}$, and
\item $t_i\leq \big(\sum_{j=1}^{i-1} a_j\big) - t_{i-1}$,
\end{itemize}
for all $1 < i \le K$, and $t_K = r+\big(\sum_{j=1}^{K} a_j\big)/2-nk/2$.
\end{de}

Let $\mathcal{T}(n,k,r)$ be the set of marked KOH trees of type $(n,k,r)$.
In \cite{PakPanovaSwanson}, it is shown that
Hermite coefficients are counted by these trees:
\[
g((nk-r,r),(n^k),(n^k)) = a_{n[k]}^{(nk-r,r)} = \#\mathcal{T}(n,k,r).
\]
This is a $\sharpP$ formula.
It is natural to ask whether this construction can be modified to give a $\sharpL$ formula. That is, whether there exists a verifier $\NL$ machine which checks the validity of a KOH tree. 
A naive approach is to encode a KOH tree via its Depth-First-Search traversal and check the defining conditions above. 
There are several obstacles to this approach. For instance, we saw in \S\ref{sec:structure constants} how one cannot store the partition labelling the root in a log-space machine. Moreover, KOH trees can be polynomially deep, requiring a polynomial amount of counters just to traverse them.
In the rest of this section we partially overcome these technical difficulties and prove Theorem~\ref{thm:Hermite}, which gives a $\log^2$-space verifiable formula.

We define below a compressed version of a KOH tree, which is logarithmically deep. We also avoid using partitions as labels of the nodes, by invoking the next elementary lemma about the frequency notation of partitions.

\begin{lem}[Frequency notation of partitions]\label{lem:partition} Fix $n$.
The map
\[
\lambda \mapsto \big(m_1(\lambda), m_2(\lambda), \ldots, m_{\lambda_1}(\lambda),0^{n
-\lambda_1}\big)
\]
is a bijection from the set of partitions of $n$ to the set
\[
\textstyle
\{(m_1,\ldots,m_{n
})\mid \sum_j j m_j=n
\} \subset (\N_0)^{n
}.
\]
We have
\(\ell(\lambda)=\sum_i m_i(\lambda)\)
and
\(
\lambda_j' = \ell(\lambda) - \sum_{i=1}^{j-1} m_i(\lambda)
\)
and $|\lambda| = \sum_j j\cdot m_j(\lambda) = n$.
\end{lem}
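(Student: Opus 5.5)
The plan is to identify a partition with the multiset of its parts, and then read off each claimed identity by grouping the parts according to their value. First I check that the map is well defined. For $\lambda\vdash n$ every part satisfies $1\le\lambda_i\le\lambda_1\le n$. Hence the tuple $(m_1(\lambda),\ldots,m_{\lambda_1}(\lambda))$ padded with $n-\lambda_1\ge0$ zeros has length exactly $n$. Its entries are non-negative integers, and $m_j(\lambda)=0$ for $j>\lambda_1$, so the padding agrees with the frequency of the value $j$ for every $j\le n$. Grouping the parts of $\lambda$ by their value gives
\[
|\lambda|=\sum_i\lambda_i=\sum_{j=1}^n j\cdot\#\{i\mid\lambda_i=j\}=\sum_j j\,m_j(\lambda)=n,
\]
so the image lies in the stated set. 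This computation also proves the identity $|\lambda|=\sum_j j\,m_j(\lambda)$ in the statement.

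Next I prove bijectivity. For injectivity, a partition is a weakly decreasing sequence, so it is determined by the multiset of its parts. That multiset is in turn determined by the multiplicities $m_j$, and so two partitions with the same frequency vector coincide. For surjectivity, given $(m_1,\ldots,m_n)\in\N_0^n$ with $\sum_j j\,m_j=n$, I let $\lambda$ be the sequence $n^{m_n}\ldots2^{m_2}1^{m_1}$, listing the value $j$ exactly $m_j$ times in decreasing order. This $\lambda$ is weakly decreasing with positive entries and has size $\sum_j j\,m_j=n$, so $\lambda\vdash n$. Its largest part is $\max\{j\mid m_j\neq0\}$, so the entries $m_j$ with $j>\lambda_1$ vanish and coincide with the padding zeros. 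Therefore the image of $\lambda$ is exactly the given tuple.

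The remaining identities are again counts of parts. The length $\ell(\lambda)$ is the total number of parts, which is $\sum_i m_i(\lambda)$. The conjugate part $\lambda'_j$ is the number of rows of length at least $j$, that is, $\#\{i\mid\lambda_i\ge j\}$. Every row has length either at least $j$ or exactly $i$ for some $1\le i\le j-1$. Subtracting the rows of length less than $j$ from all rows therefore gives $\lambda'_j=\ell(\lambda)-\sum_{i=1}^{j-1}m_i(\lambda)$. I do not expect any real obstacle here. The only point that needs care is the bookkeeping of the zero padding to length $n$, which relies on $\lambda_1\le n$.
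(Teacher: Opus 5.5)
Your proof is correct and takes the same approach as the paper, which simply says the lemma is clear from the definition of a partition. You have written out that routine verification in full: grouping parts by value, rebuilding $\lambda$ from its multiplicities, and counting rows of length at least $j$ for $\lambda'_j$.
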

\begin{proof}
Clear from the definition of a partition.
\end{proof}
For example, the partition $(5,3,3,1,1,1,1)=(5^1\,3^2\,1^4)$ has a frequency vector (i.e., the list of exponents) $(4,0,2,0,1)$ and hence Lemma~\ref{lem:partition} maps $(5,3,3,1,1,1,1,0,\ldots,0)$ to $(4,0,2,0,1,0,\ldots,0)$.

Next, we present a lemma that allows for the computation of the values $a$ in the label of an \emph{internal node} (a non-leaf node) of a KOH tree from the remaining available information. 
Given a node $v$ in an edge-labelled rooted tree, we denote by $v\ua$ its parent, and by $v\da_j$ its child via the edge labelled $j$.
For any list of edge labels $(j_1,\ldots,j_y)$ we write $v\da_{(j_1,\ldots,j_y)} := (v\da_{(j_1,\ldots,j_{y-1})})\da_{j_y}$.
For any two nodes $V$ and $v$ in $T$, if there is a path from $V$ to $v$, then it is unique and we denote it by $P_T(V,v)$.
For any node $v$ of a KOH tree, define its \emph{main ancestor} to be the earliest ancestor $V$ of $v$ such that the edges in the path $P_T(V,v)$ have labels $(j\, 1^x)$ for some $j\ge1$ and some $x\ge0$. Note that $v = V\da_{j,1^x}$.

\begin{lem}\label{lem:a}
Let $v$ be a node of a KOH tree $T$ different from the root. Let $V = (\mu,a,b)$ be the main ancestor of $v$.
Write
\[
\Big(\mu^{(j,1^y)}, a^{(j,1^y)}, b^{(j,1^y)}\Big)
\]
for the label of $V\da_{j,1^y}$.
Then for all $x\geq 0$ we have
\[
a^{(j,1^x)} = 2x + (a+2)j - 2|\mu'_{\le j}| - 2\sum_{y=0}^{x-1} \ell(\mu^{(j,1^y)}).
\]
\end{lem}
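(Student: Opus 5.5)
The plan is to argue by induction on $x$, using only the defining recursion \eqref{eq:KOHtreea} for the labels of children in a KOH tree. The single observation to record is what \eqref{eq:KOHtreea} says for an edge labelled $1$. If a node has label $(\nu,c,d)$ and has a child along the edge labelled $1$ (that is, $m_1(\nu)\neq0$), then that child has
\[
c^{(1)} = (c+2)\cdot 1 - 2|\nu'_{\le 1}| = c + 2 - 2\nu'_1 = c+2-2\ell(\nu),
\]
since the first column of $\nu$ has length $\nu'_1=\ell(\nu)$. In other words, every step down an edge labelled $1$ adds $2$ and subtracts twice the length of the partition in the parent's label.

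\textbf{Base case $x=0$.} Here $V\da_{j,1^0}=V\da_j$. By \eqref{eq:KOHtreea} applied at $V=(\mu,a,b)$,
\[
a^{(j)}=(a+2)j-2|\mu'_{\le j}|.
\]
This is exactly the claimed formula with empty sum and $2x=0$.

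\textbf{Inductive step.} Suppose $v=V\da_{j,1^{x}}$ with $x\ge1$ and the formula holds for $x-1$. The path from $V$ to $v$ has labels $(j\,1^x)$, so $v$ is the child of $u:=V\da_{j,1^{x-1}}$ along the edge labelled $1$. The node $u$ has label $(\mu^{(j,1^{x-1})},a^{(j,1^{x-1})},b^{(j,1^{x-1})})$, and its child along edge $1$ exists because $v$ exists. Applying the observation above to $u$ gives
\[
a^{(j,1^x)} = a^{(j,1^{x-1})} + 2 - 2\ell\big(\mu^{(j,1^{x-1})}\big).
\]
Substituting the inductive hypothesis for $a^{(j,1^{x-1})}$ raises $2(x-1)$ to $2x$ and extends the sum from $y\le x-2$ to $y\le x-1$, which is the claim.

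\textbf{Remarks.} The argument never uses that $V$ is the \emph{earliest} such ancestor. The maximality in the definition of main ancestor matters only for later uniqueness and encoding purposes, so the identity holds for any ancestor $V$ with $v=V\da_{j,1^x}$. The case $j=1$ needs no separate treatment. The only point needing care, rather than a real obstacle, is the identification $|\nu'_{\le1}|=\nu'_1=\ell(\nu)$; it can also be read off from Lemma~\ref{lem:partition}, which gives $\nu'_1=\ell(\nu)-0$.
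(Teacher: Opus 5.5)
Your proof is correct and is essentially the paper's own argument: induction on $x$, with the base case read directly off \eqref{eq:KOHtreea}, and the inductive step applying \eqref{eq:KOHtreea} along an edge labelled $1$ (where $|\nu'_{\le 1}| = \ell(\nu)$) before substituting the inductive hypothesis. Your side remark is also right: the minimality of the main ancestor plays no role, so the identity holds for any ancestor $V$ with $v = V\da_{j,1^x}$.
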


\begin{proof}
In the base case $x = 0$ the right-most sum is empty and the statement holds by~\eqref{eq:KOHtreea}. For the induction step ($x\geq 1$), we calculate
\begin{align*}
a^{(j,1^x)} &\stackrel{\eqref{eq:KOHtreea}}= (a^{(j,1^{x-1})}+2)\cdot1 - 2 \ell(\mu^{(j,1^{x-1})})\\
&\stackrel{\textup{ind.\,hyp.}}{=} 2x + (a+2)j - 2 |\mu'_{\le j}| - 2 \sum_{y=0}^{x-1}\ell(\mu^{(j,1^{y})}).
\qedhere
\end{align*}
\end{proof}

We are now ready to introduce the main object of study of this section, the small KOH witnesses. See Figure \ref{fig:KOH tree} for an example and Proposition~\ref{p:bijection} for a construction of small KOH witnesses from marked KOH trees.

\begin{figure}
\[
\begin{tikzpicture}[baseline=11em, x=2em, y=4em]
    \node (root) at (2.5,5) {$\ydiagram{2,2,1,1,1},~15,7$};
\coordinate (coordn)  at ($(root.north)+(0.1,-0.15)$);
\coordinate (coordk)  at ($(root.east)+(-0.1,0)$);
\draw[dotted,-,blue] (4.2,5.5) node[right]{\textit{$n=a(\textup{{root}})$}} to[out=180,in=90] (coordn);
\draw[dotted,-,blue] (4.5,5) node[right]{\textit{$k=b(\textup{{root}})$}} to[out=180,in=0] (coordk);
\node (child0) at (1,4) {$\ydiagram{2,1},~7, 3$};
    \node (child1) at (4,4) {$\ydiagram{1,1},~20, 2$};
\coordinate (coorda)  at ($(child1.north)+(0.1,-0.05)$);
\coordinate (coordb)  at ($(child1.east)+(-0.1,0)$);
\coordinate (coordj)  at ($(child1.south)+(0,-.3)$);
\draw[dotted,-,blue] (5.2,4.5) node[right]{\textit{$a$}} to[out=180,in=90] (coorda);
\draw[dotted,-,blue] (5.5,4) node[right]{\textit{$b$}} to[out=180,in=0] (coordb);
\draw[dotted,-,blue] (5.5,3.5) node[right]{\textit{$j$}} to[out=180,in=0] (coordj);
    \node (child00) at (2,3) {$5;2$};
    \node (child01) at (0,3) {$12;0$};
\coordinate (coordlabel)  at ($(child01.west)+(0.1,0)$);
\coordinate (coordmark) at ($(child01.east)+(-0.05,0)$);
\draw[dotted,-,blue] (-.3,1.5) node[right]{\textit{label $a$}} to[out=150,in=210] (coordlabel);
\draw[dotted,-,blue] (2,2) node[left]{\textit{mark $t$}} to[out=30,in=350] (coordmark);
    \node (child10) at (4,3) {$\ydiagram{1,1},~18, 2$};
    \node (child100) at (4,2) {$\ydiagram{2},~16, 2$};
    \node (child1000) at (4,1) {$32;8$};
    \draw[->] 
    (root)--(child0) node[midway, draw, fill=white, circle, inner sep=.05em] {\scriptsize $1$};
    \draw[->] 
    (root)--(child1) node[midway, draw, fill=white, circle, inner sep=.05em] {\scriptsize $2$};
    \draw[->] 
    (child1)--(child10) node[midway, draw, fill=white, circle, inner sep=.05em] {\scriptsize $1$};
    \draw[->] 
    (child10)--(child100) node[midway, draw, fill=white, circle, inner sep=.05em] {\scriptsize $1$};
    \draw[->] 
    (child100)--(child1000) node[midway, draw, fill=white, circle, inner sep=.05em] {\scriptsize $2$};
    \draw[->] 
    (child0)--(child00) node[midway, draw, fill=white, circle, inner sep=.05em] {\scriptsize $1$};
    \draw[->] 
    (child0)--(child01) node[midway, draw, fill=white, circle, inner sep=.05em] {\scriptsize $2$};
\end{tikzpicture}
\qquad
\hspace{-1.2cm}{\scalebox{1.5}{$\leftrightsquigarrow$}}\hspace{-0.8cm}
\qquad
\begin{tikzpicture}[baseline=16em, x=4em, y=5em]
    \node (root) at (2.5,5) {$5,3,7$};
\coordinate (smallcoordell)  at ($(root.west)+(0,0)$);
\coordinate (smallcoordm)  at ($(root.north)+(0,-.05)$);
\coordinate (smallcoordk) at ($(root.east)+(-.05,0.1)$);
\draw[dotted,-,blue] (1.2,4.8) node[left]{\textit{$\ell\!\left(\ydiagram{2,2,1,1,1}\right)$}} to[out=30,in=190] (smallcoordell);
\draw[dotted,-,blue] (1.6,5.2) node[left]{\textit{$m_1\!\!\left(\ydiagram{2,2,1,1,1}\right)$}} to[out=0,in=100] (smallcoordm);
\draw[dotted,-,blue] (3.5,5) node[right]{\textit{$k$}} to[out=150,in=30] (smallcoordk);
    \node (child0) at (.5,4) {$2,1,3$};
    \node (child1) at (2.5,4) {$2,2,2$};
\coordinate (smallcoordellv)  at ($(child1.south west)+(0.1,0.08)$);
\coordinate (smallcoordm) at ($(child1.south)+(0,0.05)$);
\coordinate (smallcoordb) at ($(child1.south east)+(-0.15,0.08)$);
\draw[dotted,-,blue] (2,3.5) node[left]{\textit{$\ell$}} to[out=0,in=240] (smallcoordellv);
\draw[dotted,-,blue] (2.5,3.6) node[below]{\textit{$m_1$}} to[out=60,in=270] (smallcoordm);
\draw[dotted,-,blue] (3,3.5) node[right]{\textit{$b$}} to[out=180,in=300] (smallcoordb);
    \node (child00) at (1.5,4) {$5;2$};
    \node (child01) at (.5,3) {$12;0$};
\coordinate (smallcoordlabel)  at ($(child01.west)+(0.1,0)$);
\coordinate (smallcoordmark) at ($(child01.east)+(-0.05,0)$);
\draw[dotted,-,blue] (-.3,2) node[right]{\textit{label $a$}} to[out=150,in=210] (smallcoordlabel);
\draw[dotted,-,blue] (1.5,2.5) node[left]{\textit{mark $t$}} to[out=30,in=350] (smallcoordmark);
    \node (child10) at (3.5,4) {$2,2,2$};
    \node (child100) at (4.5,4) {$1,0,2$};
    \node (child1000) at (4.5,3) {$32;8$};
\coordinate (smallcoordj) at ($(child1000.north west)+(0.1,0.2)$);
\draw[dotted,-,blue] (3.5,3) node[below]{\textit{$j,x$}} to[out=90,in=180] (smallcoordj);
    \filldraw[->] 
    (root)--(child0) node[pos=.7, draw, fill=white, inner sep=.1em] {\scriptsize $1,0$};
    \draw[->] 
    (root)--(child1) node[pos=.7, draw, fill=white, inner sep=.1em] {\scriptsize $2,0$};
    \draw[->] 
    (root)--(child10) node[pos=.7, draw, fill=white, inner sep=.1em] {\scriptsize $2,1$};
    \draw[->] 
    (root)--(child100) node[pos=.7, draw, fill=white, inner sep=.1em] {\scriptsize $2,2$};
    \draw[->] 
    (child100)--(child1000) node[pos=.7, draw, fill=white, inner sep=.1em] {\scriptsize $2,0$};
    \draw[->] 
    (root)--(child00) node[pos=.7, draw, fill=white, inner sep=.1em] {\scriptsize $1,1$};
    \draw[->] 
    (child0)--(child01) node[pos=.7, draw, fill=white, inner sep=.1em] {\scriptsize $2,0$};
\coordinate (coordtrueleaf1)  at ($(child01.south)+(0.0,0)$);
\coordinate (coordtrueleaf2) at ($(child00.south)+(0.0,0.0)$);
\coordinate (coordtrueleaf3) at ($(child1000.south)+(0.0,0.0)$);
\node (trueleaveslabel) at (2.5,1.8) {\textcolor{blue}{\textit{true leaves}}};
\draw[dotted,-,blue] (trueleaveslabel)  to[out=180,in=270] (coordtrueleaf1);
\draw[dotted,-,blue] (trueleaveslabel)  to[out=120,in=270] (coordtrueleaf2);
\draw[dotted,-,blue] (trueleaveslabel)  to[out=0,in=270] (coordtrueleaf3);
\end{tikzpicture}
\]
\caption{Left, a marked KOH tree of type $(n,k,r)=(15,7,36)$.
The marks are separated by a semicolon.
The value of $r$ is calculated as $r=8-(12+5+32)/2+(15\cdot 7)/2=36$.
Right, the corresponding small KOH witness.
Note how sequences of edges labeled 1 are split up,
and
the values of $a$ are only stored at true leaves, and instead of partitions $\mu$ only their length is stored at internal nodes. Note that in the KOH tree, the edge with label 1 from the root goes to the leftmost child, while all other edges with label 1 go to the rightmost child.}
\label{fig:KOH tree}
\end{figure}

\begin{de}\label{de:witness}
A \emph{small KOH witness} of type $(n, k, r)$ is a labelled plane rooted tree $T$ and a subset $L$ of its leaves, the so-called set of \emph{true leaves}, such that
\begin{itemize}
\item each edge is labelled by a pair $(j, x)\in\N\times\N_0$,
\item each true leaf $v$ is labelled by a pair $(a,t)\in\N_0 \times\N_0$, where we say that $a$ is the label and $t(v)=t$ is the mark, and we define $\ell(v)=1$, $m_1(v)=1$, and $b(v)=1$ (see \eqref{eq:a} and \eqref{sw:a} for the definition of $a(v)$),
\item each of the remaining nodes (so-called \emph{internal nodes}, even though these can be leaves, but not true leaves) is labelled by a triple $(\ell,m_1,b)\in\N\times\N_0\times\N$,
and we set $\ell(v)=\ell$, $m_1(v)=m_1$, and $b(v)=b$,
\end{itemize}
and the labels and marks are subject to the following numbered relations \eqref{sw:lex}--\eqref{sw:tK}.

\begin{enumerate}
\item The outgoing edges of a node are lexicographically ordered. \label{sw:lex}
\item Only the root may have an outgoing edge labelled $(1,0)$. \label{sw:root}
\item If a node has an outgoing edge labelled $(j,x)$ for some $x>0$ then it also has an outgoing edge $(j,x-1)$. \label{sw:x-1}
\item If a node has an outgoing edge labelled $(j,x)$ then it has no other outgoing edge labelled $(j,x)$. \label{sw:unique}
\end{enumerate}
For any node $v$ different to the root, let $v\ua$ be its parent.
For any node which is not a leaf, let $v\da_{j,x}$ be its unique child along an edge labelled $(j,x)$, if it exists. If $w$ is not a well-defined node, let $b(w) = 0$.

Let $e(v) = (v\ua, v)$ be the unique incoming edge to $v$. 
Let $j(v)$ and $x(v)$ be a shorthand for the labels of $e(v)$. 
\begin{enumerate}[resume]
\item For every non-root $v$ it holds that 
$$
    b(v\ua\da_{j(v),x(v)+1}) + \sum_{j>1} b(v\da_{j,0}) = \ell(v).
$$ \label{sw:len}
\item For the root $R$, it holds $\sum_{j} j\cdot b(R\da_{j,0}) = k$. \label{sw:k}
\item
For each non-root $v$ we have
\[
m_1(v\da_{j,x})=b(v\da_{j,x+1}).
\]
For the root $v$, we have $m_1(v)=b(v\da_{1,0})$.
\label{sw:mone}
\item For any node with outgoing edges, it holds 
$$
b(v\ua\da_{j(v),x(v)+1}) + \sum_{j} j\cdot b(v\da_{j,0}) = b(v).
$$ \label{sw:b}
\end{enumerate}
For any node $v$ and $j\in\N$, define recursively the \emph{column sum} operator
\[
c_\Sigma(v,1) = \ell(v)
\]
and
\[
c_\Sigma(v,j) = 
c_\Sigma(v,j-1) + \ell(v)
-m_1(v)
-
\sum_{1<i<j} b(v\da_{i,0})
.
\]
Define recursively
\begin{equation}\label{eq:a}
a(v) = 
2x(v) + \big(a(v\ua) + 2\big)j(v) - 2c_\Sigma(v\ua,j(v)) - 2\sum_{y< x(v)}\ell(v\ua\da_{j,y}),
\end{equation}
where the root $R$ satisfies $a(R) = n$.
\begin{enumerate}[resume]
\item Given a true leaf $v$ with label $a$ and mark $t$, it holds that $a(v)=a$. \label{sw:a}
\end{enumerate}
While performing a left-to-right Depth-First-Search on the tree, at arrival to a true leaf $v$, let $\mathsf{left}(v)$ be the last encountered true leaf. 
If $v_1$ is the first true leaf, then set $s(v_1)=a(v_1)$ and for all remaining true leaves let
\[
s(v) = a(v) + s(\mathsf{left}(v)).
\]
\begin{enumerate}[resume]
\item The first encountered true leaf $v_1$ satisfies $t(v_1)=0$. \label{sw:t1}
\item For any other true leaf $v$ it holds
\begin{itemize}
\item $t(\mathsf{left}(v))\leq t(v) \leq a(v) + t(\mathsf{left}(v))$, and
\item $t(v)\leq s(\mathsf{left}(v)) - t(\mathsf{left}(v))$.
\end{itemize}\label{sw:marks}
\item For the last true leaf $v$ ever encountered, $t(v) = r+\big(s(v)-nk\big)/2$.\label{sw:tK}
\end{enumerate}

\end{de}

\begin{p}\label{p:bijection}
Given a marked KOH tree $T$, construct a new tree $T'$ as follows.
\begin{enumerate}
\item There is a one-to-one correspondence between nodes of $T$ and nodes of $T'$. 
A leaf of $T$ labelled $((1),a,1)$ and marked with $t$ corresponds to a true leaf with label $a$ and mark $t$ in $T'$. The leaves in $T$ from left to right appear in the same order as the true leaves in $T'$.
Any other node $(\mu,a,b)$ of $T$ corresponds to an internal node of $T'$ labelled $(\ell(\mu),m_1(\mu),b)$.
\item For each path $P_T(u,v)$ whose edges are labelled $(j,1^x)$, $j>1$, $x\geq 0$, we create an edge $(u,v)$ in $T'$ with label $(j,x)$,
and for paths starting at the root we also do so for $j=1$.
\item Turn the resulting graph into a plane rooted tree, where the root is the node of $T'$ corresponding to the root of $T$, and where outgoing edges of any given node are ordered lexicographically.
\end{enumerate}
Then $T'$ is a small KOH witness. Furthermore, this is a bijection between the set $\mathcal{T}(n,k,r)$ of marked KOH trees and the set $\mathcal{T}'(n,k,r)$ of small KOH witnesses of the same type.
\end{p}

\begin{proof}
Suppose $T$ is a marked KOH tree of type $(n,k,r)$ and construct $T'$.
We begin by showing that $T'$ is a small KOH witness of type $(n,k,r)$.

Conditions \eqref{sw:lex}--\eqref{sw:unique} of the definition of a small KOH witness are trivially satisfied. Condition \eqref{sw:len} holds since the outgoing edges of a node $(\mu,a,b)$ in $T$ correspond to the various parts of $\mu$,
and the $b$-values of the children sum up to~$\ell(\mu)$. Similarly, conditions \eqref{sw:k} and \eqref{sw:b} hold because $\sum_j j\cdot m_j(\mu) = |\mu|$. 
Condition~\eqref{sw:mone} follows directly from Definition~\ref{de:KOH}.
Condition \eqref{sw:a} holds by Lemma \ref{lem:a}. Conditions \eqref{sw:t1}--\eqref{sw:tK} hold by definition of a \emph{marked} KOH tree.

We now show how given a small KOH witness $T'$ one can recover $T$. First note that for any internal node $v$ of $T'$ different from the root one can recover the triple $(\mu,a,b)$ labelling the corresponding node of $T$ as follows: $a$ is given by the function $a(v)$ (see Lemma~\ref{lem:a}), $b$ is given by $b(v)$ (Lemma~\ref{lem:partition}), and $\mu$ is completely determined by the formulas
\begin{align*}
\mu'_1 &= \ell(v),\\
\mu'_2 &= \ell(v) - b(v\ua\da_{j(v),x(v)+1}),\\
\mu'_j &= \ell(v) - b(v\ua\da_{j(v),x(v)+1}) - \sum_{1<i<j} b(v\da_{i,0})
\end{align*}
for any $j\ge3$ (again by Lemma~\ref{lem:partition}). For the root $R$, a similar set of formulas hold, by replacing $b(v\ua\da_{j(v),x(v)+1})$ by $b(v\da_{1,0})$. Similarly, for any true leaf labelled with $a$ and marked with $t$, the corresponding leaf of $T$ is labelled by $((1),a,1)$ and marked by $t$.
For any edge $(u,v)$ labelled $(j,0)$ of $T'$, create an edge labeled $j$
between the node of $T$ corresponding to $u$ and the node of $T$ corresponding to $v$. For any edge $(u,v)$ labelled $(j,x)$ with $x\ge1$, create an edge labeled $1$ between the node of $T$ corresponding to $u\da_{j,x-1}$ and the node of $T$ corresponding to $v$.

Since $b(v)=1$ for true leaves, a leaf of $T$ must correspond to a partition of $1$. Conditions~\eqref{sw:root}, \eqref{sw:x-1}, and~\eqref{sw:unique} ensure that any node of $T$ has at most one outgoing edge labelled $j$ for each $j$. Conditions~\eqref{sw:len}--\eqref{sw:b} ensure that for each node $(\mu,a,b)$ of $T$, then $\mu$ is a partition of $b$ and the outgoing edges correspond to the distinct parts of $\mu$ (see Lemma~\ref{lem:partition}). Condition~\eqref{sw:a} checks the defining property of $a$ is satisfied, thanks to Lemma~\ref{lem:a}. Finally, Condition~\eqref{sw:lex} fixes an order of the leaves of $T$ and \eqref{sw:t1}--\eqref{sw:tK} ensure that the marks of these leaves are consistent with those of a KOH tree.

It is clear that these two processes are inverse to each other.
\end{proof}

\begin{lem}\label{lem:log depth}
A small KOH witness of type $(n,k,r)$ has $O(\log(k))$ depth.
\end{lem}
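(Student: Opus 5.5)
The plan is to track the label $b$ along a root-to-leaf path and show that it at least halves at every edge except possibly the first. Since $b(R)=k$ at the root and $b(v)\ge 1$ at every node, the depth is then at most $\log_2(k)+O(1)$. By Proposition~\ref{p:bijection} I may work either in the marked KOH tree $T$ or directly in $T'$ via conditions \eqref{sw:mone} and \eqref{sw:b}. I will argue in $T$, where the meaning of $b$ is transparent, and transport the conclusion along the bijection.

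\emph{Step 1: $b$ is non-increasing along edges of $T$.} In a KOH tree, a node $(\mu,a,b)$ with $\mu\vdash b$ has, for each $j$ with $m_j(\mu)\ne0$, a child whose third label is $m_j(\mu)$. Since $j\,m_j(\mu)\le|\mu|=b$, we get $m_j(\mu)\le b/j$. Hence $b$ never increases along an edge of $T$, and it at least halves along any edge labelled $j\ge2$. In the language of $T'$, this is exactly the inequality $j\cdot b(v\da_{j,0})\le b(v)$, which follows from \eqref{sw:b}.

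\emph{Step 2: one edge of $T'$ halves $b$.} An edge of $T'$ labelled $(j,x)$ from $u$ to $v$ corresponds to a path $P_T(u,v)$ with edge labels $(j,1^x)$. Suppose $j\ge2$. By Step 1,
\[
b(v)\le b(u\da_{j})\le \frac{b(u)}{j}\le\frac{b(u)}{2},
\]
where the first inequality holds because the subsequent $x$ edges labelled $1$ cannot increase $b$. In $T'$ this chain reads $b(u\da_{j,x})=m_1(u\da_{j,x-1})\le b(u\da_{j,x-1})$, using \eqref{sw:mone} and \eqref{sw:b}, and by induction on $x$ it ends at $b(u\da_{j,0})\le b(u)/j$. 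Condition \eqref{sw:root} (equivalently, construction step (2) of Proposition~\ref{p:bijection}) guarantees that edges with $j=1$ occur only out of the root. So every edge of $T'$ not incident to the root halves $b$.

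\emph{Step 3: conclude.} Take any root-to-leaf path $R=v_0,v_1,\dots,v_d$ in $T'$. Then $b(v_1)\le k$, and $b(v_{i+1})\le b(v_i)/2$ for every $i\ge1$. Every node satisfies $b\ge1$: internal nodes have $b\in\N$, and true leaves have $b=1$ by definition. Therefore $1\le k/2^{d-1}$, so $d\le \log_2(k)+1=O(\log k)$.

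The only delicate point is Step 2. One must check that the compression in Proposition~\ref{p:bijection}, which merges each maximal run of $1$-edges into a single edge $(j,x)$, really does remove all non-halving edges below the root. This is exactly why the root alone is allowed the label $(1,0)$. Once this is confirmed, the rest is the elementary bound $m_j(\mu)\le|\mu|/j$.
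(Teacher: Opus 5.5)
Your proof is correct and follows the paper's argument. Both pass to the marked KOH tree via Proposition~\ref{p:bijection}, use $j\,m_j(\mu)\le|\mu|$ to see that $b$ at least halves along every edge of $T'$ not leaving the root, and conclude depth at most $\log_2(k)+1$. You are slightly more careful than the paper in noting that the trailing $1$-edges cannot increase $b$. Your parenthetical translations into $T'$ are loose: \eqref{sw:b} is stated only for nodes with outgoing edges, and ruling out $(1,x)$-edges below the root also needs \eqref{sw:x-1}. Your main argument in $T$ does not rely on them.
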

\begin{proof}
Let $T'$ be a small KOH witness and let $T$ be the corresponding marked KOH tree.
Recall that each node $u$ of $T'$ corresponds to a node of $T$ labelled by a triple $(\mu,a,b)$. Suppose $u$ is not the root. If $v$ is a child of $u$ in $T'$, then there exists a path in $T$ in which there is one edge labelled $j>1$. Thus the partition labelling $v$ in $T$ is of size at most $|\mu|/j = b/j \le b/2$.

Consequently, a path from the root $(\lambda,n,k)$ to a leaf of $T'$ can be of length at most $\log_2(k)+1$.
\end{proof}

\begin{lem}\label{lem:at most b leaves}
    A small KOH witness of type $(n,k,r)$ has at most $k$ true leaves. 
\end{lem}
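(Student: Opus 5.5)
We will argue on the marked KOH tree $T$ corresponding to the small KOH witness via Proposition~\ref{p:bijection}. That bijection puts the leaves of $T$ in order-preserving correspondence with the true leaves of $T'$, and the true leaves are exactly the leaves of $T$ labelled $((1),a,1)$. So it suffices to show that a KOH tree whose root is labelled $(\mu,n,k)$ with $\mu\vdash k$ has at most $k$ leaves. More generally, we will show that a subtree rooted at a node labelled $(\mu,a,b)$ has at most $b=|\mu|$ leaves.

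We prove this by strong induction on $b$.

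\emph{Base case.} If $b=1$, then $|\mu|=1$. By condition (1) of Definition~\ref{de:KOH}, the node is a leaf, so there is exactly $1\le b$ leaf.

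\emph{Inductive step.} If $b>1$, the node has one child for each distinct part size $j$ of $\mu$, and that child is labelled $(\mu^{(j)},a^{(j)},m_j(\mu))$. We have
\[
m_j(\mu)\le \sum_j j\, m_j(\mu)=|\mu|=b,
\]
and in fact $m_j(\mu)<b$ unless $\mu=(1^b)$. By the induction hypothesis (when $m_j<b$), the subtree at child $j$ has at most $m_j(\mu)$ leaves. Summing over the children, the number of leaves is at most
\[
\sum_j m_j(\mu)\le \sum_j j\, m_j(\mu)=b.
\]

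The one subtle case is $\mu=(1^b)$. Here the node has a single child, labelled $(\mu^{(1)},a^{(1)},b)$, with the same $b$, so strong induction on $b$ alone does not apply. To handle it, we induct on the pair $(b,\text{depth-to-leaves})$ instead, or simply note that the tree is finite: a chain of $1$-edges eventually reaches a node whose partition is not $(1^b)$, or the leaf case when $b=1$. Along such a chain the bound $b$ is passed down unchanged, so the bound at the bottom of the chain gives the bound at its top. Finiteness of a KOH tree is implicit in the definition, since it is a tree whose leaves are explicitly those with $|\mu|=1$. Alternatively, one can work directly in $T'$ using condition~\eqref{sw:b}, $\sum_j j\,b(v\da_{j,0}) + b(v\ua\da_{j(v),x(v)+1})=b(v)$, which already absorbs the $1$-chains into the siblings. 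Either way, the main obstacle is this chain case.

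Applying the bound at the root, where $b=k$, gives at most $k$ true leaves, which proves the lemma.
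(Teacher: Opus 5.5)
Your proof is correct and is essentially the paper's argument. Both reduce to the marked KOH tree via Proposition~\ref{p:bijection}, induct on $b$ (the paper inducts on $k$) using $\sum_j m_j(\mu)\le\sum_j j\,m_j(\mu)=b$, and treat $\mu=(1^b)$ separately by following the single-child chain down until it branches. The only difference is how the chain is shown to terminate: the paper notes that $a$ strictly decreases along it ($a^{(1)}=a+2-2b<a$, with $a\ge0$), while you appeal to finiteness of the tree, which is equally valid (and given finiteness, plain induction on subtree height would make the special case unnecessary).
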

\begin{proof}
We use Proposition~\ref{p:bijection} and instead show that a KOH tree of type $(n,k,r)$ has at most $k$ leaves.

Let $T(\mu,n,k)$ denote a marked KOH tree with root $(\mu,n,k)$.
    The proof is by induction on $k$. If $k=1$ then the tree $T(\mu,n,k)$ consists of one vertex which is the leaf.
    Suppose the trees of type $(n',k',r')$ with $k'\leq k-1$ have  at most $k'$ many leaves. 
    Consider the children of the root of $T(\mu,n,k)$. Let $\mu=(1^{m_1}2^{m_2}\ldots r^{m_r})$. First, suppose that $m_1 \neq k$ (have $m_i \leq k/i <k$ for all $i\geq 2$). Then the children vertices are $v_j = (\mu^{(j)}, a^{(j)}, m_j)$ for $j\in J:=\{ j: m_j\geq 1\}$ (the set of row lengths of $\mu$). We have that $\sum_j j m_j = |\mu|=k$ and hence $\sum_j m_j \leq k$. 
    Finally, the set of  leaves of $T(\mu,n,k)$ is the union of the sets of leaves of $T(v_j)$. By induction, each $T(v_j)$ has at most $m_j<k$ many leaves and so the total number of leaves of $T(\mu,n,k)$ is $\leq \sum_j m_j \leq k$.

    Next, suppose that $m_1=k$. Then $n_1=(n+2)1-2k=n +2(1-k)<n$ and there is exactly one child $(\mu^{(1)},a^{(1)},k)$. If $\mu^{(1)} \neq (1^k)$ the above analysis applies. Otherwise we have a path of vertices $( (1^k),a^{(j)},k)$ with $a^{(j)}$ strictly decreasing until it branches out, at which point the above analysis applies, and the proof is done by induction.
\end{proof}

\begin{lem}\label{lem:sum of a values}
The sum of all values $a$ at the true leaves of a small KOH witness, as well as a KOH tree, is at most $nk$.
\end{lem}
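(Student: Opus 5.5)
The plan is to work with marked KOH trees and prove by induction on $b$ the stronger statement: for every KOH tree whose root is labelled $(\mu,a,b)$, the sum of the $a$-values over its leaves is at most $ab$. Taking the root label $(\lambda,n,k)$ then gives the bound $nk$. By Proposition~\ref{p:bijection} the true leaves of the corresponding small KOH witness carry exactly the same $a$-labels, in the same order, so the statement for small KOH witnesses follows.

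\emph{Base case.} If $b=1$, the tree is a single leaf with label $a = a\cdot b$, so the claim holds.

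\emph{Inductive step.} Suppose the root is $(\mu,a,b)$ with $|\mu| = b > 1$. Its children are $(\mu^{(j)}, a^{(j)}, m_j)$ for $j \in J$. By \eqref{eq:KOHtreea}, $a^{(j)} = (a+2)j - 2|\mu'_{\le j}|$. A child with $m_j < b$ is covered by the induction hypothesis, so its leaves contribute at most $a^{(j)} m_j$. The total is therefore at most
\[
\sum_j m_j\big((a+2)j - 2|\mu'_{\le j}|\big) = ab + 2b - 2\sum_j m_j |\mu'_{\le j}|,
\]
using $\sum_j j m_j = b$. It then suffices to show $\sum_j m_j |\mu'_{\le j}| \ge b$. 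This holds because $|\mu'_{\le j}| \ge j$ whenever $m_j \neq 0$: the first $j$ columns contain at least the $j$ boxes of a row of length $j$. Hence $\sum_j m_j |\mu'_{\le j}| \ge \sum_j j m_j = b$.

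\emph{Main obstacle: the case $m_1 = b$.} Here $\mu = (1^b)$, the only child also has size $b$, and induction on $b$ alone does not apply. I would handle it by strengthening the induction to the lexicographic order on (size $b$, depth of the subtree), or simply on the depth of the subtree. The computation above needs no induction on size, only that each child satisfies the bound, so induction on depth suffices throughout; the tree is finite because $a$ strictly decreases along chains of $(1^b)$ nodes.

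In that case the single child has $a^{(1)} = a + 2 - 2b$, and the bound from the child is $(a+2-2b)\,b \le ab$ since $b \ge 1$. This is consistent with the general computation, which I would therefore apply uniformly by induction on depth.

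One more point must be checked: nonnegativity of the $a$-values, so that the inequalities make sense. This holds by Definition~\ref{de:KOH}, where $a \ge 0$ is part of the definition of a label.
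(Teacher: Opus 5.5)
Your proof is correct, but it establishes the key local inequality differently from the paper.

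\textbf{The paper's route.} It obtains $\sum_j a^{(j)} m_j(\mu)\le ab$ indirectly, from the KOH identity. Each product $\prod_j \qbinom{a^{(j)}+m_j}{m_j}$ on the right-hand side is a Laurent polynomial with nonnegative coefficients and top degree $\sum_j a^{(j)}m_j$. These products sum to $\qbinom{a+b}{b}$, whose top degree is $ab$, so no term can exceed that degree. The paper then says it ``expands this along the tree.''

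\textbf{Your route.} You prove the same inequality by direct computation from \eqref{eq:KOHtreea}. This gives the exact slack
\[
ab-\sum_j m_j a^{(j)} \;=\; 2\sum_j m_j\big(|\mu'_{\le j}|-j\big)\;\ge 0,
\]
using $|\mu'_{\le j}|\ge j$ whenever $m_j\neq 0$.

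\textbf{What your approach buys.} It is self-contained and elementary. It needs neither the KOH formula, nor positivity and degree facts about quantum binomials, nor the fact that KOH trees are an exact unravelling of that formula. It also makes the recursion rigorous through induction on subtree height, which correctly handles the $(1^b)$ chains where $b$ does not decrease; the paper leaves this step implicit. Your computation is essentially the one the paper itself uses for $\phi$ in Lemma~\ref{lem:koh tree depth}.

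\textbf{What the paper's approach buys.} It is shorter and explains conceptually why the bound must hold: it is forced by the identity that the trees unravel.

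One small remark: nonnegativity of the $a$-labels is not needed for your inequalities. Summing the child-wise bounds and the algebraic identity above work over all integers. Nonnegativity only enters your side argument for finiteness, and finiteness is already part of the tree definition.
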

\begin{proof}
We use Proposition~\ref{p:bijection} and instead show that the sum of the $a$-values at all leaves of a KOH tree of type $(n,k,r)$ is at most $nk$. To see the last part, we use the fact that the KOH tree definition from \cite{PakPanovaSwanson} is a direct unravelling of the KOH formula~\eqref{KOH formula}, and the intermediate label $a^{(j)}$ computed via equation~\eqref{eq:KOHtreea} are the integers appearing in the binomials of~\eqref{KOH formula}, so we have
$$    \qbinom{a+b}{b} = \sum_{\mu\vdash b} \, \prod_{m_j(\mu)\ne0} \qbinom{a^{(j)} + m_j(\mu)}{m_j(\mu)}.
$$
Since these are Laurent polynomials in $q$ with positive coefficients, the maximal degrees of the products do not exceed the maximal degree on the left, which is $ab$.
So for every vertex labelled $(\mu,a,b)$ we have
$$ \sum_j a^{(j)}m_j(\mu) \leq ab,$$
where the labels of its children are $(-,a^{(j)},m_j(\mu))$.
The KOH formula is applied recursively to each quantum binomial coefficient, giving the branching in the vertices of the tree, and the leaves appear when $b=1$. Expanding the above inequality along the tree we get that the sum of the leaf labels under the vertex $(\mu,a,b)$ is at most $ab$, and the claim follows since the root has $(a,b) =(n,k)$.
\end{proof}

\begin{lem}\label{lem:koh tree depth}
    A KOH tree of type $(n,k,r)$ has depth $O(nk)$.
\end{lem}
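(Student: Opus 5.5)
We need to show that every root-to-leaf path in a KOH tree of type $(n,k,r)$ has length $O(nk)$. The plan is to walk down an arbitrary path and classify its edges into two kinds: edges labelled $j>1$, and maximal runs of consecutive edges labelled $1$. We bound the number of each separately, using Lemma~\ref{lem:log depth} for the first kind and the $a$-values for the second.

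\textbf{Edges with label $j>1$.} By the argument of Lemma~\ref{lem:log depth}, each such edge reduces $b$ to at most $b/j\le b/2$. Along an edge labelled $1$, $b$ does not increase, since $m_1(\mu)\le|\mu|$. Hence a path contains at most $\log_2(k)+1$ edges with $j>1$, so it breaks into $O(\log k)$ maximal runs of $1$-edges. It therefore suffices to show that each run has length $O(nk)$, or better, that the total length of all runs is $O(nk)$.

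\textbf{A single run of $1$-edges.} Consider a run through nodes $(\mu^{(y)},a^{(y)},b^{(y)})$ for $y=0,1,\dots,x$. By \eqref{eq:KOHtreea} with $j=1$,
\[
a^{(y+1)}=a^{(y)}+2-2\ell(\mu^{(y)}),\qquad b^{(y+1)}=m_1(\mu^{(y)}).
\]
\begin{itemize}
\item If $\ell(\mu^{(y)})\ge2$, the value $a$ does not increase; it drops by at least $2$ whenever $\ell\ge2$.
\item If $\ell(\mu^{(y)})=1$, then $\mu^{(y)}=(b^{(y)})$. So $m_1=0$ unless $b^{(y)}=1$, and a $1$-edge from it exists only when $b^{(y)}=1$, i.e.\ at a leaf. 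Thus a $1$-edge never leaves a node with $\ell=1$ except at the end of the run.
\end{itemize}
Consequently, every step inside a run strictly decreases $a$ by at least $2$. Since all labels satisfy $a\ge0$, a run has length at most $a^{(0)}/2+1$.

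\textbf{Bounding $a$ along the path.} It remains to show that every $a$ occurring on the path is $O(nk)$. Lemma~\ref{lem:sum of a values} (its inequality $\sum_j a^{(j)}m_j(\mu)\le ab$ applied at each vertex) gives $a^{(j)}\le ab$ for every child, but iterating this naively yields exponential bounds. Instead, I would use the quantity $a\cdot b$, which is non-increasing along edges: $a^{(j)}m_j\le ab$ because the degree of each factor in the KOH product is at most the degree of the left-hand side. Hence every node below the root satisfies $a\,b\le nk$, and since $b\ge1$, every $a$ on the path is at most $nk$. One should also check that $a$ can only grow when $b$ shrinks, but $ab\le nk$ already covers this.

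\textbf{Conclusion and main obstacle.} Combining the pieces gives depth $O(nk\log k)$. To sharpen this to $O(nk)$, I would telescope along the whole path. Within a run, the step count is at most $\tfrac12(a_{\text{start}}-a_{\text{end}})+1$. A $j$-edge raises $a$ to $(a+2)j-2|\mu'_{\le j}|$ while dividing $b$ by at least $j$. Tracking the potential $ab$ (or $a\cdot b+b$) step by step should show that it decreases by at least a constant at every $1$-step, yielding at most $O(nk)$ steps in total. I expect this potential argument across $j$-edges to be the main obstacle; if it fails, the weaker bound $O(nk\log k)$, which is still polynomial, suffices for the intended application.
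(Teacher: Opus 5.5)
\paragraph{Gap: the stated $O(nk)$ bound is not proved.}
As written, your argument proves only depth $O(nk\log k)$: at most $nk/2+1$ steps per run, times $O(\log k)$ runs. The sharpening to $O(nk)$, which is what the lemma asserts, is left as an expectation ("should show", "main obstacle") rather than proved.

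\paragraph{How to close it.}
The missing step is short and uses only facts you already have, but you have put the difficulty in the wrong place. Across an edge labelled $j>1$ you only need $ab$ not to increase, and you already showed this via $a^{(j)}m_j\le ab$. The strict decrease is needed at $1$-steps, and there it is immediate. Let $(\mu,a,b)\to(\mu^{(1)},a',b')$ be a $1$-step. You showed $\ell(\mu)\ge 2$, so $0\le a'\le a-2$, while $1\le b'=m_1(\mu)\le b$. Hence $a'b'\le (a-2)b'\le ab-2b'\le ab-2$. Since $ab=nk$ at the root and $ab\ge 0$ everywhere, a root-to-leaf path has at most $nk/2$ edges labelled $1$ and at most $\log_2 k+1$ other edges. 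So the depth is $O(nk)$, and the run decomposition and the separate bound $a\le nk$ become unnecessary.

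\paragraph{Comparison with the paper.}
The paper uses the same potential $\phi(v)=ab$, but applies it uniformly at every edge. By \eqref{eq:KOHtreea}, the child $w^{(j)}$ along the edge labelled $j$ has $\phi(w^{(j)})=a\,jm_j-2m_j\big(|\mu'_{\le j}|-j\big)$. Moreover $|\mu'_{\le j}|\ge j+1$ unless $\mu=(b)$, in which case the child is a leaf. So $\phi$ drops by at least $2$ at every step except possibly the last. This needs neither Lemma~\ref{lem:log depth}, nor the run analysis, nor the degree argument of Lemma~\ref{lem:sum of a values}.

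As you remark, $O(nk\log k)$ would still suffice for the polynomial bounds used later (Lemma~\ref{lem:poly many nodes}, Proposition~\ref{p:polytime}). It is not, however, the stated lemma.
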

\begin{proof}
    We use the idea from the proof of Lemma~\ref{lem:sum of a values}. For a vertex $v$ labeled by $(\mu,a,b)$, let $\phi(v) = ab$, we will show that the $\phi$ values decrease strictly from a parent to a child as long as the child is not a leaf.
    
    Let $v$ be an internal vertex, so $b>1$. 
    First, suppose that $\mu=(1^b)$, then $v$ has only one child $w$ with a label $(\mu^1,a_1,b_1)$ where
    $\mu^1 \vdash b_1=b$ and $a_1 = (a+2)1 -2b \leq a-2$ since $b\geq 2$. Then 
    $$\phi(w) =a_1b_1 \leq (a-2)b = \phi(v) -2b \leq \phi(v)-4.$$ 

    Next, if $\mu = (1^{m_1}2^{m_2}\ldots)$ with $m_j \geq 1$ for some $j>1$, we consider the children $w^{(j)}$ corresponding to $m_j \geq 1$ with labels $(\mu^{(j)}, a^{(j)},m_j)$. 
    We have that 
    $$\phi(w^{(j)}) = ( (a+2)j -2|\mu'_{\leq j}|)m_j =a\cdot(jm_j) -2m_j\cdot ( |\mu'_{\leq j}|-j).$$
    For $j=1$, if $m_1 >0$ we have $\mu'_1=m_1+\cdots = \ell(\mu) \geq 2$, since the partition is not just a single row or column. At the same time, $1m_1 = b - \sum_{j \geq 2} jm_j \leq b-2$, since $m_j\geq 1$ for some $j\geq 2$. So $\phi(w^{(1)}) \leq a(1m_1)-2 \leq a (b-2)-2\leq  ab -4=\phi(v) -4$. 
    
     For $j>1$ with $m_j>0$ we have that $|\mu'_{\leq j}|= \sum_{i\leq j} (m_i+m_{i-1}+\cdots+m_b) >j $ unless $\mu=(b)$ and so $\phi(w^{(j)}) \leq a\cdot(jm_j)-2 \leq \phi(v)-2$. If $\mu=(b)$ then $w$ is a leaf and $\phi(w)=\phi(v)$.

     Hence in all cases, but possibly $w$ being a leaf, we have that $\phi(w) \leq \phi(v)-2$. 

     For every path from the root to a leaf $v_0 \to v_1 \to v_2 \to \cdots \to v_s$ we have that 
     $nk-2s=\phi(v_0) -2s \geq  \phi(v_1)-2(s-1) \geq \cdots  \geq \phi(v_{s-1}) -2 \geq \phi(v_s)-2 \geq -2,$ so $s < (nk+2)/2$, and the path has at most $\lfloor nk/2 \rfloor+2$ many vertices. 
\end{proof}

\begin{lem}\label{lem:poly many nodes}
    A small KOH witness and a KOH tree of type $(n,k,r)$ have $O( nk^2 )$ many nodes.
\end{lem}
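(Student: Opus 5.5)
I will bound the number of nodes by counting, for each true leaf, how many nodes it can "own", and then add the contributions of internal nodes that are not ancestors of any leaf. By Proposition~\ref{p:bijection} the two trees have the same number of nodes, so it suffices to bound the KOH tree $T$.

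\textbf{Step 1: every node lies above a leaf.} Every node of $T$ is an ancestor of (or equal to) some leaf of $T$, since $T$ is a finite tree. Hence the node set of $T$ is the union, over all leaves $u$, of the vertex sets of the root-to-$u$ paths. This gives
\[
\#\{\text{nodes of }T\}\le \#\{\text{leaves of }T\}\cdot \max_{u}\big(\text{length of root-to-}u\text{ path}+1\big).
\]

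\textbf{Step 2: bound the number of leaves.} By Lemma~\ref{lem:at most b leaves}, $T$ has at most $k$ leaves.

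\textbf{Step 3: bound the depth.} By Lemma~\ref{lem:koh tree depth}, every root-to-leaf path in $T$ has at most $\lfloor nk/2\rfloor+2=O(nk)$ vertices.

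\textbf{Step 4: combine.} Multiplying the two bounds gives $O(k\cdot nk)=O(nk^2)$ nodes. Transferring through the bijection of Proposition~\ref{p:bijection}, which is one-to-one on nodes, gives the same bound for the small KOH witness. The internal nodes of $T'$ that are leaves but not true leaves still correspond to nodes of $T$, so they are counted as well.

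\textbf{Main obstacle.} The only point needing care is Step~1 for the small witness, which is why I route the argument through $T$. In $T'$ there could a priori be internal leaves that are not true leaves. However, each node of $T'$ corresponds to a node of $T$, and in $T$ every leaf is a genuine leaf labelled $((1),a,1)$. So counting in $T$ avoids the issue. I do not expect any further difficulty.
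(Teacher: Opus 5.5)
Your proof is correct and is essentially the paper's argument: you transfer to the KOH tree via the node bijection of Proposition~\ref{p:bijection}, then multiply the leaf bound of Lemma~\ref{lem:at most b leaves} by the path-length bound of Lemma~\ref{lem:koh tree depth}, using that every node lies on some root-to-leaf path. The ``internal nodes that are not ancestors of any leaf'' mentioned in your opening plan do not exist, as your own Step~1 shows, so that extra term is simply zero.
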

\begin{proof}
   From the bijection between nodes, the number of vertices in the small witness and the original tree are the same per Proposition~\ref{p:bijection}. We will bound the number of KOH tree vertices. 
   First, the number of leaves in a KOH tree is at most~$k$, which is Lemma~\ref{lem:at most b leaves}.  For each leaf, consider the shortest path in the tree to the root, by Lemma~\ref{lem:koh tree depth} it has $O(nk)$ vertices. Every vertex of the tree lies on (at least) one such path  (a path that starts from a leaf below it), thus the total number of vertices is bound by the number of leaves times the maximal length of these paths, which gives $O(nk^2)$. 
\end{proof}

\subsection{Encoding the witness}
For the sake of concreteness, we present an encoding of a small KOH witness over a finite alphabet.
\begin{itemize}
\item Tuples are encoded with parentheses \texttt{()} and commas, as in usual mathematical notation, and semicolons are used to separate the marks.
\item non-negative integers are encoded in their decimal presentation.
\item A true leaf is encoded as $\texttt{trueleaf(}a\texttt{;}t\texttt{)}$, where $a$ is the label and $t$ is the mark.
\item The subtree of an internal node is encoded 
either as
\[\texttt{rootlabel(}\ell_0\texttt{)}\]
if it has no children, or otherwise as
\begin{align*}
\texttt{tree(}&\texttt{rootlabel(}\ell_0\texttt{),}\\
&\texttt{edge(edgelabel(}\ell_1\texttt{),}t_1\texttt{),}\\
&\texttt{edge(edgelabel(}\ell_2\texttt{),}t_2\texttt{),}\ldots\texttt{)}
\end{align*}
where $\ell_0$ is the node label and $\ell_i$ for $i\geq 1$ are edge labels,
and $t_i$ are trees, i.e., either leaf nodes or subtrees of non-leaf nodes.
\end{itemize}

See Figure \ref{fig:witness} for an example.

\begin{figure}
\begin{tabular}{cp{.25\textwidth}}
$\begin{tikzpicture}[baseline=27em, x=4em, y=5em]
    \node (root) at (2.5,5) {$5,3,7$};
    \node (child0) at (.5,4) {$2,1,3$};
    \node (child1) at (2.5,4) {$2,2,2$};
    \node (child00) at (1.5,4) {$5;2$};
    \node (child01) at (.5,3) {$12;0$};
    \node (child10) at (3.5,4) {$2,2,2$};
    \node (child100) at (4.5,4) {$1,0,2$};
    \node (child1000) at (4.5,3) {$32;8$};
    \filldraw[->] 
    (root)--(child0) node[pos=.7, draw, fill=white, inner sep=.1em] {\scriptsize $1,0$};
    \draw[->] 
    (root)--(child1) node[pos=.7, draw, fill=white, inner sep=.1em] {\scriptsize $2,0$};
    \draw[->] 
    (root)--(child10) node[pos=.7, draw, fill=white, inner sep=.1em] {\scriptsize $2,1$};
    \draw[->] 
    (root)--(child100) node[pos=.7, draw, fill=white, inner sep=.1em] {\scriptsize $2,2$};
    \draw[->] 
    (child100)--(child1000) node[pos=.7, draw, fill=white, inner sep=.1em] {\scriptsize $2,0$};
    \draw[->] 
    (root)--(child00) node[pos=.7, draw, fill=white, inner sep=.1em] {\scriptsize $1,1$};
    \draw[->] 
    (child0)--(child01) node[pos=.7, draw, fill=white, inner sep=.1em] {\scriptsize $2,0$};
\end{tikzpicture}$
& 
\small
\texttt{tree(rootlabel(5,3,7),}\linebreak
\phantom{ww}\texttt{edge(edgelabel(1,0),}\linebreak
\phantom{www}\texttt{tree(rootlabel(2,1,3),}\linebreak
\phantom{wwww}\texttt{edge(edgelabel(2,0),}\linebreak
\phantom{wwwww}\texttt{trueleaf(12;0)))),}
\phantom{ww}\texttt{edge(edgelabel(1,1),}\linebreak
\phantom{www}\texttt{trueleaf(5;2)),}\linebreak
\phantom{ww}\texttt{edge(edgelabel(2,0),}\linebreak
\phantom{www}\texttt{rootlabel(2,2,2)),}\linebreak
\phantom{ww}\texttt{edge(edgelabel(2,1),}\linebreak
\phantom{www}\texttt{rootlabel(2,2,2)),}\linebreak
\phantom{ww}\texttt{edge(edgelabel(2,2),}\linebreak
\phantom{www}\texttt{tree(rootlabel(1,0,2),}\linebreak
\phantom{wwww}\texttt{edge(edgelabel(2,0),}\linebreak
\phantom{wwwww}\texttt{trueleaf(32;8))))}
\end{tabular}
\caption{Left, a small KOH witness. Right, its encoding.
}
\label{fig:witness}
\end{figure}

\subsection{The polylog space machine}\label{sec:polylog_koh}
Given $1^n\,0\,1^k\,0\,1^r$ on the input tape,
we can read the encoding of a small KOH witness from left to right once
and verify that it is a small KOH witness of type $(n,k,r)$ using only logarithmic space,
as follows.
\begin{itemize}
\item While reading, we ensure the syntactic correctness of the string of symbols. For this, we store the number of opened parentheses, and the labels of the unclosed functions, e.g., \texttt{edge} or \texttt{edgelabel}.
\item We always fully read and store what we read until we reach the closing parenthesis of a node label.
We then check the validity and update the data, and then keep reading.
\item When visiting an edge in any level $K$ of the tree, we check conditions \eqref{sw:lex}, \eqref{sw:root}, \eqref{sw:x-1}, \eqref{sw:unique} before deleting the label of the previously visited edge in level~$K$.
\item When visiting a node $v$, let $P_T(R,v)$ be the path from root to $v$. For each level of this path, we store 
\begin{itemize}
\item the current value of the sum $\sum_{j=1}^{i} b(v\da_{j,0})$,
\item the label $\ell(v)$ of the last visited node and its parent edge, in case it is needed for \eqref{sw:len},
\item the label $b(v)$
\item the label $m_1(v)$ and $m_1(v\ua\da_{j(v),x(v)-1})$ (the latter is used for verifying~\eqref{sw:mone})
\item the current value of the sum $\sum_{j=1}^{i} j\cdot b(v\da_{j,0})$,
\item the current value of the sum $c_\Sigma(v\ua, j(v))$,
\item the current value of the sum $\sum_{y=1}^x \ell(v\ua\da_{j,1^y})$.
\end{itemize}
We then check \eqref{sw:len}, \eqref{sw:mone}, and \eqref{sw:b}.
We can now reset and update all counters for this level.
\item We keep a counter with the value of the last visited true leaf and the current value of the sum $s(v_i) = a(v_1) + \cdots + a(v_i)$. When entering a true leaf, we check \eqref{sw:t1} and \eqref{sw:tK}, and then update these counters. We also check \eqref{sw:a} and \eqref{sw:marks}, which requires recursive computations of the value of $a(-)$ at every node in the path to the root. Note that the value $a(R)=n$ can be accessed multiple times.
\item At the end of the traversal, check \eqref{sw:k} and \eqref{sw:tK}.
\end{itemize}

\begin{p}\label{p:polylog}
The above Turing machine with a read-once left-to-right witness tape  checks conditions \eqref{sw:lex}--\eqref{sw:tK} of Definition~\ref{de:witness} in $O(\log(n+k+r)^2)$-space.
\end{p}
\begin{proof}
We just need to bound the required space.
Storing $n$, $k$, and $r$ requires $O(\log_2(n+k+r))$ space.
For the counters checking the markings of true leafs, begin by noting that there are at most $k$ true leaves (Lemma~\ref{lem:at most b leaves}) and thus the counters are upper bounded by $S := (a_1 + a_2 + \cdots + a_K) \le nk$ by Lemma~\ref{lem:sum of a values}.
This is at most polynomial in $n$ and $k$ and thus takes $O(\log_2(n+k))$ space to store.\medskip

For the remaining counters we introduce some notation.
There is a family of counters per level of the small KOH witness $T'$. Let $T$ be the corresponding marked KOH tree. 
Let $\mathcal{L}'_K$ be the set of nodes of $T'$ at level $K$, and let $\mathcal{L}_K$ be the corresponding nodes in $T$.
Every other counter in level $K+1$ is upper-bounded by the size of the largest partition $\lambda^{(K)}$ among those appearing in the labels of $\mathcal{L}_K$ (note that the $a$-values are not stored at a vertex, and only one column sum is stored at a vertex).
In turn, this partition is of size at most $|\lambda^{(K-1)}|/2$ for all $K>1$, as we argued in the proof of Lemma~\ref{lem:log depth}, whereas $|\lambda^{(1)}|$ and $|\lambda^{(2)}|$ are at most $k$. There is a constant amount
of counters per level, 
so the total amount of space used is at most
\[
O\Big( \log_2(k) + \log_2(k/2) + \log_2(k/4) + \cdots + \log_2(k/2^{\log_2(k)})\Big)
\]
by Lemma~\ref{lem:log depth}.
Since there are $\log_2(k)$ many summands, each of which is at most $\log_2(k)$, this quantity simplifies to $O(\log_2(k)^2)$.
\end{proof}

\begin{p}\label{p:polytime}
The above Turing machine with a read-once left-to-right witness tape  checks conditions \eqref{sw:lex}--\eqref{sw:tK} of Definition~\ref{de:witness} in poly-time.
\end{p}
\begin{proof}
    Let $T$ be a small KOH witness. By Lemma~\ref{lem:poly many nodes}, it has polynomially many nodes. After visiting each node, we update a constant number of counters, taking $\poly$-time for each counter. Hence the total time required is polynomial.
\end{proof}

\begin{proof}[Proof of Theorem~\ref{thm:Hermite}]
    The Hermite coefficients are in $\GapL$ by Corollary~\ref{cor:partitions in a box} and~\eqref{eq:plet as diff}. 
  By \cite{PakPanovaSwanson}, these coefficients count the number of KOH trees of type $(n,k,r)$. By Proposition~\ref{p:bijection}, these are in bijection with small KOH witnesses. By Propositions~\ref{p:polylog} and~\ref{p:polytime}, the Turing machine with a read-once left-to-right witness tape constructed in this section checks the validity of a small KOH witness in poly-time and $\log(n+k+r)^2$-space.
\end{proof}

\subsection{\texorpdfstring{$\GL_2$}{GL2}-plethysm coefficients}
We now consider the case of plethysm coefficients $a^\lambda_{\mu[k]}$ for any partition $\mu \vdash n$ and $\lambda=(nk-r,r)$. As explained in~\cite[Lemma 2.1]{PakPanovaSwanson}, these cover all non-trivial cases of plethysms $a^\lambda_{\mu[\nu]}$ when $\lambda$ has two rows. In~\cite{PakPanovaSwanson} it is shown that $a^\lambda_{\mu[k]}$ count the number of certain marked GOH trees, which are rooted trees with subtrees given by the marked KOH trees and the markings respect inequalities relating them to each other and to the leaf labels. 
This section is dedicated to the proof of the following theorem.
\newcounter{savedsection}
\newcounter{savedde}
\setcounter{savedsection}{\value{section}}
\setcounter{savedde}{\value{de}}
\setcounter{section}{1}
\setcounter{de}{0}
\begin{thm}[$\GL_2$-plethysm coefficients]
\label{thm:GL2 in L2}
  Fix a constant $C$. Let $\mu$ be a partition of length at most $C$.
    The function $(1^{\mu_1}\,0\,1^{\mu_2}\,0\,\dots\,0\,1^{k}\,0\,1^r) \mapsto a^\lambda_{\mu[k]}$ for $\lambda=(nk-r,r)$ where $|\mu|=n$ is in $\sharpTISP(\poly(n),\log^2(n))\cap \GapL$. 
\end{thm}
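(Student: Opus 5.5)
Two memberships must be shown: $\GapL$, and $\sharpTISP(\poly(n),\log^2(n))$.

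For $\GapL$ I will use the $\GL_2$ character (principal specialisation) description. The coefficient $a^{(nk-r,r)}_{\mu[k]}$ equals the coefficient of $q^r$ minus that of $q^{r-1}$ in $s_\mu(1,q,\dots,q^{N})$ evaluated at the quantum binomial $\qbinom{n'+k}{k}$-type specialisation. Concretely, $s_\mu\circ h_k$ evaluated at $(1,q)$ is $s_\mu$ applied to the $k+1$ monomials $q^0,\dots,q^k$. Because $\ell(\mu)\le C$ is bounded, I will expand $s_\mu(y_0,\dots,y_k)$ with $y_i=q^i$ as a sum over SSYT of shape $\mu$ with entries in $\{0,\dots,k\}$, weighted by $q^{\text{sum of entries}}$. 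The number of such SSYT with entry sum $r$ is then in $\sharpL$ by the same ballot-sequence/standardisation technique as Corollary~\ref{cor:ssyt_fixed_ell}; more simply, each of the boundedly many rows is a weakly increasing sequence, which can be witnessed as a partition in a box, i.e.\ a lattice path. Column strictness between consecutive rows can be checked by reading the rows interleaved column by column, storing only $C$ current entries and $C$ running counters. The full coefficient is then a difference of two $\sharpL$ functions (for $r$ and $r-1$), hence in $\GapL$, exactly as for \eqref{eq:plet as diff}.

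For the $\sharpTISP(\poly(n),\log^2(n))$ part I will generalise the machine of \S\ref{sec:polylog_koh}. By \cite{PakPanovaSwanson}, $a^\lambda_{\mu[k]}$ counts marked GOH trees. Such a tree has a top structure indexed by $\mu$, with $C$ rows, together with a bounded number of marked KOH subtrees, whose marks satisfy linear inequalities coupling them to each other and to leaf labels. I will compress each KOH subtree into a small KOH witness via Proposition~\ref{p:bijection}. The witness is the concatenation of these small KOH witnesses, together with the $O(1)$ top-level data stored in binary. Each subtree will be verified by the machine of Proposition~\ref{p:polylog} in $O(\log^2)$ space. Lemmas~\ref{lem:log depth}, \ref{lem:at most b leaves}, \ref{lem:sum of a values} and \ref{lem:poly many nodes} still bound the depth, leaf count, label sums and node count polynomially, which gives polynomial time as in Proposition~\ref{p:polytime}.

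The main obstacle is the coupling of marks across different subtrees, since the witness is read only once, left to right. Because $C$ is constant, I will handle this with the guessing trick of \S\ref{subsec:onvNLmachines}. I will first place in the witness, in binary, the $O(1)$ interface values: the first and last marks and partial sums $s$ of each subtree. While each subtree is traversed, its actual values are checked against these guesses with $O(\log n)$ extra counters. The cross-subtree inequalities are then checked purely on the stored guesses. I must ensure the guessed values are uniquely determined by a valid tree, so the count is not inflated; I expect this to hold because they are the endpoint values of the marks.
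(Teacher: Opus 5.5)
Your $\GapL$ half is correct. It is a bit more elementary than the paper's. For $\ell(\mu)\le C$ you count SSYT of shape $\mu$ with entries in $\{0,\dots,k\}$ and entry sum $r$ column by column, which places $c_r(\mu,k)$ in $\sharpL$. The paper notes this after Theorem~\ref{thm:schur_q-specialization}, but it proves $\GapL$ membership for unbounded length via a signed expansion of the $q$-hook-content formula.

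The $\sharpTISP(\poly(n),\log^2(n))$ half has a genuine gap. It rests on two false claims: that a marked GOH tree has a bounded number of KOH subtrees, and that its top-level data consists of $O(1)$ numbers. Marked GOH trees encode the expansion \eqref{GOH}. There the top-level data is $m$ together with an admissible configuration $\underline{\nu}$, that is, $\ell(\mu)-1$ partitions $\nu^{(i)}$ of size up to $n$ and \emph{unbounded length}. There is one KOH factor $\qbinom{P^i_j(\underline\nu)+m_j(\nu^{(i)})}{m_j(\nu^{(i)})}$ for every distinct part size of every $\nu^{(i)}$. Bounding $\ell(\mu)$ bounds the number of these partitions, not their shapes. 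For example, take $\mu=(n-N,N)$ with $2N\le n$. Then $P^1_j=n-2|\alpha^{(1)}_{\le j}|\ge 0$, so every $\nu^{(1)}\vdash N$ with $\ell(\nu^{(1)})\le k$ is admissible, and a staircase gives $\Theta(\sqrt n)$ KOH subtrees. So neither the configuration nor your three guessed interface values per subtree can be written at the start of the witness and stored. That needs $\Omega(\sqrt n)$ counters, well beyond $O(\log^2 n)$ space.

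The marks are not where the difficulty lies. The Clebsch--Gordan constraints (Definition~\ref{de:KOH}, Lemma~\ref{lem:q-int-product}) are sequential: each mark depends only on the previous mark and a running sum. Carrying two counters across subtree boundaries therefore suffices, and no guessing is needed. The coupling that needs care is in the root parameters $(P^i_j(\underline\nu),m_j(\nu^{(i)}))$ of each KOH subtree, which you must know before verifying that subtree. These depend on the prefix column sums $|\alpha^{(i-1)}_{\le j}|$, $|\alpha^{(i)}_{\le j}|$, $|\alpha^{(i+1)}_{\le j}|$ of three consecutive partitions of the configuration.

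The paper handles this by encoding $\underline\nu$ through the transposes $\alpha^{(i)}$ and streaming them one column index $j$ at a time. The witness lists $\alpha^{(1)}_{j+1},\dots,\alpha^{(\ell)}_{j+1}$ and then the data for all factors indexed by $j$. Only a window of consecutive $\alpha$-values and $O(\ell)$ running prefix sums are kept, and Definition~\ref{de:AC} is checked on the fly. The paper also removes cross-subtree mark coupling altogether. It expands each quantum binomial into quantum integers weighted by Hermite coefficients, then applies a single outer Clebsch--Gordan chain (Theorem~\ref{plet recursion}). Each small KOH witness is then verified independently as a black box by the machine of Proposition~\ref{p:polylog}, reusing the same $O(\log^2 n)$ space. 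Without this interleaved, streamed encoding of the configuration, your machine cannot determine the type of each subtree within the space bound.
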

\setcounter{section}{\value{savedsection}}
\setcounter{de}{\value{savedde}}

While the approach from Section~\ref{sec:polylog_koh} can be modified to give the GOH trees, here we will use the KOH verifier machine in a more direct manner as a blackbox. 

Note that
$$
s_{(M+m,M)}(1,q) = q^{M} (1+q+\cdots+q^m) = q^M \frac{1-q^{m+1}}{1-q},
$$
to get
\begin{equation}\label{eq:pleth_schur}
    a^{(nk-r,r)}_{\mu[k]}
    =\mathrm{coeff}_{q^r}(1-q)s_\mu\circ s_k(1,q)
    =\mathrm{coeff}_{q^r}(1-q)s_\mu(1,q,\ldots,q^k).
\end{equation}
The $q$-analogue of the hook-content formula \cite[Theorem~7.21.2]{StanleyEC2} is
\[
s_\lambda(1,q,\ldots,q^{k}) = q^{\sum_i(i-1)\lambda_i} \prod_{u \in [\lambda]} \frac{1-q^{k+1+\ct(u)}}{1-q^{\hl(u)}}\,.
\]
When expanded as a polynomial $s_\lambda(1,q,\ldots,q^{k})= \sum_r c_r(\lambda,k) q^r$, the coefficients are given by
\[
c_r(\lambda,k) = \#\Big\{T\in\SSYT_{k+1}(\lambda) \Big| \sum_{u\in[\lambda]} \big(T(u)-1\big) = r\Big\}.
\]
\begin{thm}\label{thm:schur_q-specialization}
    Expand the Schur function specialization $s_\lambda(1,q,\ldots,q^{k}) = \sum_r c_r(\lambda,k) q^r$. Then the function 
    $(1^{\lambda_1}\,0\,1^{\lambda_2}\,0\,\dots\,0\,0\,1^k\,0\,0\,1^{r}) \mapsto c_r(\lambda,k)$ is in $\GapL$.
\end{thm}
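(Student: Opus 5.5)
We will use the Jacobi--Trudi identity to write $c_r(\lambda,k)$ as a signed sum of products of coefficients of complete homogeneous specialisations, and then apply Theorem~\ref{thm:detofsharpL}. Let $\ell=\ell(\lambda)$. We have
\[
s_\lambda(1,q,\ldots,q^k)=\det\big[h_{\lambda_i-i+j}(1,q,\ldots,q^k)\big]_{i,j=1}^{\ell},
\]
and $h_m(1,q,\ldots,q^k)=\binom{m+k}{m}_{q}$. The coefficient of $q^s$ in this $q$-binomial is $p_s(m\times k)$, which is in $\sharpL$ by Corollary~\ref{cor:partitions in a box}.

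The difficulty is that Theorem~\ref{thm:detofsharpL} handles determinants of integer $\sharpL$ entries, whereas here the entries are polynomials in $q$ and we want one coefficient of the determinant. We remove this mismatch by evaluation and interpolation. Set $D=k|\lambda|+1$, which bounds the degree. Choose integer points $q=\beta$ for $\beta=1,\dots,D$; at each point the entry $h_m(1,\beta,\ldots,\beta^k)=\sum_s p_s(m\times k)\beta^s$. This entry is in $\sharpL$: the witness consists of $s$, an exponent witness of size $\beta^s$ (a tuple of $s$ numbers in $[\beta]$, as in Theorem~\ref{thm:product}), and a witness for $p_s(m\times k)$. Since $\beta^s$ is exponentially large but only poly-bit, no counter exceeds log-space. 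Theorem~\ref{thm:detofsharpL} then gives that $\beta\mapsto s_\lambda(1,\beta,\ldots,\beta^k)$ is in $\GapL$, uniformly in $\beta$ via an $\FL$ map $g$.

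The coefficient $c_r$ is then $\sum_\beta V^{-1}_{r,\beta}\,s_\lambda(\beta)$, where $V$ is the Vandermonde matrix. Its inverse has rational entries, and this is the main obstacle: $\GapL$ is closed under polynomial sums and products of $\GapL$ functions with $\FL$-computable integers, but not under division. We would instead use a cleaner route that avoids interpolation altogether. We encode the polynomial into a single integer by Kronecker substitution, $q=B:=2^{N}$ with $N$ larger than the bit length of every coefficient. Since $c_r\ge 0$ and $c_r\le \#\SSYT$ is at most exponential, $N=\poly$ suffices, and $s_\lambda(B)=\sum_r c_r B^r$. Then $c_r$ is the $r$th block of $N$ bits of a $\GapL$ value. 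Extracting a bit block is not obviously in $\GapL$, however, so as a fallback we expand the determinant directly. By the Leibniz/Mahajan--Vinay path expansion in the proof of Theorem~\ref{thm:detofsharpL}, $\det_{\ell,\pm}$ is a sum over clow-sequence indices $q_0,\dots,q_{\ell-1}$ of products of $\ell$ entries. We refine each witness by additionally guessing exponents $s_1,\dots,s_\ell$ with $\sum s_i=r$, checked with one running counter, together with witnesses for $p_{s_i}(m_i\times k)$, where $m_i=\lambda_{\cdot}-\cdot+\cdot$ is given by $t_\pm$. The number of witnesses is exactly $\mathrm{coeff}_{q^r}\det_{\ell,\pm}$, since coefficient extraction of a product of polynomials with $\sharpL$ coefficients is a convolution that is verifiable left to right. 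This proves the two parts are in $\sharpL$, and hence their difference $c_r(\lambda,k)$ is in $\GapL$. We expect the main checks to be that the $t_\pm$ maps remain $\FL$ when $\ell$ is unbounded (the input is unary, so $\ell\le$ input length), and that out-of-range entries $h_{m}$ with $m<0$ give $0$ while $h_0=1$.
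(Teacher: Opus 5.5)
Your final argument (the ``fallback'') is correct, and it takes a genuinely different route from the paper. The interpolation and Kronecker-substitution paragraphs are never used and should simply be deleted.

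The paper avoids determinants altogether. It starts from the $q$-hook-content formula, expands each $1/(1-q^{\hl(u)})$ as a geometric series, and expands $\prod_u(1-q^{k+1+\ct(u)})$ by inclusion--exclusion. This gives $c_r(\lambda,k)=c_+(r)-c_-(r)$, where $c_\pm(r)$ counts pairs $(D,\mathbf{r})$ with $D\subseteq[\lambda]$ of even/odd size and $\mathbf{r}\in\mathbb{N}_0^{[\lambda]}$ satisfying a single linear condition on the exponent. The verifier reads $D$ and $\mathbf{r}$ cell by cell with one running counter, recomputing hook lengths from the unary input. This is elementary and self-contained, but it depends on having a product formula.

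You instead lift Theorem~\ref{thm:detofsharpL} to matrices whose entries are polynomials in $q$ with nonnegative $\sharpL$ coefficients. This costs the Mahajan--Vinay clow expansion, but it is more general. It applies verbatim to skew shapes through $s_{\lambda/\mu}=\det[h_{\lambda_i-\mu_j-i+j}]$, where no product formula is available and the paper's argument does not go through.

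When you write it up, make explicit the two facts your count rests on:
\begin{enumerate}
\item $\det_\ell=\det_{\ell,+}-\det_{\ell,-}$ is an identity in $\mathbb{Z}[y_{i,j}]$ with $\det_{\ell,\pm}$ sums of monomials. So it survives the substitution $y_{i,j}\mapsto h_{\lambda_i-i+j}(1,q,\ldots,q^k)$.
\item The entries have nonnegative coefficients. So $\mathrm{coeff}_{q^r}\det_{\ell,\pm}$ is exactly the number of clow-index sequences $(q_0,\ldots,q_{\ell-1})$ together with partitions $\pi^1,\ldots,\pi^\ell$ such that $\pi^i$ fits in an $m_i\times k$ box and $\sum_i|\pi^i|=r$. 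Here $m_i=\lambda_a-a+b$ for $(a,b)=t_\pm(\ell,i,q_{i-1},q_i)$, and there is no valid $\pi^i$ if $t_\pm$ returns $\bot$ or $m_i<0$.
\end{enumerate}
You need not guess the $s_i$ separately: just add $|\pi^i|$ to the running counter, which never exceeds $r$.
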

\begin{proof}
First note that we must have $k+1\geq \ell(\lambda)$, as otherwise the Schur specialization is 0.
The expression via the $q$-hook-content formula is a polynomial in $q$, but it can be computed as an infinite series in $q$ where it happens that the larger coefficients cancel. Namely, let $C = \sum_i(i-1)\lambda_i$ and expand each term $1/(1-q^{\hl(u)})$ as a geometric sum, we then have 
\begin{align*}
q^C
    \prod_{u \in [\lambda]} \frac{ 1-q^{k+1+\ct(u)}}{1-q^{\hl(u)}} = q^C \prod_{u \in [\lambda]} (1-q^{k+1+\ct(u)}) (1+q^{\hl(u)} +q^{2\hl(u)}+\cdots) \\
    = \sum_{D \subseteq [\lambda]} (-1)^{|D|} \sum_{\mathbf{r}} q^C \prod_{u \in [\lambda]\setminus D} q^{r_u \hl(u)} \prod_{u \in D} q^{k+1+\ct(u) +r_u \hl(u)},
\end{align*}
where the sum is over subsets $D$ of boxes in the Young diagram $[\lambda]$ and the vector $\mathbf{r}$ contains one non-negative integer entry for every box $u\in [\lambda]$. To obtain the coefficient $c_r(\lambda,k)$ of $q^r$ in $s_\lambda(1,q,\ldots,q^{k})$ we need to equate the exponents to $r$ and select only those. The positive terms are those indexed by a subset $D$ of even cardinality; let
$$c_+(r) = \#\Big\{ (D, \mathbf{r})\mid  D \subseteq [\lambda], ~ |D|~\text{even}, ~C+\!\!\sum_{u \in [\lambda]} r_u \hl(u) + \!\sum_{u \in D} (k+1+\ct(u)) =r\Big\}$$
and likewise, set $c_-(r)$ for the subsets $D$ of odd cardinality. Then $c_r(\lambda,k) = c_+(r) - c_-(r)$, and we need to show that the functions sending $(1^{\lambda_1}\,0\,1^{\lambda_2}\,0\,\dots\,0\,0\,1^k\,0\,0\,1^{r})$ to $c_+(r)$ and $c_-(r)$ are in $\sharpL$.

The witness for $c_+$ (for $c_-$ is completely analogous) is a word of 5 symbols $\{0,1,d,\textup{``},\textup{''},\textup{``};\textup{''}\}$ as follows: $w=d^{e_1}r_1,d^{e_2}r_2,d^{e_3}r_3,d^{e_4}r_4,\ldots,r_{\lambda_1};r_{\lambda_1+1},\ldots$ with $e_j=1$ if the $j$th cell  is in $D$ (reading row by row, where cells are separated by commas and rows by semicolons).
To read and check the witness, the working tape has counters $\mathtt{C},\mathtt{d}\leftarrow 0,\mathtt{i}\leftarrow 1,\mathtt{j}\leftarrow 1,\mathtt{h},\mathtt{R}, \mathtt{r}$.
Set $\mathtt{C}\leftarrow C$ in binary, which we can do because $C\le |\lambda|^2$.
We read the witness row by row, where the row index counter $\mathtt{i}$ starts from one, and at every semicolon increases by~1. The column index counter $\mathtt{j}$ resets at~1 after every semicolon and is increased by~1 each time we read a comma. At each semicolon we check that $\mathtt{j}=\lambda_i$. 
Initialise the counter $\mathtt{R}\leftarrow r-\mathtt{C}$, which will keep track of the sum of exponents. Every time we read a symbol $d$ we decrease $\mathtt{R}$ by $k+1+\mathtt{j}-\mathtt{i}$, and increase $\mathtt{d}$ by 1. We also compute the hook length $\mathtt{h} \leftarrow \lambda_i - \mathtt{i}+\lambda_j'-\mathtt{j}+1$ for the box at $(\mathtt{i},\mathtt{j})$ (read the number of 0s in the input every time there at least $\mathtt{j}$ many 1s in the string between two 0s, that would give $\lambda_j'$). Set $\mathtt{R} \leftarrow \mathtt{R} - \mathtt{h}*\mathtt{r}$, where $\mathtt{r}\leftarrow r_u$ from the witness at that step.
In the end, check if $\mathtt{R}=0$ and $\mathtt{d}$ is even to accept the witness.  
\end{proof}

 It is easy to see that if $\ell(\lambda)$ is a constant then computing the coefficients $c_r(\lambda,k)$ is in $\sharpL$.  
\begin{question} \label{q:schur ps}
    Is the function $(1^{\lambda_1}\,0\,1^{\lambda_2}\,0\,\dots\,0\,0\,1^k\,0\,0\,1^{r}) \mapsto c_r(\lambda,k)$ in $\sharpL$ for partitions $\lambda$ of unbounded length.
\end{question}

\begin{cor}\label{cor:pleth_gapL}
        The function $(1^{\mu_1}\,0\,1^{\mu_2}\,0\,\dots\,0\,1^{k}\,0\,1^r) \mapsto a^\lambda_{\mu[k]}$ for $\lambda=(nk-r,r)$ where $|\mu|=n$ is in $\GapL$.
      \end{cor}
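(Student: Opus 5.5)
By \eqref{eq:pleth_schur}, the plethysm coefficient equals the difference of two coefficients of the same specialised Schur polynomial:
\[
a^{(nk-r,r)}_{\mu[k]} = c_r(\mu,k) - c_{r-1}(\mu,k),
\]
where $s_\mu(1,q,\ldots,q^k)=\sum_r c_r(\mu,k)q^r$. Theorem~\ref{thm:schur_q-specialization} already places each $c_r(\mu,k)$ in $\GapL$, and $\GapL$ is closed under subtraction (write $c_r = c_+(r)-c_-(r)$, so the difference is $(c_+(r)+c_-(r-1)) - (c_-(r)+c_+(r-1))$, and $\sharpL$ is closed under addition). The plan is therefore to apply Theorem~\ref{thm:schur_q-specialization} twice and combine the results.

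The first step is the input translation. The input $(1^{\mu_1}\,0\,\dots\,0\,1^k\,0\,1^r)$ has to be rewritten in the format of Theorem~\ref{thm:schur_q-specialization}, namely $(1^{\mu_1}\,0\,\dots\,0\,0\,1^k\,0\,0\,1^{r})$ and $(\dots\,0\,0\,1^{r-1})$. This is an $\FL$ map: scan the blocks of $1$s, identify the last two blocks as $k$ and $r$ (count the zeros once, then copy), and output $r-1$ ones by suppressing one $1$. The boundary case $r=0$ is handled by outputting the string for $c_0$ only, since $c_{-1}=0$. We then compose these transducers with the verifier $\NL$ machines for $c_\pm$, as in \eqref{eq:FL composition}.

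The second step is the addition. Addition of two $\sharpL$ functions is realised by letting the witness start with one selector bit. That bit chooses which of the two verifier machines is run on the rest of the witness, each on its appropriately transformed input. This gives the two $\sharpL$ functions
\[
F_1 = c_+(r)+c_-(r-1), \qquad F_2 = c_-(r)+c_+(r-1),
\]
with $a = F_1 - F_2$, so $a\in\GapL$.

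No step is a genuine obstacle, since $\GapL$ is not required to have a positive witness and all the real work is already in Theorem~\ref{thm:schur_q-specialization}. The one point that needs care is that the signs from the factor $(1-q)$ combine correctly with the inclusion–exclusion signs from that proof, which is a direct bookkeeping check. One should also confirm that the parameter $n=|\mu|$ appearing in $\lambda$ does not need to be computed separately; it is only implicit, because the coefficient depends only on $\mu$, $k$ and $r$.
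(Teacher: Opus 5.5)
Your proof is correct and follows the paper's argument: \eqref{eq:pleth_schur} gives $a^{(nk-r,r)}_{\mu[k]}=c_r(\mu,k)-c_{r-1}(\mu,k)$, and Theorem~\ref{thm:schur_q-specialization} together with closure of $\GapL$ under differences finishes it. Your $\FL$ reformatting of the input and your selector-bit addition just spell out what the paper leaves implicit. (Both arguments tacitly assume $2r\le nk$, which is where this identity holds; for $2r>nk$ one should output $0$ after an $\FL$ check, and that check is the one place where $n=|\mu|$ is actually needed.)
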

      \begin{proof}
From~\eqref{eq:pleth_schur} we get $a^\lambda_{\mu[k]}=c_r(\mu,k)-c_{r-1}(\mu,k)$. Use Theorem~\ref{thm:schur_q-specialization} to conclude.
\end{proof}

We now turn to show that $\GL_2$-plethysm coefficients are in $\sharpTISP(\poly(n),\log^2(n))$ (when $\mu$ is of bounded length), which is the second half of Theorem~\ref{thm:GL2 in L2}.
 To show that we use equation~\eqref{eq:pleth_schur} and the the formula for the evaluation of the Schur functions from~\cite{GOH} using the Kirillov--Reshetikhin formula~\cite{KR86}. It requires the following definition. 

\begin{de}[\cite{KR86}]
\label{de:AC}
   Let $\lambda$ is a partition, let $n = |\lambda|$ and $\ell = \ell(\lambda)$. An \emph{admissible $\lambda$-configuration} is a sequence of partitions
      \[ \underline{\nu} = (\nu^{(0)}, \nu^{(1)}, \ldots, \nu^{(\ell)}) \]
    such that
    \begin{enumerate}
        \item $\nu^{(0)} = (1^n)$,
        \item $|\nu^{(i)}| = \sum_{j \geq i+1} \lambda_j$ for $0 \leq i \leq \ell$ (so $\nu^{(\ell)} = \emptyset$), and
        \item $P_j^i(\underline{\nu}) := 
        |\alpha^{(i+1)}_{\le j}| - 
        2|\alpha^{(i)}_{\le j}| + 
        |\alpha^{(i-1)}_{\le j}|\geq 0$ for all $1 \leq i < \ell$ and $1 \leq j \leq n$,
      \end{enumerate}
      where we write $\alpha^{(i)} := (\nu^{(i)})'$ for convenience.

      Let $\mathrm{AC}^\lambda_m$ be the set of $\lambda$-admissible configurations $\underline{\nu}$ such that $\ell(\nu^{(1)}) = m$.
    \end{de}
    The following identity shows the decomposition of a plethysm of Schur polynomials into products of quantum binomial coefficients, which establishes the unimodality directly.
    \begin{thm}[\cite{GOH}]
      Let $\circ$ denote the plethysm product. Then
      \begin{equation}
        s_\lambda\circ s_k (q,q^{-1})
          = \sum_{m=0}^k \qbinom{n+k-m}{k-m}
          \sum_{\underline{\nu}\in\mathrm{AC}^\lambda_m} 
          \prod_{\substack{1 \leq i < \ell \\ 1 \leq j \leq n}} \qbinom{P_j^i(\underline{\nu}) + m_j(\nu^{(i)})}{m_j(\nu^{(i)})}.
\tag{GOH}\label{GOH}
\end{equation}      \end{thm}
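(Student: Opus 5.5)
The plan is to treat \eqref{GOH} as a fermionic (rigged-configuration) formula for a principal specialization. Throughout, $n=|\lambda|$, and $\qbinom{a+b}{b}$ is the symmetric quantum binomial of this section.

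\textbf{Step 1 (reduction to a principal specialization).} Since $s_k(q,q^{-1})=\qbinom{k+1}{k}=q^{k}+q^{k-2}+\cdots+q^{-k}$ has $k+1$ monomials, we get $s_\lambda\circ s_k(q,q^{-1}) = s_\lambda(q^{k},q^{k-2},\ldots,q^{-k})$. This is the character of the $\GL_{k+1}$-module $V_\lambda$ restricted to the principal $\sl_2$. So the left-hand side is a centred, symmetric version of $s_\lambda(1,q^2,\ldots,q^{2k})$.

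\textbf{Step 2 (expand via rigged configurations).} I would use the Kerov--Kirillov--Reshetikhin bijection between $\SSYT$ of shape $\lambda$ (equivalently, highest weight paths in $\mathbb{B}^{\otimes n}$ of weight $\lambda$) and rigged configurations. Non-negativity of the vacancy numbers $P^i_j(\underline\nu)$ in Definition~\ref{de:AC} is exactly the admissibility condition. Each string of length $j$ in $\nu^{(i)}$ carries a rigging between $0$ and $P^i_j$, and these strings come in groups of size $m_j(\nu^{(i)})$. Summing over riggings weighted by the cocharge statistic turns each group into the $q$-binomial $\qbinom{P^i_j+m_j(\nu^{(i)})}{m_j(\nu^{(i)})}$. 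This yields the product over $1\le i<\ell$ and $1\le j\le n$ in \eqref{GOH}, after the standard symmetrization that converts Gaussian binomials into the symmetric quantum ones.

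\textbf{Step 3 (the $k$-dependence).} The only place where $k$, the number of variables, enters is the $i=0$ level. There $\nu^{(0)}=(1^n)$ is fixed, and only $m=\ell(\nu^{(1)})$ matters, because it determines the vacancy of the single column of $\nu^{(0)}$ relative to the $\GL_{k+1}$ weight lattice. Summing the remaining freedom, namely the riggings attached to the $k-m$ ``empty'' directions, should produce the factor $\qbinom{n+k-m}{k-m}$. I would verify this by checking the identity on the one-row case $\lambda=(n)$, where $s_{(n)}(q^k,\ldots,q^{-k})=\qbinom{n+k}{n}$ and only $m=0$ contributes. I would then argue that the general case factors as this ``first-level'' binomial times the KR sum over $\mathrm{AC}^\lambda_m$. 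An alternative route to the same factorization is to compare with the Hall--Littlewood expansion $s_\lambda=\sum_\mu K_{\lambda\mu}(t)P_\mu(t)$, specialized so that the $P_\mu$ become $q$-binomials.

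\textbf{Main obstacle.} I expect Step 3 to be the hard part: matching the cocharge or energy statistic with the principal grading on exactly $k+1$ variables, including the normalization so that everything is symmetric under $q\leftrightarrow q^{-1}$. The fermionic formula is naturally stated for the stable ($k\to\infty$) limit, and truncating to $k+1$ variables is what produces the $m$-dependent first factor. I would try first to prove the $k$-truncation by induction on $k$, using the branching $\GL_{k+1}\downarrow\GL_k$ on the left-hand side and a Pascal-type recurrence for $\qbinom{n+k-m}{k-m}$ on the right-hand side.
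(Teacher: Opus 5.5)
There is a genuine gap. Your Step~2 computes an object that does not involve $k$, and Step~3, where all the content of the statement lies, is only a heuristic.

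Highest-weight elements of weight $\lambda$ in $\mathbb{B}^{\otimes n}$ correspond to $\SYT(\lambda)$, not to $\SSYT_{k+1}(\lambda)$. The cocharge-weighted rigging sum over $\bigsqcup_m \mathrm{AC}^\lambda_m$ is the Kostka--Foulkes polynomial $\sum_{T\in\SYT(\lambda)}q^{\mathrm{cocharge}(T)}$, which does not involve $k$ at all; at $q=1$ it is $\#\SYT(\lambda)$, not $s_\lambda(1^{k+1})$. So the two things specific to the theorem are deferred to Step~3 with no mechanism: that a principal specialization in $k+1$ variables appears, and that $\underline{\nu}$ receives exactly the factor $\qbinom{n+k-m}{k-m}$ with $m=\ell(\nu^{(1)})$. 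The supporting arguments do not fill this in:
\begin{itemize}
\item These rigged configurations carry no ``riggings attached to $k-m$ empty directions''.
\item The check $\lambda=(n)$ is vacuous: only $m=0$ occurs and the product is empty, so it says nothing about the $m>0$ terms.
\item Induction on $k$ via $\GL_{k+1}\downarrow\GL_k$ does not visibly close. Branching replaces $\lambda$ by all interlacing $\mu\subseteq\lambda$, whereas the Pascal recurrence for $\qbinom{n+k-m}{k-m}$ keeps $n$ fixed, and you give no recursion for $\mathrm{AC}^\lambda_m$ in $\lambda$ to match it.
\item The Hall--Littlewood alternative does not directly give this shape: specializing $P_\mu(x;q)$ at $(1,q,\dots,q^k)$ yields $q$-multinomials indexed by all $\mu\vdash n$, not one binomial indexed by $\ell(\nu^{(1)})$.
\end{itemize}

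The missing idea is a precise link between $k$ and $m$. Stanley's formula and a geometric-series computation give
\[
\sum_{k\ge0}s_\lambda(1,q,\dots,q^k)\,z^k=\frac{\sum_{T\in\SYT(\lambda)}q^{\mathrm{maj}(T)}z^{\mathrm{des}(T)}}{\prod_{i=0}^{n}(1-q^iz)},
\qquad
\sum_{k\ge0}\binom{n+k-m}{k-m}_{\!q}z^k=\frac{z^m}{\prod_{i=0}^{n}(1-q^iz)}.
\]
Written with Gaussian binomials for $s_\lambda(1,q,\dots,q^k)$, the theorem for all $k$ is therefore equivalent to a descent-refined Kirillov--Reshetikhin identity. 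For each $m$,
\[
\sum_{\underline{\nu}\in\mathrm{AC}^\lambda_m} q^{(nm-\sum_{i,j}P^i_j(\underline{\nu})\,m_j(\nu^{(i)}))/2}\prod_{i,j}\binom{P^i_j(\underline{\nu})+m_j(\nu^{(i)})}{m_j(\nu^{(i)})}_{\!q}
=\sum_{\substack{T\in\SYT(\lambda)\\ \mathrm{des}(T)=m}} q^{\mathrm{maj}(T)},
\]
with $1\le i<\ell$ and $1\le j\le n$. The power of $q$ is the one making every summand symmetric about $nm/2$. Your proposal supplies neither this refinement nor the common-centre property; the latter is exactly the normalization you flag as the main obstacle.

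The paper does not reprove any of this. It imports the Gaussian-binomial identity for $s_\lambda\circ s_k(1,q)$ from \cite{GOH} and uses that all summands are symmetric and unimodal about the same centre $nk/2$. It then homogenizes to $s_\lambda\circ s_k(t,q)$ and sets $t=q^{-1}$, so every Gaussian binomial becomes the symmetric $\qbinom{\cdot}{\cdot}$ with no leftover power of $q$. If citing \cite{GOH} is allowed, your Step~1 plus that homogenization is the whole proof. Otherwise the refined identity above is what you would have to establish.
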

\begin{proof}
  The classical $q$-analogue of this identity expresses $s_\lambda\circ s_k(1,q)$ as polynomial in $q$-binomials and is proven in~\cite{GOH}. The $q$-binomials are all centered around the same unimodal peak. Homogenise the original formula to obtain an expression for  $s_\lambda\circ s_k(t,q)$, then set $t=q^{-1}$ to conclude.
\end{proof}

It is useful to introduce a shorthand for a special case of plethysm coefficients. Expanding a $\GL_2$-plethysm as a linear combination of quantum integers gives
\[
 s_\mu\circ s_k(q,q^{-1}) = \sum_j  a_{j}(\mu,k) \cdot [j].
\]
That is, if $\mu$ is a partition of $n$ then $a_{\mu[k]}^{(nk-r,r)} = a_{nk-2r+1}(\mu,k)$.

We also introduce a notation for (multi-)Clebsch--Gordan coefficients. Let $\mathbf{b} = (b_1, \ldots, b_t)$ be a vector of non-negative integers. Define a (Laurent) polynomial $p(-;-)$ and integers $\mathrm{CG}_{(-)}(-)$ via
\[
  p(\mathbf{b};q) = \prod_{i=1}^K [b_i] = \sum_{k=1}^{|\mathbf{b}|} \mathrm{CG}_k(\mathbf{b}) \cdot[k]\,,
\]
where $|\mathbf{b}| := b_1 + \cdots + b_t$.
\begin{lem}[{\cite[Lemma 4.2]{PakPanovaSwanson}}]\label{lem:q-int-product}
  Let $\mathbf{b} = (b_1, \ldots, b_K)$ be a vector of non-negative integers. Then $p(\mathbf{b}; q)$ is symmetric and unimodal, and
  \begin{multline*}
    \mathrm{CG}_k(\mathbf{b}) = \\
          \#\Bigg\{(t_1, \ldots, t_K) \in \mathbb{Z}^{K} \;\Bigg|\;
          \begin{array}{l} \textstyle
              t_1 = 0, ~~ t_{i-1} \le t_i < b_i + t_{i-1}, ~~t_K = \frac{|\mathbf{b}|-K+1-k}{2}\\[-.5em]
            t_i \leq (\sum_{j=1}^{i-1} (b_j-1)) - t_{i-1}~~\text{for all}~1\le i\le K
          \end{array}
          \Bigg\}
        \end{multline*}
        for $0 \leq k \leq |\mathbf{b}|$.
\end{lem}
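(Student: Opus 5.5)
The plan is to prove the formula by induction on $K$, multiplying in one quantum integer at a time and expanding each new product with the classical Clebsch--Gordan rule. Symmetry and unimodality then come for free at the end.

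\textbf{Step 1 (Clebsch--Gordan rule).} First I establish, for integers $c,b\ge 0$,
\[
[c]\,[b] = \sum_{s=0}^{\min(c,b)-1} [c+b-1-2s].
\]
To prove it, multiply both sides by $q-q^{-1}$. We have $[c](q-q^{-1}) = q^c-q^{-c}$, and $[c]=q^{c-1}+q^{c-3}+\cdots+q^{1-c}$. Assuming $c\ge b$ by symmetry, the product $[c](q^b-q^{-b})$ expands into $\sum_s (q^{c+b-1-2s}-q^{-(c+b-1-2s)})$. This is a direct telescoping/pairing of monomials. When $b=0$ or $c=0$ both sides vanish, since $[0]=0$ and the sum is empty.

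\textbf{Step 2 (inductive formula).} By induction on $i$ I show that
\[
\prod_{j\le i}[b_j] = \sum_{(t_1,\ldots,t_i)} \Big[\textstyle\sum_{j\le i}(b_j-1)+1-2t_i\Big],
\]
where the sum runs over integer tuples satisfying the constraints of the lemma up to index $i$. The base case $i=1$ is $t_1=0$, which gives $[b_1]$.

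For the inductive step, write $c=\sum_{j<i}(b_j-1)+1-2t_{i-1}$ and apply Step 1 to $[c][b_i]$. Set $t_i=t_{i-1}+s$ and translate the three bounds on $s$ into bounds on $t_i$:
\begin{itemize}
\item $s\ge 0$ becomes $t_{i-1}\le t_i$;
\item $s\le b_i-1$ becomes $t_i<b_i+t_{i-1}$;
\item $s\le c-1$ becomes $t_i\le \sum_{j<i}(b_j-1)-t_{i-1}$.
\end{itemize}
The new index is $c+b_i-1-2s=\sum_{j\le i}(b_j-1)+1-2t_i$. These are exactly the stated constraints, and the correspondence between old tuples with a choice of $s$ and new tuples is bijective. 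I will check separately that the convention for $i=1$ and the degenerate cases $b_i=0$ or $c\le 0$ (empty sums) agree with the stated set.

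\textbf{Step 3 (conclusion).} Taking $i=K$, the coefficient of $[k]$ counts the tuples with $|\mathbf{b}|-K+1-2t_K=k$, which gives the stated formula for $\mathrm{CG}_k(\mathbf{b})$. Since these coefficients are non-negative and each $[k]$ is a symmetric unimodal Laurent polynomial centred at $q^0$, the sum $p(\mathbf{b};q)$ is symmetric and unimodal.

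\textbf{Main obstacle.} I expect the hardest part to be the index bookkeeping, namely verifying that the bound from $s\le c-1$ matches the lemma's inequality exactly. That check is routine once Step 1 is in place.
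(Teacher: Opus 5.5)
Your proposal is correct and follows the paper's route: the paper also proves the lemma by repeatedly applying the Clebsch--Gordan rule $[a][b]=[a+b-1]+[a+b-3]+\cdots+[|a-b|+1]$, and your induction makes explicit how the bounds on $s=t_i-t_{i-1}$ become the stated constraints on $t_i$. Two small points are worth recording in the write-up. First, adding the two upper bounds on $t_i$ gives $2t_i\le\sum_{j\le i}(b_j-1)$, so the running index $c$ is never negative and the case $c<0$ (where $[c]=-[-c]\neq 0$) never arises. Second, all indices $k=|\mathbf b|-K+1-2t_K$ have the same parity, and this is what makes the nonnegative sum of the $[k]$ symmetric and unimodal.
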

\begin{note}
    This set should remind the reader of the conditions satisfied by the marks of a KOH tree in Definition~\ref{de:KOH}. 
\end{note}
\begin{proof}
The result follows from repeated applications of the Clebsch--Gordan rule
\[
[a][b] = [a+b-1] + [a+b-3] + \cdots + [|a-b|+1].
\]
This is done e.g.~in~\cite[Lemma 4.2]{PakPanovaSwanson} for the $q$-analogue version of the result. The proof of the quantum version is similar.
\end{proof}

With these tools, we can run the main computation of this section. Is the derivation of a formula for $a_r(\lambda,k)$ in terms of Hermite coefficients. Hence, it allows to reduce Theorem~\ref{thm:GL2 in L2} to Theorem~\ref{thm:Hermite}.
\begin{thm}\label{plet recursion}
  The $\GL_2$-plethysm coefficients can be expressed as the following polynomial of Hermite and Clebsch--Gordan coefficients,
  \[
    a_r(\mu, k) =
    \sum_{m=0}^k \sum_{\mathbf{b}} a_{b_0}(n,k-m) \, \mathrm{CG}_r(\mathbf{b}) \sum_{\underline{\nu}\in\mathrm{AC}^\mu_m} \prod_{\substack{1 \leq i < \ell(\mu) \\ 1 \leq j \leq |\mu|}}a_{b_{i,j}}(P_j^i(\underline{\nu}),m_j(\nu^{(i)})),
  \]
  where the sum ranges over sets of integers $\mathbf{b} = \{b_0\} \cup \{b_{i,j}\}^{1\le i < \ell(\mu)}_{1\le j\le |\mu|}$.
  \end{thm}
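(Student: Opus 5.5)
We start from the (GOH) identity and rewrite each quantum binomial as a combination of quantum integers, then multiply out. First, use the Hermite coefficients in their quantum-integer form. Since $s_n\circ s_k(q,q^{-1})=\qbinom{n+k}{k}$, the shorthand $a_j(n,k)$ gives
\[
\qbinom{N+M}{M}=\sum_{b} a_b(N,M)\,[b].
\]
Here $a_b(N,M)$ is exactly a Hermite coefficient: by \eqref{eq:arectkron} and the relation $a_{N[M]}^{(NM-r,r)}=a_{NM-2r+1}(N,M)$, the quantum binomial $\qbinom{N+M}{M}$ is the character of $s_N\circ s_M$ evaluated at $(q,q^{-1})$. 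This identification is symmetric in $N$ and $M$, which is harmless. We apply it to the prefactor $\qbinom{n+k-m}{k-m}$, which gives $\sum_{b_0}a_{b_0}(n,k-m)[b_0]$, and to every factor $\qbinom{P_j^i(\underline{\nu})+m_j(\nu^{(i)})}{m_j(\nu^{(i)})}$, which gives $\sum_{b_{i,j}}a_{b_{i,j}}(P_j^i(\underline\nu),m_j(\nu^{(i)}))[b_{i,j}]$. Factors with $m_j=0$ equal $1=[1]$, so they are consistent with $a_1(P,0)=1$ and $a_b(P,0)=0$ for $b\neq1$.

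Next, substitute these expansions into (GOH) and expand the product over all $(i,j)$ together with the prefactor. For each $m$ and each configuration $\underline\nu\in\mathrm{AC}^\mu_m$, the summand becomes a sum over tuples $\mathbf b=(b_0,(b_{i,j}))$ of the scalar $a_{b_0}(n,k-m)\prod_{i,j}a_{b_{i,j}}(\cdots)$ times the product $[b_0]\prod_{i,j}[b_{i,j}]=p(\mathbf b;q)$. By definition of the multi-Clebsch--Gordan coefficients, $p(\mathbf b;q)=\sum_r \mathrm{CG}_r(\mathbf b)[r]$. Extracting the coefficient of $[r]$ on both sides then yields the stated formula. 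This step is legitimate because the quantum integers $[1],[2],\dots$ are linearly independent as Laurent polynomials; their leading terms $q^{r-1}$ are distinct, so the decomposition is unique.

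Finally, we interchange the sums, moving the $\mathbf b$-sum outside the $\underline\nu$-sum as in the statement. This is only bookkeeping: $\mathbf b$ ranges over all nonnegative integer tuples indexed by $\{0\}\cup\{(i,j)\}$, and only finitely many terms are nonzero, since $a_b(N,M)=0$ for $b>NM+1$.

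The main obstacle is this indexing subtlety. The tuple $\mathbf b$ is fixed outside the $\underline\nu$-sum while the arguments $P_j^i(\underline\nu)$ depend on $\underline\nu$, so we must check that the exchange is valid termwise. This holds because, for each fixed $\underline\nu$, the full expansion runs over all $\mathbf b$. We also have to confirm that the normalisation of $a_j(\mu,k)$, namely the coefficient of $[j]$, matches the Hermite coefficients when $\mu=(N)$, which is the one-row case of the definition.
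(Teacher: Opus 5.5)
Your proposal is correct and follows essentially the same route as the paper: rewrite each quantum binomial in \eqref{GOH} as $\sum_b a_b(N,M)[b]$, multiply out, identify $[b_0]\prod_{i,j}[b_{i,j}]=p(\mathbf b;q)=\sum_r \mathrm{CG}_r(\mathbf b)[r]$, and read off the coefficient of $[r]$. Your remarks on the linear independence of the quantum integers and on interchanging the finite sums spell out details that the paper's proof leaves implicit.
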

  \begin{proof}
  Let $n=|\mu|$ and $\ell=\ell(\mu)-1$.   
    We use the identity $s_n\circ s_m(q,q^{-1}) = \qbinom{n+m}{n}$ together with \eqref{GOH} to write
            \begin{align*}
          s_\mu\circ s_k&(q,q^{-1})
          =
          \sum_{m=0}^k s_n\circ s_{k-m}(q,q^{-1}) \sum_{\underline{\nu}} \prod_{\substack{1 \leq i \le \ell \\ 1 \leq j \leq n}} s_{P_j^i(\underline{\nu})} \circ s_{m_j(\nu^{(i)})} (q,q^{-1})
          \\
&=
          \sum_{m=0}^k \sum_{b_0} a_{b_0}(n,k-m) \cdot [b_0] \sum_{\underline{\nu}} \prod_{\substack{1 \leq i \le \ell \\ 1 \leq j \leq n}} \sum_{b_{i,j}} a_{b_{i,j}}(P_j^i(\underline{\nu}),m_j(\nu^{(i)})) \cdot [b_{i,j}]
          \\&=
          \sum_{m=0}^k \sum_{\mathbf{b}} a_{b_0}(n,k-m) \cdot [b_0] \sum_{\underline{\nu}} \prod_{\substack{1 \leq i \le \ell \\ 1 \leq j \leq n}} a_{b_{i,j}}(P_j^i(\underline{\nu}),m_j(\nu^{(i)})) \cdot [b_{i,j}]\,.
        \end{align*}
        Take the coefficient of $[r]$ in this expression. By the definition of $\mathrm{CG}_k(\mathbf{b})$, we obtained the claimed formula.
      \end{proof}

      \begin{proof}[Proof of Theorem~\ref{thm:GL2 in L2}]
        Let $n=|\mu|$ and $\ell=\ell(\mu)-1$.
        By Theorem~\ref{plet recursion}, $a_{\mu[k]}^{(nk-r,r)}=a_{nk-2r+1}(\mu,k)$ counts the number of tuples of KOH small witnesses $(T_0) \sqcup (T_{i,j})^{1 \leq i \le \ell}_{1 \leq j \leq n}$ together with a sequence of integers which are a witness for the Clebsch--Gordan coefficient and which we also indexed similarly, $(t_0) \sqcup (t_{i,j})^{1 \leq i \le \ell}_{1 \leq j \leq n}$. The tuples depend on a matrix of integers and an admissible configuration. 
        
        To encode an admissible configuration $\underline{\nu}$, we consider the sequence of their transposes $\alpha^{(1)}, \alpha^{(2)}, \ldots$ instead. We can recover $m_j(\nu^{(i)})$ as $\alpha_j^{(i)}-\alpha_{j+1}^{(i)}$. 
        Our witness records these partitions as sequences of $\ell$ terms, encoded in binary: 
        \begin{multline*}
        m\,;\, \alpha_1^{(1)}, \ldots, \alpha_1^{(\ell)}\,;\, b_0, T_0, t_{0}\,;\,\\
        \alpha_2^{(1)}, \ldots, \alpha_2^{(\ell)}\,;\,
        b_{11}, \ldots, b_{\ell1}\,,\,
        T_{11},\ldots , T_{\ell1}\,,\, 
        t_{11},\ldots , t_{\ell1}\,;\,
        \\
        \alpha_3^{(1)}, \ldots, \alpha_3^{(\ell)}\,;\,
        b_{12}, \ldots, b_{\ell2}\,,\,
        T_{12},\ldots , T_{\ell2}\,,\, 
        t_{12},\ldots , t_{\ell2}\,;\, \ldots
        \end{multline*}
        Start by reading $m$, storing $\alpha_1^{(1)}, \ldots, \alpha_1^{(\ell)}$ for later, and $b_0$, check that   $\alpha^{(\ell)}_j=0$ for each $j$ for every row $j$ we read. Check that $T_0$ is a KOH small witness for $a_{b_0}(n,k-m)=a^{((n(k-m)+b_0-1)/2,(n(k-m)-b_0+1)/2)}_{n[k-m]}$.
        Then read and store $\alpha_2^{(1)}, \ldots, \alpha_2^{(\ell)}\,;\,
        b_{11}, \ldots, b_{\ell1}$ and check that $\alpha_1^{(i)}\geq \alpha_2^{(i)}$ for every $i=1,\ldots,\ell$. This is enough to compute $P_1^i(\underline{\nu})$ and $m_1(\nu^{(i)})$ for $i=1,\ldots,\ell$ and check they are $\geq 0$, remembering that in the computation of $P$ we take $\alpha^{(0)}_1=n$ and $\alpha^{(0)}_i=0$ for $i>1$. Check the validity of the KOH small witnesses $T_{11}, \ldots, T_{\ell1}$. Check also that $t_0, t_{11}, \ldots, t_{\ell1}$ satisfy the conditions of the first $\ell+1$ elements of a witness of a Clebsch--Gordan coefficient (see Lemma~\ref{lem:q-int-product}). Now read and store $\alpha_3^{(1)}, \ldots, \alpha_3^{(\ell)}\,;\,
        b_{12}, \ldots, b_{\ell2}$. This is enough to compute $P_2^i(\underline{\nu})$ and $m_2(\nu^{(i)})$ for $i=1,\ldots,\ell$. Only now is it possible to forget $\alpha_1^{(1)}, \ldots, \alpha_1^{(\ell)}$, and hence we need $9\ell$ (fixed number) of counters just to keep track of the admissible configuration.
        The algorithm continues iterating the previous checks until the $\ell$'th row.
        
        One needs another $\ell$ counters to keep the partial sizes of the partitions $\alpha^{(i)}$, and in the end  check that the $\alpha_j^{(i)}$ indeed do form an admissible configuration for condition (2) of Definition~\ref{de:AC}. 
        All of these $10\ell$ counters we have discussed are upper bounded by $|\mu|$ and hence occupy log-space. They can be updated in polynomial time.
        The remaining checks have already been seen to be occupy at most $\log^2$-space and take at most poly-time. 
        \end{proof}
        
\begin{question} \label{q:GL2 plet}
    Are the $\GL_2$-plethysm coefficients $a_{\mu[k]}^{(nk-r,r)}$ in $\sharpTISP(\poly(n),\log^2(n))$ for partitions $\mu$ of unbounded length?
\end{question}

    We can consider the easier computational problem $(1^r\,0\,1^k)\mapsto a_r(\mu,k)$ for fixed~$\mu$.
    For each fixed $\mu$, the generating function $A_\mu(z,q) = \sum_{r,k} a_r(\mu,k) z^k q^r$ is shown to be rational in~\cite{GOSSZ}. In particular, the $\GL_2$-plethysm coefficients involved are subject to 2-dimensional linear recurrences. These recursions may involve signs. However, it is conjectured that there exist positive expressions~\cite[Conjecture 1.1]{GOSSZ}, and the result is established for $|\mu|\le5$.
    \begin{thm}
        For any fixed partition $\mu$ of size at most $5$, the function $(1^r\,0\,1^k)\mapsto a_r(\mu,k)$ is in $\sharpL$.
        If Conjecture~1.1 in \cite{GOSSZ} holds, then this result holds for all fixed partitions~$\mu$.
    \end{thm}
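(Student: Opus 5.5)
The plan is to use the rationality result of \cite{GOSSZ} together with Theorem~\ref{thm:2-dim}. Fix $\mu$. By \cite{GOSSZ}, $A_\mu(z,q)=\sum_{r,k}a_r(\mu,k)z^kq^r$ is rational, so the array $(k,r)\mapsto a_r(\mu,k)$ satisfies a 2-dimensional linear recurrence with constant coefficients and a fixed finite set of shifts. \cite[Conjecture~1.1]{GOSSZ} asserts that this generating function admits a \emph{positive} expression, which \cite{GOSSZ} establishes for $|\mu|\le5$.

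I would take the positive expression to be of the shape $A_\mu(z,q)=N(z,q)/\prod_{s}(1-z^{\alpha_s}q^{\beta_s})$, where $N$ has non-negative coefficients, the product is finite, and $(\alpha_s,\beta_s)\ne(0,0)$. Since $\mu$ is fixed, $N$ and the exponents are constants. Expanding, $a_r(\mu,k)$ counts tuples $(\gamma,(c_s)_s)$ with $\gamma$ a monomial of $N$ taken with multiplicity, $c_s\ge0$, and $\deg_z\gamma+\sum_s c_s\alpha_s=k$, $\deg_q\gamma+\sum_s c_s\beta_s=r$.

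This is an integer-point count in a polytope of fixed dimension, namely the number of factors, with a finite choice of $\gamma$. If the expression is only positive after cancelling signs in the denominator, I would instead invert the rational function directly. The resulting recurrence $A(k,r)=\sum_\ell D_\ell A(k-i_\ell,r-j_\ell)$ has non-negative constant coefficients $D_\ell$, and I would order the variables so that each shift is lex-decreasing, using $k$ first. Theorem~\ref{thm:2-dim} then applies, with finitely many base cases given as constants.

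In the polytope formulation, I would use Theorem~\ref{thm:polytopes} with the unary inputs $k$ and $r$ as right-hand sides. If some $(\alpha_s,\beta_s)$ has a zero coordinate, boundedness still holds because both $k$ and $r$ are fixed, as long as no $(\alpha_s,\beta_s)=(0,0)$. I would sum over the finitely many $\gamma$ by adding a bounded witness prefix.

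The main obstacle is extracting the precise form of the positive expression in \cite{GOSSZ}. In particular, one must confirm that positivity holds for both numerator and denominator in a form yielding non-negative recurrence coefficients, or a bounded polytope, and that any exceptional small values of $k$ can be hard-coded as base cases. Once that form is confirmed, the reduction is routine. The conditional statement follows verbatim from the conjecture.
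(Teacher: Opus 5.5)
Your main route is sound but is not quite the paper's. The paper disposes of this statement in one line via Theorem~\ref{thm:2-dim}, taking the recurrence coefficients from the denominator of a simplified expression for $A_\mu(z,q)$. You instead expand a positive expression $N/\prod_s(1-z^{\alpha_s}q^{\beta_s})$ into tuples $(\gamma,(c_s)_s)$. You then count lattice points in a polytope of fixed dimension via Theorem~\ref{thm:polytopes}; one could equally read $(c_s)_s$ in binary and check the two linear equations, whose coefficients are constants.

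What your version buys is that it uses only the positivity such an expression provides. That is $N\ge 0$, and denominator factors $1-z^{\alpha}q^{\beta}$ with $\alpha,\beta\ge 0$ not both zero. The latter also gives boundedness through $c_s(\alpha_s+\beta_s)\le k+r$. By contrast, a recurrence obtained by clearing such a denominator has the \emph{signed} coefficients of the expanded product, e.g.\ $(1-z)(1-z^2)=1-z-z^2+z^3$, plus inhomogeneous terms coming from $N$. So the positive coefficients that Theorem~\ref{thm:2-dim} demands do not follow automatically from a non-negative numerator. The paper's sketch leaves this step implicit, whereas your lattice-point count needs no such step. Like the paper, you rely on the positive expression in \cite{GOSSZ} having the shape $N/\prod_s(1-z^{\alpha_s}q^{\beta_s})$, and you rightly flag that as the one input to check.

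For the same reason, drop your fallback claim that inverting the rational function directly yields a recurrence with non-negative $D_\ell$. That is not justified and fails in general. If you want a recurrence formulation, peel off one factor at a time. Put $F_t=N/\prod_{s\le t}(1-z^{\alpha_s}q^{\beta_s})$ and write $F_t(k,r)$ for its coefficient of $z^kq^r$. Then $F_t(k,r)=F_{t-1}(k,r)+F_t(k-\alpha_t,r-\beta_t)$, with $F_0=N$. This is a positive recurrence with one extra index $t$ bounded by the constant number of factors. Its witnesses are just your tuples $(\gamma,(c_s)_s)$ in another guise.
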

    \begin{proof}
        This follows from Theorem~\ref{thm:2-dim}. The coefficients involved in the linear recursion come from the coefficients in $q^\bullet z^\bullet$ of the denominator of a simplified expression for $A_\mu(z,q)$. These coefficients are at most polynomial, since the denominator only depends on $\mu$, which is given and constant. 
    \end{proof}
    \begin{note}
        If $|\mu| = n$ then the denominator of $A_\mu(z,q)$ divides
        \[
        d_{n}(z,q) =     \begin{cases}
        (1-z)\prod_{i=1}^n (1-z^i) \prod_{i=1}^{\frac{n}{2}} (1-q^{2i} z), & \text{for $n$ even,}\\[1ex]
        \prod_{i=1}^n (1-z^{2i}) \prod_{i=1}^{\frac{n+1}{2}} (1-q^{2i-1} z), & \text{for $n$ odd}
    \end{cases}
        \]
        by \cite[Theorem 4.9]{GOSSZ}.
    \end{note}

\bibliographystyle{alpha}
\bibliography{Bib}

\end{document}